\newtheorem{lemma}{Lemma}[section]
\newtheorem{theorem}[lemma]{Theorem}
\newtheorem{proposition}[lemma]{Proposition}
\newtheorem{corollary}[lemma]{Corollary}
\theoremstyle{definition}
\newtheorem{definition}[lemma]{Definition}
\newtheorem{remark}[lemma]{Remark}
\def\keywords{
    \vspace{1ex}
    \noindent
    \if@twocolumn
      \small{\bf  Keywords}\/---$\!$    \else
      \begin{center}\small\ {\bf Keywords}\end{center}\quotation\small
    \fi}
\def\endkeywords{\vspace{0.6em}\par\if@twocolumn\else\endquotation\fi
    \normalsize\rm}
\renewcommand{\O}{\ensuremath{\mathcal O}}
\renewcommand{\H}{\ensuremath{\mathcal H}}
\renewcommand{\P}{\ensuremath{\mathcal P}}
\renewcommand{\L}{\ensuremath{\mathcal L}}
\newcommand{\calS}{\ensuremath{\mathcal S}}
\newcommand{\scrB}{\ensuremath{\mathscr B}}
\newcommand{\scrC}{\ensuremath{\mathscr C}}
\newcommand{\scrH}{\ensuremath{\mathscr H}}
\newcommand{\scrP}{\ensuremath{\mathscr P}}
\newcommand{\bA}{\ensuremath{\bold A}}
\DeclareMathOperator{\Sp}{Sp}
\DeclareMathOperator{\loc}{loc}
\DeclareMathOperator{\Vol}{Vol}
\DeclareMathOperator{\Int}{Int}
\DeclareMathOperator{\comp}{comp}
\DeclareMathOperator{\Diag}{Diag}
\newcommand{\mb}[1]{\ensuremath{\mathbb{#1}}}
\newcommand{\N}{{\mb{N}}}
\newcommand{\R}{{\mb{R}}}
\newcommand{\C}{{\mb{C}}}
\newcommand{\bS}{{\mb{S}}}
\newcommand{\eps}{\varepsilon}
\newcommand{\M}{\ensuremath{M}} 
\newcommand{\T}{\ensuremath{\mathbb T}}
\newcommand{\D}{\ensuremath{\mathscr D}}
\newcommand{\B}{\ensuremath{\mathscr B}}
\newcommand{\A}{\ensuremath{\mathcal A}}
\let \Re \relax
\DeclareMathOperator{\Re}{Re}
\let \Im \relax
\DeclareMathOperator{\Im}{Im}
\newcommand{\ovl}[1]{\overline{#1}}
\DeclareMathOperator{\supp}{supp}
\DeclareMathOperator{\diag}{diag}
\DeclareMathOperator{\dist}{dist}
\DeclareMathOperator{\off}{off}
\DeclareMathOperator{\hw}{hw}
\DeclareMathOperator{\cut}{cut}
\DeclareMathOperator{\vect}{span}
\newcommand{\Delg}{\ensuremath{\Delta}} 
\DeclareMathOperator{\Op}{Op}
\DeclareMathOperator{\id}{Id}
\newcommand{\transp}{\ensuremath{\phantom{}^{t}}}
\renewcommand{\d}{\ensuremath{\partial}}
\newcommand{\nhd}{neighborhood\xspace}
\newcommand{\ie}{i.e.\@\xspace}
\newcommand{\eg}{e.g.\@\xspace}
\let \ae \relax
\newcommand{\ae}{a.e.\@\xspace}
\def\Ut{\tilde{U}}
\newcommand{\F}{\mathscr F}
\newcommand\bna{\begin{eqnarray*}}
\newcommand\ena{\end{eqnarray*}}
\newcommand\bnan{\begin{eqnarray}}
\newcommand\enan{\end{eqnarray}}
\newcommand\bnp{\begin{proof}}
\newcommand\enp{\end{proof}}
\newcommand\nor[2]{\left\|#1\right\|_{#2}}
\numberwithin{equation}{section}
\begin{document}

\title{Long time energy averages and a lower resolvent estimate for damped waves}

\author{Matthieu L\'eautaud\footnote{Laboratoire de Math\'ematiques d'Orsay, Universit\'e Paris-Saclay, CNRS, B\^atiment 307, 91405 Orsay Cedex France, email: matthieu.leautaud@universite-paris-saclay.fr.}} 


\maketitle

\begin{abstract}
We consider the damped wave equation on a compact manifold. We propose different ways of measuring decay of the energy (time averages of lower energy levels, decay for frequency localized data...) and exhibit links with resolvent estimates on the imaginary axis. 
As an application we prove a universal logarithmic lower resolvent bound on the imaginary axis for the damped wave operator when the Geometric Control Condition (GCC) is not satisfied. This is to be compared to the uniform boundedness of the resolvent on that set when GCC holds. 
The proofs rely on $(i)$ various (re-)formulations of the damped wave equation as a conservative hyperbolic part perturbed by a lower order damping term;
$(ii)$ a ``Plancherel-in-time'' argument as in classical proofs of the Gearhart-Huang-Pr\"uss theorem~\cite{Gearhart:78,Huang:85,Pruss:84,EN:book,HS:10} or in~\cite{BZ:04}; 
and $(iii)$ an idea of Bony-Burq-Ramond~\cite{BBR:10} of propagating a coherent state along an undamped trajectory up to Ehrenfest time. 
\end{abstract}

\begin{keywords}
Wave equations, linear hyperbolic systems, resolvent estimates, decay rates, semiclassical analysis, non-selfadjoint operators.

\medskip
\textbf{2010 Mathematics Subject Classification:}
35L05, 
 35L40, 
 35B35, 
  35B40, 
 35S30. 
 81Q20 
\end{keywords}
\tableofcontents

\section{Introduction and main results}

\subsection{Uniform stability of damped waves and resolvent estimates}

We consider a smooth connected compact Riemannian manifold $(\M,g)$ of dimension $n$ (without boundary), and denote by $\Delg$ the associated (nonpositive) Laplace-Beltrami operator.
Given a nonnegative continuous function $b \in C^0(\M;\R_+)$ and a nonnegative number $m \in \R_+$, we are interested in the damped Klein-Gordon/wave equation:
\begin{equation}
\label{eq: stabilization}
\left\{
\begin{array}{ll}
\d_{t}^2 u - \Delg u + m u + b(x) \d_{t} u = 0 & \text{in }\R_+ \times \M , \\
(u, \d_t u)|_{t=0} = (u_0,u_1)                  & \text{in } \M .\\
\end{array}
\right.
\end{equation}
Multiplying~\eqref{eq: stabilization} by $\d_{t} \ovl{u}$ and integrating on $(T_0,T_1)\times M$ yields the following dissipation identity: 
\begin{align}
\label{e:decay-energy-0}
E_m(u(T_1)) - E_m(u(T_0))= -\int_{T_0}^{T_1} \int_\M b|\d_t u|^2 dx dt \leq 0 , \quad \text{for all }T_1 \geq T_0\geq 0 ,
\end{align}
where the energy is defined by
$$
 E_m(u(t)) = \frac12 \left(\|\nabla u(t)\|_{L^2(M;TM)}^2 +m \|u(t)\|_{L^2(M)}^2 +\|\d_t u(t)\|_{L^2(M)}^2 \right) $$
 (and is actually a function of $(u(t),\d_t u(t))$ but we write it as a function of $u(t)$ only for concision).
 Here, $\nabla$ is the Riemannian gradient and $\|\cdot \|_{L^2(M)}$ and $\|\cdot\|_{L^2(M;TM)}$ are defined with respect to the Riemannian volume form.
For the sake of the exposition (in order to avoid an additional technical difficulty), we assume in this paragraph that $m>0$.
From~\eqref{e:decay-energy-0}, we see that the energy $E_m(u(t))$ is a nonincreasing function of time. As soon as $\omega_b:= \{b>0\} \neq \emptyset$, one can further prove that $E_m(u(t))\to 0$ as $t \to + \infty$, (as a consequence of unique continuation properties for eigenfunctions of $\Delg$, see~\cite{Leb:96}), and, in view of possible applications, it is of interest to understand at which rate this decay occurs. 
To this aim, it is customary to reformulate~\eqref{eq: stabilization} equivalently as a first order system 
\begin{equation}
\label{eq: first order eqation}
\left\{
\begin{array}{l}
\d_t U = \A_m U , \\
U|_{t=0} = \transp(u_0 , u_1) ,
\end{array}
\right.
\quad 
U = 
\begin{pmatrix}
u \\
\d_t u
\end{pmatrix} , \quad
\A_m = 
\begin{pmatrix}
0   &  \id \\
\Delg -m& - b
\end{pmatrix}
,\end{equation}
on the energy space $H^1_m(M)\times L^2(M)$, where $H^1_m(M)  = H^1(M)$ is endowed with the norm $\|v\|_{H^1_m(M)}^2 = \|\nabla v\|_{L^2(M)}^2 +m \|v\|_{L^2(M)}^2$ (recall that we assume $m>0$ in this introduction) and $\A_m$ is endowed with appropriate domain $D(\A_m)$. 
Stability properties of the energy of solutions to~\eqref{eq: stabilization} reformulate equivalently in terms of stability properties of the semigroup $\big(e^{t\A_m}\big)_{t\in \R_+}$ in the state space $H^1_m(M)\times L^2(M)$. 

The first key question raised by applications concerns uniform/exponential stability, namely, under which conditions (on $(M,g), b$) there exist $C, \gamma>0$ such that
 \begin{equation}
 \label{e:uniform-decay-wa}
 E_m(u(t)) \leq C e^{-\gamma t} E_m(u(0)), \quad \text{ for all solutions to~\eqref{eq: stabilization}} .
 \end{equation}
According to the celebrated Gearhart-Huang-Pr\"uss theorem~\cite{Gearhart:78,Huang:85,Pruss:84}, and on account to the fact that  $\big(e^{t\A_m}\big)_{t\in \R_+}$ is a contraction semigroup on $H^1_m(M)\times L^2(M)$, uniform/exponential stability~\eqref{e:uniform-decay-wa} is equivalent to the condition
\begin{align}
i\R \subset \rho(\A_m)\quad\text{and} \quad \sup_{s\in \R}\nor{(is -\A_m)^{-1}}{\L(H^1_m\times L^2)} < +\infty , 
\end{align}
where $\rho(\A_m) \subset \C$ denotes the resolvent set of $\A_m$.
In the context of Equation~\eqref{eq: stabilization} the question has been solved by the Rauch-Taylor Theorem~\cite{RT:74} (see also the Bardos-Lebeau-Rauch theorem on manifolds with boundary~\cite{BLR:88,BLR:92}): uniform/exponential stability~\eqref{e:uniform-decay-wa} holds if the Geometric Control Condition (GCC) is satisfied: every geodesic starting from $S^*\M$ enters the set $\omega_b =\{b>0\}$ in finite time. Reciprocally, for a continuous function $b$,~\eqref{e:uniform-decay-wa} implies GCC~\cite{Ralston:69,BG:97,MZ:02}. A corollary of this converse implication together with the Gearhart-Huang-Pr\"uss theorem is that if GCC is {\em not} satisfied, then the resolvent cannot be bounded at $\pm i \infty$ (it is a continuous function on $i\R$), that is $$\limsup_{s\to \pm \infty}\nor{(is -\A_m)^{-1}}{\L(H^1_m\times L^2)} = +\infty .$$
Remarkably enough, none of the above-mentioned references uses the semigroup setting and the Gearhart-Huang-Pr\"uss theorem, but rather directly consider energy estimates for the evolution equation~\eqref{eq: stabilization}, such as~\eqref{e:decay-energy-0}. Hence, the results of~\cite{Ralston:69,BG:97,MZ:02} do not furnish an estimate of the divergence of the resolvent at $\pm i \infty$. Our first result provides with an a priori lower estimate of the resolvent under the condition that $b$ has some regularity.

\begin{theorem}
\label{t:theorem-holder-0}
Assume that $b$ is H\"older continuous on $M$ and that GCC is {\em not} satisfied. Then, we have
$$
\limsup_{s\to \pm \infty}  \frac{1}{\log|s|}\nor{(is -\A_m)^{-1}}{\L(H^1_m\times L^2)}  >0 .
$$
\end{theorem}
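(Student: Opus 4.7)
The plan is to work semiclassically with $s = \tau_h = 1/h$ for $h \to 0^+$ and to build, along an undamped trajectory, a family $V_h$ of approximate solutions of $(i\tau_h - \A_m) V_h \approx 0$ whose residual in the energy norm is of order $1/\log(1/h)$ compared with $\|V_h\|$. The lower bound then follows from $\|(i\tau_h - \A_m)^{-1}\|_{\L(H^1_m \times L^2)} \geq \|V_h\|/\|(i\tau_h - \A_m) V_h\|$; the case $s \to -\infty$ is handled by reversing the momentum. First I would use failure of GCC, combined with continuity of $b$ and compactness of $S^*\M$, to extract a bi-infinite geodesic $\gamma \subset S^*\M$ that avoids $\omega_b$, so that $b \equiv 0$ on its base projection $\pi\gamma$; the H\"older hypothesis (with exponent $\alpha \in (0,1]$) upgrades this to the tube estimate $b(x) \leq C\, d_g(x,\pi\gamma)^\alpha$ near $\pi\gamma$.

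Fix $(x_0,\xi_0) \in \gamma$ with $|\xi_0|_g = 1$, and let $v_h$ be an $L^2$-normalized coherent state at semiclassical scale $h$ microlocalized at $(x_0, \xi_0)$, of position scale $\sqrt h$. Set $u_h(t) := e^{it\Lambda_m} v_h$ with $\Lambda_m := \sqrt{-\Delg + m}$. The key analytic input is the Bony-Burq-Ramond construction from \cite{BBR:10}: $u_h(t)$ remains $L^2$-concentrated at position scale $\sqrt{h}\, e^{\lambda t}$ around $\gamma(t)$ for $t$ up to the Ehrenfest time $T_h := \frac{1-2\beta}{2\lambda}\log(1/h)$, where $\lambda$ is a Lyapunov-type bound for the linearized geodesic flow near $\gamma$ and $\beta \in (0,1/2)$ is an arbitrary small parameter. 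Combined with the tube estimate this yields $\|b\, u_h(t)\|_{L^2} \lesssim h^{\alpha\beta}$ uniformly on $[0,T_h]$. The lift $U_h(t) := (u_h(t), i\Lambda_m u_h(t))$ solves the \emph{undamped} system exactly, so a direct matrix computation gives
\begin{equation*}
(\partial_t - \A_m)\, U_h(t) = (0,\, i b\, \Lambda_m u_h(t)) =: F_h(t), \qquad \|F_h(t)\|_{H^1_m \times L^2} \lesssim \tau_h\, h^{\alpha\beta}.
\end{equation*}

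The last step is a Plancherel-in-time argument in the spirit of Gearhart-Huang-Pr\"uss. Pick $\chi \in C_c^\infty((0,1))$ with $\int \chi = 1$ and set $V_h := \int_0^{T_h} \chi(t/T_h)\, e^{-i\tau_h t}\, U_h(t)\, dt$; integration by parts and the evolution equation for $U_h$ give
\begin{equation*}
(i\tau_h - \A_m)\, V_h = \frac{1}{T_h}\int_0^{T_h} \chi'(t/T_h)\, e^{-i\tau_h t}\, U_h(t)\, dt - \int_0^{T_h} \chi(t/T_h)\, e^{-i\tau_h t}\, F_h(t)\, dt,
\end{equation*}
of energy norm $\lesssim \|U_h(0)\| + T_h\, \tau_h\, h^{\alpha\beta} \lesssim \tau_h$ (using contraction of the semigroup for the first piece). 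Since $v_h$ is microlocalized at $(x_0,\xi_0)$, $\Lambda_m v_h = \tau_h v_h + O_{L^2}(1)$, so $e^{-i\tau_h t}\, U_h(t)$ stays close to $U_h(0)$ on $[0, T_h]$ and $\|V_h\| \gtrsim T_h\, \|U_h(0)\| \sim T_h\, \tau_h$. Dividing yields $\|(i\tau_h - \A_m)^{-1}\| \gtrsim T_h \sim \log|s|$ along the sequence $s = \tau_h$, which is the claim.

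The main obstacle is the propagation step: tracking the coherent state up to Ehrenfest time on a curved manifold while keeping its position spread bounded by $h^\beta$. This is precisely the content of the Bony-Burq-Ramond Gaussian ansatz, with amplitudes obtained by solving transport equations along $\gamma$; the exponential divergence of the linearized flow is what fixes the logarithmic scale of $T_h$ and hence the logarithmic strength of the resulting lower bound. A secondary nuisance is the bookkeeping of factors $\tau_h$ coming from the mismatch between the $H^1_m \times L^2$ norm of $U_h$ and the $L^2$ norm of $u_h$, which must be tracked carefully in each of the estimates above.
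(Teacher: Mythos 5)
There is a genuine gap at the normalization step $\|V_h\|\gtrsim T_h\,\tau_h$. You justify it by ``$\Lambda_m v_h=\tau_h v_h+O_{L^2}(1)$, so $e^{-i\tau_h t}U_h(t)$ stays close to $U_h(0)$ on $[0,T_h]$'', but a coherent state of position scale $\sqrt h$ has frequency spread of order $h^{-1/2}$: one has $\|(\Lambda_m-\tau_h)v_h\|_{L^2}\sim h^{-1/2}$, not $O(1)$. Hence $e^{-i\tau_h t}u_h(t)=e^{it(\Lambda_m-\tau_h)}v_h$ decorrelates from $v_h$ already for times $t\sim h^{1/2}$, far before the Ehrenfest time $T_h\sim\log(1/h)$, and the time integral defining $V_h$ suffers massive phase cancellation: heuristically the spectral measure of $v_h$ has width $h^{-1/2}$ around $\tau_h$, so $\|V_h\|_{H^1_m\times L^2}$ is of order $\tau_h\,(T_h h^{1/2})^{1/2}$ rather than $T_h\tau_h$. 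With your (correct) upper bound $\lesssim\tau_h$ on the residual, the quotient then no longer produces a logarithm. This cannot be repaired by making the packet more monochromatic: phase coherence of $e^{-i\tau_h t}U_h(t)$ over a time $T_h$ would require frequency spread $O(1/\log(1/h))$, which by the uncertainty principle forces position spread $\gtrsim\log(1/h)$; then the tube estimate $\|b\,u_h(t)\|\lesssim h^{\alpha\beta}$, and indeed the localization along the undamped geodesic (which spreads exponentially along unstable directions), are lost. At best one would need a two-sided lower bound on the spectral density of $v_h$ at scale $1/T_h$ near $\tau_h$, which is not provided and is essentially equivalent to running the argument over a spectral window as below.

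This is precisely why the paper does not construct a quasimode at a single frequency but argues by contraposition over a whole spectral window: assuming the resolvent is bounded by $G(h)/h$ for all $\tau$ with $|\tau|\in[1-\varepsilon,1+\varepsilon]$, the Plancherel-in-time argument of Theorem~\ref{t:resolvent-implies-long-time} (which needs the resolvent on the full window because $\hat w(\tau)$ is spread out, and uses ellipticity of $P_h-\tau$ outside the window) gives $E_m(u(T_h))\lesssim G(h)^2/T_h^2+\text{small}$ for data spectrally localized in the window; then the coherent state propagated along an undamped geodesic up to $T_h=\mu\log(1/h)$ — via the regularization $b_{h^\nu}$ of the H\"older damping, the semiclassical diagonalization of the first-order system, and the non-selfadjoint Egorov theorem of Section~\ref{s:egorov} — keeps $E_m(u(T_h))\sim E_m(u(0))$, forcing $\sup_{|\tau|\in I_\varepsilon}\|(i\tau/h-\A_m)^{-1}\|_{\L(H^1_m\times L^2)}\gtrsim\log(1/h)$, which suffices for the limsup statement (Theorem~\ref{t:main-res}). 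Your first steps (the H\"older tube bound $b\lesssim d_g(\cdot,\pi\gamma)^{\alpha}$ and undamped propagation with the damping treated as an $O(\tau_h h^{\alpha\beta})$ source) are reasonable and close in spirit to the paper's Steps 1--4, but the final quasimode step must be replaced by this windowed, contrapositive Plancherel argument.
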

A more precise version of this result (with the proper regularity condition on $b$, and a more explicit formulation of the divergence) is provided in Theorem~\ref{t:main-res} below. A logarithmic growth of the resolvent is optimal without further assumptions in view of the logarithmic upper bounds proved in different geometric contexts in~\cite{Christ:07,Christ:10, Christ:11,Riviere:14,CSVW:14,Schenck:10,Schenck:11,Nonnenmacher:11,Jin:17,DJN:20} (see Section~\ref{s:literature} below for a more precise discussion).

\subsection{Non-uniform stability and resolvent estimates}

When the set $\omega_b$ does not satisfy the Geometric Control Condition, (necessarily non-uniform) decay rates for the energy $E_m(u(t))$ and resolvent estimates for $\A_m$ on $i\R$ remain closely related. This was first remarked by Lebeau~\cite{Leb:96}, and more systematically studied by Batty and Duyckaerts~\cite{BD:08}. In this context, they introduced the following definition.

\begin{definition}[Non-uniform stability]
\label{def:non-unif-stab}
Given $a \in \R$ and a decreasing function $f: [a, +\infty) \to \R_+^*$ such that $f(t)\to 0$ as $t \to +\infty$,
we say that the solutions of~\eqref{eq: stabilization} decay at rate $f(t)$ if there exists $C>0$ such that for all $(u_0, u_1) \in D(\A_m) = H^2(\M)\times H^1(\M)$ (recall that we have $m>0$ in the introduction), for all $t\geq a$, we have
$$
E_m(u(t))^{\frac12} \leq C f(t) \nor{\A_m (u_0,u_1)}{H^1_m\times L^2} .
$$
\end{definition}
Note that this is equivalent to having $\nor{e^{t\A_m}\A_m^{-1}}{\L(H^1_m\times L^2)} \leq  C f(t)$.
As noticed in~\cite{Leb:96,BD:08}, decay at a rate $f(t)$ implies faster decay (like $f(t/k)^k$) for ``smoother'' data (in the sense $(u_0, u_1) \in D(\A_m^k)$). Resolvent estimates and decay rates for the energy are linked via the following general theorem.
\begin{theorem}[Batty and Duyckaerts~\cite{BD:08}]
\label{t:batty-duyckaerts}
Let $\mathsf{M} : \R\to \R_+^*$ be an even continuous function that is nondecreasing on $\R_+$ such that 
\begin{align}
\label{e:res-bounded-M}
i\R \subset \rho(\A_m)\quad \text{and} \quad \nor{(is-\A_m)^{-1}}{\L(H^1_m \times L^2)}  \leq \mathsf{M}(s) , \quad s \in \R .
\end{align}
Then, setting $\mathsf{M}_{\log}(s) =  \mathsf{M}(s) \big( \log(1+ \mathsf{M}(s)) + \log(1+ s)\big)$ for $s\geq0$, there exists $c>0$ such that solutions of~\eqref{eq: stabilization} decay at rate $f(t) = \frac{1}{\mathsf{M}_{\log}^{-1}(t/c)}$, where $\mathsf{M}_{\log}^{-1} : \R_+\to \R_+$ denotes the inverse of the strictly increasing function $\mathsf{M}_{\log}$.
\end{theorem}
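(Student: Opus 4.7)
My plan combines an inverse-Laplace (Bromwich) representation of $e^{t\A_m}\A_m^{-1}$ with a contour deformation exploiting the resolvent bound~\eqref{e:res-bounded-M}, in the spirit of the Gearhart--Huang--Pr\"uss theorem. The first step is a holomorphic extension of the resolvent off the imaginary axis: using the Neumann series
\begin{align*}
(\lambda-\A_m)^{-1} = (is-\A_m)^{-1} \sum_{k\geq 0} \big( (is-\lambda)(is-\A_m)^{-1} \big)^k, \qquad |\lambda - is|\,\mathsf{M}(s) < 1 ,
\end{align*}
$(\lambda-\A_m)^{-1}$ extends holomorphically to $\Omega := \{\lambda \in \C : |\Re\lambda| < 1/(2\mathsf{M}(|\Im\lambda|))\}$, with $\nor{(\lambda-\A_m)^{-1}}{\L(H^1_m\times L^2)} \leq 2\mathsf{M}(|\Im\lambda|)$ on $\Omega$.

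Next, since $0 \in \rho(\A_m)$ (as $m>0$) and the semigroup is contractive, for $Y \in D(\A_m)$ and $\delta > 0$ the Bromwich formula (understood as a principal value) yields
\begin{align*}
e^{t\A_m}\A_m^{-1}Y \;=\; \frac{1}{2\pi i}\,\text{p.v.}\!\!\int_{\delta-i\infty}^{\delta+i\infty} e^{t\lambda}\, (\lambda-\A_m)^{-1}\A_m^{-1} Y \, d\lambda .
\end{align*}
Fix $R>0$ large and set $h(R) := 1/(2\mathsf{M}(R))$. The integrand is holomorphic on $\Omega\cup\{\Re\lambda>0\}$, so by Cauchy's theorem I deform the contour onto the path $\Gamma_R$ composed of the left vertical segment $\{-h(R)+i\tau : |\tau|\leq R\}$, two short horizontal pieces at $\Im\lambda = \pm R$ joining it to $\delta\pm iR$, and the unchanged tails $\{\delta + i\tau : |\tau| \geq R\}$.

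Using the identity $(\lambda-\A_m)^{-1}\A_m^{-1} = \lambda^{-1}\big((\lambda-\A_m)^{-1} + \A_m^{-1}\big)$ to gain a $1/|\lambda|$ decay, the unchanged tails contribute $O(1/R)\nor{Y}{}$ (after letting $\delta \to 0$), the horizontal pieces are negligible, and on the central left vertical segment, bounding $\mathsf{M}(\tau) \leq \mathsf{M}(R)$ for $|\tau|\leq R$, one finds
\begin{align*}
\bigg\|\int_{-R}^R \frac{e^{t(-h(R)+i\tau)}\big[(-h(R)+i\tau -\A_m)^{-1}Y + \A_m^{-1}Y\big]}{-h(R)+i\tau}\,d\tau\bigg\|
\leq C\,e^{-t/(2\mathsf{M}(R))}\,\mathsf{M}(R)\!\int_{-R}^R \!\frac{d\tau}{\sqrt{h(R)^2+\tau^2}}\,\nor{Y}{} ,
\end{align*}
and since $\int_{-R}^R d\tau/\sqrt{h(R)^2+\tau^2} \sim 2\log(R/h(R)) \sim \log(1+\mathsf{M}(R)) + \log(1+R)$, summing the three pieces gives
\begin{align*}
\nor{e^{t\A_m}\A_m^{-1} Y}{H^1_m\times L^2} \leq C \Big(\tfrac{1}{R} + e^{-t/(2\mathsf{M}(R))}\,\mathsf{M}(R)\big(\log(1+\mathsf{M}(R))+\log(1+R)\big)\Big)\nor{Y}{H^1_m\times L^2}.
\end{align*}
Balancing the two terms forces $t \simeq c\, \mathsf{M}(R)\big(\log(1+\mathsf{M}(R))+\log(1+R)\big) = c\, \mathsf{M}_{\log}(R)$, whence $R = \mathsf{M}_{\log}^{-1}(t/c)$ and the announced rate $f(t) = 1/\mathsf{M}_{\log}^{-1}(t/c)$.

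The decisive technical point is the careful bookkeeping of the logarithmic factors on the central vertical segment: without the extra $1/|\lambda|$ (obtained by working with $\A_m^{-1}Y$ rather than $Y$), the integrand would not decay at infinity along $\tau$, and the vertical contribution would be of order $e^{-t h(R)} R\,\mathsf{M}(R)$, whose optimization yields only the weaker rate $1/\mathsf{M}^{-1}(t/c)$; it is precisely the $1/\lambda$ factor that converts the linear ``$R$'' into the logarithmic factor $\log(1+R)+\log(1+\mathsf{M}(R))$ defining $\mathsf{M}_{\log}$. A secondary subtlety is to justify the Bromwich formula for data only in $D(\A_m)$ (and the contour deformation in the limit $\delta\to 0$), which is handled by a standard smoothing/density argument.
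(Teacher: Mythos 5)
The paper does not prove this statement: it is quoted from Batty--Duyckaerts~\cite{BD:08} (with alternative proofs in~\cite{Duy:15,CS:16}), so your attempt can only be judged on its own terms. Your overall heuristic is the right one --- extend the resolvent by Neumann series to the region $|\Re\lambda|<1/(2\mathsf{M}(|\Im\lambda|))$, shift the Bromwich contour there for $|\Im\lambda|\le R$, and balance $e^{-t/(2\mathsf{M}(R))}\mathsf{M}(R)\big(\log(1+\mathsf{M}(R))+\log(1+R)\big)$ against $1/R$; this is indeed where $\mathsf{M}_{\log}$ comes from, and your treatment of the left vertical segment and the short horizontal connectors is essentially fine.

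The genuine gap is the sentence asserting that ``the unchanged tails contribute $O(1/R)\nor{Y}{}$ (after letting $\delta\to0$)''. On $\{\Re\lambda=\delta,\ |\Im\lambda|\ge R\}$ the only bounds at your disposal are $\nor{(\lambda-\A_m)^{-1}}{\L(H^1_m\times L^2)}\le 1/\delta$ (useless as $\delta\to0$) or, in the limit, $\nor{(i\tau-\A_m)^{-1}}{}\le\mathsf{M}(\tau)$ with $\mathsf{M}$ nondecreasing and typically unbounded; hence the tail term $\int_{|\tau|\ge R}\tau^{-1}(i\tau-\A_m)^{-1}Y\,e^{it\tau}d\tau$ is not absolutely convergent ($\int_R^\infty \mathsf{M}(\tau)\tau^{-1}d\tau=\infty$ already for polynomial $\mathsf{M}$), and no rewriting with a single factor of $\A_m^{-1}$ improves the decay beyond $\mathsf{M}(\tau)/\tau$. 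Integration by parts is worse, since $\partial_\tau(i\tau-\A_m)^{-1}=-i(i\tau-\A_m)^{-2}$ is only $O(\mathsf{M}(\tau)^2)$. So the tails can only be controlled by exploiting cancellation, and you give no mechanism for that (the exchange of the $\delta\to0$ limit with the conditionally convergent tail integral is likewise unjustified). This is precisely the crux that the actual proofs address with extra devices --- Batty--Duyckaerts compare with finite-time Laplace transforms and insert additional smoothing/cutoff factors in the contour integral (in the spirit of Newman--Korevaar), Chill--Seifert argue via quantified Ingham--Karamata Tauberian theorems, and in the Hilbert-space polynomial case Borichev--Tomilov avoid contours altogether via Plancherel --- so your closing remark that the delicate point is the logarithmic bookkeeping on the central segment inverts where the real difficulty lies. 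As written, the argument proves the estimate only modulo this unproved tail bound, and with absolute-value estimates alone it cannot be closed.
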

Alternative proofs of this result are provided in~\cite{Duy:15,CS:16}. 
For instance, if $\mathsf{M}(s) = Ce^{c|s|}$ then Theorem~\ref{t:batty-duyckaerts} implies decay at rate $f(t) = \frac{1}{\log(t)}$, if $\mathsf{M}(s) = C(1+|s|)^{\alpha}$, then $f(t) = \frac{\log(t)}{t^\frac{1}{\alpha}}$, and if $\mathsf{M}(s) = C\log(2+|s|)$, then $f(t) = e^{-\gamma \sqrt{t}}$.  The literature concerning known resolvent estimates (and associated decay results) for damped waves is reviewed in Section~\ref{s:literature} below.
In the case of polynomial decay, the $\log(t)$ loss was removed by Borichev and Tomilov~\cite{BT:10} who proved decay at rate $f(t) = t^{-\frac{1}{\alpha}}$ if $\mathsf{M}(s) = C(1+|s|)^{\alpha}$ (using the Hilbert-space structure, as opposed to the results of~\cite{BD:08,CS:16}).  Generalizations of~\cite{BT:10} to functions $\mathsf{M}$ of ``positive increase'' were provided successively in~\cite{BCT:16,BCT:17}. See also the review articles~\cite{CPSST:19,CST:20}.
\begin{remark} Note that a classical Neumann series argument shows that~\eqref{e:res-bounded-M} implies that 
\begin{equation}
\label{e:spectral-gap}
\left\{ z \in \C ,\Re(z) > - \frac{1}{\mathsf{M}(\Im(z))}  \right\} \subset \rho(\A_m) ,
\end{equation}
which is a (non-uniform if $\mathsf{M}(s)\to +\infty$ as $s\to \pm\infty$) spectral gap.
\end{remark}

\subsection{Time-averages of the energy}
Our strategy for proving Theorem~\ref{t:theorem-holder-0} is inspired by that of Bony, Burq and Ramond~\cite{BBR:10} concerning the cutoff resolvent of Schr\"odinger operators in $\R^n$. First, we take advantage of the lack of GCC to prove that particular families of solutions (concentrated along a geodesic cuve that does not intersect $\omega_b$) to the wave equation~\eqref{eq: stabilization} conserve their initial energy for a long time. Second, we relate this long time energy conservation to the lower bound of the resolvent. 
Unfortunately, Theorem~\ref{t:batty-duyckaerts} does not seem to be precise enough to fulfill the second part of the proof. 

We propose other ways of measuring energy decay, which seem better adapted to the proof of Theorem~\ref{t:theorem-holder-0}. One instance of such results is the following theorem, in which we assume additional smoothness of $b$ for simplicity. In order to state it, we need to introduce the  function $\mathsf{M}_\eps =\mathsf{M}\left(\frac{\cdot}{1-\eps} \right)$ together with the operators $\Lambda_m = \sqrt{-\Delta + m}$ and $\mathsf{M}_\eps(\Lambda_m)^{-1} = \left(\frac{1}{\mathsf{M}_\eps}\right)(\Lambda_m)$, both defined {\em via} continuous functional calculus for selfadjoint operators.

\begin{theorem}
\label{t:thm-classiq-A-0}
Assume $b\in C^\infty (M;\R_+)$ is positive on a non-empty open set, and $\mathsf{M}  \in C^0(\R;\R_+^*)$ is an even function, nondecreasing on $\R_+$ such that for some $\lambda_0,C>0$ we have
\begin{align*}
\mathsf{M} (4\lambda) \leq C  \mathsf{M} (\lambda),  \quad \text{ and } \quad \mathsf{M} (\lambda) \leq C \lambda^{N}, \quad \text{ for all }\lambda\geq \lambda_0.
\end{align*}
Assume that~\eqref{e:res-bounded-M} holds. Then, for all $\eps \in (0,1)$, there is $C>0$ such that for all $\Psi \in H^1_{\loc}(\R)$ with $\supp(\Psi) \subset [0,+\infty)$ and $\Psi' \in L^2(\R)$, for all $(u_0,u_1)\in H^1_m \times L^2$, we have
\begin{align}
\label{e:Psi-T-classical-quad-0}
 \int_\R \left|\Psi \left( t\right)\right|^2 E_m \big(\mathsf{M}_\eps(\Lambda_m)^{-1}u(t) \big) dt  \leq C \nor{\Psi'}{L^2(\R)}^2 E_m(u(0)) , 
\end{align}
where $u$ is the associated solution to~\eqref{e:KG-eq}.
\end{theorem}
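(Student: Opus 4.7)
I will follow the classical ``Plancherel-in-time'' proof of the Gearhart--Huang--Pr\"uss theorem (as used in~\cite{BZ:04}), the novelty here being that the smoothing factor $\mathsf{M}_\eps(\Lambda_m)^{-1}$ on the left-hand side of~\eqref{e:Psi-T-classical-quad-0} is exactly what compensates the growth $\|(i\tau - \A_m)^{-1}\| \leq \mathsf{M}(|\tau|)$ in the time-frequency variable $\tau$.

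First I rewrite the equation in first-order form $\d_t U = \A_m U$ with $U = (u, \d_t u)$, so that $\|U(t)\|_{H^1_m \times L^2}^2 = 2E_m(u(t))$ and $\|U(t)\|\leq\|U(0)\|$ by the contractivity~\eqref{e:decay-energy-0}. Setting $V(t) := \Psi(t) U(t)$, Leibniz gives $(\d_t - \A_m) V = \Psi' U$. Since $\Psi' \in L^2(\R)$ and $\supp(\Psi) \subset [0, +\infty)$, the continuous representative of $\Psi$ satisfies $\Psi(0) = 0$; extending $V$ and $\Psi' U$ by zero to $\R$, the identity holds in $\mathcal{D}'(\R; H^1_m \times L^2)$ (first for $\Psi \in C_c^\infty(\R_+^*)$, then by density). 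Fourier transforming in $t$ yields $\hat V(\tau) = (i\tau - \A_m)^{-1} \hat G(\tau)$ with $G := \Psi' U$. Applying $\mathsf{M}_\eps(\Lambda_m)^{-1}$ (which commutes with multiplication in $t$) and Plancherel, the left-hand side of~\eqref{e:Psi-T-classical-quad-0} becomes
\begin{align*}
\int_\R \left\| \mathsf{M}_\eps(\Lambda_m)^{-1} (i\tau - \A_m)^{-1} \hat G(\tau)\right\|_{H^1_m \times L^2}^2 d\tau .
\end{align*}

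The crux is therefore the uniform resolvent bound
\begin{align*}
\sup_{\tau \in \R} \left\|\mathsf{M}_\eps(\Lambda_m)^{-1}(i\tau - \A_m)^{-1}\right\|_{\L(H^1_m\times L^2)} \leq C_\eps ,
\end{align*}
which, combined with $\|G\|_{L^2(\R; H^1_m\times L^2)}^2 \leq \|\Psi'\|_{L^2}^2 \|U(0)\|^2 = 2\|\Psi'\|_{L^2}^2 E_m(u(0))$, concludes the proof. To establish this bound I would introduce smooth $\Lambda_m$-spectral cutoffs at the threshold $(1-\eps)|\tau|$: set $\Pi_H = \chi(\Lambda_m/|\tau|)$ with $\chi$ supported in $[1-\eps, +\infty)$, and $\Pi_L = I - \Pi_H$. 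On $\mathrm{Ran}(\Pi_H)$, the hypothesis and monotonicity yield $\|\mathsf{M}_\eps(\Lambda_m)^{-1} \Pi_H\| \leq \mathsf{M}_\eps((1-\eps)|\tau|)^{-1} = \mathsf{M}(|\tau|)^{-1}$, which balances the factor $\mathsf{M}(|\tau|)$ in $\|(i\tau-\A_m)^{-1}\|$ and gives an $O(1)$ bound. On $\mathrm{Ran}(\Pi_L)$ the operator is elliptic: inverting explicitly via $(\Lambda_m^2 - \tau^2 + i\tau b)u = f_2 + (i\tau+b)f_1$, commuting $\Pi_L$ across and pairing $(\Lambda_m^2 - \tau^2 + i\tau b)\Pi_L u = \Pi_L g - i\tau[\Pi_L, b]u$ with $\Pi_L u$, the real part is bounded below by $(2\eps - \eps^2)\tau^2\|\Pi_L u\|^2$, which, after the commutator term is handled, yields the uniform estimate $\|\Pi_L(i\tau - \A_m)^{-1}\|_{\L(H^1_m\times L^2)} = O(1)$.

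\textbf{Main obstacle.} The delicate point is controlling the commutator $[\Pi_L, b]$ in the low-frequency branch: the spurious source $-i\tau[\Pi_L, b]u$ involves the unlocalised $u$, which a priori is only bounded by $\mathsf{M}(|\tau|)\|\hat G\|$. This is where the smoothness of $b$ enters crucially: in the semiclassical calculus with $h = |\tau|^{-1}$, $\chi(\Lambda_m/|\tau|)$ is a $0$-order symbol, whence $\|[\chi(\Lambda_m/|\tau|), b]\|_{L^2 \to L^2} = O(|\tau|^{-1})$, producing the needed commutator gain. The doubling property $\mathsf{M}(4\lambda) \leq C \mathsf{M}(\lambda)$ allows one to absorb dyadic slippages when partitioning frequency scales, while the polynomial growth $\mathsf{M}(\lambda) \leq C \lambda^N$ guarantees that any bootstrap over finitely many dyadic bands terminates. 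Closing this delicate balance among the smoothing by $\mathsf{M}_\eps(\Lambda_m)^{-1}$, the growth of $\mathsf{M}(|\tau|)$, and the semiclassical commutator gain is the heart of the technical work.
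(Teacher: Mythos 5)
Your overall architecture is the same as the paper's: a Plancherel-in-time reduction (as in Gearhart--Huang--Pr\"uss) to the uniform modified resolvent bound $\sup_{\tau\in\R}\nor{\mathsf{M}_\eps(\Lambda_m)^{-1}(i\tau-\A_m)^{-1}}{\L(H^1_m\times L^2)}<\infty$, proved by splitting at the frequency threshold $\Lambda_m\approx(1-\eps)|\tau|$, with the monotonicity of $\mathsf{M}$ absorbing the loss $\mathsf{M}(|\tau|)$ on the high-frequency range and an elliptic estimate on the low-frequency range; this is precisely the structure of Lemma~\ref{l:res-est-res} combined with Lemma~\ref{l:Psi-T-classical} (the paper works with $\P_m$ rather than $\A_m$, which is immaterial by Corollary~\ref{corollary-Pm-Am-reso}).

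The step you yourself call ``the heart of the technical work'' is, however, a genuine gap, and the tool you propose for it is quantitatively insufficient. In the low-frequency branch the coupling term $-i\tau[\Pi_L,b]u$ contains the full, unlocalized solution, whose only a priori bound is $\mathsf{M}(|\tau|)\nor{\hat G(\tau)}{}$, and no smoothing by $\mathsf{M}_\eps(\Lambda_m)^{-1}$ is available there. A single semiclassical commutator gain $\nor{[\chi(\Lambda_m/|\tau|),b]}{\L(L^2)}=O(|\tau|^{-1})$ therefore leaves, after the bookkeeping of the energy weights and the ellipticity factor $\tau^{-2}$, an error of order at best $|\tau|^{-1}\mathsf{M}(|\tau|)\nor{\hat G}{}$, which is $O(\nor{\hat G}{})$ only when $\mathsf{M}$ grows at most linearly; since the hypothesis allows $\mathsf{M}(\lambda)\leq C\lambda^{N}$ with $N$ arbitrary, the argument does not close. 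What is needed is not the first commutator but the full frequency non-mixing estimate for spectral cutoffs with disjoint supports, $\nor{(1-\tilde\varphi(h\Lambda_m))\,b\,\varphi(h\Lambda_m)}{\L(L^2)}=O(h^{N})$ for every $N$ (valid for $b\in C^\infty$ by functional and pseudodifferential calculus, Lemma~\ref{l:b-mix-freq}), inserted into a dyadic decomposition over frequency bands in which the doubling hypothesis on $\mathsf{M}$ compares adjacent bands; this is exactly the content of Proposition~\ref{p:dyadic}, Corollary~\ref{c:dyadic} and the compatibility Assumption~\ref{asspt-reg-B} of Lemma~\ref{l:res-est-res}. Your closing remarks about ``dyadic slippages'' and a ``bootstrap over finitely many dyadic bands'' gesture in this direction, but no such argument is carried out, and it cannot be closed from the $O(|\tau|^{-1})$ bound alone. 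A secondary, fixable point: in the Plancherel step, $\Psi U$ need not belong to $L^2(\R;H^1_m\times L^2)$ (one only has $|\Psi(t)|\lesssim \sqrt{t}\,\nor{\Psi'}{L^2}$), so the ``by density'' reduction is shaky; the standard remedy, used in the paper, is to insert a factor $e^{-\alpha t}$, derive the estimate uniformly in $\alpha>0$, and let $\alpha\to0^+$.
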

This might be understood as an alternative way of measuring decay of the energy. The main difference with Definition~\ref{def:non-unif-stab} and Theorem~\ref{t:batty-duyckaerts} is that the controlled quantity is not the energy at time $t$, but a time average of the energy. This statement can be interpreted as follows:
\begin{enumerate}
\item if $\Psi (t)= \min\{t,1\}$, then~\eqref{e:Psi-T-classical-quad-0} yields integrability of $E_m \big(\mathsf{M}_\eps(\Lambda_m)^{-1}u(t) \big)$ on $[1,\infty)$.
\item if $T$ is large and $\Psi \in C^\infty_c(0,T+1)$ with $\Psi=1$ on $[1,T]$ and $\|\Psi'\|_{L^2}$ independent of $T$, then~\eqref{e:Psi-T-classical-quad-0} implies that a time average of $E_m \big(\mathsf{M}_\eps(\Lambda_m)^{-1}u(t) \big)$ over a time interval of length $T$ decays like $T^{-1}$.
\end{enumerate}
A second difference is that we do not measure the usual $H^1_m\times L^2$ energy level but rather a ``lower energy level'', depending on the bound on the resolvent we have.
As opposed to Theorem~\ref{t:batty-duyckaerts}, different bounds on the resolvent give rise to the same decay rate of the average  (\eqref{e:Psi-T-classical-quad-0} always implies that $T$-averages decay like $\frac{1}{T}$), but measured in different spaces.
An advantage is that there is no logarithmic loss in the estimate.
A drawback is that it requires some regularity of the damping coefficient $b$.

We also provide with a semiclassical version of this result in Theorem~\ref{t:resolvent-implies-long-time} which focuses on data having a given ``scale of oscillation'' (we also assume that $b$ is smooth for simplicity).

\begin{theorem}
\label{t:semiclassic-intro}
Assume that $b \in C^\infty(M;\R_+)$, set
\begin{equation*}
I_\eps :=  [1-\eps, 1+\eps] , \quad \text{ for }\eps \in (0,1) ,
\end{equation*}
and take $\chi_\eps \in C^\infty_c(\R)$ with $\chi_\eps \geq0$, $\supp\chi_\eps \subset \Int(I_\eps)$.
Then, for any $N,N' \in \N$, any $\psi \in H^1_{\comp}(\R)$ with $\supp \psi \subset \R_+$, and any $\delta \in (0,1)$, there is $C>0$ such that 
for all $(u_0 , u_1) \in \chi_\eps(h \Lambda_m)(H^1_m\times L^2)$ and $u$ associated solution to~\eqref{eq: stabilization}, 
for all $h \in (0,1)$ and all $T_h \leq h^{-N}$
\begin{align}
\label{e:toto-intro}
\frac{1}{T_h}\int_\R \left|\psi\left( \frac{t}{T_h}\right) \right|^2 E_m(u(t)) dt
 \leq  \left(  \frac{G(h)^2}{T_h^2} \nor{\psi'}{L^2(\R)}^2 
+ C h^{N'}\right) E_m(u(0)) , \\
    E_m(u(T_h)) \leq  \left( (2+\delta) \frac{G(h)^2}{T_h^2}     +C h^{N'} \right) E_m(u(0)) ,
\end{align}
where 
\begin{align}
\label{e:asspt-G}
G(h) = \sup_{\tau \in \R , |\tau| \in I_\eps} \nor{(i\tau/h - \A_m)^{-1}}{\L\big(H^1_m \times L^2\big)}  .
\end{align}
\end{theorem}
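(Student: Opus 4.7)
My plan is to combine a Plancherel-in-time argument on a time-truncation of the solution with a propagation of frequency localization that exploits the smoothness of $b$.

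\paragraph{Setup and Fourier transform.} Let $U=(u,\partial_t u)$ and $V(t):=\psi(t/T_h)U(t)$. Since $\supp\psi\subset\R_+$ one has $V(t)=0$ for $t\leq 0$, and the contraction $\|e^{t\A_m}U_0\|\leq\|U_0\|$ on $H^1_m\times L^2$, together with the compact support of $\psi(\cdot/T_h)$, ensures $V\in L^2_t(H^1_m\times L^2)$. Differentiating and using $\partial_t U=\A_m U$ yields
\begin{equation*}
(\partial_t-\A_m)V=T_h^{-1}\psi'(t/T_h)U=:S.
\end{equation*}
Since $G(h)<+\infty$ forces $i\R\subset\rho(\A_m)$, Fourier transform in time gives $\hat V(\sigma)=(i\sigma-\A_m)^{-1}\hat S(\sigma)$ for $\sigma\in\R$. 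For some $\eps_1\in(\eps,1)$, I would split $\int\|\hat V(\sigma)\|^2 d\sigma$ according to whether $h|\sigma|\in I_{\eps_1}$ or not: on the ``good'' set, hypothesis~\eqref{e:asspt-G} gives pointwise $\|\hat V(\sigma)\|\leq G(h)\|\hat S(\sigma)\|$; the main task is to show that the ``bad'' integral contributes only $O(h^{N'})E_m(u_0)$.

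\paragraph{Propagation of frequency localization.} To control the bad set, decompose $\A_m=\A_m^0+\mathbf{B}$ with $\A_m^0=\begin{pmatrix}0&I\\-\Lambda_m^2&0\end{pmatrix}$ skew-adjoint on $H^1_m\times L^2$ (so $e^{t\A_m^0}$ is unitary and commutes with every function of $\Lambda_m$) and $\mathbf{B}=\begin{pmatrix}0&0\\0&-b\end{pmatrix}$ bounded. Pick a nested chain of smooth cutoffs $\chi_\eps\prec\chi^{(1)}\prec\cdots\prec\chi^{(K)}$ all supported in $I_{\eps_1}$, each satisfying $\chi^{(k+1)}\equiv 1$ on $\supp\chi^{(k)}$. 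The key semiclassical identity, relying on $b\in C^\infty(M)$, is
\begin{equation*}
(I-\chi^{(k+1)}(h\Lambda_m))\,\mathbf{B}\,\chi^{(k)}(h\Lambda_m)=O_{\L(H^1_m\times L^2)}(h^\infty),
\end{equation*}
because every term in the Moyal symbol expansion vanishes identically (all derivatives of $\chi^{(k)}$ are supported where $\chi^{(k+1)}=1$). An iterated Duhamel scheme for $f_k(t):=(I-\chi^{(k)}(h\Lambda_m))U(t)$ (which vanishes at $t=0$ since $\chi^{(k)}U_0=U_0$) then yields
\begin{equation*}
\|(I-\chi^{(K)}(h\Lambda_m))U(t)\|_{H^1_m\times L^2}\leq C_{N,N'}\,h^{N'}\,E_m(u_0)^{1/2},\qquad 0\leq t\leq T_h\leq h^{-N},
\end{equation*}
provided $K=K(N,N')$ is chosen large enough; this reduces the bad-frequency contribution to $O(h^{N'})E_m(u_0)$.

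\paragraph{Conclusion, pointwise bound, and main obstacle.} Combining the two pieces via Plancherel and using $\|U\|^2=2E_m(u)\leq 2E_m(u_0)$ and $\|S\|_{L^2_t}^2\leq 2T_h^{-1}\|\psi'\|^2E_m(u_0)$, one obtains the first estimate of~\eqref{e:toto-intro} after dividing by $2T_h$. For the pointwise bound, I would apply this to the particular cutoff $\psi_\delta$ equal to $1$ on $[0,1]$ and linearly decaying to $0$ on $[1,1+1/(2+\delta)]$: then $\|\psi_\delta'\|_{L^2}^2=2+\delta$ and $\int_0^1|\psi_\delta|^2=1$, and the energy monotonicity $E_m(u(t))\geq E_m(u(T_h))$ for $t\in[0,T_h]$ (from the dissipation identity~\eqref{e:decay-energy-0}) yields $E_m(u(T_h))\leq((2+\delta)G(h)^2/T_h^2+Ch^{N'})E_m(u_0)$. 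The principal obstacle is the propagation step: the naive commutator bound $[\chi(h\Lambda_m),b]=O(h)$ alone would give only $O(hT_h)=O(h^{1-N})$ leakage (useless for $N\geq 1$); the $O(h^{N'})$ estimate crucially requires the $O(h^\infty)$ vanishing of the nested off-diagonal compositions combined with an iterated Duhamel scheme absorbing the polynomial time factor against the $O(h^\infty)$ gain at each step.
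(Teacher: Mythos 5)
Your overall architecture (Plancherel in time, a good/bad split in the temporal frequency, exploiting that smooth $b$ does not mix $h\Lambda_m$-frequencies, then optimizing the weight for the pointwise bound) is the same as the paper's, but the two steps you leave as ``the main task'' are exactly where your argument does not close. First, the bad set. Your good set $\{h|\sigma|\in I_{\eps_1}\}$ with $\eps_1>\eps$ is already problematic, since \eqref{e:asspt-G} gives no bound whatsoever on $(i\sigma-\A_m)^{-1}$ for $h|\sigma|\in I_{\eps_1}\setminus I_\eps$ (the split must be at $I_\eps$ itself, which is fine because $\supp\chi_\eps\subset\Int(I_\eps)$). More seriously, on the bad set the identity $\hat V(\sigma)=(i\sigma-\A_m)^{-1}\hat S(\sigma)$ is useless: off the window no resolvent bound is assumed, and in the intended applications (GCC violated) the resolvent there can be exponentially large in $1/h$, so the $O(h^{N'})$ (even $O(h^\infty)$) smallness of $(I-\chi^{(K)}(h\Lambda_m))U(t)$ cannot be converted into smallness of $\hat V$ through the resolvent; and a direct pointwise-in-$\sigma$ error of size $O(h^{N'}T_h)$ without decay in $\sigma$ is not integrable over the unbounded bad set. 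What is needed is an elliptic estimate for $(\P_h-\tau)$ \emph{on the range of the spectral cutoff}, with a $\langle\tau\rangle^{-1}$ weight, combined with $N$ integrations by parts in $t$ (using $(\P_h-\tau)^Ne^{\frac{it}{h}(\P_h-\tau)}=(\frac{h}{i}\d_t)^Ne^{\frac{it}{h}(\P_h-\tau)}$) to gain $(h/T_h)^N$; this is precisely the parametrix hypothesis \eqref{param-alg-N-bis-1}--\eqref{param-alg-N-bis-2} of Theorem~\ref{t:resolvent-implies-long-time-smooth}, constructed in Proposition~\ref{p:parametrix} from the pseudodifferential calculus for smooth $b$. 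Your nested-cutoff identity $(I-\chi^{(k+1)}(h\Lambda_m))\mathbf{B}\chi^{(k)}(h\Lambda_m)=O(h^\infty)$ is the correct ingredient (it is Lemma~\ref{l:b-mix-freq}/Corollary~\ref{c:functional-calc}), but it must be used in this stationary, $\tau$-dependent form, not in the time domain.

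Second, your propagation lemma as justified does not hold for the stated time range. In the iterated Duhamel scheme the source for $f_{k+1}=(I-\chi^{(k+1)}(h\Lambda_m))U$ splits as $(I-\chi^{(k+1)})\mathbf{B}\chi^{(k)}U$, which is indeed $O(h^\infty)\|U\|$, \emph{plus} a term coupling to $f_k$ through $[\chi^{(k+1)}(h\Lambda_m),\mathbf{B}]$, which is only $O(h)$ in norm; integrating over $[0,T_h]$ each iteration therefore multiplies the previous error by $C\,hT_h\leq C\,h^{1-N}$, which is not small for $N\geq 1$ (the $O(h^\infty)$ gain never acts on the error-feeding term). The scheme yields roughly $(hT_h)^K$, so it only works for $T_h\ll h^{-1}$, whereas the theorem requires all $T_h\leq h^{-N}$; the paper avoids long-time propagation of frequency localization altogether for this reason. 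Finally, your optimizing weight $\psi_\delta$ (equal to $1$ on $[0,1]$) is inadmissible: $\psi\in H^1_{\comp}$ with $\supp\psi\subset\R_+$ forces $\psi(0)=0$, and dropping this introduces a Dirac term $\psi(0)\,\delta_{t=0}\otimes U_0$ in the truncated equation; with the constraint $\psi(0)=0$ the infimum of $\|\psi'\|_{L^2}^2$ over $\|\psi\|_{L^2(0,1)}=1$ is exactly $2$ and is approached only with a long decaying tail (Lemma~\ref{l:optimizing-psi}), which is how the constant $2+\delta$ arises together with the monotonicity Lemma~\ref{l:debile}.
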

Note that the operator $\chi_\eps(h \Lambda_m) = \chi_\eps(h \sqrt{-\Delta+m})$ is defined {\em via} functional calculus for selfadjoint operators.
In Theorem~\ref{t:semiclassic-intro}, assuming that the resolvent $(\lambda - \A_m)^{-1}$ is bounded for $ \lambda \in \pm i [(1 -\eps)h^{-1}, (1+\eps)h^{-1}]$ by $G(h)$, we deduce that
\begin{enumerate}
\item energy averages on time intervals of size $T_h$,
\item the pointwise value at time $T_h$
\end{enumerate} 
of solutions having typical frequency $h^{-1}$ (that is, associated with data spectrally localized in this energy window, in terms of the functional calculus for $\Lambda_m$) are estimated by $\frac{G(h)^2}{T_h^{2}}$ for all semiclassically tempered time scale $T_h$.

Note that we use (and prove) that for $b \in C^\infty$ the evolution $e^{t\A_m}$ does not mix frequencies. The same statement also holds for $b \in L^\infty(M;\R_+)$ (see Theorem~\ref{t:resolvent-implies-long-time}) but the remainder is only $O(hT_h)$ instead of $O(h^{N'})$, which reduces the range of times for which the result is nonempty.

Note that $\A_m$ having real coefficients, $\nor{(\overline{z} - \A_m)^{-1}}{\L\big(H^1_m \times L^2\big)}=\nor{(z - \A_m)^{-1}}{\L\big(H^1_m \times L^2\big)}$ and thus $ |\tau| \in I_\eps$  in~\eqref{e:asspt-G} may be equivalently replaced by either $\tau \in I_\eps$ or $\tau \in - I_\eps$.

Note finally that  the semiclassical Theorem~\ref{t:semiclassic-intro} (or one of its variants, Theorem~\ref{t:resolvent-implies-long-time}) is better suited to prove Theorem~\ref{t:theorem-holder-0} than the more classical Theorem~\ref{t:thm-classiq-A-0}.

\begin{remark}
We have formulated all of our results for damped waves and Klein-Gordon equations for simplicity. Most of the results presented here may actually be generalized to more general equations/systems consisting in a conservative/hyperbolic principal part perturbed by a lower order damping term. We have sticked to the paradigmatic example of the damped wave and Klein-Gordon equations for readability and concision.
\end{remark}

\subsection{Known resolvent estimates for damped waves}
\label{s:literature}

It is convenient to introduce the subset of phase-space consisting in points-directions that are never brought into the damping region $\omega_b$ by the geodesic flow. Namely, the {\em undamped set} is defined by
\begin{equation}
\label{e:defsingularS}
S = \{(x,\xi) \in S^*\M, \text{ for all } t \in \R , \ \pi \circ \varphi_t(x,\xi)  \cap \omega_b =\emptyset \} ,
\end{equation}
where $\varphi_t$ is the geodesic flow on $S^*\M$ and $\pi : T^*\M \to \M$ the canonical projection. With this definition, GCC is equivalent to $S=\emptyset$.

To our knowledge, Lebeau~\cite{Leb:96} was the first to consider the question of resolvent estimates and its relation to non-uniform decay rates, \ie go beyond GCC.  He proved that~\eqref{e:res-bounded-M} always holds for $\mathsf{M}(s)=Ce^{\gamma|s|}$ (whence decay at rate $1/\log t$), independently of $(\M,g)$ and $b$ as soon as $\omega_b \neq \emptyset$ (see also~\cite{Burq:98,BD:08}). 
Moreover, he constructed a series of explicit examples of geometries for which this resolvent growth is optimal, including for instance the case where $\M = \bS^2 \subset \R^3$ and $\supp(b)$ does not intersect an equator (see also~\cite[Theorem~1.3]{Hitrik:03}). This result is generalized in~\cite{LR:97} for a wave equation damped on a (small) part of the boundary. 
Lebeau also constructed surfaces of revolution and a damping function $b$ for which GCC is not satisfied (hence uniform decay does not hold) but a uniform spectral gap holds: there is $\eps>0$ such that 
$
\left\{ z \in \C ,\Re(z) > - \eps  \right\} \subset \rho(\A_m) ,
$
(compare with~\eqref{e:spectral-gap}).

\paragraph{``Hyperbolic situations''.}
Christianson~\cite{Christ:07,Christ:10} proved~\eqref{e:res-bounded-M} with $\mathsf{M}(s) = C\log(2+|s|)$ (hence energy decays at rate $f(t) =e^{-\gamma \sqrt{t}}$ for some $\gamma>0$), in the case
where the undamped set $\pi(S)$ is a hyperbolic closed
geodesic (see also~\cite{CSVW:14}). This result was generalized by Christianson~\cite{Christ:11} to the case $\pi(S)$ is a semi-hyperbolic orbit, and by Nonnenmacher and Rivi\`ere~\cite{Riviere:14} and Christianson, Schenck, Vasy and Wunsch~\cite{CSVW:14} to the case where $S$ is a small (in terms of topological pressure) hyperbolic set. These results only assume a hyperbolicity condition near $S$, and Burq and Christianson~\cite{BC:15} proved the optimality of the decay $e^{-C\sqrt{t}}$ under this sole dynamical condition.
With an additional assumption on the {\em global} dynamics, Schenck~\cite{Schenck:10,Schenck:11} proved both that $\mathsf{M}(s) = C\log(2+|s|)$ and a uniform spectral gap (hence energy decay at rate
$e^{-Ct}$, still in the sense of Definition~\ref{def:non-unif-stab}) on manifolds with negative sectional curvature, if the
undamped set is ``small enough'' in terms of topological pressure (for
instance, a small \nhd of a closed geodesic), and if the damping is
``large enough''. This result was later generalized by Nonnenmacher~\cite{Nonnenmacher:11}. Finally on compact {\em surfaces} with negative curvature, Dyatlov-Jin-Nonnenmacher~\cite{Jin:17,DJN:20} recently proved the $e^{-Ct}$ decay for any non-trivial damping function $b$.
In these papers, the geodesic flow (either near the undamped set $S$ or globally) enjoys strong instability properties: it is uniformly hyperbolic, in particular, trajectories are exponentially unstable.

\paragraph{``Integrable situations''.}
On the other hand, Liu and Rao considered in \cite{LR:05} the case where $\M$ is a square and the set $\{b >0\}$ contains a vertical strip. In this situation, the undamped trajectories consist in a family of parallel vertical geodesics; these are unstable, in the sense that nearby geodesics diverge at a linear rate. They proved~\eqref{e:res-bounded-M} with $\mathsf{M}(s) = C\langle s\rangle^2$ (\ie solutions of~\eqref{eq: stabilization} decay at rate $t^{-\frac12}$). This was extended by Burq and Hitrik~\cite{BH:07}  to the case of partially rectangular two-dimensional domains, if the set $\{b >0\}$ contains a \nhd of the non-rectangular part and in~\cite{AL:14} on the torus. See also~\cite{Phung:07b,Phung:08} where Phung proved $\mathsf{M}(s) = C\langle s\rangle^{N}$ for some $N>0$ in a three-dimensional domain having two parallel faces, and~\cite{ALM:16cras,ALM:16} where the bound $\mathsf{M}(s) = C(1+s^2)$ is proved in the disk if $\omega_b$ intersects the boundary. Assuming additional smoothness assumptions on $b$,  this was improved to $\mathsf{M}(s) = C\langle s\rangle^{1+\eps}$ in~\cite{BH:07,AL:14}. In all these situations, the only obstruction to GCC is due to ``cylinders of periodic orbits''. It was also proved in~\cite{AL:14} that~\eqref{e:res-bounded-M} implies $\limsup_{s\to\pm \infty}\frac{\mathsf{M}(s)}{|s|}>0$ as soon as the ``cylinder of periodic orbits'' is nonempty. 
These resolvent bounds on the torus were further refined in the case of H\"older regular damping in~\cite{Stahn:17,Kleinhenz:18,Sun:21}. Lower resolvent bounds in the context of nonselfadjoint operators were also proved in~\cite{Arnaiz:22}.

\paragraph{``Locally undamped situations''.}
Related to the above-discussed cases, the following typical situation was studied in~\cite{LeLe:14,LeLe:17}: $M=\T^2$, $\gamma$ is a periodic geodesic, and $b(x) = \dist_g(x,\gamma)^{2\alpha}$, so that the undamped set is exactly $\pi(S)=\gamma$ (as opposed to the ``cylinder of periodic orbits'' case). In such a situation, it was proved that~\eqref{e:res-bounded-M} holds with $\mathsf{M}(s) = C\langle s \rangle^{\frac{\alpha}{\alpha+1}}$ (thus improving on the $C\langle s\rangle^{1+\eps}$ bound) and that this is optimal, \ie~\eqref{e:res-bounded-M} implies  $\limsup_{s\to\pm \infty}\frac{\mathsf{M}(s)}{|s|^{\frac{\alpha}{\alpha+1}}}>0$.
Later on, Burq-Zuily~\cite{BZ:15} weakened some of the assumptions of this result. They also investigated in~\cite{BZ:16} a similar situation in case the manifold has no product structure: this covers \eg the case of the sphere $\mathbb{S}^2\subset\R^3$ and $b(x)=\dist_g(x,\gamma)^{2\alpha}$, where $\gamma$ is an equator and proved that~\eqref{e:res-bounded-M} holds with $\mathsf{M}(s) = C\langle s \rangle^{\alpha}$ (thus improving on the $Ce^{c|s|}$ a priori bound). In the situation where $\gamma$ is a hyperbolic closed geodesic curve on a compact manifold and $b(x)=\dist_g(x,\gamma)^{2\alpha}$, no precise result was known so far. Theorem~\ref{t:theorem-holder-0} shows that~\eqref{e:res-bounded-M} implies $\limsup_{s\to\pm \infty}\frac{\mathsf{M}(s)}{\log|s|}>0$ whatever $\alpha$. Hence, one cannot hope to improve on the logarithmic bound of~\cite{Christ:07,Christ:10, Christ:11,Riviere:14,CSVW:14,Schenck:10,Schenck:11,Nonnenmacher:11,Jin:17,DJN:20} by enhancing the damping up to $\gamma$, \ie taking lower values of $\alpha$.

\subsection{Plan of the paper, ideas of proofs}

Section~\ref{s:first-order-system} is a preliminary section in which we reformulate  the wave/Klein-Gordon equations~\eqref{eq: first order eqation} in more customary forms. Namely, in many steps of the proofs, we exhibit and exploit the structure of the Klein-Gordon/wave equation as a conservative-propagative equation perturbed by a lower-order dissipation term.
 We first reformulate~\eqref{eq: first order eqation} under the form of a hyperbolic system, \ie
\begin{align}
\label{e:evol-hyp}
D_tU = \P U , \quad D_t = \frac{\d_t}{i} ,
\end{align}
on $L^2(M;\C^d)$ (here with $d=2$) or a subspace of $L^2(M;\C^d)$, and where $\P$ has the form 
\begin{align}
\label{e:oper-hyp}
\P =D\Lambda_m+iQ,\quad D \text{ diagonal with real entries, }\Lambda_m=\sqrt{-\Delta+m}, \quad Q \text{ bounded with } \Re(QU,U) \geq 0 .
\end{align}
This reduction is performed in an algebraic way in Section~\ref{s:KG-reformulation} for the Klein-Gordon equation and in Section~\ref{s:KG-waves} for the wave equation. The semigroups $(e^{t\A_m})_{t\in\R_+}$ and $(e^{it\P})_{t\in\R_+}$ are compared, as well as the resolvents $(z-\A_m)^{-1}$ and  $(z-\P)^{-1}$.
Under the form~\eqref{e:evol-hyp}-\eqref{e:oper-hyp}, the evolution is a first order conservative (and decoupled) system $D_t - D\Lambda_m$, perturbed by a zero order dissipative term $iQ$, that couples the equations.
This algebraic reduction is useful in connection with the abstract/functional resolvent approach performed in Sections~\ref{s:semiclassical-resolvent} and~\ref{s:abstract-section}.

Then, in Section~\ref{s:semiclass-hyperbolic}, we explain how~\eqref{e:oper-hyp} can be semiclassically diagonalized if all eigenvalues of $D$ are different (which is the case for the wave/Klein-Gordon equations~\eqref{eq: first order eqation} for which the eigenvalues of $D$ are $\pm1$), that is to say, how the system decouples at high frequencies. More precisely, we prove that for data oscillating at frequency $1/h$, $h\to 0^+$, the solution of~\eqref{e:evol-hyp}-\eqref{e:oper-hyp} is at time $t$ essentially $O(ht)$ close to that of the associated diagonal system (\ie simply putting $0$ in the off-diagonal terms of $Q$). This will in particular be useful in Section~\ref{s:proof-main} in order to describe the long time (depending on $h$) propagation of coherent states.

\medskip
Section~\ref{s:semiclassical-resolvent} is devoted to the proofs of two theorems like Theorem~\ref{t:semiclassic-intro}. In Section~\ref{s:bounded-damping-general}, an abstract/functional analysis theorem like Theorem~\ref{t:semiclassic-intro} is proved for semigroups of the form $(e^{\frac{it}{h}\P_h})_{t\in \R}$ with $\P_h = P _h+ i h Q_h$ on a Hilbert space $\H$, assuming only $P_h$ selfadjoint and $Q_h$ bounded with nonnegative real part. In this setting, we prove an analogue of Theorem~\ref{t:semiclassic-intro} with a remainder of the form $O(hT_h)$ instead of $O(h^{N'}T_h)$. The proof relies on Fourier transform in time of the equation and the use of the Plancherel theorem. It is inspired by the proofs of~\cite[Theorem~4]{BZ:04} (concerning selfadjoint operators) and of the Gearhart-Huang-Pr\"uss theorem~\cite{Gearhart:78,Huang:85,Pruss:84,EN:book,HS:10}. This version of the theorem only assumes boundedness of the ``damping operator'' and is thus well-suited to prove a result like Theorem~\ref{t:theorem-holder-0} (which only requires a little regularity of the damping coefficient).
Then, Section~\ref{s:remainder-regularity} is devoted to an improvement of the $O(hT_h)$ remainder to an $O(h^{N'}T_h)$, still in the abstract setting, but assuming the existence of an improved parametrix for $(\P_h-\tau)^N$ for $\tau$ away from $\supp(\chi_\eps)$.
The latter is constructed in Section~\ref{s:parametrice} for the wave/Klein-Gordon operators using pseudodifferential calculus and assuming $b$ smooth. This eventually leads to a proof of Theorem~\ref{t:semiclassic-intro} in Section~\ref{s:proof-thm-smooth-h}.

\medskip
Section~\ref{s:egorov} is devoted to the proof of an Egorov theorem for a non selfadjoint operator in Ehrenfest time $T_h = \mu \log(1/h)$ where $\mu$ is a classical dynamical quantity. Here the group involved is $(e^{itP})_{t\in \R}$, where $P= \pm \Lambda_m + iQ$ is a scalar operator, and $Q$ is a semiclassical operator of order zero, the real part of which being nonnegative at the principal level (one can take $Q=b$ if $b$ is smooth).
A technicality comes from the fact that, for the purposes of Theorem~\ref{t:theorem-holder-0}, we do not want to assume that $b$ is smooth. To overcome this issue, we construct (in Appendix~\ref{e:reg-b}) an $h-$dependent regularization of $b$ called $b_{h^\nu}$.
To include such an operator in the Egorov Theorem, we assume that the operator $Q$ in Section~\ref{s:egorov} belongs to a mildly exotic class of operators.
The proof of the Egorov theorem then follows that of~\cite{AnNon,DG:14,DJN:20}. We collect in Section~\ref{sub:classical-estimates} a number of classical/symbolic estimates, checking that the propagated symbol remains bounded in a reasonable mildly exotic class for times of order $T_h = \mu \log(1/h)$. Then, the Egorov theorem is proved in Section~\ref{sub:Egorov}.

\medskip
Section~\ref{s:proof-main} is then devoted to the proof of Theorem~\ref{t:theorem-holder-0} in a more precise form (Theorem~\ref{t:main-res} and Corollary~\ref{c:ralalalala}), and is inspired by~\cite{BBR:10} (see also~\cite{Wang:91}), which concerns the cutoff resolvent of Schr\"odinger operators in $\R^n$. It relies first on the results of Section~\ref{s:semiclassical-resolvent} which relate the resolvent  and the evolution for the wave/Klein-Gordon equation, together with those of Sections~\ref{s:KG-reformulation} and~\ref{s:KG-waves} to formulate the latter in the ``hyperbolic system'' form~\eqref{e:evol-hyp}. Then we replace $b$ by its $h-$dependent regularization $b_{h^\nu}$. The next step consists in replacing the operator in~\eqref{e:evol-hyp} by its diagonal part using the results of Section~\ref{s:semiclass-hyperbolic}. 
We finally use the Egorov theorem of Section~\ref{s:egorov} together with a coherent state localized at a point $\rho_0 \in S$, \ie in the undamped set (see~\eqref{e:defsingularS}). That $S \neq \emptyset$ is equivalent to the assumption of Theorem~\ref{t:theorem-holder-0} that GCC is not satisfied. In an inequality like~\eqref{e:toto-intro}, this implies that $E_m(u(T_h)) \sim 1 = E_m(u(0))$ for $T_h = \mu \log(1/h)$, whence $G(h)\geq 2^{-1/2} T_h$ and the sought lower resolvent bound.

\medskip
Section~\ref{s:abstract-section} is devoted to the proofs of non-semiclassical results like Theorem~\ref{t:thm-classiq-A-0}.
As a preliminary, Section~\ref{s:modified-resolvent} reformulates resolvent estimates like~\eqref{e:res-bounded-M} in an abstract setting, in different forms, under a ``compatibility condition'' between the selfadjoint, the skewadjoint parts of the operator and the function $\mathsf{M}$. Section~\ref{s:from-res-to-integrated} then proves Theorem~\ref{t:thm-classiq-A-0}, following classical proofs of the Gearhart-Huang-Pr\"uss-Greiner Theorem \eg~\cite[Chapter~V.1]{EN:book} or~\cite{HS:10} (see also the related~\cite{BZ:04},~\cite[Theorem~2.3]{Miller:12} or~\cite[Theorem~4.7]{BCT:16}). The abstract ``compatibility condition'' between the selfadjoint, the skewadjoint parts of the operator and $\mathsf{M}$ is rephrased in Section~\ref{s:Mand-asspt-2} as a temperence condition on $\mathsf{M}$ and the fact that the skewadjoint part of the operator does not mix the frequencies of the selfadjoint part of the operator at high frequencies. The latter condition is finally checked for the wave/Klein-Gordon equations in Section~\ref{s:app-damped-semiclass} using semiclassical analysis, and this concludes the proof of results like Theorem~\ref{t:thm-classiq-A-0}.

\medskip 
The paper ends with two appendices. In Appendix~\ref{appendix}, we collect various facts and notations of semiclassical pseudodifferential calculus that are used in the main part of the paper. We also construct in Section~\ref{e:reg-b} the regularization procedure needed for the above-discussed proofs in case $b$ is only H\"older continuous (or has a logarithmic modulus of continuity).
Finally, Appendix~\ref{appendix-elementary} collects several elementary technical lemmata used throughout the proofs.

\bigskip
\noindent
{\em Acknowledgements.} 
We would like to thank Camille Laurent for early discussions on Egorov theorems, Gabriel Rivi\`ere for later discussions on Egorov theorems, and Benjamin Delarue for a discussion on~\cite{Kuster:17}.
Most of this project was carried on when the author was in CRM in Montr\'eal and he wants to acknowledge this institution for its kind hospitality. 
The author is partially supported by the Agence Nationale de la Recherche under grants SALVE (ANR-19-CE40-0004) and ADYCT (ANR-20-CE40-0017).

\section{First order hyperbolic systems}
\label{s:first-order-system}

In this section, we first reformulate wave and Klein-Gordon equations~\eqref{eq: stabilization} as first order hyperbolic systems. This reformulation consists essentially of algebraic manipulations; it is slightly simpler for Klein-Gordon equations ($m>0$) since in this case $-\Delta+m$ is injective. We thus explain it first in this setting in Section~\ref{s:KG-reformulation}, and then proceed to the case $m=0$ in Section~\ref{s:KG-waves}. 
Finally, in Section~\ref{s:semiclass-hyperbolic}, we proceed with a semiclassical (approximate) diagonalization of hyperbolic systems of the form obtained in Sections~\ref{s:KG-reformulation} and~\ref{s:KG-waves}. The latter will be useful for describing propagation of singularities.

\bigskip
We denote by $(u,v)_{L^2(M)} = \int_M u(x) \ovl{v(x)}d\Vol_g(x)$ the $L^2$ inner product on $M$, where $d\Vol_g$ denotes the Riemannian volume element for the metric $g$.
We also recall that $\nabla$ denotes the Riemannian gradient and, for two (possibly complex-valued) vector-fields $X,Y$ on $M$ (in particular $X=\nabla u$) use the notation 
$$
(X,Y)_{L^2(M)}=(X,Y)_{L^2(M;TM)} = \int_M g_x\big(X(x) ,\ovl{Y(x)}\big)d\Vol_g(x), 
$$
and associated norm $\|X\|_{L^2(M)}=\|X\|_{L^2(M;TM)} =\sqrt{(X,X)_{L^2(M)}}$.
We shall make use of a Hilbert basis $(e_j)_{j\in \N}$ of eigenfunctions of $-\Delta$, namely
$$
-\Delta e_j  = \lambda_j^2 e_j , \quad (e_j , e_k)_{L^2(M)} = \delta_{jk} , \quad  0 = \lambda_0 <\lambda_1 \leq \cdots \leq \lambda_j \leq \lambda_{j+1} \to + \infty ,
$$
where $e_0$ is the constant function normalized in $L^2(M)$ (recall $\dim \ker \Delta =1$ is a consequence of the fact that $M$ is connected), that is $e_0(x) = \Vol_g(M)^{-1/2}$ for $x \in M$.
For all $m\geq0$, we finally define $\Lambda_m : = \sqrt{- \Delta +m}$ via functional calculus as 
\begin{align}
\label{e:lambda-m}
\Lambda_m : L^2(M)\to L^2(M) ,  \quad \Lambda_m \left( \sum_{j=0}^\infty u_j e_j \right) = \sum_{j=0}^\infty \sqrt{\lambda_j^2+m} \ u_j e_j  ,  \quad  \text{ with }D(\Lambda_m)=H^1(M) .
\end{align}

\subsection{The damped Klein-Gordon equation}
\label{s:KG-reformulation}
We consider the damped Klein-Gordon equation (the damping function $b$ is only assumed $L^\infty(M)$ in this section)
\begin{align}
\label{e:KG-eq}
(\d_t^2 - \Delta +m + b\d_t)u=0 , \quad (u,\d_t u)|_{t=0} = (u_0,u_1) ,
\end{align}
with $m>0$ a positive constant. This simplifies the semigroup framework; the case $m=0$ is treated in Section~\ref{s:KG-waves} below. 
As already mentioned in the introduction, the energy under interest for Equation~\eqref{e:KG-eq} is the quantity
\begin{align*}
E_m(v_0,v_1)& =\frac12 \left( \nor{\nabla v_0}{L^2(M)}^2 + m \nor{v_0}{L^2(M)}^2 + \nor{v_1}{L^2(M)}^2 \right)
= \frac12 \left( \nor{\Lambda_m v_0}{L^2(M)}^2 + \nor{v_1}{L^2(M)}^2 \right)\\
& = \frac12 \left( \nor{v_0}{H^1_m(M)}^2 + \nor{v_1}{L^2(M)}^2 \right) . 
\end{align*}
Here, we define, for $s\in \R$,
\begin{align}
\label{e:m-dep-norm}
\nor{u}{H^s_m(M)} := \nor{\Lambda_m^s u}{L^2(M)} , \quad \nor{(u,v)}{H^\sigma_m(M)\times H^s_m(M)}^2 :=  \nor{\Lambda_m^\sigma u}{L^2(M)}^2 +  \nor{\Lambda_m^s v}{L^2(M)}^2 ,
\end{align}
which are norms on $H^s(M)$ (resp. $H^\sigma(M)\times H^s(M)$) that are equivalent to the usual one since $m>0$. We define $(\cdot ,\cdot)_{H^1_m\times L^2}$ the inner product associated to the $H^1_m\times L^2$ norm (which is the norm involved in the energy $E_m$).

We consider the following operators:
\begin{align}
\label{e:def-Am-KG}
\A_m & = 
\begin{pmatrix}
0   &  \id \\
\Delta - m & - b
\end{pmatrix} ,
 \text{ acting on } H^1(M)\times L^2(M) , \text{ with domain }
D(\A_m) =  H^2(M)\times H^1(M) , \\
\label{e:def-tAm-KG}
\tilde{\A}_m & = 
\begin{pmatrix}
0   &  \Lambda_m\\
-\Lambda_m & - b
\end{pmatrix} ,
 \text{ acting on } L^2(M;\C^2) , \text{ with domain }
D(\tilde{\A}_m) =  H^1(M;\C^2) , \\
\label{e:def-Pm-KG}
\P_m & = \Lambda_m
\begin{pmatrix}
1   &  0\\
0 & - 1
\end{pmatrix} + 
i \frac{b}{2}
\begin{pmatrix}
1   &  1\\
1 &  1
\end{pmatrix} ,
 \text{ acting on } L^2(M;\C^2) , \text{ with domain }
D(\P_m) =  H^1(M;\C^2) ,
\end{align}

The operator $\A_m$ already appears in~\eqref{eq: first order eqation} as the infinitesimal generator of the semigroup of the damped Klein-Gordon equation. The operator $\P_m$ takes the form of a hyperbolic system with diagonal principal part, which is mainly used in the rest of the paper.
The above three operators are linked via the following elementary lemma.

\begin{lemma}
\label{l:equiv-operators-m}
For all $s \in \R$, the operator 
\begin{equation}
\label{e:def-Lm}
L_m
 := \begin{pmatrix}
\Lambda_m   & 0\\
0  &  \id
\end{pmatrix}
\end{equation}
is an isomorphic isometry $H^{s+1}_m(M)\times H^s_m(M) \to H^s_m(M;\C^2)$, endowed with the norms~\eqref{e:m-dep-norm}, and we have 
$$
\tilde{\A}_m = 
L_m
 \A_m L_m^{-1}, 
 \qquad L_m^{-1} = 
 \begin{pmatrix}
\Lambda_m^{-1}   & 0\\
0  &  \id
\end{pmatrix} .
$$
The matrix 
\begin{equation}
\label{e:def-sigma}
\Sigma  := \frac{1}{\sqrt{2}}
\begin{pmatrix}
1& \frac1i \\
-1 & \frac1i
\end{pmatrix} 
\text{  is unitary, with }\quad
\Sigma^{-1} = \frac{1}{\sqrt{2}} \begin{pmatrix}
1& -1\\
i& i
\end{pmatrix} , 
\end{equation}
and we have 
$$
i\P_m=\Sigma\tilde{\A}_m\Sigma^{-1}.
$$
\end{lemma}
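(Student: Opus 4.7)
The proof is essentially a sequence of direct algebraic computations, with the only conceptual input being the identity $(\Delta - m)\Lambda_m^{-1} = -\Lambda_m$ on $L^2(M)$ (which uses $m>0$ so that $\Lambda_m$ is boundedly invertible, since $\Lambda_m^2 = -\Delta+m$). I would organize the proof in three short steps following the three claims.

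\textbf{Step 1: The operator $L_m$.} Since $\Lambda_m$ is defined by functional calculus, $\Lambda_m^s$ is an isomorphic isometry $H^{s+1}_m(M) \to H^s_m(M)$ (with the norms in \eqref{e:m-dep-norm}) with inverse $\Lambda_m^{-s}$. Applied component-wise, this immediately yields that $L_m$ is an isomorphic isometry $H^{s+1}_m(M)\times H^s_m(M) \to H^s_m(M;\C^2)$ with inverse $L_m^{-1} = \operatorname{diag}(\Lambda_m^{-1}, \id)$.

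\textbf{Step 2: Conjugation of $\A_m$ by $L_m$.} Simply compute
\[
L_m \A_m L_m^{-1} =  \begin{pmatrix}\Lambda_m & 0 \\ 0 & \id\end{pmatrix} \begin{pmatrix} 0 & \id \\ \Delta-m & -b \end{pmatrix} \begin{pmatrix}\Lambda_m^{-1} & 0 \\ 0 & \id\end{pmatrix} = \begin{pmatrix} 0 & \Lambda_m \\ (\Delta-m)\Lambda_m^{-1} & -b \end{pmatrix},
\]
and then use $(\Delta-m)\Lambda_m^{-1} = -\Lambda_m^2 \Lambda_m^{-1} = -\Lambda_m$ to conclude equality with $\tilde{\A}_m$. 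Domains match by Step~1.

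\textbf{Step 3: The matrix $\Sigma$ and conjugation.} Check directly that $\Sigma\Sigma^{-1}=\id$ by multiplying out (the off-diagonal entries cancel, the diagonal entries give $1$). Unitarity follows by inspection: $\Sigma^* = \frac{1}{\sqrt 2}\bigl(\begin{smallmatrix}1 & -1 \\ -1/i & -1/i\end{smallmatrix}\bigr) = \frac{1}{\sqrt 2}\bigl(\begin{smallmatrix}1 & -1 \\ i & i\end{smallmatrix}\bigr) = \Sigma^{-1}$. For the final identity, the cleanest path is to first expand
\[
i\P_m = \begin{pmatrix} i\Lambda_m - b/2 & -b/2 \\ -b/2 & -i\Lambda_m - b/2 \end{pmatrix},
\]
and then verify $\Sigma \tilde{\A}_m = (i\P_m)\Sigma$ by multiplying out both sides (each of the four entries reduces to a combination of $\pm i\Lambda_m$ and $\pm b/2$ after using $1/i=-i$); alternatively one multiplies $\Sigma\tilde{\A}_m \Sigma^{-1}$ directly. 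Either way the verification is purely scalar in each entry.

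\textbf{Expected obstacles.} There are essentially none: the whole lemma is algebraic, and the only substantive ingredient is the invertibility of $\Lambda_m$ (which fails precisely in the $m=0$ case treated separately in Section~\ref{s:KG-waves}). The only care needed is bookkeeping of domains (which follows automatically from Step~1 together with the fact that $\Lambda_m : H^{s+1}_m \to H^s_m$ is bijective) and being careful with the sign $1/i=-i$ in the computation of $\Sigma^{-1}$ and of the conjugation in Step~3.
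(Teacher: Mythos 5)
Your proof is correct and coincides with the direct algebraic verification the paper treats as implicit (the lemma is stated as ``elementary'' without a written proof): the isometry property of $L_m$ for the norms~\eqref{e:m-dep-norm}, the identity $(\Delta-m)\Lambda_m^{-1}=-\Lambda_m$, the unitarity of $\Sigma$, and the entrywise check of $\Sigma\tilde{\A}_m\Sigma^{-1}=i\P_m$ all go through exactly as you wrote them. One notational slip in Step~1: the isometry $H^{s+1}_m(M)\to H^s_m(M)$ is $\Lambda_m$ itself (with inverse $\Lambda_m^{-1}$), not $\Lambda_m^s$, which instead maps $H^{s+1}_m$ isometrically onto $H^1_m$; your stated inverse $L_m^{-1}=\diag(\Lambda_m^{-1},\id)$ is nevertheless the correct one, so the argument is unaffected.
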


As a direct consequence, we deduce the following corollary (recall again that $M$ is connected).
\begin{corollary}
\label{corollary-Pm-Am-reso}
The following results hold:
\begin{enumerate}
\item 
The operators $\A_m,\tilde{\A}_m,\P_m$ have compact resolvents
$$
(z\id - \tilde{\A}_m)^{-1} = L_m 
 (z\id - \A_m)^{-1} 
L_m^{-1}, \quad  (z\id  - i\P_m)^{-1} = \Sigma (z\id - \tilde{\A}_m)^{-1} \Sigma^{-1} .
$$
 Moreover, $\Sp(\A_m)=\Sp(\tilde{\A}_m) = i \Sp(\P_m)$ consists in eigenvalues with finite multiplicity, accumulating only at infinity.
\item \label{i:resolvantes}
For all $z\in \C$, we have 
$$
\nor{(z\id - \A_m)^{-1}}{\L\big(H^1_m \times L^2\big)} = 
\nor{(z\id - \tilde{\A}_m)^{-1}}{\L\big(L^2(M;\C^2)\big)} = \nor{(z\id  - i\P_m)^{-1}}{\L\big(L^2(M;\C^2)\big)} ,  
$$
(with equality in $(0,+\infty]$).
\item For all $U=(u_0,u_1)\in H^2\times H^1$ and $\tilde{U}=L_mU = (\Lambda_m u_0,u_1)$ we have $$(\A_m U,U)_{H^1_m\times L^2} = (\tilde{\A}_m \tilde{U},\tilde{U})_{L^2(M;\C^2)} =2i \Im(u_1,u_0)_{H^1_m} -(b u_1,u_1)_{L^2(M)}.$$ 
In particular (assuming $b\geq 0$ \ae on $M$), $\Sp(\A_m)=\Sp(\tilde{\A}_m)\subset \{z \in \C ,- \nor{b}{L^\infty(M)}  \leq \Re(z)\leq 0\}$ and 
$\Sp(\P_m) \subset  \{z \in \C ,0 \leq \Im(z) \leq \nor{b}{L^\infty(M)} \}$.
\item \label{i:refined-loc} Assuming that $b\geq 0$ on $M$ and that $b$ does not vanish identically (that is to say, $\{b>0\}$ has positive measure), we have 
\begin{align*}
\Sp(\A_m)=\Sp(\tilde{\A}_m) &\subset \left\{z \in \C ,- \frac12 \nor{b}{L^\infty(M)}  \leq \Re(z) <  0 \right\} \cup \left\{r \in \R ,-  \nor{b}{L^\infty(M)} \leq r <  0 \right\}  , \\
\Sp(\P_m) &\subset \left\{z \in \C , 0< \Im(z) \leq  \frac12 \nor{b}{L^\infty(M)} \right\} \cup \left\{ i s  ,s\in \R, 0<s \leq \nor{b}{L^\infty(M)} \right\}  ,
\end{align*}
and in particular, $\Sp(\A_m) \cap i\R = \emptyset$, $\Sp(\P_m)\cap \R=\emptyset$.
\item The operators $\A_m,\tilde{\A}_m$ and $i \P_m$ generate groups of evolution that are contraction semigroups (on their respective Hilbert space of definition) denoted by $(e^{t\A_m})_{t\in \R},(e^{t\tilde{\A}_m})_{t\in \R}$ and $(e^{it\P_m})_{t\in \R}$, and we have 
$$
e^{t \tilde{\A}_m} = L_m
e^{t \A_m} 
L_m^{-1}, 
\quad \text{ and } \quad 
e^{it\P_m}=\Sigma e^{t\tilde{\A}_m}\Sigma^{-1}.
$$
\item \label{i:Energ-am} For all $U_0= (u_0,u_1) \in H^1(M)\times L^2(M)$, there is a unique solution to~\eqref{e:KG-eq} in $C^0(\R; H^1(M))\cap C^1(\R;L^2(M))$ and we have $(u,\d_t u)(t)= e^{t\A_m}U_0$, together with 
$$
E_m(u,\d_t u)(t) = \frac12 \nor{e^{t\A_m}U_0}{H^1_m\times L^2}^2 = \frac12 \nor{e^{t\tilde{\A}_m}L_mU_0}{L^2(M;\C^2)}^2 = \frac12 \nor{e^{i t\P_m}\Sigma L_m U_0}{L^2(M;\C^2)}^2 .
$$
\item \label{i:fPm} When $b=0$ in~\eqref{e:def-Am-KG},~\eqref{e:def-tAm-KG} and~\eqref{e:def-Pm-KG}, the resulting operators $A_m = \begin{pmatrix}
0   &  \id \\
-\Lambda_m^2 & 0
\end{pmatrix} $ and $\tilde{A}_m  = 
\begin{pmatrix}
0   &  \Lambda_m\\
-\Lambda_m & 0
\end{pmatrix}$  are skew-adjoint, and the operator
$P_m :=  \Lambda_m
\begin{pmatrix}
1   &  0\\
0 & - 1
\end{pmatrix}$
is selfadjoint (when acting on their respective spaces, with their respective domains).
Moreover, assuming that $f \in C^0(\R)$ is such that $f(-s) = f(s)$ for all $s\in \R$, we have 
$
f(P_m) = f(\Lambda_m) I_2 ,
$
and $f(P_m)\Sigma L_m U = \Sigma L_m f(\Lambda_m)U$ for all $U = \sum_{j=0}^\infty \begin{pmatrix} a_j\\ b_j \end{pmatrix}e_j$ such that $\sum_{j=0}^\infty f\big(\sqrt{\lambda_j^2 + m}\big)\big(\lambda_j^2 |a_j|^2 + |b_j|^2\big) <\infty$. In particular, for all $t\in \R$,
$$
 \frac12 \nor{ f(\Lambda_m)  e^{t\A_m}U_0}{H^1_m\times L^2}^2 = \frac12 \nor{f(P_m) e^{i t\P_m} (\Sigma L_m U_0)}{L^2(M;\C^2)}^2 .
$$
\end{enumerate}
\end{corollary}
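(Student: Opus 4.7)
The plan is to systematically pull through the conjugation identities of Lemma~\ref{l:equiv-operators-m} and combine them with standard semigroup/PDE tools; every item of the corollary follows from such an assembly. The main obstacle — and the only step that uses anything beyond linear algebra — is ruling out purely imaginary eigenvalues in item~\ref{i:refined-loc}, which requires unique continuation.

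First I would dispatch items~1 and~2 in one stroke. Since $L_m$ is an \emph{isometry} from $H^1_m \times L^2$ onto $L^2(M;\C^2)$ (endowed with the norms~\eqref{e:m-dep-norm}) and $\Sigma$ is \emph{unitary} on $L^2(M;\C^2)$, the conjugations $\tilde{\A}_m = L_m\A_m L_m^{-1}$ and $i\P_m = \Sigma\tilde{\A}_m\Sigma^{-1}$ preserve both spectrum and resolvent norms, yielding the displayed formulas. Compactness of $(z-\A_m)^{-1}$ at any point of $\rho(\A_m)$ follows from the compact embedding $D(\A_m) = H^2 \times H^1 \hookrightarrow H^1 \times L^2$, and transports via conjugation.

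For item~3 I would compute $(\A_m U, U)_{H^1_m \times L^2}$ directly, using $(v,w)_{H^1_m} = (\Lambda_m^2 v, w)_{L^2}$ so that the cross terms $(u_1,u_0)_{H^1_m}$ and $(\Delta u_0 - m u_0, u_1)_{L^2} = -(\Lambda_m^2 u_0, u_1)_{L^2}$ assemble into $2i\Im(u_1,u_0)_{H^1_m}$, leaving only the dissipative term $-(bu_1,u_1)_{L^2}$. Taking real parts on an eigenvector gives the strip $-\|b\|_\infty \leq \Re z \leq 0$. For the refinement of item~\ref{i:refined-loc}, the eigenvalue equation $\A_m(u_0,u_1) = \lambda(u_0,u_1)$ collapses to $u_1 = \lambda u_0$ and $\Lambda_m^2 u_0 + \lambda b u_0 + \lambda^2 u_0 = 0$; pairing with $u_0$ produces the scalar quadratic $\lambda^2 + \beta \lambda + \alpha = 0$ with $\alpha = \|\Lambda_m u_0\|^2/\|u_0\|^2 > 0$ and $\beta = (bu_0,u_0)/\|u_0\|^2 \in [0,\|b\|_\infty]$, whose complex roots have real part $-\beta/2 \in [-\|b\|_\infty/2, 0)$ (strictly negative unless $\beta=0$) and whose real roots are negative. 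The hard step is excluding $\lambda \in i\R\setminus\{0\}$: this forces $\beta = 0$, hence $u_0 \equiv 0$ on $\{b>0\}$, so that $u_0$ is a Laplace eigenfunction vanishing on a set of positive measure, which is impossible by unique continuation (Lebeau~\cite{Leb:96}). The case $\lambda = 0$ is ruled out directly since $m>0$ implies $\ker\Lambda_m^2 = \{0\}$.

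Items~5 and~6 follow from Lumer-Phillips: the dissipativity computed above plus the spectral information of item~3 (which provides $(0,+\infty)\subset\rho(\A_m)$) shows $\A_m$ generates a $C^0$-contraction semigroup on $H^1_m\times L^2$; to upgrade to a \emph{group}, I write $\A_m = A_m + \mathbf{B}$ with $A_m$ the undamped (skew-adjoint) operator of item~\ref{i:fPm} and $\mathbf{B}=\begin{pmatrix}0&0\\0&-b\end{pmatrix}$ bounded on $H^1_m\times L^2$, then invoke the bounded perturbation theorem for groups. The conjugation formulas for semigroups propagate from those of the generators, and the energy identity in~\ref{i:Energ-am} is just the reformulation $E_m = \frac12\|\cdot\|_{H^1_m\times L^2}^2$ transported through the isometries. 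Finally, for item~\ref{i:fPm}, selfadjointness of $P_m = \Lambda_m\,\mathrm{diag}(1,-1)$ on $L^2(M;\C^2)$ is immediate from functional calculus, and conjugation yields skew-adjointness of $\tilde A_m$ and $A_m$ in their respective (appropriately weighted) inner products. For the functional calculus identity, since $f$ is even one has $f(P_m) = f(\Lambda_m) I_2$ by spectral mapping; the commutation $f(P_m)\Sigma L_m U = \Sigma L_m f(\Lambda_m) U$ then reduces, upon expanding on the eigenbasis $(e_j)$, to the trivial fact that scalar functions of $\Lambda_m$ commute with $\Lambda_m$ componentwise.
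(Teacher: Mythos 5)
Your proposal is correct and follows essentially the same route as the paper: all items are transported through the isometry $L_m$ and the unitary $\Sigma$ of Lemma~\ref{l:equiv-operators-m}, the strip localization comes from the same quadratic-form computation, the refined localization of Item~\ref{i:refined-loc} rests on the same key ingredient (unique continuation for an eigenfunction of $-\Delta+m$ vanishing on $\{b>0\}$, which the paper delegates to~\cite{HS:89,Leb:96,AL:14} while you spell out the quadratic-root argument), the exclusion of $0$ uses $m>0$ exactly as in the paper, and Item~\ref{i:fPm} is the same spectral-basis computation. One ordering remark: as written, reading off $(0,+\infty)\subset\rho(\A_m)$ from the eigenvalue localization of Item~3 presupposes that the spectrum consists only of eigenvalues, hence that $\rho(\A_m)\neq\emptyset$, which your Items~1--3 do not yet provide; this is not a real gap since your own decomposition $\A_m=A_m+\mathbf{B}$ (Stone plus bounded perturbation) already yields generation of a group and a nonempty resolvent set, but that step should logically come before, not after, the Lumer--Phillips argument.
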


\bnp
Most items come from Lemma~\ref{l:equiv-operators-m}, using that $L_m$ is an isomorphic isometry $H^{s+1}_m(M)\times H^s_m(M) \to H^s_m(M;\C^2)$ with the norms~\eqref{e:m-dep-norm}, and $\Sigma$ is unitary (hence an isomorphic isometry of $H^s_m(M;\C^2)$).

The refined localization of the spectrum in Item~\ref{i:refined-loc} relies on the unique continuation principle:
$$
\Big( (\lambda ,\psi) \in \R \times H^2_m, \quad (-\Delta +m) \psi = \lambda \psi ,  \quad b \psi = 0 \text{ \ae on }M \Big) \implies \psi = 0 \text{ on } M , 
$$
proved in~\cite{HS:89}, and we refer to~\cite{Leb:96} or~\cite[Lemma~4.2]{AL:14} for a complete proof. That $0\notin \Sp(\A_m)$ comes from the fact that $0\notin \Sp(-\Delta + m\id)$ since $m>0$.

Finally, the proof of Item~\ref{i:fPm} relies on the fact that $\big((e_j,0),(0,e_j)\big)_{j\in \N}$ is a Hilbert basis of $L^2(M;\C^2)$ diagonalizing $P_m$ with $P_m(e_j,0)=  \sqrt{\lambda_j^2+m} \ (e_j,0)$ and $P_m(0,e_j)= - \sqrt{\lambda_j^2+m} \ (0,e_j)$. This yields $\Sp(P_m) = \Sp(\Lambda_m)\cup (-\Sp(\Lambda_m))$ together with, for $U=(u_0,u_1)$,
$$f(P_m)U= \sum_{j\in\N}f\left(\sqrt{\lambda_j^2+m}\right)(u_0,e_j)_{L^2(M)}
\begin{pmatrix}
e_j\\
0
\end{pmatrix}+f\left(-\sqrt{\lambda_j^2+m}\right) (u_1,e_j)_{L^2(M)} \begin{pmatrix}
0\\
e_j
\end{pmatrix} = \begin{pmatrix}
f(\Lambda_m) u_0 \\
f(-\Lambda_m)u_1
\end{pmatrix},$$
whence the result if $f$ is even.
\enp

\subsection{The damped wave equation}
\label{s:KG-waves}
We now consider the damped wave equation 
\begin{align}
\label{e:DWE}
(\d_t^2 - \Delta + b\d_t)u=0 , \quad (u,\d_t u)|_{t=0} = (u_0,u_1) ,
\end{align}
on the compact manifold $M$ (which corresponds to~\eqref{e:KG-eq} with $m=0$). Recalling that $\Lambda_0 = \sqrt{-\Delta}$ is defined in~\eqref{e:lambda-m} via functional calculus, the energy under interest is the quantity
\begin{equation}
\label{e:def-energy}
E(v) = \frac12 \left( \nor{\nabla v_0}{L^2(M)}^2 + \nor{v_1}{L^2(M)}^2 \right)
=\frac12 \left( \nor{\Lambda_0 v_0}{L^2(M)}^2 + \nor{v_1}{L^2(M)}^2 \right) .
\end{equation}
When compared to the study of~\eqref{e:KG-eq} with $m>0$,  there is an additional difficulty linked to the fact that $0\in \Sp (\Delta)$, or equivalently, that the energy is not a norm on $H^1\times L^2$. Let us explain how to remedy this (see also~\cite{AL:14,LL:21} or~\cite{BG:20} for slightly different approaches). We recall that $M$ is assumed connected, so that $\dim \ker \Delta=1$. 
We define (recall the definitions of $\lambda_j, e_j$ at the beginning of Section~\ref{s:first-order-system})
$$
\Pi_0 u  = (u,e_0)_{L^2(M)} e_0 = \frac{1}{\Vol_g(M)}\int_M u(x) d\Vol_g(x) , \qquad \Pi_+ = \id - \Pi_0 , 
$$
or equivalently $\Pi_0(\sum_{j=0}^\infty u_j e_j ) = u_0 e_0$ and $\Pi_+(\sum_{j=0}^\infty u_j e_j ) = \sum_{j=1}^\infty u_j e_j $.
We set $L^2_+(M) = \Pi_+ L^2(M)$ and $H^s_+(M) = \Pi_+ H^s(M)$.
Next, we notice that 
$$
\Lambda_0 : L^2(M)\to L^2(M) ,  \quad \Lambda_0 \left( \sum_{j=0}^\infty u_j e_j \right) = \sum_{j=0}^\infty \lambda_j u_j e_j = \sum_{j=1}^\infty \lambda_j u_j e_j ,  \quad  \text{ with }D(\Lambda_0)=H^1(M) ,
$$
and we have $\ker\Lambda_0 = \vect(e_0)$ and $\Lambda_0 H^1(M) = L^2_+(M)$. Hence, we define its restriction $\Lambda_+:= \Lambda_0|_{L^2_+(M)}$ and have
$$
\Lambda_+^{-1} :L^2_+(M) \to L^2_+(M) , \quad \Lambda_+^{-1} \left( \sum_{j=1}^\infty u_j e_j \right) = \sum_{j=1}^\infty \frac{u_j}{\lambda_j} e_j .
$$
We thus have $\Lambda_+^{-1} \Lambda_+ = \Lambda_+ \Lambda_+^{-1} = \id_{L^2_+(M)}$ together with
$$
\Lambda_+^{-1} \Lambda_0 u = \Pi_+ u \quad \text{for all } u \in H^1(M) , \qquad 
\Lambda_0 \Lambda_+^{-1} v = v \quad \text{for all } v \in L^2_+(M) . 
$$
We now define 
\begin{align}
\label{e:+-dep-norm}
\nor{u}{H^s_+(M)} := \nor{\Lambda_+^s u}{L^2(M)} , \quad s\in \R ,
\end{align}
which is a norm on $H^s_+(M)$ (whereas $\nor{\Lambda_0^su}{L^2(M)}$ is only a seminorm on $H^s$). We also consider 
$$
 \nor{(u,v)}{H^\sigma_+(M)\times H^s(M)}^2 :=  \nor{\Lambda_+^\sigma u}{L^2(M)}^2 +  \nor{ (1+ \Lambda_0)^s v}{L^2(M)}^2 ,
$$
which is a norm on $H^{\sigma}_+(M)\times H^s(M)$. In particular, the norm in $H^1_+(M)\times L^2(M)$ is $ \nor{(u,v)}{H^1_+(M)\times L^2(M)}^2 =  \nor{\Lambda_+ u}{L^2(M)}^2 +  \nor{v}{L^2(M)}^2$ and we use the associated inner product $(\cdot ,\cdot)_{H^1_+\times L^2}$.

\medskip
We consider the following operators:
\begin{align}
\bA & = 
\begin{pmatrix}
0   &  \id  \\
\Delta & - b
\end{pmatrix} ,
 \text{ acting on } H^1(M)\times L^2(M) , \text{ with domain }
D(\bA) =  H^2(M)\times H^1(M) , \\
\A & = 
\begin{pmatrix}
0   &  \Pi_+ \\
\Delta & - b
\end{pmatrix} ,
 \text{ acting on } H^1(M)\times L^2(M) , \text{ with domain }
D(\A) =  H^2(M)\times H^1(M) , \\
\tilde{\A} & = 
\begin{pmatrix}
0   &  \Lambda_0\\
-\Lambda_0 & - b
\end{pmatrix} ,
 \text{ acting on } L^2(M;\C^2) , \text{ with domain }
D(\tilde{\A}) =  H^1(M;\C^2) , \\
\label{e:def-P}
\P & = \Lambda_0
\begin{pmatrix}
1   &  0\\
0 & - 1
\end{pmatrix} + 
i \frac{b}{2}
\begin{pmatrix}
1   &  1\\
1 &  1
\end{pmatrix} ,
 \text{ acting on } L^2(M;\C^2) , \text{ with domain }
D(\P) =  H^1(M;\C^2) .
\end{align}
Remark that the interest of $\tilde{\A}$ is that, as far as the energy $E$ in~\eqref{e:def-energy} is concerned, the damped wave equation is an equation on the couple $(\Lambda_0 u, \d_t u)$ only. The interest of $\P$ is that under this form, the damped wave equation~\eqref{e:DWE} formulates as a strictly hyperbolic problem.
We first make the following remark.
\begin{lemma}
\label{l:ker-op}
Assume that $b=0$. Then 
$$
\ker \bA = \vect\{(e_0,0)\} , \quad  \ker \A= \ker \tilde\A = \ker \P = \vect\{(e_0,0) , (0,e_0)\}  . 
$$
Assume $b\in L^\infty(M)$, $b\geq0$ and $b$ does not vanish identically (or equivalently $\{b>0\}$ has positive measure). Then
$$
\ker \bA = \ker \A= \vect\{(e_0,0)\} , \quad \ker \tilde\A = \vect\{(e_0,0)\} , \quad \ker \P = \vect\{(e_0,-e_0)\} .
$$
\end{lemma}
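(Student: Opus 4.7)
The proof is essentially a componentwise calculation using the characterizations $\ker \Delta = \ker \Lambda_0 = \vect\{e_0\}$ and $\Range(\Delta)\subset L^2_+$, combined with an orthogonality-against-$e_0$ trick to handle $b$.

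\textbf{Case $b=0$.} I would expand each operator on a pair $(u,v)\in D$ and set each component to zero:
\begin{itemize}
\item $\bA(u,v)=(v,\Delta u)=0$ forces $v=0$ and $u\in \ker\Delta=\vect\{e_0\}$, giving $\ker\bA=\vect\{(e_0,0)\}$.
\item $\A(u,v)=(\Pi_+v,\Delta u)=0$ forces $\Delta u=0$ and $\Pi_+v=0$, so $u,v\in \vect\{e_0\}$ independently.
\item For $\tilde\A$ and $\P$, the off-diagonal blocks (up to signs) are just $\Lambda_0$, so vanishing forces $u,v\in\ker\Lambda_0=\vect\{e_0\}$ independently. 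The middle column-vector $(e_0,0),(0,e_0)$ assertion follows.
\end{itemize}

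\textbf{Case $b\neq 0$, the operators $\bA,\A,\tilde\A$.} The recurring mechanism is the following. Setting one component of the image to zero produces an equation of the form $(\text{something in }L^2_+)=cb$ with $c$ a constant arising from $v=ce_0\in\ker\Lambda_0$ (or $\in\ker\Pi_+$). Pairing with $e_0$ then yields $c\int_M b\,dV_g=0$, which by the hypothesis that $b\geq 0$ is nontrivial forces $c=0$. Concretely:
\begin{itemize}
\item For $\bA$: $\bA(u,v)=0$ gives $v=0$ and $\Delta u=0$, so $\ker\bA=\vect\{(e_0,0)\}$ with no use of $b$.
\item For $\A$: $\Pi_+v=0$ yields $v=ce_0$; then $\Delta u=bv=c\,\Vol_g(M)^{-1/2}b$; since $\Delta u\in\Range(\Delta)\subset L^2_+$, integrating against the constant $e_0$ gives $c\int b=0$, hence $c=0$, $v=0$, $u\in\ker\Delta$.
\item For $\tilde\A$: $\Lambda_0 v=0$ gives $v=ce_0$; then $\Lambda_0 u=-bv$, and since $\Lambda_0 u=\Pi_+\Lambda_0 u\in L^2_+$, the same argument forces $c=0$, then $u\in\ker\Lambda_0=\vect\{e_0\}$, $v=0$.
\end{itemize}

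\textbf{Case $b\neq 0$, the operator $\P$.} Writing $\P(u,v)=0$ as the two scalar equations $\Lambda_0 u+i\tfrac{b}{2}(u+v)=0$ and $-\Lambda_0 v+i\tfrac{b}{2}(u+v)=0$, I subtract to get $\Lambda_0(u+v)=0$, so $u+v=ce_0$ for some $c\in\C$. Substituting into the first equation yields
\[
\Lambda_0 u=-i\tfrac{c}{2}\,\Vol_g(M)^{-1/2}\,b.
\]
Again $\Lambda_0 u\in L^2_+$, so pairing with $e_0$ gives $c\int b=0$, whence $c=0$ and $v=-u$. Feeding $u+v=0$ back into either equation gives $\Lambda_0 u=0$, i.e.\ $u=\alpha e_0$, and $v=-\alpha e_0$. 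Thus $\ker\P=\vect\{(e_0,-e_0)\}$.

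\textbf{Main obstacle.} There is no genuine difficulty; the only point requiring a sentence of care is the identification $\Range(\Delta)=L^2_+$ (equivalently $\Lambda_0 H^1(M)=L^2_+(M)$), which was already recorded in Section~\ref{s:KG-waves}, together with the observation that $\int_M b\,dV_g>0$ whenever $b\geq 0$ is nontrivial.
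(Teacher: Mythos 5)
Your proof is correct and follows essentially the same route as the paper: spectral decomposition of $\Delta$ and $\Lambda_0$ for the case $b=0$, and for $b\not\equiv 0$ the same mechanism of reducing the kernel condition to $v=c e_0$ and pairing the remaining equation against the constant $e_0$ to force $c\int_M b =0$, hence $c=0$. The only (inessential) difference is the treatment of $\ker\P$: the paper reduces it to $\ker\tilde{\A}$ via the substitution $u_0=u_+-u_-$, $u_1=i(u_++u_-)$, whereas you subtract the two scalar equations and rerun the same $e_0$-orthogonality argument directly, which is equally valid.
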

\bnp
The first part follows from the spectral decomposition of $\Delta,\Lambda_0$. Then, in case $b\geq0$ does not vanish identically, we have $\ker \bA = \{(u_0,u_1), u_1=0, \Delta u_0=0 \}= \vect\{(e_0,0)\}$. Taking now $(u_0,u_0) \in \ker \A$ is equivalent to $\Pi_+ u_1=0$ and $\Delta u_0-bu_1 = 0$, which in turn is equivalent to the existence of $\alpha \in \C$ such that $u_1=\alpha e_0$ and $\Delta u_0-\alpha b e_0 = 0$. Taking the inner product of this last identity with $e_0$ and using that $\Delta e_0 = 0$ implies $\alpha (b e_0, e_0)_{L^2(M)} = 0$. Since $e_0$ is constant on $M$ and $\int_M b(x) dx \neq 0$, we deduce that $\alpha =0$, and then $u_0 \in \ker \Delta$. Conversely $(e_0,0) \in \ker \A$. The same proof works for $\ker \tilde\A$. 
Finally, writing $u_0 = u_+-u_-$ and $u_1=i(u_++u_-)$, one finds that $(u_+,u_-) \in \ker \P$ if and only if $(u_0,u_1) \in \ker \tilde\A= \vect\{(e_0,0)\}$, that is to say $(u_+,u_-)\in \vect\{(e_0,-e_0)\}$.
\enp
We are thus led to consider restriction of these operators to a well-chosen supplementary subspace to (respectively) $\ker \A, \ker \tilde\A , \ker \P$.
Note that $\A \big(H^2_+(M)\times H^1(M) \big) \subset H^1_+(M)\times L^2(M)$ and $\tilde{\A} \big(H^1_+(M)\times H^1(M) \big) \subset L^2_+(M)\times L^2(M)$. Similarly, setting
\begin{align*}
\H^s_+  =  \{(u_+,u_-)\in H^s(M;\C^2), \Pi_0 u_+ = \Pi_0 u_-\} =\Sigma\left(H^s_+\times H^s\right) , \qquad \L^2_+  = \H^0_+ ,
\end{align*}
we remark  that we have 
$$
\begin{pmatrix}
\Pi_0   &  0\\
0  &  \Pi_0
\end{pmatrix} 
 \P
\begin{pmatrix}
u_+\\
u_-
\end{pmatrix}
=
\frac{i}{2} \begin{pmatrix}
\Pi_0 \big(b(u_+ + u_-) \big)\\
\Pi_0 \big(b(u_+ + u_-) \big)
\end{pmatrix} ,
$$
so that $\P$ may be restricted as a map $\H^1_+ \to \L^2_+$.
We endow $\L^2_+$ with the usual $L^2(M;\C^2)$ norm.
We may thus define the following restrictions:
\begin{align}
\label{e:def-A+}
\A_+  & = 
\begin{pmatrix}
0   &  \Pi_+ \\
-\Lambda_+^2 & - b
\end{pmatrix} ,
 \text{ acting on } H^1_+(M)\times L^2(M) , \text{ with domain }
D(\A_+) =  H^2_+(M)\times H^1(M) , \\
\label{e:def-tA+}
\tilde{\A}_+ & = 
\begin{pmatrix}
0   &  \Lambda_0\\
-\Lambda_+ & - b
\end{pmatrix} ,
 \text{ acting on } L^2_+(M)\times L^2(M) , \text{ with domain }
D(\tilde{\A}_+) =  H^1_+(M)\times H^1(M), \\
\label{e:def-P+}
\P_+ & = \Lambda_0
\begin{pmatrix}
1   &  0\\
0 & - 1
\end{pmatrix} + 
i \frac{b}{2}
\begin{pmatrix}
1   &  1\\
1 &  1
\end{pmatrix} ,
 \text{ acting on } \L^2_+ , \text{ with domain }
D(\P_+) =  \H^1_+ ,
\end{align}

The above operators are linked via the following elementary lemma (Analogue of Lemma~\ref{l:equiv-operators-m} adapted to the damped wave equation).

\begin{lemma}[Links between $\A_+,\tilde{\A}_+,\P_+$]
\label{l:equiv-operators}
The operator 
\begin{align}
\label{e:def-L+}
L_+
 := \begin{pmatrix}
\Lambda_+   & 0\\
0  &  \id
\end{pmatrix}
\end{align}
is an isomorphic isometry $H^{s+1}_+(M)\times H^s(M) \to H^s_+(M) \times H^s(M)$, endowed with the norms~\eqref{e:+-dep-norm} and we have 
$$
\tilde{\A}_+ = 
L_+
 \A_+ L_+^{-1}, 
 \qquad L_+^{-1} = 
 \begin{pmatrix}
\Lambda_+^{-1}   & 0\\
0  &  \id
\end{pmatrix} .
$$
The map $U\mapsto \Sigma U$ defined in~\eqref{e:def-sigma} is an isomorphism $H^s_+(M)\times H^s(M) \to \H^s_+$, an isometry $L^2_+(M)\times L^2(M) \to \L^2_+$ (with the norms~\eqref{e:+-dep-norm}),  
and we have 
$$
i\P_+ =\Sigma\tilde{\A}_+ \Sigma^{-1}.
$$
\end{lemma}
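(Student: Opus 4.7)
The plan is to treat the two operators $L_+$ and $\Sigma$ separately, establishing first their mapping properties between the relevant spaces, and then verifying the conjugation relations by direct matrix multiplication. The proof is essentially an algebraic/bookkeeping exercise; most of the technical content is already encoded in Lemma~\ref{l:equiv-operators-m} and in the spectral definitions of $\Lambda_0, \Lambda_+, \Pi_+$ preceding the statement.

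For $L_+$, I would begin by noting that $\Lambda_+ : H^{s+1}_+(M)\to H^s_+(M)$ is an isomorphic isometry with respect to the norms \eqref{e:+-dep-norm}. This is immediate from the functional calculus definitions $\Lambda_+ \Lambda_+^{-1} = \Lambda_+^{-1}\Lambda_+ = \id_{L^2_+(M)}$ and the identity $\|\Lambda_+^s (\Lambda_+ u)\|_{L^2} = \|\Lambda_+^{s+1} u\|_{L^2}$, combined with the fact that $\Lambda_+$ commutes with $\Pi_+$. Since $L_+$ is block-diagonal with entries $\Lambda_+$ and $\id$, it is then an isomorphic isometry of the product spaces, with the announced inverse. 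For the identity $\tilde{\A}_+ = L_+ \A_+ L_+^{-1}$, I expand directly; the only nontrivial point is that the top-right entry of the resulting matrix is $\Lambda_+ \Pi_+$, and this equals $\Lambda_0$ on $H^1(M)$ because $\Lambda_0$ annihilates $\vect(e_0) = \ker \Lambda_0$, so that $\Lambda_0 u = \Lambda_0 \Pi_+ u = \Lambda_+ \Pi_+ u$ for every $u\in H^1(M)$.

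For $\Sigma$, unitarity on $L^2(M;\C^2)$ is already recorded in Lemma~\ref{l:equiv-operators-m}, so the $L^2$-isometry property between $L^2_+(M)\times L^2(M)$ and $\L^2_+$ reduces to checking the mapping property. Taking $U = \transp(u_0,u_1)$ and computing $\Sigma U = \tfrac{1}{\sqrt{2}} \transp(u_0 - iu_1, -u_0 - i u_1)$, the membership $\Sigma U \in \H^s_+$ amounts to $\Pi_0(u_0 - iu_1) = \Pi_0(-u_0 - iu_1)$, i.e.\ $\Pi_0 u_0 = 0$, which is precisely the defining condition of $H^s_+(M)$. The same verification using $\Sigma^{-1}$ from \eqref{e:def-sigma} and the condition $\Pi_0 v_+ = \Pi_0 v_-$ gives the reverse inclusion, hence the bijectivity onto $\H^s_+$ in the $H^s$ scale. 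The identity $i\P_+ = \Sigma \tilde{\A}_+ \Sigma^{-1}$ is then obtained by the same matrix computation as in Lemma~\ref{l:equiv-operators-m}, using once more $\Lambda_0 = \Lambda_+ \Pi_+$ whenever $\Lambda_+$ needs to be interpreted on vectors with a nontrivial $\Pi_0$-component.

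The main obstacle, such as it is, lies in keeping track of domains: $\Lambda_0$ is defined on $H^1(M)$ with nontrivial kernel, whereas $\Lambda_+$ is an honest isomorphism on the $\Pi_+$-range, and the operators $\A_+, \tilde{\A}_+, \P_+$ mix elements belonging to different subspaces (some projected, some not). Once the identity $\Lambda_0 = \Lambda_+ \Pi_+$ on $H^1(M)$ is invoked at each step, all matrix products reduce to the same algebra as in the Klein-Gordon case of Lemma~\ref{l:equiv-operators-m}, and the proof concludes.
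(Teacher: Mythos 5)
Your proposal is correct and follows essentially the same route as the paper: the paper treats the lemma as an elementary algebraic verification (its only explicit remark is that the $\Sigma$-isometry on $L^2_+(M)\times L^2(M)\to\L^2_+$ follows from the parallelogram identity, since these spaces carry the plain $L^2$ norms), and your detailed checks --- the identity $\Lambda_0=\Lambda_+\Pi_+$ on $H^1(M)$, the block computation for $L_+\A_+L_+^{-1}$, and the equivalence $\Sigma U\in\H^s_+\iff\Pi_0 u_0=0$ --- are exactly the implicit steps, with your appeal to the unitarity of $\Sigma$ from Lemma~\ref{l:equiv-operators-m} being equivalent to the paper's parallelogram argument. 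No gaps.
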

\bnp
That $\Sigma : L^2_+(M)\times L^2(M) \to \L^2_+$ is an isometry follows from the parallelogram identity together with the fact that these spaces are endowed with usual $L^2$ norms.
\enp
The interest of this lemma is to reformulate the damped wave equation as a first order hyperbolic system with diagonal principal part.
The following corollary (analogue of Corollary~\ref{corollary-Pm-Am-reso} for the damped wave equation) is a direct consequence of Lemma~\ref{l:equiv-operators}  (recall again that $M$ is connected).
\begin{corollary}[Properties of $\A_+,\tilde{\A}_+,\P_+$]
\label{c:prop-op-+}
The following results hold:
\begin{enumerate}
\item 
The operators $\A_+,\tilde{\A}_+,\P_+$ have compact resolvents
$$
(z\id - \tilde{\A}_+)^{-1} = L_+ 
 (z\id - \A_+)^{-1} 
L_+^{-1}, \quad  (z\id  - i\P_+)^{-1} = \Sigma (z\id - \tilde{\A}_+)^{-1} \Sigma^{-1} .
$$
 Moreover, $\Sp(\A_+)=\Sp(\tilde{\A}_+) = i \Sp(\P_+)$ consists in eigenvalues with finite multiplicity, accumulating only at infinity.
\item \label{i:egal-resol+}
For all $z\in \C$, we have 
$$
\nor{(z\id - \A_+)^{-1}}{\L\big(H^1_+ \times L^2\big)} = 
\nor{(z\id - \tilde{\A}_+)^{-1}}{\L\big(L^2(M;\C^2)\big)} = \nor{(z\id  - i\P_+)^{-1}}{\L\big(L^2(M;\C^2)\big)} , 
$$
(with equality in $(0,+\infty]$). Note that $\nor{\cdot}{\L^2_+} =\nor{\cdot}{L^2(M;\C^2)}$ so that $\nor{\cdot}{\L(\L^2_+)} = \nor{\cdot}{\L\big(L^2(M;\C^2)\big)}$.
\item For all $U=(u_0,u_1)\in H^2_+\times H^1$ and $\tilde{U}=L_+U = (\Lambda_+ u_0,u_1)$ we have $$(\A_+ U,U)_{H^1_+\times L^2} = (\tilde{\A}_+ \tilde{U},\tilde{U})_{L^2(M;\C^2)} =2i\Im(\Lambda_0 u_1,\Lambda_+u_0)_{L^2_+} -(b u_1,u_1)_{L^2(M)}.$$ 
In particular (assuming $b\geq 0$ \ae on $M$), $\Sp(\A_+)=\Sp(\tilde{\A}_+)\subset \{z \in \C ,- \nor{b}{L^\infty(M)}  \leq \Re(z)\leq 0\}$ and 
$\Sp(\P_+) \subset  \{z \in \C ,0 \leq \Im(z) \leq \nor{b}{L^\infty(M)} \}$.
\item \label{i:loc-spec+}  Assuming that $b\geq 0$ on $M$ and that $b$ does not vanish identically (that is to say, $\{b>0\}$ has positive measure), we have 
\begin{align*}
\Sp(\A_+)=\Sp(\tilde{\A}_+) &\subset \left\{z \in \C ,- \frac12 \nor{b}{L^\infty(M)}  \leq \Re(z) <  0 \right\} \cup \left\{r \in \R ,-  \nor{b}{L^\infty(M)} \leq r <  0 \right\}  , \\
\Sp(\P_+) &\subset \left\{z \in \C , 0< \Im(z) \leq  \frac12 \nor{b}{L^\infty(M)} \right\} \cup \left\{ i s  ,s\in \R, 0<s \leq \nor{b}{L^\infty(M)} \right\}  ,
\end{align*}
and in particular, $\Sp(\A_+) \cap i\R = \emptyset$, $\Sp(\P_+)\cap \R=\emptyset$.
\item The operators $\A_+,\tilde{\A}_+$ and $i \P_+$ generate groups of evolution that are contraction semigroups (on their respective Hilbert space of definition)
 denoted $(e^{t\A_+})_{t\in \R},(e^{t\tilde{\A}_+})_{t\in \R}$ and $(e^{it\P_+})_{t\in \R}$, and we have 
$$
e^{t \tilde{\A}_+} = L_+
e^{t \A_+} 
L_+^{-1}, 
\quad \text{ and } \quad 
e^{it\P_+}=\Sigma e^{t\tilde{\A}_+}\Sigma^{-1}.
$$
\item \label{i:++enrgy} For all $U_0= (u_0,u_1) \in H^1_+(M)\times L^2(M)$, there is a unique solution to~\eqref{e:KG-eq} in $C^0(\R; H^1_+(M))\cap C^1(\R;L^2(M))$ and we have $(u,\d_t u)(t)= e^{t\A_+}U_0$, together with 
$$
E(u(t)) = \frac12 \nor{e^{t\A_+}U_0}{H^1_+\times L^2}^2 = \frac12 \nor{e^{t\tilde{\A}_+}L_+U_0}{L^2(M;\C^2)}^2 = \frac12 \nor{e^{i t\P_+}\Sigma L_+ U_0}{L^2(M;\C^2)}^2
$$
\end{enumerate}
\end{corollary}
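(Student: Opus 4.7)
The plan is to mirror the proof of Corollary~\ref{corollary-Pm-Am-reso} in the Klein--Gordon setting, transferring every statement about $\tilde{\A}_+$ and $\P_+$ to $\A_+$ through the isometric conjugations established in Lemma~\ref{l:equiv-operators}. Since $L_+:H^{s+1}_+\times H^s\to H^s_+\times H^s$ and $\Sigma:L^2_+\times L^2\to\L^2_+$ are isomorphic isometries intertwining $\tilde{\A}_+=L_+\A_+L_+^{-1}$ and $i\P_+=\Sigma\tilde{\A}_+\Sigma^{-1}$, the identities for spectra, resolvents, semigroups and their norms (Items~1, 2 and 5) are pure conjugation arguments. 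The compactness of the resolvent of $\A_+$ reduces to the Rellich embedding $H^2_+\times H^1\hookrightarrow H^1_+\times L^2$, and existence of $(z\id-\A_+)^{-1}$ for some (hence all admissible) $z$ will follow from Item~3 via the Lumer--Phillips theorem.

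For Item~3, I would compute $(\A_+ U,U)_{H^1_+\times L^2}$ explicitly for $U=(u_0,u_1)\in H^2_+\times H^1$:
\bna
(\A_+ U,U)_{H^1_+\times L^2}=(\Lambda_+\Pi_+ u_1,\Lambda_+ u_0)_{L^2}-(\Lambda_+^2 u_0,u_1)_{L^2}-(bu_1,u_1)_{L^2},
\ena
then use $\Lambda_+\Pi_+ u_1=\Lambda_0 u_1$ together with the selfadjointness of $\Lambda_+$ on $L^2_+$ to combine the first two summands into $2i\Im(\Lambda_0 u_1,\Lambda_+ u_0)_{L^2_+}$. The corresponding identity for $(\tilde{\A}_+\tilde U,\tilde U)_{L^2(M;\C^2)}$ follows from $L_+$ being an isometry. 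Taking real parts yields the dissipativity $\Re(\A_+ U,U)=-(bu_1,u_1)_{L^2}\le 0$, and the two-sided bound $-\|b\|_{L^\infty}\|U\|^2\leq\Re(\A_+ U,U)\leq 0$ gives the coarse spectral localization for $\A_+$; the one for $\P_+$ follows by $-i\Sigma\cdot\Sigma^{-1}$ conjugation.

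The delicate point is Item~4: showing $\Sp(\A_+)\cap i\R=\emptyset$ when $b\ge 0$ does not vanish identically. I would argue by contradiction: if $\A_+ U=isU$ with $s\in\R$ and $U=(u_0,u_1)\neq 0$, the real part of the inner product identity forces $(bu_1,u_1)=0$, hence $u_1\equiv 0$ on $\omega_b$ and $bu_1\equiv 0$ on $M$. The eigenvalue system then reduces to $\Pi_+ u_1=isu_0$ and $-\Lambda_+^2 u_0=isu_1$; the left-hand side of the second equation lies in $L^2_+$, which for $s\ne 0$ forces $\Pi_0 u_1=0$, so that $u_1\in H^1_+$ satisfies $-\Delta u_1=s^2 u_1$ and vanishes on the open set $\omega_b$. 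The unique continuation principle of~\cite{HS:89} (cf.~\cite[Lemma~4.2]{AL:14}) then gives $u_1=0$, whence $u_0=0$, a contradiction. The case $s=0$ uses that $\Pi_+ u_1=0$ combined with $u_1|_{\omega_b}=0$ forces $u_1=\alpha e_0$ with $\alpha=0$ since the constant $e_0$ is nonzero on $\omega_b$; equivalently, Lemma~\ref{l:ker-op} already ensures $\ker\A_+=\{0\}$ once one has restricted to $H^1_+\times L^2$. The refinement $\Re z<-\tfrac12\|b\|_{L^\infty}$ off the real axis is obtained exactly as in Corollary~\ref{corollary-Pm-Am-reso}(\ref{i:refined-loc}), and transfers to $\P_+$ by conjugation.

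Item~6 then combines Item~5 with the definition $E(u(t))=\tfrac12\|(u,\d_t u)(t)\|_{H^1_+\times L^2}^2$ and the fact that $L_+$ and $\Sigma$ are isometries on the relevant spaces. The only real obstacle in the whole argument is the bookkeeping around the $\Pi_0$--component when checking Item~4: one must carefully track the difference between $\Lambda_0$ (which annihilates constants) and $\Lambda_+$ (which does not), so as to extract from an eigenvector of $\A_+$ in $H^1_+\times L^2$ a genuine $L^2$ eigenfunction of $-\Delta$ to which the unique continuation theorem of~\cite{HS:89} may be applied.
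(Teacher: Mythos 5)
Your proposal is correct and takes essentially the same route as the paper, which likewise deduces everything from the conjugations in Lemma~\ref{l:equiv-operators} (mirroring Corollary~\ref{corollary-Pm-Am-reso}) and handles Item~\ref{i:loc-spec+} via the unique continuation result of~\cite{HS:89} together with the observation that $0\notin\Sp(\Lambda_+)$; your extra care with the $\Pi_0$-component in the eigenvalue equation is exactly the right bookkeeping. The only slip is your restatement ``$\Re z<-\tfrac12\nor{b}{L^\infty(M)}$ off the real axis'', which should read $-\tfrac12\nor{b}{L^\infty(M)}\leq\Re z<0$; since you defer that step to Corollary~\ref{corollary-Pm-Am-reso}~(\ref{i:refined-loc}) anyway, it does not affect the argument.
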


\bnp
The proof is the same as that of Corollary~\ref{corollary-Pm-Am-reso} and most items come from Lemma~\ref{l:equiv-operators}.
For the refined localization of the spectrum of Item~\ref{i:loc-spec+}, one uses again unique continuation~\cite{HS:89} and 
the fact that $0\notin \Sp(\A_+)$ comes from the fact that $0\notin \Sp(\Lambda_+)$ (by definition).
\enp

\begin{lemma}[Links between $\A,\tilde{\A},\P$ and $\A_+,\tilde{\A}_+,\P_+$]
\label{l:link-+ou-pas}
For any $b \in L^\infty(M;\R_+)$, we have 
\begin{enumerate}
\item $\A_+= \A|_{H^1_+\times L^2}$, $\tilde{\A}_+= \tilde{\A}|_{L^2_+\times L^2}$, and $\P_+ = \P|_{\L^2_+}$, and 
\begin{align}
\A & =\A_+ \begin{pmatrix}
\Pi_+ & 0\\
0& \id
\end{pmatrix}+ \begin{pmatrix}
\Pi_0 & 0\\
0& 0
\end{pmatrix} , \quad 
\tilde{\A} =\tilde{\A}_+ \begin{pmatrix}
\Pi_+ & 0\\
0& \id
\end{pmatrix}+ \begin{pmatrix}
\Pi_0 & 0\\
0& 0
\end{pmatrix} ,  \nonumber
\\
\label{e:link-P-P+}
\P & =\P_+\Pi_{\L^2_+}
+
\frac{\Pi_0}{2} \begin{pmatrix}
1& -1\\
-1& 1
\end{pmatrix}
, \quad \text{ where }
\Pi_{\L^2_+} = 
 \begin{pmatrix}
\Pi_+ & 0\\
0& \Pi_+
\end{pmatrix}
+  \frac{\Pi_0}{2} \begin{pmatrix}
1& 1\\
1& 1
\end{pmatrix}.
\end{align}
\item \label{i:commutat-0} $\left[ \A , \begin{pmatrix}
\Pi_+ & 0\\
0& \id
\end{pmatrix} \right] = 0$, $\left[ \tilde{\A} , \begin{pmatrix}
\Pi_+ & 0\\
0& \id
\end{pmatrix} \right] = 0$, and $[\P,\Pi_{\L^2_+}]=0$.

\item \label{i:resolvents-+ou-pas}
$\Sp(\A) = \Sp(\tilde{\A}) = i \Sp(\P) = \Sp(\A_+)\cup \{0\}= i\Sp(\P_+)\cup \{0\}$ and 
\begin{align*}
(\lambda \id -\A)^{-1}
& =
(\lambda \id -\A_+)^{-1}
 \begin{pmatrix}
\Pi_+ & 0\\
0& \id
\end{pmatrix}  + \lambda^{-1}  \begin{pmatrix}
\Pi_0 & 0\\
0& 0
\end{pmatrix} ,   \\
(\lambda \id -\tilde{\A})^{-1}
& =
(\lambda \id - \tilde{\A}_+)^{-1}
 \begin{pmatrix}
\Pi_+ & 0\\
0& \id
\end{pmatrix}  + \lambda^{-1}  \begin{pmatrix}
\Pi_0 & 0\\
0& 0
\end{pmatrix} , \\
(\lambda \id - \P)^{-1} & = (\lambda \id - \P_+)^{-1} \Pi_{\L^2_+}
+ \lambda^{-1}
\frac{\Pi_0}{2} \begin{pmatrix}
1& -1\\
-1& 1
\end{pmatrix} .
\end{align*}

\item  \label{i:++A++A+}The operators $\A,\tilde{\A}$ and $i \P$ generate groups of evolution that are bounded semigroups denoted $(e^{t\A})_{t\in \R_+},(e^{t\tilde{\A}})_{t\in \R_+}$ and $(e^{it\P})_{t\in \R_+}$, and we have 
\begin{align*}
e^{t \A} & = e^{t \A_+}
\begin{pmatrix}
\Pi_+ & 0\\
0& \id
\end{pmatrix}+ \begin{pmatrix}
\Pi_0 & 0\\
0& 0
\end{pmatrix}
, \quad 
e^{t \tilde{\A}}
= 
e^{t \tilde{\A}_+}
\begin{pmatrix}
\Pi_+ & 0\\
0& \id
\end{pmatrix}+ \begin{pmatrix}
\Pi_0 & 0\\
0& 0
\end{pmatrix} , \\
e^{it\P} & = e^{it\P_+}\Pi_{\L^2_+}
+
\frac{\Pi_0}{2} \begin{pmatrix}
1& -1\\
-1& 1
\end{pmatrix} .
\end{align*}
\end{enumerate}

\end{lemma}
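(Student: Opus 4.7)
The lemma is purely algebraic; I would establish each item by direct computation using the orthogonal decomposition $\id = \Pi_0 + \Pi_+$ on $L^2(M)$, together with the identities $\Delta\Pi_0 = 0 = \Lambda_0\Pi_0$ (both because $\ker\Delta = \vect\{e_0\}$), whence $-\Delta = \Lambda_+^2\Pi_+$ and $\Lambda_0 = \Lambda_+\Pi_+$ on $H^2$. The content of these identities is that the $\Pi_0$-direction is annihilated by every differential operator appearing in $\A$, $\tilde\A$ and $\P$.

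For Item~1, substituting $u_0 = \Pi_+ u_0 + \Pi_0 u_0$ into $\A(u_0, u_1) = (\Pi_+ u_1, \Delta u_0 - b u_1)$ gives $\A(u_0, u_1) = (\Pi_+ u_1, -\Lambda_+^2 \Pi_+ u_0 - b u_1) = \A_+(\Pi_+ u_0, u_1)$, which yields both $\A_+ = \A|_{H^1_+\times L^2}$ and the global block-decomposition identity; the case of $\tilde\A$ is verbatim. For $\P$, I would first verify that $\Pi_{\L^2_+}$ as defined in~\eqref{e:link-P-P+} is the orthogonal projector onto $\L^2_+$, which reduces to showing $(\L^2_+)^\perp = \vect\{(e_0, -e_0)\}$ (a short computation from the defining condition $\Pi_0 u_+ = \Pi_0 u_-$) and then identifying the formula for the complementary projector. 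Since $\P(e_0, -e_0) = 0$ (because $\Lambda_0 e_0 = 0$ and $u_+ + u_- = 0$), decomposing an arbitrary vector along the invariant splitting $L^2(M;\C^2) = \L^2_+ \oplus \vect\{(e_0,-e_0)\}$ produces the stated formula for $\P$.

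Item~2 is then immediate from the block structure: $[\Pi_+, \Delta] = [\Pi_+, \Lambda_0] = 0$ and $\Pi_+$ trivially commutes with the identity/zero/multiplication-by-$b$ entries, while $[\Pi_{\L^2_+}, \P] = 0$ reduces to the $\P$-invariance of both $\L^2_+$ and $(\L^2_+)^\perp$ just checked. Item~3 is an algebraic consequence: the $\A$-invariant decomposition $H^1 \times L^2 = (H^1_+\times L^2) \oplus \vect\{(e_0, 0)\}$ reduces $(\lambda\id - \A)$ to a block operator whose diagonal entries are $(\lambda\id - \A_+)$ and $\lambda\id$ (the latter because $\A$ vanishes on $\vect\{(e_0,0)\}$), which gives the resolvent formula and $\Sp(\A) = \Sp(\A_+) \cup \{0\}$; the $\P$-case is identical with the splitting $\L^2_+ \oplus \vect\{(e_0, -e_0)\}$. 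Item~4 follows from Item~3 via the Hille--Yosida correspondence, or directly from the fact that the semigroup preserves each invariant subspace, equals $e^{t\A_+}$ on the first summand, and is the identity on the kernel direction (where the generator vanishes). The only point requiring care is the $\P$-case, where the invariant decomposition is not of product form and $\Pi_{\L^2_+}$ correspondingly involves non-diagonal $2\times 2$ blocks; everything else reduces to routine linear algebra once the key annihilation identities have been recorded.
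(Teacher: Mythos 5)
Your proposal is correct and follows essentially the same route as the paper's proof: direct computation from $\Delta\Pi_0=\Lambda_0\Pi_0=0$, identification of the invariant splitting $L^2(M;\C^2)=\L^2_+\oplus\vect\{(e_0,-e_0)\}$ (the paper encodes this as $\P\,\frac{\Pi_0}{2}\begin{pmatrix}1&-1\\-1&1\end{pmatrix}=0$, which is exactly your observation that $\P(e_0,-e_0)=0$ and that $\Pi_{\L^2_+}$ is the orthogonal projector onto $\L^2_+$), and then the resolvent and semigroup formulas are read off block by block from this splitting. One caveat: your computation in fact yields $\A=\A_+\begin{pmatrix}\Pi_+&0\\0&\id\end{pmatrix}$ and $\P=\P_+\Pi_{\L^2_+}$ with \emph{no} extra zeroth-mode term --- consistent with $\A(e_0,0)=0$ and $\P(e_0,-e_0)=0$ --- so the additional blocks $\begin{pmatrix}\Pi_0&0\\0&0\end{pmatrix}$ and $\frac{\Pi_0}{2}\begin{pmatrix}1&-1\\-1&1\end{pmatrix}$ appearing in the operator identities of Item~1 are a slip in the printed statement (they belong only in the resolvent and semigroup formulas of Items~3 and~4, where the zero generator on the kernel direction contributes $\lambda^{-1}$ and $\id$ respectively); you should say explicitly that you prove this corrected form rather than assert that your computation gives the displayed formula verbatim.
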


\bnp
That the first two commutators vanish in Item~\ref{i:commutat-0} is straightforward and the third one follows from the fact that $\Pi_{\L^2_+} + \frac{\Pi_0}{2} \begin{pmatrix}
1& -1\\
-1& 1
\end{pmatrix} = \id$ together with 
\begin{align*}
\P  \frac{\Pi_0}{2} 
\begin{pmatrix}
1& -1\\
-1& 1
\end{pmatrix}  
=
i\frac{b}{2} 
\begin{pmatrix}
1& 1\\
1& 1
\end{pmatrix}  
 \frac{\Pi_0}{2} 
\begin{pmatrix}
1& -1\\
-1& 1
\end{pmatrix}  
= 0 
= 
 \frac{\Pi_0}{2} 
\begin{pmatrix}
1& -1\\
-1& 1
\end{pmatrix}  
i\frac{b}{2} 
\begin{pmatrix}
1& 1\\
1& 1
\end{pmatrix} 
=  \frac{\Pi_0}{2} \begin{pmatrix}
1& -1\\
-1& 1
\end{pmatrix}  \P.
\end{align*}
Item~\ref{i:resolvents-+ou-pas} come from the fact that 
$$
(\lambda \id -\A)^{-1}
 \begin{pmatrix}
\Pi_+ & 0\\
0& \id
\end{pmatrix} 
=
(\lambda \id -\A_+)^{-1}
 \begin{pmatrix}
\Pi_+ & 0\\
0& \id
\end{pmatrix}  , \quad (\lambda \id -\A)^{-1}
 \begin{pmatrix}
\Pi_0 & 0\\
0& 0
\end{pmatrix} 
= \lambda^{-1} 
 \begin{pmatrix}
\Pi_0 & 0\\
0& 0
\end{pmatrix} .
$$
\enp

We also state the following in case $b=0$, which is an analogue of Item~\ref{i:fPm} in Corollary~\ref{corollary-Pm-Am-reso}, together with additional precision concerning the eigenfrequency $0$.
\begin{lemma}
Assume $b=0$ in~\eqref{e:def-P},~\eqref{e:def-A+},~\eqref{e:def-tA+} and~\eqref{e:def-P+}, then:
\begin{enumerate}
\item the resulting operators $A_+ = \begin{pmatrix}
0   &  \id  \\
-\Lambda_+^2 & 0
\end{pmatrix} $ and $\tilde{A}_+  = 
\begin{pmatrix}
0   &  \Lambda_0 \\
-\Lambda_+ & 0
\end{pmatrix}$ are skew-adjoint (when acting on their respective spaces, with their respective domains), and the operators
$P :=  \Lambda_0
\begin{pmatrix}
1   &  0\\
0 & - 1
\end{pmatrix}$ acting on $L^2(M;\C^2)$ and 
$P_+ :=  \Lambda_0
\begin{pmatrix}
1   &  0\\
0 & - 1
\end{pmatrix}$ 
acting  on $\L^2_+$ are selfadjoint.
\item \label{i:ker-ker} $\ker(\tilde{A}_+) = \ker(A_+)= \vect\{(0,e_0)\}$, $\ker(P_+)=  \vect\{(e_0,e_0)\}$ ;
\item \label{i:P-P+-funct} if $f \in C^0(\R)$ is such that $f(-s) = f(s)$ for all $s\in \R$, we have 
\begin{align*}
f(P) & = f(\Lambda_0) I_2 ,  \quad \text{ on } L^2(M;\C^2),  
\quad f(P_+) = f(\Lambda_+) I_2 + f(0)\Pi_0 I_2, \quad \text{ on } \L^2_+ ,\\
 f(P) & =  f(P_+) \Pi_{\L^2_+} + f(0) \frac{\Pi_0}{2} \begin{pmatrix}
1& -1\\
-1& 1
\end{pmatrix} .
\end{align*}
\end{enumerate}
\end{lemma}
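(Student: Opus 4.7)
The plan is to reduce everything to the spectral theory of the selfadjoint operator $\Lambda_0$ together with the unitary intertwinings $\Sigma$ and $L_+$ already established in Lemma~\ref{l:equiv-operators} (and Lemma~\ref{l:link-+ou-pas}), which identify $P$, $P_+$, $\tilde{A}_+$, $A_+$ up to isometry. The key observation is that the two Hilbert bases
$$ \big\{ (e_j,0) ,\ (0,e_j) \big\}_{j\geq 0} \quad \text{of } L^2(M;\C^2), \qquad \big\{ (e_j,0),(0,e_j)\big\}_{j\geq1} \cup \big\{(e_0,e_0)/\sqrt{2}\big\} \quad \text{of } \L^2_+ $$
diagonalize $P$ and $P_+$ respectively, with real eigenvalues $\pm\lambda_j$ (resp. $\pm\lambda_j$ for $j\geq 1$, together with the $0$-eigenvalue coming from the constant mode).

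For Item~1, I would first verify directly from these bases that $P$ is selfadjoint on $L^2(M;\C^2)$ with domain $H^1(M;\C^2)$. For $P_+$, I would check that $\L^2_+$ is stable under $P$ when $b=0$ (a one-line computation), which makes $P_+$ the restriction of the selfadjoint $P$ to an invariant closed subspace, hence selfadjoint. The skew-adjointness of $\tilde A_+$ and $A_+$ then follows from $i\P_+ = \Sigma \tilde A_+\Sigma^{-1}$ and $\tilde A_+ = L_+ A_+ L_+^{-1}$ together with the fact that $\Sigma$ is unitary on $L^2(M;\C^2)$ and $L_+$ is an isometry $H^{1}_+\times L^2 \to L^2_+\times L^2$ (Lemmas~\ref{l:equiv-operators} and~\ref{l:link-+ou-pas}).

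For Item~2, I would compute $\ker P_+$ directly: $P_+(u_+,u_-)=0$ forces $\Lambda_0 u_\pm =0$, so $u_\pm \in \C e_0$, and the constraint $\Pi_0 u_+ = \Pi_0 u_-$ defining $\L^2_+$ forces the two scalars to agree, giving $\vect\{(e_0,e_0)\}$. The kernels of $A_+$ and $\tilde A_+$ then follow by applying the isomorphisms $\Sigma$ and $L_+$: since $\Sigma(0,e_0) = \frac{1}{i\sqrt{2}}(e_0,e_0)$, we obtain $\ker \tilde A_+ = \ker A_+ = \vect\{(0,e_0)\}$ (one may alternatively redo the direct computation: $A_+(u_0,u_1)=0$ gives $\Lambda_+^2 u_0 =0$, and $\Lambda_+$ is injective on $L^2_+$ by construction).

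For Item~3, I would expand in the two Hilbert bases above and use the hypothesis $f(-s)=f(s)$ to merge the $\pm\lambda_j$ contributions. On $L^2(M;\C^2)$, summation over $j\geq 0$ immediately yields $f(P)=f(\Lambda_0)I_2$. On $\L^2_+$, the $j\geq 1$ part reconstructs $f(\Lambda_+)I_2$ (here $f(\Lambda_+)$ is extended by $0$ on $\vect(e_0)$), while the extra basis vector $(e_0,e_0)/\sqrt{2}$ contributes the $f(0)\Pi_0 I_2$ term (using the identity $(u_+,e_0)=(u_-,e_0)$ for vectors of $\L^2_+$). The third identity follows from the orthogonal decomposition $L^2(M;\C^2)=\L^2_+ \oplus \L^2_+^\perp$ with $\L^2_+^\perp = \vect\{(e_0,-e_0)\}\subset \ker P$ (a direct check) and with corresponding projectors $\Pi_{\L^2_+}$ and $\frac{\Pi_0}{2}\begin{pmatrix}1&-1\\-1&1\end{pmatrix}$; on the two summands $f(P)$ acts as $f(P_+)$ and as $f(0)\,\mathrm{Id}$ respectively.

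No single step is genuinely hard: the only delicate point is bookkeeping around the zero frequency $e_0$ and making sure that the convention ``$f(\Lambda_+)$ extended by $0$ on $\vect(e_0)$'' is applied consistently, so that when recombining the $\L^2_+$ and $\L^2_+^\perp$ components the $f(0)$ contribution appears in the correct place. Everything else is spectral theory of a selfadjoint operator plus the isometric reductions already recorded earlier in the section.
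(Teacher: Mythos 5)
Your proof is correct and takes essentially the same route as the paper: everything reduces to the spectral decomposition of $\Lambda_0$ in the basis $(e_j)$, the evenness of $f$, and the isometric intertwinings by $\Sigma$ and $L_+$ from Lemma~\ref{l:equiv-operators}. The only cosmetic difference is in the last two identities of Item~3, where the paper argues algebraically from~\eqref{e:link-P-P+} together with $f(\Lambda_0)=f(\Lambda_+)\Pi_+ + f(0)\Pi_0$, while you rederive $f(P_+)$ via an explicit orthonormal basis of $\L^2_+$ and the reducing decomposition $L^2(M;\C^2)=\L^2_+\oplus\vect\{(e_0,-e_0)\}$ — the same content in slightly different form.
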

Item~\ref{i:ker-ker} of the lemma has to be compared with Lemma~\ref{l:ker-op}.
\bnp
The proof of the first two items is classical. 
That $f(P) = f(\Lambda_0) I_2$ in Item~\ref{i:P-P+-funct} comes from the same computation as in the proof of Item~\ref{i:fPm} in Corollary~\ref{corollary-Pm-Am-reso}. The second and third statements in Item~\ref{i:P-P+-funct}  come from~\eqref{e:link-P-P+} together with $\id= \Pi_++\Pi_0$ and thus $f(\Lambda_0) = f(\Lambda_0) \Pi_++f(\Lambda_0)\Pi_0 = f(\Lambda_+)\Pi_+  +  f(0)\Pi_0$.
\enp

We finally come back to the operator $\bA$ which also naturally enters into the game.
\begin{lemma}[Links with $\bA$]
The following statements hold:
\begin{enumerate}
\item \label{iioo--iuu} For all $U=(u_0,u_1)\in H^2\times H^1$, we have
$$(\bA U,U)_{H^1\times L^2} = 2i\Im(\nabla u_1,\nabla u_0)_{L^2(M)} + ( u_1, u_0)_{L^2(M)} -(b u_1,u_1)_{L^2(M)}.$$  
\item \label{i:intertwin} On $H^2\times H^1$, we have
$$
 \begin{pmatrix}
\Pi_+ & 0\\
0& \id
\end{pmatrix}
\bA = \A = 
\A
 \begin{pmatrix}
\Pi_+ & 0\\
0& \id
\end{pmatrix}
= \A_+
 \begin{pmatrix}
\Pi_+ & 0\\
0& \id
\end{pmatrix} .
$$
\item  \label{i:inter-semi} The operator $\bA$ generates a group of evolution on $H^1\times L^2$, denoted $(e^{t\bA})_{t\in \R}$, satisfying
$$
 \begin{pmatrix}
\Pi_+ & 0\\
0& \id
\end{pmatrix}
e^{t\bA} = 
e^{t\A}
 \begin{pmatrix}
\Pi_+ & 0\\
0& \id
\end{pmatrix}
= e^{t\A_+}
 \begin{pmatrix}
\Pi_+ & 0\\
0& \id
\end{pmatrix} .
$$
\item For all $U_0= (u_0,u_1) \in H^1(M)\times L^2(M)$, there is a unique solution to~\eqref{e:DWE} in $C^0(\R; H^1(M))\cap C^1(\R;L^2(M))$ and we have $(u,\d_t u)(t)= e^{t\bA}U_0$, together with 
\begin{align*}
E (u(t))
&= E (e^{t\bA}U_0) =  E \left( e^{t\A}U_0^+ \right)
 =  \frac12 \nor{e^{t\A_+} U_0^+}{H^1_+\times L^2}^2 
= \frac12 \nor{e^{t\tilde{\A}_+}L_+U_0^+}{L^2(M;\C^2)}^2 \\
& = \frac12 \nor{e^{i t\P_+}\Sigma L_+ U_0^+}{L^2(M;\C^2)}^2 
= \frac12 \nor{e^{i t\P_+}\Sigma L U_0}{L^2(M;\C^2)}^2 
= \frac12 \nor{e^{i t\P}\Sigma L U_0}{L^2(M;\C^2)}^2 ,
\end{align*}
where $U_0^+ =  \begin{pmatrix}
\Pi_+ & 0\\
0& \id
\end{pmatrix} U_0 =  \begin{pmatrix}
\Pi_+u_0\\
u_1
\end{pmatrix}$ and $L=  \begin{pmatrix}
\Lambda_0 & 0\\
0 & \id
\end{pmatrix}$.
\end{enumerate}
\end{lemma}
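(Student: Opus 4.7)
For Item~\ref{iioo--iuu}, I would apply $\bA$ to $U=(u_0,u_1)$ and expand the inner product using $(v,w)_{H^1} = (\nabla v,\nabla w)_{L^2}+(v,w)_{L^2}$; integration by parts turns $(\Delta u_0,u_1)_{L^2}$ into $-(\nabla u_0,\nabla u_1)_{L^2}$. The cross terms then combine as $(\nabla u_1,\nabla u_0)_{L^2}-(\nabla u_0,\nabla u_1)_{L^2}=2i\Im(\nabla u_1,\nabla u_0)_{L^2}$, the $L^2$ piece of the $H^1$ inner product provides $(u_1,u_0)_{L^2}$, and the damping term yields $-(bu_1,u_1)_{L^2}$.

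For Item~\ref{i:intertwin}, everything reduces to $2\times 2$ operator matrix algebra once one observes $\Pi_+\Delta=\Delta\Pi_+=\Delta$ on $H^2$ (since $\Delta e_0=0$), and $-\Lambda_+^2\Pi_+=\Delta\Pi_+$ on $H^2$ (by the spectral definition of $\Lambda_+$). Each of the three claimed identities then follows by entry-by-entry comparison.

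For Item~\ref{i:inter-semi}, I would first argue that $\bA$ generates a $C^0$-group on $H^1\times L^2$: the free wave generator $\bigl(\begin{smallmatrix}0&\id\\ \Delta&0\end{smallmatrix}\bigr)$ does so (explicit solution by eigenfunction expansion, with the zero mode evolving as $c_0+tc_1$), and bounded perturbation by $\bigl(\begin{smallmatrix}0&0\\ 0&-b\end{smallmatrix}\bigr)$ preserves this property. The operator identity $\bigl(\begin{smallmatrix}\Pi_+&0\\ 0&\id\end{smallmatrix}\bigr)\bA=\A\bigl(\begin{smallmatrix}\Pi_+&0\\ 0&\id\end{smallmatrix}\bigr)$ from Item~\ref{i:intertwin} then lifts to the semigroups by the standard trick: differentiating $t\mapsto e^{-t\A}\bigl(\begin{smallmatrix}\Pi_+&0\\ 0&\id\end{smallmatrix}\bigr)e^{t\bA}$ on the dense core $D(\bA)$ yields zero, so the map is constant and equals its value at $t=0$. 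The second equality then follows directly from Lemma~\ref{l:link-+ou-pas}~\ref{i:++A++A+} combined with $\Pi_+^2=\Pi_+$ and $\Pi_0\Pi_+=0$.

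For the last item, existence, uniqueness and the identification $(u,\d_t u)(t)=e^{t\bA}U_0$ are standard consequences of Item~\ref{i:inter-semi}. The key observation for the energy is $\Lambda_0=\Lambda_+\Pi_+$, which yields $E(V)=\tfrac12\|(\Pi_+v_0,v_1)\|^2_{H^1_+\times L^2}$ for any $V=(v_0,v_1)\in H^1\times L^2$; applied at time $t$ together with Item~\ref{i:inter-semi} this gives $E(u(t))=\tfrac12\|e^{t\A_+}U_0^+\|^2_{H^1_+\times L^2}$. The $\tilde\A_+$ and $\P_+$ expressions then come from the isometries of Corollary~\ref{c:prop-op-+}, once one observes $L_+U_0^+=(\Lambda_+\Pi_+u_0,u_1)=(\Lambda_0u_0,u_1)=LU_0$. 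The hard part will be the very last equality $\|e^{it\P_+}\Sigma LU_0\|=\|e^{it\P}\Sigma LU_0\|$: this requires checking that $\Sigma LU_0\in\L^2_+$, which holds because both components of $\Sigma LU_0=\tfrac{1}{\sqrt 2}(\Lambda_0u_0-iu_1,-\Lambda_0u_0-iu_1)$ have $\Pi_0$-projection equal to $-\tfrac{i}{\sqrt 2}\Pi_0u_1$ (since $\Lambda_0u_0\perp e_0$). Once this is in hand, Lemma~\ref{l:link-+ou-pas}~\ref{i:++A++A+} forces $e^{it\P}\Sigma LU_0=e^{it\P_+}\Sigma LU_0$, closing the chain of equalities.
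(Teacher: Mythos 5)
Your proposal is correct and follows essentially the same route as the paper: Items 1--2 by direct computation, Item 3 by the standard intertwining argument together with Item~\ref{i:++A++A+} of Lemma~\ref{l:link-+ou-pas}, and Item 4 by chaining the isometries of Corollary~\ref{c:prop-op-+}. The only cosmetic deviations are that you obtain generation of the group by viewing $\bA$ as a bounded perturbation of the free wave generator (where the paper invokes Item 1 and a variant of Hille--Yosida) and that you differentiate $e^{-t\A}\bigl(\begin{smallmatrix}\Pi_+&0\\0&\id\end{smallmatrix}\bigr)e^{t\bA}$ instead of identifying $\bigl(\begin{smallmatrix}\Pi_+&0\\0&\id\end{smallmatrix}\bigr)e^{t\bA}U_0$ as a solution of $\d_t U=\A U$; both are equivalent, and your checks that $L_+U_0^+=LU_0$ and $\Sigma L U_0\in\L^2_+$ correctly fill in the details of Item 4 that the paper leaves implicit.
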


\begin{proof}
Items~\ref{iioo--iuu} and~\ref{i:intertwin}  are direct computations.
To prove Item~\ref{i:inter-semi}, first notice that $\bA$ generates a group of evolution from Item~\ref{iioo--iuu} and a variant of the Hille-Yosida theorem.
Then, setting $U(t) =  \begin{pmatrix}
\Pi_+ & 0\\
0& \id
\end{pmatrix} e^{t\bA} U_0$ for $U_0 \in D(\bA)$, we have $\d_t U(t) =  \begin{pmatrix}
\Pi_+ & 0\\
0& \id
\end{pmatrix} \bA  U(t)  = \A U(t)$
according to Item~\ref{i:intertwin}, that is to say $U(t) = e^{t\A}U_0$. 
Using Item~\ref{i:++A++A+} of Lemma~\ref{l:link-+ou-pas}, we deduce that 
 $U(t) =  \begin{pmatrix}
\Pi_+ & 0\\
0& \id
\end{pmatrix} U(t) =  \begin{pmatrix}
\Pi_+ & 0\\
0& \id
\end{pmatrix} e^{t\A}U_0 = e^{t\A} \begin{pmatrix}
\Pi_+ & 0\\
0& \id
\end{pmatrix} U_0$, which proves Item~\ref{i:inter-semi}.
\end{proof}

As a consequence of this lemma, studying decay rates of the energy for the damped wave equation~\eqref{e:DWE} is equivalent to study decay properties for the semigroups $e^{t\A_+}, e^{t\tilde{\A}_+}$ or $e^{it\P}, e^{it\P_+}$. From the point of view of applying results like the Gearhart-Huang-Pr\"uss theorem~\cite{Gearhart:78,Huang:85,Pruss:84}, the Batty-Duyckaerts Theorem~\ref{t:batty-duyckaerts}, or our Theorems~\ref{t:thm-classiq-A-0} and~\ref{t:semiclassic-intro}, the useful resolvents are thus those of $\A_+ ,\tilde{\A}_+,\P_+$.

Note however that on account to Item~\ref{i:resolvents-+ou-pas} in Lemma~\ref{l:link-+ou-pas}, the latter are equivalent to those of  $\A ,\tilde{\A},\P$ away from zero (and in particular near $\pm i\infty$, which is the main concern of the abovementioned theorems), when measured with the ``operator norms'' associated to the seminorms of $H^1_+\times L^2$, $L^2_+\times L^2$ and $\L^2_+$ respectively.

\bigskip
To conclude this section, let us give an additional property of the operator $\bA$. The latter is not needed for the rest of the paper, but clarifies the convergence of solutions of the damped wave equation as $t\to + \infty$, and makes a link with the slightly different approach in~\cite{AL:14}.
\begin{lemma}[Properties of $\bA$]
Assume $b\geq 0$ and $b$ does not vanish identically.
Then, we have $\ker(\bA)=  \vect(e_0,0)$ and the spectral projector of $\bA$ associated to the eigenvalue $0$ writes:
$$
 \Pi_{\ker\bA}
\left(
\begin{array}{c}
u_0 \\ u_1
\end{array}
\right) 
=\left(\frac{1}{\int_M b} \int_M (b(x)u_0(x) + u_1(x))d\Vol_g(x)\right)
\left(
\begin{array}{c}
1 \\ 0
\end{array}
\right) ,
$$
Moreover, there exists a constant $C>0$ such that for all $U = (u_0,u_1) \in H^1\times L^2$, we have
$$
2 E(U) \leq \nor{U-\Pi_{\ker\bA}U}{H^1\times L^2}^2 \leq 2 C E(U) ,
$$
that is to say 
\begin{align}
\label{e:equiv-norm-dw}
\|\nabla u_0\|_{L^2}^2 + \| u_1\|_{L^2}^2 \leq \nor{u_0 -\frac{1}{\int_M b} \int_M (b \  u_0 + u_1 )d\Vol_g}{H^1}^2 + \nor{u_1}{L^2}^2 \leq C \left( \|\nabla u_0\|_{L^2}^2 + \| u_1\|_{L^2}^2\right) .
\end{align}
Finally, we have 
\begin{align}
\label{e:decay-decomp}
e^{t\bA} = \Pi_{\ker\bA} + e^{t\bA} (\id_{H^1\times L^2} -  \Pi_{\ker\bA} )  
\end{align}
\end{lemma}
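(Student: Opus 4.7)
I would organize the proof around three parts: identifying the spectral projector, proving the norm equivalence, and deducing the semigroup decomposition.

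First, I would compute the kernel: $\bA U=0$ forces $u_1=0$ and $\Delta u_0=0$, so $\ker\bA = \vect\{(e_0,0)\} = \vect\{(1,0)\}$ (recall $e_0$ is a constant). Next I would verify that the candidate $\Pi U := c(U)(1,0)$ with $c(U) := \frac{1}{\int_M b}\int_M(bu_0+u_1)\,d\Vol_g$ is a projection: $\Pi^2=\Pi$ follows from $c((1,0))=1$. To show it coincides with the spectral projector at the eigenvalue $0$, I would check that $\Pi\bA = 0$ on $D(\bA)$: indeed $c(\bA U) = \frac{1}{\int b}\int(bu_1+\Delta u_0 - bu_1)\,d\Vol_g = \frac{1}{\int b}\int \Delta u_0\,d\Vol_g = 0$, using that $M$ is closed. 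Combined with $\bA\Pi = 0$ (since $\text{Range}(\Pi)\subset\ker\bA$), $\Pi$ commutes with $\bA$. Since $\bA$ has compact resolvent with spectrum off $0$ bounded away by Corollary~\ref{c:prop-op-+} (through the isomorphism of Lemma~\ref{l:link-+ou-pas}), this plus the matching of algebraic multiplicities (which I would also verify: solving $\bA^2 U =0$ leads via the mean-value argument to $u_1=0$ and reduces to $\ker\bA$) identifies $\Pi = \Pi_{\ker\bA}$.

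Second, for the norm equivalence~\eqref{e:equiv-norm-dw}, I would decompose $u_0 = \Pi_0 u_0 + \Pi_+ u_0$ with mean $\bar u_0 := \Vol_g(M)^{-1}\int u_0$. A direct calculation gives
\begin{equation*}
\Pi_0 u_0 - c(U) = -\frac{1}{\int_M b}\int_M b\,\Pi_+ u_0\,d\Vol_g - \frac{1}{\int_M b}\int_M u_1\,d\Vol_g,
\end{equation*}
which is a constant on $M$ of size $\lesssim \|\Pi_+u_0\|_{L^2}+\|u_1\|_{L^2}$. Combining with $\|\Pi_+ u_0\|_{L^2}\lesssim \|\nabla u_0\|_{L^2}$ (Poincar\'e inequality for zero-mean functions) yields $\|u_0-c(U)\|_{L^2}^2 \lesssim \|\nabla u_0\|_{L^2}^2+\|u_1\|_{L^2}^2$, which is the nontrivial (right-hand) inequality. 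The left-hand inequality is immediate since $\nabla(u_0-c(U))=\nabla u_0$.

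Third, for~\eqref{e:decay-decomp}, I would just write $e^{t\bA} = e^{t\bA}\Pi_{\ker\bA} + e^{t\bA}(\id - \Pi_{\ker\bA})$ and observe that every vector in $\ker\bA$ is fixed by $e^{t\bA}$ (differentiating in $t$ gives derivative $0$), so $e^{t\bA}\Pi_{\ker\bA}=\Pi_{\ker\bA}$.

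The main obstacle is really the algebraic identification of $\Pi$ with the spectral projector, and in particular checking that $0$ is algebraically simple; the bookkeeping with the mean-value operator $\Pi_0$ and the constant $e_0$ is routine but easy to muddle. Once that is done, the norm equivalence is a one-line Poincar\'e argument and the semigroup decomposition is a triviality.
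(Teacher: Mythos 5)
Your proposal is correct and follows essentially the same route as the paper: verify that $\Pi$ is a projection commuting with $\bA$ whose range is $\ker\bA=\vect(e_0,0)$, prove the norm equivalence by a Poincar\'e-type inequality, and obtain~\eqref{e:decay-decomp} from the trivial identity $e^{t\bA}\Pi_{\ker\bA}=\Pi_{\ker\bA}$. The only minor differences are that the paper uses a $b$-weighted Poincar\'e--Wirtinger inequality (proved by a compactness--contradiction argument) where you split off the mean $\Pi_0 u_0$ and invoke the standard zero-mean one, and that the paper delegates the underlying spectral facts (isolation and semisimplicity of the eigenvalue $0$) to~\cite[Section~4]{AL:14}, which you instead verify directly via $\ker\bA^2=\ker\bA$; both variants are sound.
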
 
Note that~\eqref{e:equiv-norm-dw} combined with~\eqref{e:decay-decomp} prove that if convergence holds (that is to say, as soon as $b\geq 0$ does not vanish identically, see \eg~\cite{Leb:96}), then 
$$
 u(t)  \to   \frac{1}{\int_M b} \int_M (b \  u_0 + u_1 )d\Vol_g  \quad\text{ in } H^1(M) \text{ as }t \to + \infty .
$$
\bnp
Only the explicit expression of $\Pi_{\ker\bA}$ and~\eqref{e:equiv-norm-dw} are not proved in~\cite[Section~4]{AL:14}. A direct computation shows that $\Pi_{\ker\bA}^2=\Pi_{\ker\bA}$, $[\Pi_{\ker\bA} ,\bA]=0$ and $\Pi_{\ker\bA}(H^1\times L^2) =  \vect(e_0,0)$.
As far as~\eqref{e:equiv-norm-dw} is concerned, we remark that the central term in~\eqref{e:equiv-norm-dw} is equal to 
$$
\nor{U-\Pi_{\ker\bA}U}{H^1\times L^2}^2  =  \nor{\nabla u_0}{L^2}^2  +  \nor{ u_0 -\frac{1}{\int_M b} \int_M (b \  u_0 + u_1 )d\Vol_g}{L^2}^2 + \nor{u_1}{L^2}^2 ,
$$
so that the left inequality holds. Concerning the right inequality, we only need to estimate the term
\begin{align*}
 \nor{ u_0 -\frac{1}{\int_M b} \int_M (b \  u_0 + u_1 )d\Vol_g}{L^2} & \leq  \nor{ u_0 -\frac{1}{\int_M b} \int_M b \  u_0 d\Vol_g}{L^2}+ \frac{1}{\int_M b} \nor{ \int_M  u_1 d\Vol_g}{L^2} \\
 & \leq  \nor{ u_0 -\frac{1}{\int_M b} \int_M b \  u_0 d\Vol_g}{L^2}+ \frac{\Vol_g(M)}{\int_M b} \|u_1\|_{L^2} .
\end{align*}
To conclude, it suffices to remark that the inequality
$$
\nor{ \psi -\frac{1}{\int_M b} \int_M b \psi d\Vol_g}{L^2(M)} \leq C_{M,g,b} \|\nabla \psi\|_{L^2(M)} ,\quad \text{ for all } \psi \in H^1(M) ,
$$
is a version of the Poincar\'e-Wirtinger inequality (which can be proved by the classical contradiction compactness argument). The last three lines yield the right inequality in~\eqref{e:equiv-norm-dw}.
\enp

\subsection{Semiclassical diagonalization of hyperbolic systems}
\label{s:semiclass-hyperbolic}
In Sections~\ref{s:KG-reformulation} and~\ref{s:KG-waves}, we have reformulated the evolution equation~\eqref{eq: stabilization} under the form 
\begin{align}
\label{e:damp-wave-hyp}
D_t U = \P_m U , \quad \text{with} \quad 
\P_m = \Lambda_m \begin{pmatrix}
1& 0 \\
0 & -1
\end{pmatrix}  + i \frac{b}{2} \begin{pmatrix}
1& 1 \\
1 & 1
\end{pmatrix}.
\end{align}
on $L^2(M;\C^2)$. In this section, we consider the following semiclassical evolution equation
\begin{align}
\label{e:first-system-h}
\left(D_t - \mathsf{P} \right) U = 0  ,  \quad \text{ with } \mathsf{P} =  \Lambda I_\pm  + i Q_h  , 
\quad 
I_\pm = 
 \begin{pmatrix}
1& 0 \\
0 & -1
\end{pmatrix} ,
\quad 
Q_h =  
 \begin{pmatrix}
Q_h^+ & Q_h^{\pm}\\
Q_h^{\mp} &Q_h^-
\end{pmatrix}
\in \Psi^0_\nu (M ;\C^{2\times 2}) .
\end{align}
Here, the definition of the classes $S^m_\nu, \Psi^m_\nu$ is recalled in Appendix~\ref{appendix}, $\Lambda \in \Psi^1(M)$ is any selfadjoint first order pseudodifferential operator with principal symbol $\sigma_h (\Lambda)(x,\xi) = |\xi|_x :=\sqrt{g_x^*(\xi,\xi)}$ (where $g^*$ is the metric on $T^*M$). In particular, any $\Lambda_m$ with $m\geq0$ satisfies these assumptions according to functional calculus~\cite[Section~8]{DS:book} or~\cite[Theorem~14.9]{Zworski:book}.
In case $b \in C^\infty(M)$, notice that the damped Klein-Gordon equation~\eqref{e:KG-eq}, as formulated in~\eqref{e:damp-wave-hyp}, can be rewritten under the  form~\eqref{e:first-system-h} with $Q_h^+=Q_h^-=Q_h^{\pm}=Q_h^{\mp} = \frac{b}{2}$, and hence $\nu=0$.
If $b$ is not smooth, then~\eqref{e:damp-wave-hyp} does not take the form~\eqref{e:first-system-h}. However, in this situation, we shall also be able to use the results of the present section after having appropriately regularized $b$ (see Appendix~\ref{e:reg-b} and the proof of Theorem~\ref{t:main-res} in Section~\ref{s:proof-main}).
We denote by
\begin{align}
q_h =  
 \begin{pmatrix}
q_h^+ & q_h^{\pm}\\
q_h^{\mp} &q_h^-
\end{pmatrix}
= \sigma_h (Q_h) 
\in S^0_\nu (M ;\C^{2\times 2})
\end{align}
the principal symbol of $Q_h$. We shall always assume that the system is dissipative (at the level of the principal symbol), that is to say:
\begin{align}
\label{e:asspt-positive}
\Re(q_h) = \frac12( q_h + q_h^*) \geq 0 \text{ on } T^*M ,
\end{align}
with $q_h^*$ the adjoint matrix of $q_h$ in $\C^{2\times2}$ (and the inequality holds in the sense of hermitian matrices).
We define two energy cutoffs by functional calculus as $\chi_0(h\Lambda)$ and $\tilde{\chi}_0(h\Lambda)$, where 
\begin{align}
\label{e:def-cut-off-energy-0}
\chi_0 ,\tilde{\chi}_0 \in C^\infty_c((0,2);[0,1]) , \quad \chi_0 =1 \text{ in a neighborhood of } 1,
 \quad \tilde{\chi}_0 =1 \text{ in a neighborhood of }\supp(\chi_0) .
\end{align}
We also identify $\chi_0(h\Lambda)$ with $\chi_0(h\Lambda) I_2$, where $I_2= \diag(1,1)$ (and similarly with $\tilde{\chi}_0(h\Lambda)$).
According to~\cite[Theorem~14.9]{Zworski:book} (see also Corollary~\ref{c:functional-calc} below), we have 
$$
\chi_0(h\Lambda) , \tilde{\chi}_0(h\Lambda) \in \Psi^{-\infty}(M; \C^{2\times 2})
$$ with respective principal symbols $\sigma_h(\chi_0(h\Lambda))(x,\xi) = \chi_0(|\xi|_x)$ (See Appendix~\ref{s:calcul-pseudo} for notation of semiclassical calculus).

The following proposition aims at quantifying the error made by replacing $\mathsf{P}$ by a spectrally truncated version of $\mathsf{P}$ when data are themselves spectrally localized. This allows in particular to remove the singularity at $\xi=0$ of $|\xi|_x$ and $\Lambda$.
\begin{proposition}
\label{p:addition-energy-cutoff}
Assume~\eqref{e:asspt-positive} and set 
\begin{align}
\label{e:def-P1}
\mathsf{P}_{\cut} = \Lambda\tilde{\chi}_0(h\Lambda) I_\pm    + i \tilde{\chi}_0(h\Lambda)Q_h \tilde{\chi}_0(h\Lambda)  .
\end{align}
Then, there are $C,C_0,h_0>0$ such that we have 
\begin{align}
\label{e:energy-cutoff}
\nor{\chi_0(h\Lambda) e^{it\mathsf{P}} - e^{it\mathsf{P}_{\cut}} \chi_0(h\Lambda)}{\L(L^2)}  \leq C t h^{1-2\nu} e^{C_0t h^{1-2\nu}} ,\\
\label{e:energy-cutoff-bis}
\nor{e^{it\mathsf{P}} \chi_0(h\Lambda) - e^{it\mathsf{P}_{\cut}} \chi_0(h\Lambda)}{\L(L^2)}  \leq C t h^{1-2\nu} e^{C_0t h^{1-2\nu}} ,
\end{align}
for all $t\geq0$ and $h\in (0,h_0)$.
If we assume further $\Re(Q_h) = \frac12 (Q_h+ Q_h^*) \geq 0$ in the sense of operators (which implies \eqref{e:asspt-positive}), then \eqref{e:energy-cutoff}-\eqref{e:energy-cutoff-bis} hold true with $C_0=0$.
\end{proposition}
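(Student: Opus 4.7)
The plan is a Duhamel argument combining a symbolic commutator identity with a sharp G{\aa}rding–type bound for the two semigroups. The algebraic step goes first: functional calculus for the selfadjoint operator $\Lambda$ gives $\tilde\chi_0(h\Lambda)\chi_0(h\Lambda) = \chi_0(h\Lambda)$ (since $\tilde\chi_0 \equiv 1$ on $\supp\chi_0$), and $\tilde\chi_0(h\Lambda)$ commutes with $\Lambda I_\pm$. A short manipulation then yields the key identity
\begin{equation*}
\chi_0(h\Lambda)\,\mathsf{P} - \mathsf{P}_{\cut}\,\chi_0(h\Lambda) = i\tilde\chi_0(h\Lambda)\,[\chi_0(h\Lambda), Q_h] .
\end{equation*}
Since $\chi_0(h\Lambda)\in\Psi^{-\infty}$ is a classical semiclassical PDO and $Q_h \in \Psi^0_\nu$, the pseudodifferential commutator calculus in the exotic class yields $\nor{[\chi_0(h\Lambda),Q_h]}{\L(L^2)}\leq C h^{1-2\nu}$, while $\nor{\tilde\chi_0(h\Lambda)}{\L(L^2)}\leq 1$ by functional calculus; the whole right-hand side is thus $O_{\L(L^2)}(h^{1-2\nu})$.

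Next I bound the two semigroups. Using $\Re(i\mathsf{P}) = -\Re(Q_h)$ (as operators) together with the principal-symbol positivity~\eqref{e:asspt-positive}, matrix-valued sharp G{\aa}rding in $S^0_\nu$ gives $\Re(Q_h u, u) \geq - C h^{1-2\nu}\nor{u}{}^2$, whence by Gr\"onwall $\nor{e^{it\mathsf{P}}}{\L(L^2)} \leq e^{C_0 t h^{1-2\nu}}$ for $t\geq 0$. The same argument applies to $\mathsf{P}_{\cut}$: $\Re(i\mathsf{P}_{\cut}) = -\tilde\chi_0(h\Lambda)\Re(Q_h)\tilde\chi_0(h\Lambda)$ has nonnegative principal symbol $\tilde\chi_0(|\xi|_x)^2\Re(q_h)$. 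Under the stronger hypothesis $\Re(Q_h) \geq 0$ as operators, G{\aa}rding is bypassed altogether: the energy identity directly gives $\nor{e^{it\mathsf{P}}}{\L(L^2)}\leq 1$ for $t\geq 0$, and for $\mathsf{P}_{\cut}$ one uses $(\tilde\chi_0(h\Lambda)\Re(Q_h)\tilde\chi_0(h\Lambda)u,u) = (\Re(Q_h)\tilde\chi_0(h\Lambda)u,\tilde\chi_0(h\Lambda)u)\geq 0$, so that $C_0 = 0$.

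Finally, the Duhamel identity
\begin{equation*}
\frac{d}{dt}\bigl[e^{i(T-t)\mathsf{P}_{\cut}}\chi_0(h\Lambda) e^{it\mathsf{P}}\bigr] = i e^{i(T-t)\mathsf{P}_{\cut}}\bigl(\chi_0(h\Lambda)\mathsf{P} - \mathsf{P}_{\cut}\chi_0(h\Lambda)\bigr)e^{it\mathsf{P}} ,
\end{equation*}
integrated on $[0,T]$ and combined with the previous two steps, yields~\eqref{e:energy-cutoff} (with $C_0$ at most twice the G{\aa}rding constant). For~\eqref{e:energy-cutoff-bis}, I decompose
\begin{equation*}
e^{iT\mathsf{P}}\chi_0(h\Lambda) - e^{iT\mathsf{P}_{\cut}}\chi_0(h\Lambda) = \bigl[e^{iT\mathsf{P}},\chi_0(h\Lambda)\bigr] + \bigl(\chi_0(h\Lambda) e^{iT\mathsf{P}} - e^{iT\mathsf{P}_{\cut}}\chi_0(h\Lambda)\bigr) ;
\end{equation*}
the second term is controlled by~\eqref{e:energy-cutoff}, and the commutator by an analogous Duhamel formula using $[\chi_0(h\Lambda),\mathsf{P}] = i[\chi_0(h\Lambda), Q_h]$, which delivers a bound of the same form. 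The main bookkeeping obstacle is tracking the exotic exponent $h^{1-2\nu}$ consistently through both the matrix-valued commutator calculus and the matrix-valued sharp G{\aa}rding inequality in $S^0_\nu$ for $\nu < 1/2$.
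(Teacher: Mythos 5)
Your proposal is correct and follows essentially the same route as the paper: the exact identity $\chi_0(h\Lambda)\mathsf{P} - \mathsf{P}_{\cut}\chi_0(h\Lambda) = i\tilde\chi_0(h\Lambda)[\chi_0(h\Lambda),Q_h]$ is the operator form of the relation the paper obtains by applying $\chi_0(h\Lambda)$ to the evolution equation, the $O(h^{1-2\nu})$ commutator bound and the sharp G{\aa}rding semigroup bounds (Lemma~\ref{l:bound-etip}) are used identically, and the Duhamel integration plus the commutator estimate $[e^{it\mathsf{P}},\chi_0(h\Lambda)]$ (the paper's Lemma~\ref{l:1-BB}) handle \eqref{e:energy-cutoff} and \eqref{e:energy-cutoff-bis} in the same way, including the $C_0=0$ refinement when $\Re(Q_h)\geq 0$ as operators.
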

The next proposition estimates the error made by replacing $\mathsf{P}_{\cut}$ by its diagonal part.
\begin{proposition}
\label{p:diagonalization}
Assume~\eqref{e:asspt-positive}, recall that $\mathsf{P}_{\cut}$ is defined in~\eqref{e:def-P1}, and let 
\begin{align}
\label{e:def-Pd}
\mathsf{P}_{\diag} = \Lambda \tilde{\chi}_0(h\Lambda) I_\pm    + i
 \begin{pmatrix}
\tilde{\chi}_0(h\Lambda) Q_h^+\tilde{\chi}_0(h\Lambda) &0  \\
0 &\tilde{\chi}_0(h\Lambda)  Q_h^-\tilde{\chi}_0(h\Lambda) 
\end{pmatrix} .
\end{align}
Then there are $C,C_0,h_0>0$ such that we have 
\begin{align}
\label{e:diagonal}
\nor{e^{it\mathsf{P}_{\cut}} - e^{it\mathsf{P}_{\diag}}}{\L(L^2)}  \leq C\left(th^{1-2\nu}+h\right) e^{C_0t h^{1-2\nu}} ,
\end{align}
for all $t\geq0$ and $h\in (0,h_0)$.
If we assume further $\Re(Q_h) = \frac12 (Q_h+ Q_h^*) \geq 0$ in the sense of operators, then \eqref{e:diagonal} holds true with $C_0=0$.
\end{proposition}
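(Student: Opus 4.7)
The plan is to diagonalize $\mathsf{P}_{\cut}$ up to an $O(h)$ error via a near-identity conjugation $\mathsf{U} = I + A$ with $\|A\|_{\L(L^2)} = O(h)$, and then transfer the estimate to the semigroup level through Duhamel's formula. Conceptually, the off-diagonal entries of $i\tilde\chi_0(h\Lambda)Q_h\tilde\chi_0(h\Lambda)$ couple modes that propagate at different speeds under the principal part $\Lambda\tilde\chi_0(h\Lambda) I_\pm$, so a commutator with $\Lambda I_\pm$ effectively divides by $\pm 2\Lambda$, which on the spectral window $\{\tilde\chi_0(h\Lambda) \neq 0\}$ costs exactly one power of $h$.

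Concretely, let $\phi(\lambda) := \tilde\chi_0(\lambda)/\lambda$, which extends to a smooth compactly supported function of $\lambda \in \R$ because $\tilde\chi_0$ vanishes near $0$. Observe that $\phi(h\Lambda) = \Lambda^{-1}\tilde\chi_0(h\Lambda)\cdot h^{-1}\cdot h = O_{\L(L^2)}(h)$ via functional calculus. Define the off-diagonal operator
\begin{equation*}
A = \frac{-i}{2}\begin{pmatrix} 0 & \phi(h\Lambda)\,Q_h^{\pm}\,\tilde\chi_0(h\Lambda)\\ -\phi(h\Lambda)\,Q_h^{\mp}\,\tilde\chi_0(h\Lambda) & 0\end{pmatrix},
\end{equation*}
so that $\|A\|_{\L(L^2)} = O(h)$ and $\mathsf{U} = I_2 + A$ is invertible with $\mathsf{U}^{-1} = I_2 - A + O(h^2)$ for $h$ small. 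The signs are chosen so that $I_\pm A - A I_\pm$ reproduces $2A$ with flipped sign on the $(2,1)$ entry; combined with the identity $\Lambda\cdot\phi(h\Lambda) = \tilde\chi_0(h\Lambda)/h$ (spectrally), one computes
\begin{equation*}
[\Lambda\tilde\chi_0(h\Lambda) I_\pm, A] = -i\,\tilde\chi_0(h\Lambda)^2\begin{pmatrix} 0 & Q_h^{\pm}\tilde\chi_0(h\Lambda)\\ Q_h^{\mp}\tilde\chi_0(h\Lambda) & 0\end{pmatrix} + h\,R_1,
\end{equation*}
where the remainder $hR_1$ collects the commutators $[\Lambda\tilde\chi_0(h\Lambda),Q_h^{\pm}]$. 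In the exotic calculus $\Psi_\nu^0$ these commutators produce a factor $h^{1-2\nu}$ from the symbolic expansion, which composed with the $\phi(h\Lambda)=O(h)$ prefactor from $A$ yields $\|hR_1\|_{\L(L^2)} \leq C h^{2-2\nu} \leq Ch$. Adding in the bounded, off-diagonal commutator $[iQ_h,A] = O(h)$ and absorbing the discrepancy $\tilde\chi_0(h\Lambda)^3$ versus the $\tilde\chi_0(h\Lambda)^2$ factor of $\mathsf{P}_{\diag}-\mathsf{P}_{\cut}$ into another $O(h)$ term, one obtains
\begin{equation*}
\mathsf{U}^{-1}\mathsf{P}_{\cut}\mathsf{U} = \mathsf{P}_{\diag} + h R_2,\qquad \|R_2\|_{\L(L^2)} = O(1).
\end{equation*}

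Now $e^{it\mathsf{P}_{\cut}} = \mathsf{U}\,e^{it(\mathsf{P}_{\diag}+hR_2)}\mathsf{U}^{-1}$, and Duhamel's formula gives
\begin{equation*}
e^{it(\mathsf{P}_{\diag}+hR_2)} - e^{it\mathsf{P}_{\diag}} = i\int_0^t e^{i(t-s)(\mathsf{P}_{\diag}+hR_2)}(hR_2)e^{is\mathsf{P}_{\diag}}\,ds.
\end{equation*}
Under hypothesis~\eqref{e:asspt-positive} the diagonal entries of $q_h$ have nonnegative real part, so sharp Gårding applied to $\tilde\chi_0(h\Lambda)Q_h^\pm\tilde\chi_0(h\Lambda)$ yields $\Im\mathsf{P}_{\diag}\geq -Ch^{1-2\nu}$, hence $\|e^{it\mathsf{P}_{\diag}}\|_{\L(L^2)} \leq e^{C_0 t h^{1-2\nu}}$ and similarly for $\mathsf{P}_{\diag}+hR_2$; this gives $\|e^{it(\mathsf{P}_{\diag}+hR_2)}-e^{it\mathsf{P}_{\diag}}\|_{\L(L^2)} \leq Cht\,e^{C_0 t h^{1-2\nu}}$. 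Writing
\begin{equation*}
e^{it\mathsf{P}_{\cut}} - e^{it\mathsf{P}_{\diag}} = \mathsf{U}\bigl(e^{it(\mathsf{P}_{\diag}+hR_2)}-e^{it\mathsf{P}_{\diag}}\bigr)\mathsf{U}^{-1} + (\mathsf{U}-I)e^{it\mathsf{P}_{\diag}}\mathsf{U}^{-1} + e^{it\mathsf{P}_{\diag}}(\mathsf{U}^{-1}-I),
\end{equation*}
and combining the Duhamel estimate with $\|\mathsf{U}-I\|+\|\mathsf{U}^{-1}-I\| = O(h)$ gives~\eqref{e:diagonal}, since $h \leq h^{1-2\nu}$ for $\nu\in[0,1/2]$. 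When the stronger hypothesis $\Re Q_h \geq 0$ holds as an operator inequality, the diagonal of $\mathsf{P}_{\diag}$ is outright dissipative so $\|e^{it\mathsf{P}_{\diag}}\| \leq 1$, and the argument produces the constants with $C_0 = 0$.

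The main obstacle is the second step: verifying the commutator identity for $[\Lambda\tilde\chi_0(h\Lambda)I_\pm, A]$ with all the pseudodifferential remainders properly tracked, in particular confirming that the composition $\phi(h\Lambda)\cdot [\Lambda\tilde\chi_0(h\Lambda),Q_h^\pm]$ indeed gains a full factor of $h$ and not merely $h^{1-2\nu}$, which is what allows the final bound to contain only $Cht$ (hence $Cth^{1-2\nu}$) rather than a worse $Cth^{1-4\nu}$. Once the symbolic bookkeeping is done, the rest is a standard Duhamel-plus-Gronwall argument.
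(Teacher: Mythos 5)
Your scheme is at bottom the same as the paper's: the paper conjugates by $\id-K$ with $K$ given in \eqref{e:choice-K} and runs a Duhamel argument, and your $A$ is meant to be that $K$. But several of your quantitative claims are wrong as written, and one of them breaks the construction. First, $\phi(h\Lambda)$ with $\phi(\lambda)=\tilde{\chi}_0(\lambda)/\lambda$ is $O(1)$ in $\L(L^2)$, not $O(h)$: $\phi$ is a bounded function, and the operator that is $O(h)$ is $\Lambda^{-1}\tilde{\chi}_0(h\Lambda)=h\,\phi(h\Lambda)$, which is what must enter $A$. With $A$ as you defined it, $\nor{A}{\L(L^2)}=O(1)$ and the commutator with $\Lambda\tilde{\chi}_0(h\Lambda)I_\pm$ produces $h^{-1}$ times the off-diagonal block, so the cancellation overshoots by a factor $h^{-1}$. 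Second, even after fixing the normalization, the extra cutoff you placed to the right of $Q_h^{\pm}$ spoils the principal cancellation: the commutator then has off-diagonal principal symbol $i\,\tilde{\chi}_0^3\,q_h^{\pm}$, whereas the off-diagonal part of $\mathsf{P}_{\cut}$ has symbol $i\,\tilde{\chi}_0^2\,q_h^{\pm}$; the discrepancy $(\tilde{\chi}_0^2-\tilde{\chi}_0^3)\,q_h^{\pm}$ is an $O(1)$ symbol supported where $0<\tilde{\chi}_0<1$ and cannot be "absorbed into another $O(h)$ term" — this is exactly why \eqref{e:choice-K} carries no cutoff on that side. Third, the step you yourself single out as the crux does fail: the gain $h^{1-2\nu}$ applies to $[h\Lambda\tilde{\chi}_0(h\Lambda),Q_h^{\pm}]$, so $[\Lambda\tilde{\chi}_0(h\Lambda),Q_h^{\pm}]$ is only $O(h^{-2\nu})$, and after composition with the $O(h)$ prefactor the conjugation error is $O(h^{1-2\nu})$, not $O(h^{2-2\nu})$; your intermediate bound $Cht$ is therefore not available by this route. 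Fortunately the proposition only requires $O\left(th^{1-2\nu}+h\right)$, so once the corrector is taken to be the paper's $K$ the corrected bookkeeping still yields \eqref{e:diagonal}.

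There is also a gap in the $C_0=0$ claim. Your Duhamel step involves $e^{it(\mathsf{P}_{\diag}+hR_2)}$, and $hR_2$ has no sign, so the only a priori bound is $e^{t\nor{hR_2}{\L(L^2)}}$, which reintroduces an exponential factor; dissipativity of $\mathsf{P}_{\diag}$ alone does not remove it. The repair is to use that $\mathsf{P}_{\diag}+hR_2=\mathsf{U}^{-1}\mathsf{P}_{\cut}\mathsf{U}$ exactly, hence $e^{it(\mathsf{P}_{\diag}+hR_2)}=\mathsf{U}^{-1}e^{it\mathsf{P}_{\cut}}\mathsf{U}$, and when $\Re(Q_h)\geq0$ in the operator sense $e^{it\mathsf{P}_{\cut}}$ is a contraction (Lemma~\ref{l:bound-etip}), so this semigroup is bounded by $1+O(h)$ uniformly in $t$; with that observation your argument does give $C_0=0$, but the step must be made explicit rather than inferred from the dissipativity of the diagonal part.
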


Combining Propositions~\ref{p:addition-energy-cutoff} and~\ref{p:diagonalization} implies that for data spectrally localized, one can replace $e^{it\mathsf{P}}$ by $e^{it\mathsf{P}_{\diag}}$ with a small error if $t$ is not too large. We rewrite this in the following corollary under the form used below for future reference.
\begin{corollary}
\label{c:energy-cutoff-final}
With all above definitions, assuming $\Re(Q_h) \geq 0$ in the sense of operators, we have
\begin{align}
\label{e:energy-cutoff-final}
\nor{e^{it\mathsf{P}} \chi_0(h\Lambda) I_2 - e^{it\mathsf{P}_{\diag}}  \chi_0(h\Lambda)I_2 }{\L(L^2)}  \leq C\left(th^{1-2\nu}+h\right) ,   
\end{align}
for all $t\geq0$ and $h\in (0,h_0)$.
\end{corollary}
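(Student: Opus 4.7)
The corollary follows directly from chaining the two preceding propositions by the triangle inequality, so the plan is essentially one estimate. Write
\begin{align*}
e^{it\mathsf{P}} \chi_0(h\Lambda) I_2 - e^{it\mathsf{P}_{\diag}}  \chi_0(h\Lambda)I_2
&= \bigl( e^{it\mathsf{P}} - e^{it\mathsf{P}_{\cut}} \bigr)\chi_0(h\Lambda) I_2 \\
&\quad + \bigl( e^{it\mathsf{P}_{\cut}} - e^{it\mathsf{P}_{\diag}} \bigr) \chi_0(h\Lambda) I_2 ,
\end{align*}
and bound each piece in $\L(L^2)$ operator norm separately.

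For the first piece, I would invoke Proposition~\ref{p:addition-energy-cutoff}, precisely~\eqref{e:energy-cutoff-bis}. Since the standing assumption of the corollary is $\Re(Q_h) \geq 0$ in the operator sense, the ``if we assume further\ldots'' clause of that proposition applies, so the exponential factor disappears and we get the bound $Cth^{1-2\nu}$.

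For the second piece, I would apply Proposition~\ref{p:diagonalization} (again under the stronger hypothesis $\Re(Q_h)\geq 0$, which gives $C_0=0$), bounding $\|e^{it\mathsf{P}_{\cut}} - e^{it\mathsf{P}_{\diag}}\|_{\L(L^2)} \leq C(th^{1-2\nu}+h)$. Composing on the right with $\chi_0(h\Lambda)I_2$ is harmless because $\chi_0(h\Lambda)$ is uniformly bounded on $L^2$ (in fact $\|\chi_0(h\Lambda)\|_{\L(L^2)} \leq \|\chi_0\|_\infty$ by functional calculus for the selfadjoint operator $\Lambda$). Summing the two estimates absorbs the $Cth^{1-2\nu}$ contribution into $C(th^{1-2\nu}+h)$ and yields~\eqref{e:energy-cutoff-final}.

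There is no real obstacle: the two propositions have been tailored exactly for this combination, and the only point to verify is that the stronger operator-positivity hypothesis on $Q_h$ propagates to both invocations so that one can take $C_0=0$ and avoid any exponential blow-up on the right-hand side.
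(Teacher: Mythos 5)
Your proposal is correct and is exactly the argument the paper intends: the corollary is stated as a direct combination of Propositions~\ref{p:addition-energy-cutoff} and~\ref{p:diagonalization}, and your triangle-inequality splitting, with the operator-positivity hypothesis giving $C_0=0$ in both estimates and the uniform bound $\nor{\chi_0(h\Lambda)}{\L(L^2)}\leq 1$ handling the composition, is the intended one-line proof.
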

We now give a proof of Propositions~\ref{p:addition-energy-cutoff} and~\ref{p:diagonalization}.
In the proofs of this section, we simply write $\Pi = \chi_0(h\Lambda)$ and $\tilde{\Pi} = \tilde{\chi}_0(h\Lambda)$ for concision.

\bnp[Proof of Proposition~\ref{p:addition-energy-cutoff}]
First notice that Assumption~\eqref{e:asspt-positive} together with Lemma~\ref{l:bound-etip} below ensure that $i\mathsf{P}$ generates a strongly continuous semigroup $(e^{it \mathsf{P}})_{t\in \R_+}$ on $L^2(M)$ such that 
\begin{align}
\label{e:semigroup-bd}
\nor{e^{it\mathsf{P}}}{\L(L^2)} \leq e^{C_0t h^{1-2\nu}}, \quad \text{for all }t\geq 0, h \in (0,h_0).
\end{align}
Second, we remark that~\eqref{e:energy-cutoff} and~\eqref{e:energy-cutoff-bis} are equivalent. Indeed, recalling that $\Pi  \in \Psi^{-\infty}(M)$ is identified with $\Pi I_2  \in \Psi^{-\infty}(M;\C^{2\times 2})$, we have $[\Pi I_2,\Lambda I_\pm] =[\Pi ,\Lambda ]I_\pm = 0$ and, using $Q_h \in\Psi^{0}_{\nu}(M;\C^{2\times 2})$, 
\begin{align}
\label{e:comm-pi-qh}
i [\mathsf{P},\Pi] = [\Pi , Q_h]  = [\Pi I_2 , Q_h] =
 \begin{pmatrix}
[\Pi , Q_h^+ ]& [\Pi , Q_h^{\pm}]\\
[\Pi , Q_h^{\mp}] &[\Pi , Q_h^-]
\end{pmatrix}
 \in h^{1-2\nu}\Psi^{-\infty}_{\nu}(M ; \C^{2\times 2}) . 
 \end{align}
As a consequence, Lemma~\ref{l:1-BB} implies 
\begin{align*}
\nor{e^{it\mathsf{P}}\Pi  - \Pi e^{it\mathsf{P}}}{\L(L^2)} & \leq t e^{C_0t h^{1-2\nu}}  \nor{[\mathsf{P},\Pi]}{\L(L^2)} \leq C t h^{1-2\nu} e^{C_0t h^{1-2\nu}}  .
\end{align*}
Hence, we only need to prove~\eqref{e:energy-cutoff}. 
Now write $U(t) = e^{it \mathsf{P}}U_0$ for the solution of~\eqref{e:first-system-h} for $t>0$ such that $U|_{t=0}=U_0$. Applying $\Pi$ to the evolution equation~\eqref{e:first-system-h} we obtain, for $t>0$
$$
\left(- D_t \Pi+\Lambda \Pi I_\pm  + i \Pi Q_h  \right) U = 0 , 
$$
where we used $\Lambda \Pi = \Pi\Lambda$. With $\Pi = \tilde{\Pi} \Pi$, this implies 
$$
\left(-D_t \Pi +\Lambda \tilde{\Pi} I_\pm \Pi   + i \tilde{\Pi} Q_h \Pi + i \tilde{\Pi}  [\Pi , Q_h]  \right) U = 0 , 
$$
and hence 
\begin{align}
\label{e:pi-first-system-h}
\left(- D_t + \mathsf{P}_{\cut} \right) \Pi U=
\left(- D_t +\Lambda \tilde{\Pi} I_\pm    + i \tilde{\Pi} Q_h \tilde{\Pi}  \right)  \Pi U = - i \tilde{\Pi}  [\Pi , Q_h] U  .
\end{align}
where, according to~\eqref{e:comm-pi-qh}, we have  
$\tilde{\Pi}  [\Pi , Q_h] \in h^{1-2\nu}\Psi^{-\infty}_{\nu}(M ;\C^{2\times 2})$. 
Recalling the definition of $\mathsf{P}_{\cut}$ in~\eqref{e:def-P1}, and noticing that $\Lambda \tilde{\Pi} I_\pm$ is selfadjoint and $\tilde{\Pi} Q_h \tilde{\Pi}$ has principal symbol $\sigma_h(\tilde{\Pi})^2 q_h$ which satisfies $\Re(\sigma_h(\tilde{\Pi})^2 q_h) =\sigma_h(\tilde{\Pi})^2 \Re(q_h)$ and hence is a nonnegative selfadjoint family of matrices on $T^*M$, we may apply Proposition~\ref{l:bound-etip} to obtain 
$\nor{e^{it\mathsf{P}_{\cut}}}{\L(L^2)} \leq e^{C_0t h^{1-2\nu}}$ for all $t\geq 0, h \in (0,h_0)$. 
We can now solve~\eqref{e:pi-first-system-h} with the Duhamel formula as 
\begin{align*}
\Pi U(t) = e^{it\mathsf{P}_{\cut}} \Pi U_0 - \int_0^t   e^{i(t-s) \mathsf{P}_{\cut}}\tilde{\Pi}   [\Pi , Q_h] U (s) ds , \quad   U (s) = e^{i s\mathsf{P}}U_0 .
\end{align*} 
Together with~\eqref{e:semigroup-bd}, we deduce that 
\begin{align}
\label{e:premiere-reduction}
\nor{\Pi e^{it\mathsf{P}} U_0 - e^{it\mathsf{P}_{\cut}} \Pi U_0}{L^2(M;\C^2)} & \leq \int_0^t   \nor{e^{i(t-s) \mathsf{P}_{\cut}}}{\L(L^2)} \nor{\tilde{\Pi}   [\Pi , Q_h] }{\L(L^2)}
\nor{e^{is\mathsf{P}}}{\L(L^2)}\nor{U_0}{L^2(M;\C^2)} ds \nonumber \\
& \leq C t h^{1-2\nu}  e^{C_0t h^{1-2\nu}}\nor{U_0}{L^2(M;\C^2)} ,
\end{align} 
uniformly for all $t\geq0$ and $h\in (0,h_0)$. This concludes the proof of the proposition.
\enp
We next prove Proposition~\ref{p:diagonalization}, relying on the following lemma.
\begin{lemma}
Let $V(t)$ solve $(D_t - \mathsf{P}_{\cut})V = 0$ for $t>0$ and set $W(t) = (\id - K)V(t)$ where $K$ is a time-invariant bounded operator. Then, we have 
\begin{align}
\label{e:eqn-comm-W}
\left(D_t - \mathsf{P}_{\cut} + [K,\mathsf{P}_{\cut}] \right) W(t) = [\mathsf{P}_{\cut},K] K V(t) .
\end{align}
\end{lemma}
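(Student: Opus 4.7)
The lemma is a purely algebraic statement; the plan is a direct computation, expressing $D_t W - \mathsf{P}_{\cut} W$ first in terms of $V$ and then rewriting everything in terms of $W$ modulo a remainder involving $V$.

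First, since $K$ is time-invariant and $V$ solves $(D_t - \mathsf{P}_{\cut}) V = 0$, I would compute
\begin{equation*}
D_t W = (\id - K) D_t V = (\id - K)\mathsf{P}_{\cut} V .
\end{equation*}
On the other hand, $\mathsf{P}_{\cut} W = \mathsf{P}_{\cut}(\id - K) V$, so subtracting gives
\begin{equation*}
(D_t - \mathsf{P}_{\cut}) W = \bigl( (\id - K)\mathsf{P}_{\cut} - \mathsf{P}_{\cut}(\id - K) \bigr) V = (\mathsf{P}_{\cut} K - K \mathsf{P}_{\cut}) V = [\mathsf{P}_{\cut}, K]\, V .
\end{equation*}

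The second step is the key reformulation: write $V = W + KV$, which follows tautologically from $W = (\id - K)V$. Substituting into the right-hand side yields
\begin{equation*}
[\mathsf{P}_{\cut}, K] V = [\mathsf{P}_{\cut}, K] W + [\mathsf{P}_{\cut}, K] K V ,
\end{equation*}
and moving the first term to the left gives
\begin{equation*}
(D_t - \mathsf{P}_{\cut}) W - [\mathsf{P}_{\cut}, K] W = [\mathsf{P}_{\cut}, K] K V ,
\end{equation*}
which is precisely \eqref{e:eqn-comm-W} after rewriting $-[\mathsf{P}_{\cut}, K] = [K, \mathsf{P}_{\cut}]$.

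There is no real obstacle here: the proof is a one-line manipulation followed by the substitution $V = W + KV$. The only point worth flagging is that the lemma is written so as to prepare a Duhamel-type argument for Proposition~\ref{p:diagonalization}: the operator on the left, $D_t - \mathsf{P}_{\cut} + [K, \mathsf{P}_{\cut}]$, will be chosen (via a suitable $K \in h \Psi^{-\infty}$) so as to essentially coincide with $D_t - \mathsf{P}_{\diag}$ up to errors controlled by pseudodifferential calculus, while the source term $[\mathsf{P}_{\cut}, K] K V$ gains an extra power of $h$ from the product of two commutators. Formally establishing \eqref{e:eqn-comm-W} requires no such structure on $K$; it holds for any bounded, time-invariant $K$, and this is what I would record.
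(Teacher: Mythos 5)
Your proof is correct and is essentially the same direct algebraic computation as the paper's: the paper expands $\left(D_t - \mathsf{P}_{\cut} + [K,\mathsf{P}_{\cut}]\right)(\id - K)V$ using the commutator $[D_t - \mathsf{P}_{\cut}, \id - K] = [\mathsf{P}_{\cut},K]$, while you first compute $(D_t-\mathsf{P}_{\cut})W = [\mathsf{P}_{\cut},K]V$ and then substitute $V = W + KV$ — a trivially different bookkeeping of the identical manipulation. Nothing is missing.
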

The term $[K,\mathsf{P}_{\cut}]$ is used in the proof of Proposition~\ref{p:diagonalization} as a correction term to kill the off-diagonal part of $\mathsf{P}_{\cut}$. This idea is due to Taylor~\cite[Section~2]{Taylor:75}, see also~\cite[Proof of Proposition~2.9]{LL:16}.
\bnp
This follows from the computation
\begin{align*}
\left(D_t - \mathsf{P}_{\cut} + [K,\mathsf{P}_{\cut}] \right) W(t) & = \left(D_t - \mathsf{P}_{\cut} + [K,\mathsf{P}_{\cut}] \right)  (\id - K)V(t) \\
& = (\id - K)(D_t - \mathsf{P}_{\cut}) V(t) + [D_t - \mathsf{P}_{\cut} , \id - K ]V(t)  + [K,\mathsf{P}_{\cut}]  (\id - K)V(t) \\
& = 0+ [ \mathsf{P}_{\cut} , K ]V(t)  + [K,\mathsf{P}_{\cut}]  V(t) - [K,\mathsf{P}_{\cut}] K V(t)\\
& = - [K,\mathsf{P}_{\cut}] K V(t) ,
\end{align*}
where we have used $(D_t - \mathsf{P}_{\cut})V = 0$ in the third line.
\enp

\bnp[Proof of Proposition~\ref{p:diagonalization}]
First notice that Assumption~\eqref{e:asspt-positive} implies that $\Re(q_h^+) \geq0 , \Re(q_h^-) \geq0$, and hence the principal symbol of $\begin{pmatrix}
\tilde{\Pi} Q_h^+ \tilde{\Pi} &0  \\
0 &\tilde{\Pi} Q_h^-\tilde{\Pi}
\end{pmatrix}$ has nonnegative real part. 
Together with Proposition~\ref{l:bound-etip}, this ensures that $i\mathsf{P}_{\diag}$ generates a semigroup $(e^{it \mathsf{P}_{\diag}})_{t\in \R_+}$ such that 
\begin{align}
\label{e:semigroup-bd-Pd}
\nor{e^{it\mathsf{P}_{\diag}}}{\L(L^2)} \leq e^{C_0t h^{1-2\nu}}, \quad \text{for all }t\geq 0, h \in (0,h_0).
\end{align}
Next, we decompose $\mathsf{P}_{\cut}$ in~\eqref{e:def-P1}-\eqref{e:first-system-h} into its diagonal and off-diagonal part as
$$
\mathsf{P}_{\cut} = \mathsf{P}_{\diag} + \mathsf{P}_{\off} , 
$$
with $\mathsf{P}_{\diag}$ defined in~\eqref{e:def-Pd} and 
\begin{align}
\label{e:Pad}
  \mathsf{P}_{\off} = i \begin{pmatrix}
 0  & \tilde{\Pi} Q_h^{\pm}  \tilde{\Pi} \\
 \tilde{\Pi} Q_h^{\mp}  \tilde{\Pi}& 0
\end{pmatrix}
\in \Psi^{-\infty}_{\nu} (M ;\C^{2\times 2}) . 
\end{align}
Hence, denoting by $\tilde{\Pi}\Lambda^{-1} : = h \tilde{\chi}_0(h\Lambda) (h\Lambda)^{-1} = h \frac{\tilde{\chi}_0(s)}{s}|_{s= h\Lambda}$ defined via functional calculus, we have from~\cite[Theorem~14.9]{Zworski:book} (see also Corollary~\ref{c:functional-calc} below) that $\tilde{\Pi}\Lambda^{-1}\in h \Psi^{-\infty}(M)$. We choose the operator $K$ to be plugged in~\eqref{e:eqn-comm-W} as
\begin{align}
\label{e:choice-K}
K = \frac{i}{2}
\begin{pmatrix}
0 &- \tilde{\Pi}\Lambda^{-1} Q_h^\pm  \\
 \tilde{\Pi}\Lambda^{-1} Q_h^\mp  & 0
\end{pmatrix}  \in h \Psi^{-\infty}_{\nu}(M; \C^2) .
\end{align}
With this choice of correction $K$, and using that $ \Lambda \tilde{\Pi}\tilde{\Pi}\Lambda^{-1} =\tilde{\Pi}^2$, we obtain
\begin{align}
\label{e:KP1comm}
[K, \mathsf{P}_{\cut}] & =   [K, \tilde{\Pi}\Lambda I_\pm] + i  [K, \tilde{\Pi} Q_h \tilde{\Pi}] , \quad \text{with}\\
\nonumber
 [K, \tilde{\Pi}\Lambda I_\pm] & = 
 \frac{i}{2}
\begin{pmatrix}
0 &  \tilde{\Pi}\Lambda^{-1} Q_h^\pm\Lambda \tilde{\Pi} + \tilde{\Pi}^2  Q_h^\pm  \\
 \tilde{\Pi}\Lambda^{-1} Q_h^\mp\Lambda \tilde{\Pi} + \tilde{\Pi}^2 Q_h^\mp  & 0
\end{pmatrix} , \\
\label{e:KQhcomm}
 [K, \tilde{\Pi} Q_h \tilde{\Pi}] & \in h \Psi^{-\infty}_{\nu}(M) .
\end{align}
Next, rewriting
$$[K, \tilde{\Pi}\Lambda I_\pm]  = 
 \frac{i}{2}
\begin{pmatrix}
0 & \tilde{\Pi}(h\Lambda)^{-1} Q_h^\pm(h\Lambda) \tilde{\Pi} + \tilde{\Pi}^2  Q_h^\pm  \\
 \tilde{\Pi}(h\Lambda)^{-1} Q_h^\mp(h\Lambda) \tilde{\Pi} + \tilde{\Pi}^2 Q_h^\mp  & 0
\end{pmatrix} ,
$$
we deduce from symbolic calculus that 
\begin{align}
\label{e:KLamcomm}
[K, \tilde{\Pi}\Lambda I_\pm] \in \Psi^{-\infty}_{\nu}(M; \C^2),
\end{align}
 with principal symbol
$$
\sigma_h([K, \tilde{\Pi}\Lambda I_\pm]) 
= i 
\begin{pmatrix}
0 &  \sigma_h(\tilde{\Pi})^2  q_h^\pm   \\
\sigma_h(\tilde{\Pi})^2 q_h^\mp   & 0
\end{pmatrix} = \sigma_h (\mathsf{P}_{\off}) ,
$$
where the last equality comes from~\eqref{e:Pad}. This implies (see Appendix~\ref{s:calcul-pseudo})
\begin{align}
\label{e:comm-van}
- \mathsf{P}_{\off} +[K, \tilde{\Pi}\Lambda I_\pm] \in h^{1-2\nu} \Psi^{-\infty}_{\nu}(M;\C^2).
\end{align}
As a consequence of \eqref{e:KP1comm}-\eqref{e:KQhcomm}-\eqref{e:comm-van}, we now have 
\begin{align*}
- \mathsf{P}_{\cut} + [K,\mathsf{P}_{\cut}]  & = - \mathsf{P}_{\diag} - \mathsf{P}_{\off} + [K, \tilde{\Pi}\Lambda I_\pm]  + i  [K, \tilde{\Pi} Q_h \tilde{\Pi}]  \\
& =  - \mathsf{P}_{\diag} +R_1 , \quad \text{with} \quad R_1 \in h^{1-2\nu} \Psi^{-\infty}_{\nu}(M;\C^2) .
\end{align*}
Moreover, \eqref{e:KP1comm}-\eqref{e:KQhcomm} and \eqref{e:KLamcomm} imply $[\mathsf{P}_{\cut},K] \in \Psi^{-\infty}_{\nu}(M; \C^2)$ and thus, with~\eqref{e:choice-K},
 $$R_2 := [\mathsf{P}_{\cut},K] K \in h \Psi^{-\infty}_{\nu}(M; \C^2).$$
Now, we apply~\eqref{e:eqn-comm-W}, and recalling that $W(t) = (\id - K)V(t)$, we have obtained from the above two lines that $W$ solves
$$
(D_t - \mathsf{P}_{\diag} + R_1 ) W(t) = R_2 V(t), 
$$
that is 
$$
(D_t - \mathsf{P}_{\diag} ) W(t) = R  V(t) , \quad \text{ with } R = R_2-R_1(\id - K)  \in  h^{1-2\nu}\Psi^{-\infty}_{\nu}(M; \C^2) .
$$
If we further assume that $W(0) = V(0) = V_0 \in L^2(M;\C^2)$, hence $V(t) = e^{it\mathsf{P}_{\cut}} V_0$, we now have 
$$
W(t) = e^{it\mathsf{P}_{\diag}} V_0 + i \int_0^t e^{i(t-s)\mathsf{P}_{\diag}} R e^{is\mathsf{P}_{\cut}} V_0 ds , \quad t\geq 0 .
$$
Using that $\nor{e^{it\mathsf{P}_{\cut}}}{\L(L^2)} \leq e^{C_0t h^{1-2\nu}}$ for all $t\geq 0, h \in (0,h_0)$, together with~\eqref{e:semigroup-bd-Pd}, this implies in particular
$$
\nor{W(t) - e^{it\mathsf{P}_{\diag}} V_0}{L^2(M;\C^2)} \leq t h^{1-2\nu}e^{C_0t h^{1-2\nu}} \nor{V_0}{L^2(M;\C^2)}  , \quad t\geq 0 , h \in (0,h_0).
$$
Recalling that $W(t) = (\id - K)e^{it\mathsf{P}_{\cut}} V_0$ with~\eqref{e:choice-K}, we now deduce that
$$
\nor{e^{it\mathsf{P}_{\cut}} V_0- e^{it\mathsf{P}_{\diag}} V_0}{L^2(M;\C^2)} \leq \left( t h^{1-2\nu} + Ch \right)e^{C_0t h^{1-2\nu}} \nor{V_0}{L^2(M;\C^2)}  , \quad t\geq 0, h \in (0,h_0) ,
$$
which concludes the proof of the proposition.
\enp

\section{From semiclassical resolvent estimates to damped waves}
\label{s:semiclassical-resolvent}
In this section, we consider the question of obtaining an averaged semigroup decay starting from a resolvent estimate. This question is considered in an abstract semiclassical setting, close to~\cite[Section~3]{BZ:04}. Namely, we let $\H$ be a separable Hilbert space, and we study the following $h$-dependent family operators on $\H$:
\begin{equation}
\label{e:def-P-sc}
\P_h = P_h + i h Q_h ,
\end{equation}
where $P_h$ is a selfadjoint operator.
A typical application we have in mind is the half damped wave operator $P_h = h\sqrt{-\Delta}$ and $Q_h=b$. The damped wave and Klein-Gordon equations also fit in this setting, once reformulated as in Section~\ref{s:first-order-system}.
We define the energy window
\begin{equation*}
I_\eps := [1-\eps(h), 1+\eps(h)] , \quad \text{with }0<\eps(h) \leq \eps_0 < 1 .
\end{equation*}
We let $\chi \in C^\infty_c(\R)$ with $\chi \geq0$, $\supp\chi \subset (-1,1)$ and $\chi(0)=1$, and set $\chi_\eps(s) = \chi \left((s-1)\eps(h)^{-1}\right)$, so that $\supp(\chi_\eps) \subset \Int(I_\eps)$. We then define (using functional calculus for selfadjoint operators)
 \begin{equation}
 \label{def:Pieps}
 \Pi_\eps = \chi_\eps \left( P_h \right) . 
 \end{equation}
We also consider the case in which the energy window is symmetric about $0$, namely, 
$$
I_\eps^\pm := [-1-\eps(h), -1+\eps(h)] \cup [1-\eps(h), 1+\eps(h)] , \quad
 \chi_\eps^\pm(s) = \chi_\eps(s)+\chi_\eps(-s), \quad   \Pi_\eps^\pm = \chi_\eps^\pm \left( P_h \right) .
$$
Note that the link with the damped Klein-Gordon/wave operators (as reformulated in Section~\ref{s:first-order-system}) is explained in~\eqref{e:def-Ph-encore}-\eqref{e:defPipm} below.

We first formulate a general result in an abstract framework in Section~\ref{s:bounded-damping-general}. We then improve the remainder term assuming a parametrix is given in Section~\ref{s:remainder-regularity}, and finally construct the sought parametrix in the case of the Klein-Gordon/wave equations (using pseudodifferential calculus) in Section~\ref{s:parametrice}. 

\subsection{Functional analysis: bounded damping term}
\label{s:bounded-damping-general}
Note that, under the above assumptions, for any $h>0$, $\P_h$ is a bounded perturbation of a selfadjoint operator such that $\Im(\P_h z, z)_\H = h\Re(Q_h z,z)_\H \geq 0$. Hence,  the operator $\frac{i}{h}\P_h$ generates a contraction semigroup (see \eg~\cite{Pazy:83}), which we denote $\left(e^{\frac{it}{h}\P_h}\right)_{t\in \R_+}$.
The main result of this section links a resolvent estimate for $\P_h$ in $I_\eps$ together with time-averages of the semigroup $e^{\frac{it}{h}\P_h}$ for data spectrally localized in the window $I_\eps$ or $I_\eps^\pm$.
\begin{theorem}
\label{t:resolvent-implies-long-time}
Assume that $\P_h$ is defined by~\eqref{e:def-P-sc} for $h \in (0,h_0)$, with $P_h$ a family of selfadjoint operators with $h$-independent domains $D(P_h) \subset \H$, and $Q_h$ a family of bounded operators on $\H$ such that $\Re(Q_h v,v)_\H \geq 0$ for all $v\in \H$.
Then, there is a constant $\mathsf{C}_0$ (depending only on $\chi$) such that for all $\psi \in H^1_{\comp}(\R)$ with $\supp \psi \subset \R_+$, $G(h)\in (0,+ \infty]$, $T_h \geq 1$, $\eps(h)\in (0,\eps_0]$, if  
\begin{equation}
\label{e:hyp-resP}
I_\eps \subset \rho(\P_h) \quad \text{and} \quad  
\nor{(\P_h -\tau)^{-1}}{\L(\H)} \leq \frac{G(h)}{h} , \quad \text{for all } \tau \in I_\eps \text{ and }h \in (0,h_0) ,
\end{equation}
 then, for all $h \in (0,h_0)$ and $u\in \H$, 
 \begin{align}
 \label{e:estim-}
\frac{1}{T_h} \int_\R \left|\psi\left( \frac{t}{T_h}\right) \right|^2 \nor{e^{\frac{it}{h}\P_h} \Pi_\eps u}{\H}^2  dt
  \leq \left( \frac{G(h)^2}{T_h^2} \nor{\psi'}{L^2(\R)}^2  + C_{Q,\psi}  \frac{h T_h}{\eps(h)^2}\right)  \nor{ \Pi_\eps u }{\H}^2 , \\
 \label{e:estim-pointwise}
\nor{e^{\frac{i \theta T_h }{h}\P_h} \Pi_\eps u}{\H}^2
\leq \left( \frac{G(h)^2}{T_h^2}\frac{\nor{\psi'}{L^2(\R)}^2}{\nor{\psi}{L^2(0,\theta)}^2} +\frac{C_{Q,\psi}}{\nor{\psi}{L^2(0,\theta)}^2}\frac{h T_h}{\eps(h)^2} \right) \nor{\Pi_\eps u}{\H}^2,\quad\text{for all }\theta>0,\\
    \label{e:estim-pointwise-opt}
\nor{e^{\frac{i T_h }{h}\P_h} \Pi_\eps u}{\H}^2 
  \leq \left(  (2+\delta)\frac{G(h)^2}{T_h^2}   + \frac{C_Q}{\delta^2}  \frac{h T_h}{\eps(h)^2} \right) \nor{ \Pi_\eps u }{\H}^2 , \quad \text{for all }\delta \in (0,1),
\end{align}
 with
  $$
 C_{Q,\psi} =  \mathsf{C}_0(1 +\|Q_h\|_{\L(\H)}^2) \max \{\nor{\psi}{L^1}^2 , \nor{\psi'}{L^1}^2 \} , \quad  C_{Q} =  \mathsf{C}_0(1 +\|Q_h\|_{\L(\H)}^2) .
 $$
 Moreover, the same result holds if we replace $I_\eps, \chi_\eps, \Pi_\eps$ by $I_\eps^\pm,\chi_\eps^\pm, \Pi_\eps^\pm$ respectively.
 \end{theorem}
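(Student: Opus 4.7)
The plan is to follow a \emph{Plancherel-in-time} strategy, in the spirit of classical proofs of the Gearhart-Huang-Pr\"uss theorem and of~\cite{BZ:04}. I first set $v(t) = e^{\frac{it}{h}\P_h}\Pi_\eps u$ for $t\geq 0$, extended by zero for $t<0$, and $w(t) = \psi(t/T_h)v(t)$. Since $\psi \in H^1(\R) \hookrightarrow C^0(\R)$ with $\supp \psi \subset \R_+$, necessarily $\psi(0)=0$, so $w$ is $H^1$, compactly supported in $t$, and $w(0)=0$. Differentiation yields $(hD_t - \P_h)w = \frac{h}{iT_h}\psi'(t/T_h) v$, and Fourier transform in $t$ produces
\begin{equation*}
(h\tau - \P_h)\hat w(\tau) = \frac{h}{iT_h}\,\widehat{\psi'(\cdot/T_h)\,v}(\tau),\quad \tau\in\R.
\end{equation*}

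By Plancherel, the left-hand side of~\eqref{e:estim-} equals $\frac{1}{2\pi T_h}\int_\R \|\hat w(\tau)\|_\H^2\, d\tau$, which I split as $\int_{h\tau\in I_\eps}+\int_{h\tau\notin I_\eps}$. On the region $h\tau\in I_\eps$, the resolvent hypothesis~\eqref{e:hyp-resP} gives $\|\hat w(\tau)\|\leq \frac{G(h)}{T_h}\|\widehat{\psi'(\cdot/T_h)\,v}(\tau)\|$; combined with Plancherel and the semigroup contractivity $\|v(t)\|\leq \|\Pi_\eps u\|$ (which follows from $\Re(Q_h v,v)\geq 0$), this produces the main term $\frac{G(h)^2}{T_h^2}\|\psi'\|_{L^2}^2\|\Pi_\eps u\|^2$.

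The technical heart of the proof is to bound $\int_{h\tau\notin I_\eps}\|\hat w\|^2\,d\tau$ so as to produce the remainder $2\pi T_h\cdot C_{Q,\psi}\frac{hT_h}{\eps^2}\|\Pi_\eps u\|^2$. I rely on: $(i)$ the algebraic identity $(1-\tilde\chi(P_h))\Pi_\eps=0$, with $\tilde\chi\in C^\infty_c(I_\eps)$ satisfying $\tilde\chi\equiv 1$ on $\supp\chi_\eps$ (available since $\supp\chi\subset(-1,1)$ strictly); and $(ii)$ the $h$-small damping $ihQ_h$ in $\P_h=P_h+ihQ_h$. Applying $(1-\tilde\chi(P_h))$ to the evolution equation, Duhamel gives
\begin{equation*}
(1-\tilde\chi(P_h))v(t) = -\int_0^t e^{\frac{i(t-s)}{h}P_h}(1-\tilde\chi(P_h))Q_h v(s)\,ds,
\end{equation*}
the homogeneous contribution vanishing by $(i)$. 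Combining with the bound $\|(h\tau-P_h)^{-1}\tilde\chi(P_h)\|_{\L(\H)}\lesssim 1/\eps$ valid for $h\tau\notin I_\eps$ (the distance from $h\tau$ to $\supp\tilde\chi$ being bounded below by $c\eps$), and Cauchy-Schwarz estimates, yields the claimed remainder; the norms $\|\psi\|_{L^1}$ and $\|\psi'\|_{L^1}$ enter through the $L^\infty$-bounds $\|\hat{\tilde\psi}\|_{L^\infty}\leq \|\tilde\psi\|_{L^1}$ and the analogue for $\tilde\psi'$.

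The pointwise estimates use the monotonicity $\frac{d}{dt}\|v(t)\|^2 = -2\Re(Q_h v,v)\leq 0$: for $\theta>0$,
\begin{equation*}
\|v(\theta T_h)\|^2 \cdot T_h\|\psi\|_{L^2(0,\theta)}^2 = \|v(\theta T_h)\|^2 \int_0^{\theta T_h}|\psi(t/T_h)|^2\,dt \leq \int_\R |\psi(t/T_h)|^2\|v(t)\|^2\,dt,
\end{equation*}
and combining with~\eqref{e:estim-} gives~\eqref{e:estim-pointwise}. For the sharp~\eqref{e:estim-pointwise-opt}, I would optimize the choice $\psi=\psi_\delta$ so as to drive the coefficient of $G(h)^2/T_h^2$ close to $2$ while $C_{Q,\psi_\delta}$ scales as $C_Q/\delta^2$. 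The symmetric-window case $I_\eps^\pm,\Pi_\eps^\pm$ is entirely parallel: the $\tau$-integral over $I_\eps^\pm/h$ splits as a disjoint union of two intervals, each handled by the same resolvent estimate. The hardest step is the off-spectrum analysis above: in this abstract setting $[Q_h,\tilde\chi(P_h)]$ has no semiclassical smallness, so the $hT_h$-gain must be recovered solely from the explicit $h$ in $\P_h=P_h+ihQ_h$, via the Duhamel identity combined with the spectral identity $(i)$.
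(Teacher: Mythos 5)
Your overall architecture (truncate in time, Plancherel, split the frequency integral into the window $I_\eps$ and its complement, use the resolvent hypothesis on the window, use monotonicity of $t\mapsto\|v(t)\|$ and an optimized weight $\psi$ for the pointwise bounds) is exactly the paper's, and those parts are fine. The gap is in what you yourself call the technical heart, the off-window region. Your mechanism for the $O(hT_h/\eps^2)$ remainder is the Duhamel identity for $(1-\tilde\chi(P_h))v(t)$, with the claim that ``the $hT_h$-gain must be recovered solely from the explicit $h$ in $\P_h=P_h+ihQ_h$, via the Duhamel identity''. But in the time evolution that $h$ cancels: the generator is $\tfrac{i}{h}\P_h=\tfrac{i}{h}P_h-Q_h$, so your Duhamel formula reads $(1-\tilde\chi)v(t)=-\int_0^t e^{\frac{i(t-s)}{h}P_h}(1-\tilde\chi)Q_h v(s)\,ds$ with an $O(1)$ (not $O(h)$) damping coefficient. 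The only available bound is $\|(1-\tilde\chi)v(t)\|\leq t\,\|Q_h\|\,\|\Pi_\eps u\|$, with no factor of $h$ and no decay in $\tau$; its contribution to $\frac{1}{T_h}\int|\psi(t/T_h)|^2\|v\|^2dt$ is then $O\big(T_h^2\|Q_h\|^2\big)$, which swamps the claimed $C_{Q,\psi}\,hT_h/\eps(h)^2$ (and would destroy the application, where $T_h\sim\log(1/h)$ and the remainder must vanish). Nor can you recover smallness by oscillation: after taking the Fourier transform of the Duhamel term, the spectral components of $(1-\tilde\chi)Q_hv$ can resonate exactly with off-window frequencies $h\tau$, so there is no non-stationary-phase gain on the range of $1-\tilde\chi$. (A second, smaller, issue: your elliptic bound $\|(h\tau-P_h)^{-1}\tilde\chi(P_h)\|\lesssim 1/\eps$ lacks the $\langle h\tau\rangle^{-1}$ decay needed to integrate pointwise bounds over the unbounded off-window region; this one is repairable, either by noting $\dist(h\tau,\supp\tilde\chi)\gtrsim\eps\langle h\tau\rangle$ as in the paper, or by estimating in $L^2_\tau$ via Plancherel rather than pointwise.)

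The paper's way around this is to never split $v(t)$: it writes, for $\tau$ off the window, $\hat w(\tau)=\int\psi(t/T_h)\,e^{\frac{it}{h}(\P_h-\tau)}\bar\Pi_\eps\,\Pi_\eps u\,dt$ (with $\bar\chi\chi=\chi$) and inserts the parametrix $(\P_h-\tau)S_1=\bar\Pi_\eps+R_1$, where $S_1=(P_h-\tau)^{-1}\bar\Pi_\eps$ and $R_1=ihQ_h(P_h-\tau)^{-1}\bar\Pi_\eps$. In the $S_1$ term one integrates by parts in $t$, converting $\P_h-\tau$ into $\tfrac{h}{T_h}\psi'(t/T_h)$, which yields $h\,\|\psi'\|_{L^1}/(\eps\langle\tau\rangle)$; the $R_1$ term is estimated brutally by $T_h\|\psi\|_{L^1}\cdot h\|Q_h\|/(\eps\langle\tau\rangle)$. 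The crucial point is that the elliptic inverse is applied \emph{before} the damping acts, against the spectrally localized data, so the explicit $ih$ in front of $Q_h$ survives; in your Duhamel route the damping sits inside the $\tfrac1h$-scaled evolution and the $h$ is lost. Reorganizing your off-window argument along these lines (parametrix on the data plus integration by parts in time, with the $\langle\tau\rangle^{-1}$-improved elliptic bound) closes the gap and reproduces the stated constants.
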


\begin{remark}
The remainder term depends on $h$ as $\frac{h T_h}{\eps(h)^2}$. In case the energy window is fixed, namely $\eps(h)=\eps_0$, it remains bounded for $T_h \lesssim \frac{1}{h}$.
\end{remark}

In this paragraph (and in particular in the proof below), we use the semiclassical (unitarily normalized) Fourier transform, defined for $f \in \calS(\R)$ by
\begin{equation}
\label{e:Fourier-sc}
\hat{f}(\tau) = \F_{t\to \tau}(f)(\tau) =\frac{1}{\sqrt{2\pi h}} \int_\R e^{-\frac{i\tau t}{h}} f(t) dt , \quad \text{ whence }\quad  f(t) = \frac{1}{\sqrt{2\pi h}} \int_\R e^{\frac{it \tau}{h}} \hat{f}(\tau) d\tau  .
\end{equation}
Note that $\F_{t\to \tau}\left(\frac{h}{i}\d_t f\right)(\tau) = \tau \hat{f}(\tau)$ and $\nor{f}{L^2(\R)} = \nor{\hat{f}}{L^2(\R)}$.

\medskip
The proof of Theorem~\ref{t:resolvent-implies-long-time} relies on a ``Plancherel-in-time'' argument as in~\cite[Theorem~4]{BZ:04} or in classical proofs of the Gearhart-Huang-Pr\"uss theorem~\cite{Gearhart:78,Huang:85,Pruss:84,EN:book,HS:10}.

\bnp[Proof of Theorem~\ref{t:resolvent-implies-long-time}]
From $u \in \H$, we define $v (t) =e^{\frac{it}{h}\P_h} \Pi_\eps u$. Since $\Pi_\eps u \in D(\P_h) = D(P_h)$, we have $v \in C^1(\R_+; \H)\cap C^0(\R_+; D(P_h))$, see \eg~\cite{Pazy:83}. We also set 
$$
w(t) =\psi\left(\frac{t}{T_h}\right)v (t) , \quad \text{ and } \quad g (t) = \psi' \left(\frac{t}{T_h}\right)v (t) ,
$$ and extend $w(t), g (t)$ by $0$ for $t\leq 0$. Since $\psi =0$ on $(-\infty, 0]$, the extended functions then satisfy $w\in H^1_{\comp}(\R; \H)$, $g\in L^2_{\comp}(\R; \H)$. Moreover, we have 
$$
\left(\frac{h}{i}\d_t - \P_h\right)v (t)=0, \quad \text{ for } t\in \R_+^*, 
$$
and hence
\begin{equation}
\label{e:eq-trunk-time}
\left(\frac{h}{i}\d_t - \P_h\right)w (t)= \frac{h}{iT_h} g(t) \quad \text{ in } L^2_{\comp}(\R; \H) . 
\end{equation}
We take the semiclassical Fourier transform in time~\eqref{e:Fourier-sc} of~\eqref{e:eq-trunk-time} to obtain 
$$
 \left(\tau - \P_h\right)\hat{w} (\tau)= \frac{h}{iT_h}\hat{g} (\tau) ,\quad \tau \in \R .
$$

Our goal is to prove an estimate on $\int_\R \|\hat{w}(\tau)\|_{\H}^2 d\tau$, and then use the Plancherel Theorem. To this aim, we decompose the integral over $\tau \in \R$ into $\tau \in I_\eps$ (for which we use Assumption~\eqref{e:hyp-resP}) and $\tau \notin I_\eps$ (for which we use the invertibility of $(\P_h-\tau)\Pi_\eps$).

We first consider the case $\tau \in I_\eps$ and obtain a bound on $\int_{I_\eps} \|\hat{w}(\tau)\|_{\H}^2 d\tau$.
For this, remark that if  $\tau \in I_\eps$, then Assumption~\eqref{e:hyp-resP} yields $\tau \notin \Sp(\P_h)$ and we have 
\begin{equation*}
\hat{w} (\tau)= \frac{h}{iT_h}   \left(\tau - \P_h \right)^{-1} \hat{g} (\tau) .
\end{equation*}
With~\eqref{e:hyp-resP}, this implies, \begin{align}
\label{e:key-res-tau}
\nor{\hat{w} (\tau)}{\H} & \leq  \frac{h}{T_h} \nor{\left(\tau - \P_h\right)^{-1}}{\L(\H)} \nor{ \hat{g} (\tau)}{\H} 
\leq\frac{G(h)}{T_h}\nor{ \hat{g} (\tau)}{\H} ,\quad \text{ for all }\tau \in I_\eps.
\end{align}
Integrating~\eqref{e:key-res-tau} yields
\begin{align}
\label{e:estim-wz}
\int_{I_\eps} \|\hat{w}(\tau)\|_{\H}^2 d\tau \leq  \frac{G(h)^2}{T_h^2}  \int_{I_\eps} \nor{\hat{g} (\tau)}{\H}^2 d\tau  \leq  \frac{G(h)^2}{T_h^2}  \int_\R\nor{\hat{g} (\tau)}{\H}^2 d\tau 
= \frac{G(h)^2}{T_h^2}  \int_\R \nor{g(t)}{\H}^2 dt  ,
\end{align}
where we used the vector-valued Plancherel theorem~\cite[Theorem~1.8.2]{ABHN:book} ($\F$ is well-normalized).
Recalling that $\left(e^{\frac{it}{h}\P_h}\right)_{t\in \R_+}$ is a $C^0$ semigroup of contractions, we have $\nor{v(t)}{\H}=  \nor{e^{\frac{it}{h}\P_h} \Pi_\eps u}{\H}\leq \nor{\Pi_\eps u}{\H}$, and hence
\begin{align}
\label{e:plangherel-gtau}
 \int_{\R} \|g(t)\|_{\H}^2 dt \leq 
 \int_\R \left|\psi'\left(\frac{t}{T_h}\right)\right|^2 dt \nor{\Pi_\eps u}{\H}^2 
=  T_h  \nor{\psi'}{L^2(\R)}^2 \nor{\Pi_\eps u}{\H}^2 .
\end{align}
With~\eqref{e:estim-wz} and~\eqref{e:plangherel-gtau}, we have obtained 
\begin{align}
\label{e:estim-wz-Ieps}
\int_{I_\eps} \|\hat{w}(\tau)\|_{\H}^2 d\tau 
 \leq  \frac{G(h)^2}{T_h}\nor{\psi'}{L^2(\R)}^2\nor{\Pi_\eps u}{\H}^2 .
\end{align}

We now consider the case $\tau \in \R \setminus I_\eps$ and obtain a bound on $\int_{\R \setminus I_\eps} \|\hat{w}(\tau)\|_{\H}^2 d\tau$.
Let $\alpha\in (0,1)$ such that $\supp(\chi) \subset [-(1-\alpha) , 1-\alpha]$. Then, by construction of $\chi_\eps$ and definition of $I_\eps$, we have 
$$
s \in \supp(\chi_\eps) \text{ and } \tau \notin  I_\eps \implies |s-\tau | \geq \alpha \eps(h).
$$
Moreover, we have 
$$
s \in \supp(\chi_\eps) \text{ and } \tau \in \R \implies |s-\tau | \geq \big(|\tau| -(1+\eps(h))\big)_+.
$$
As a consequence, we have in particular 
\begin{align*}
s \in \supp(\chi_\eps) \text{ and }\tau \notin  I_\eps \implies |s-\tau |&  \geq \max \left\{ \alpha \eps(h) , \big(|\tau| -(1+\eps(h))\big)_+ \right\} \\
& \geq \frac{\alpha \eps(h)}{2 + (1+\alpha)\eps(h)}(1+|\tau|) \geq \frac{\alpha \eps(h)}{4} \langle\tau\rangle ,
\end{align*}
and hence
$$
\chi_\eps(s)\mathds{1}_{\R\setminus I_\eps}(\tau) |s-\tau| \geq \chi_\eps(s)\mathds{1}_{\R\setminus I_\eps}(\tau) \frac{\alpha \eps(h)}{4} \langle\tau\rangle , \quad (s, \tau ) \in \R^2 .
$$
Together with the spectral theorem for selfadjoint operators, this gives, for all $y \in D(P_h)$,
$$
\nor{\mathds{1}_{\R\setminus I_\eps}(\tau) (P_h-\tau) \Pi_\eps y}{\H}  \geq \mathds{1}_{\R\setminus I_\eps}(\tau) \frac{\alpha \eps(h)}{4} \langle\tau\rangle \nor{ \Pi_\eps y}{\H}, \quad (s, \tau ) \in \R^2 .
$$
This implies that $(P_h-\tau)$ is invertible on $\Pi_\eps \H$, together with
\begin{equation}
\label{e:estim-elliptic}
\nor{(P_h-\tau)^{-1} \Pi_\eps y }{\H}  \leq \frac{4}{\alpha \eps(h) \langle\tau\rangle}  \nor{ \Pi_\eps y }{\H}, \quad \tau \in  \R\setminus I_\eps.
\end{equation}
Now, we recall the definition of the semiclassical Fourier transform~\eqref{e:Fourier-sc} to write 
$$
\sqrt{2\pi h} \ \hat{w}(\tau) =  \int_\R e^{-\frac{i\tau t}{h}} w(t) dt = \int_\R e^{-\frac{i\tau t}{h}} \psi(t/T_h) e^{\frac{it}{h}\P_h} \Pi_\eps u \, dt = \int_\R  \psi(t/T_h) e^{\frac{it}{h}(\P_h-\tau)} \Pi_\eps u \, dt .
$$
We now let $\bar{\Pi}_\eps = \bar{\chi}_\eps(P_h)$ (see~\eqref{def:Pieps}) with $\bar{\chi}$ defined as $\chi$ and such that  $\bar{\chi} \chi =\chi$. In particular, we have $\bar{\Pi}_\eps \Pi_\eps =\Pi_\eps$. 
We next write
\begin{equation}
\label{e:split-param}
(\P_h - \tau) S_1 = \bar{\Pi}_\eps + R_1 , \quad \text{ with } \quad   S_1 = (P_h-\tau)^{-1}  \bar{\Pi}_\eps, \quad R_1  = ih Q_h (P_h-\tau)^{-1} \bar{\Pi}_\eps.
\end{equation}
We decompose accordingly
\begin{align}
\label{e:T1+T2}
&\sqrt{2\pi h} \ \hat{w}(\tau)  = \int_\R  \psi(t/T_h) e^{\frac{it}{h}(\P_h-\tau)} \bar{\Pi}_\eps  \Pi_\eps u \, dt  = T_1(\tau) - T_2 (\tau), \quad \text{with }\\
&T_1(\tau) =  \int_\R  \psi(t/T_h) e^{\frac{it}{h}(\P_h-\tau)} (\P_h-\tau) S_1 \Pi_\eps u \, dt, \qquad T_2(\tau) = \int_\R  \psi(t/T_h) e^{\frac{it}{h}(\P_h-\tau)} R_1  \Pi_\eps u \, dt , \nonumber
\end{align}
and we study the two terms separately.
Concerning the first term, we remark that $(\P_h-\tau) e^{\frac{it}{h}(\P_h-\tau)}= \left(\frac{h}{i}\d_t\right) e^{\frac{it}{h}(\P_h-\tau)}$. Hence, for $\tau \in \R$, using an integration by parts, we obtain\begin{align*}
T_1(\tau) & =  \int_\R  \psi(t/T_h) \left(\frac{h}{i}\d_t\right) e^{\frac{it}{h}(\P_h-\tau)} S_1 \Pi_\eps u \, dt  = \int_\R \left[ \left(-\frac{h}{i}\d_t\right) \psi(t/T_h) \right] e^{\frac{it}{h}(\P_h-\tau)} S_1 \Pi_\eps u \, dt \\
 & = \left(\frac{ih}{T_h}\right)  \int_\R \psi'(t/T_h)e^{\frac{it}{h}(\P_h-\tau)} S_1 \Pi_\eps u \, dt .
\end{align*}
We thus have 
\begin{align*}
\nor{T_1(\tau)}{\H} & \leq \frac{h}{T_h} \int_\R |\psi'(t/T_h)| dt   \nor{S_1 \Pi_\eps u}{\H}  = h  \nor{\psi'}{L^1(\R)}  \nor{S_1 \Pi_\eps u}{\H} .
\end{align*}
Recalling $S_1 = (P_h-\tau)^{-1}\bar{\Pi}_\eps$, together with~\eqref{e:estim-elliptic}, yields, for $\tau \in  \R\setminus I_\eps$,
$$
\nor{S_1 \Pi_\eps u}{\H} = \nor{(P_h-\tau)^{-1} \Pi_\eps u }{\H}  \leq \frac{4}{\alpha \eps(h) \langle\tau\rangle}  \nor{ \Pi_\eps u }{\H} .
$$
We hence obtain
\begin{align}
\label{e:estim-T1}
\nor{T_1(\tau)}{\H} \leq \frac{4h}{\alpha \eps(h) \langle\tau\rangle} \nor{\psi'}{L^1(\R)}    \nor{ \Pi_\eps u }{\H} .
\end{align}
We estimate $T_2$ roughly as 
$$
\nor{T_2(\tau)}{\H} \leq  \int_\R  |\psi(t/T_h) | dt \nor{R_1  \Pi_\eps u}{\H} = T_h \|\psi \|_{L^1(\R)}  \nor{R_1  \Pi_\eps u}{\H} ,
$$
Recalling the definition of $R_1 = ih Q_h (P_h-\tau)^{-1}\bar{\Pi}_\eps$ together with~\eqref{e:estim-elliptic}, we have, for $\tau \in  \R\setminus I_\eps$,
 $$
 \nor{R_1  \Pi_\eps u}{\H} \leq h\|Q_h\|_{\L(\H)}  \nor{(P_h-\tau)^{-1}\Pi_\eps u }{\H} \leq h\|Q_h\|_{\L(\H)} \frac{4}{\alpha \eps(h) \langle\tau\rangle}  \nor{ \Pi_\eps u }{\H} ,
 $$
and hence
\begin{align}
\label{e:estim-T2}
\nor{T_2(\tau)}{\H} \leq  4/\alpha \|\psi \|_{L^1(\R)}  \|Q_h\|_{\L(\H)} \frac{hT_h}{\eps(h) \langle\tau\rangle}  \nor{ \Pi_\eps u }{\H} .
\end{align}
Combining~\eqref{e:T1+T2},~\eqref{e:estim-T1} and~\eqref{e:estim-T2}, we have obtained, for $\tau \in  \R\setminus I_\eps$,
$$
\sqrt{2\pi h} \nor{\hat{w}(\tau) }{\H} \leq 4/\alpha \nor{\psi'}{L^1(\R)}  \frac{h}{\eps(h) \langle\tau\rangle}  \nor{ \Pi_\eps u }{\H}  +4/\alpha \|\psi \|_{L^1(\R)} \|Q_h\|_{\L(\H)} \frac{hT_h}{\eps(h) \langle\tau\rangle}  \nor{ \Pi_\eps u }{\H} ,
$$
that is, using $T_h \geq 1$,
$$
\nor{\hat{w}(\tau) }{\H} \leq \bar{C}_{Q,\psi}  \frac{\sqrt{h} T_h}{\eps(h) \langle\tau\rangle}  \nor{ \Pi_\eps u }{\H} , \quad \tau \in  \R\setminus I_\eps .
$$
with $\bar{C}_{Q,\psi} = \frac{4}{\alpha \sqrt{2\pi}} (1+ \|Q_h\|_{\L(\H)}) \max\{\nor{\psi}{L^1(\R)} ,\nor{\psi'}{L^1(\R)} \}$.
We thus have 
\begin{align}
\label{e:final-I-eps-comp}
\int_{\R\setminus I_\eps }\nor{\hat{w}(\tau) }{\H}^2 d\tau 
& \leq  \int_{\R\setminus I_\eps } \bar{C}_{Q,\psi}^2  \frac{h T_h^2}{\eps(h)^2 \langle\tau\rangle^2}  \nor{ \Pi_\eps u }{\H}^2 d\tau  \leq \pi \bar{C}_{Q,\psi}^2 \frac{h T_h^2}{\eps(h)^2}  \nor{ \Pi_\eps u }{\H}^2 .
\end{align}
We finally combine~\eqref{e:estim-wz-Ieps} and \eqref{e:final-I-eps-comp} to obtain 
\begin{align*}
\int_{\R}\nor{\hat{w}(\tau)}{\H}^2 d\tau   
& \leq  \left(  \frac{G(h)^2}{T_h} \nor{\psi'}{L^2(\R)}^2 
+ \pi \bar{C}_{Q,\psi}^2 \frac{h T_h^2}{\eps(h)^2} \right) \nor{ \Pi_\eps u }{\H}^2
\end{align*}
The vector-valued Plancherel theorem finally yields 
\begin{align*}
\int_\R \left|\psi\left( \frac{t}{T_h}\right) \right|^2 \nor{e^{\frac{it}{h}\P_h} \Pi_\eps u}{\H}^2  dt
&  = \int_{\R}\nor{w(t)}{\H}^2 dt  = \int_{\R}\nor{\hat{w}(\tau)}{\H}^2 d\tau   \\
& \leq  \left(  \frac{G(h)^2}{T_h} \nor{\psi'}{L^2(\R)}^2 
+ \pi \bar{C}_{Q,\psi}^2 \frac{h T_h^2}{\eps(h)^2} \right) \nor{ \Pi_\eps u }{\H}^2 ,
\end{align*}
 and hence  the sought result~\eqref{e:estim-} when dividing by $T_h$.
Then,~\eqref{e:estim-pointwise} follows from~\eqref{e:estim-} combined with~\eqref{e:debile-2} in Lemma~\ref{l:debile} and the fact that $\left(e^{\frac{it}{h}\P_h}\right)_{t\in \R_+}$ is a contraction semigroup.
Finally,~\eqref{e:estim-pointwise-opt} comes from taking $\theta=1$ in~\eqref{e:estim-pointwise} and optimizing in the function $\psi$. That is to say,  for $\delta \in (0,1)$, we choose $\psi=\psi_{\min,L}$ from Lemma~\ref{l:optimizing-psi} with $L := 1+ \delta^{-1}+ \sqrt{\delta^{-2}+ 2\delta^{-1}}>2+\sqrt{3}$ so that $\frac{2L}{(L-1)^2}=\delta$. As a consequence of this choice, $ \nor{\psi_{\min,L}'}{L^2(\R)}^2 = 2 \left( 1+ \frac{L}{(L-1)^2}\right)=2+\delta$, $\nor{\psi_{\min,L}'}{L^1(\R)} = \sqrt{2} \left(1+ \frac{L}{L-1}\right) \leq 2\sqrt{2}$ and $\nor{\psi_{\min,L}}{L^1(\R)} = \frac{L}{\sqrt{2}} \leq 2^{-1/2}(2+ \sqrt{2})\delta^{-1}$  uniformly  for $\delta \in (0,1)$.
\enp

\subsection{Improved remainder with regularity assumptions}
\label{s:remainder-regularity}
The following is a variant of Theorem~\ref{t:resolvent-implies-long-time} in which we assume a parametrix for $(\P_h -\tau)^{N}$, $\tau \notin I_\eps$, and improve the remainder term.
\begin{theorem}
\label{t:resolvent-implies-long-time-smooth}
Under the assumptions of Theorem~\ref{t:resolvent-implies-long-time}, we suppose further that $\eps$ is $h$-independent and $\psi \in C^\infty_c(0,+\infty)$. Let $N\in \N^*$ and assume that there are $C_N>0$ and $S_N, R_N \in \L(\H)$ such that 
\begin{align}
\label{param-alg-N-bis-1}
(\P_h -\tau)^{N} S_N = \bar\Pi_\eps^\pm + R_N, \quad \text{with} \\
\label{param-alg-N-bis-2}  \nor{S_N}{\L(\H)}  \leq \frac{C_N}{\langle \tau \rangle} , \quad \nor{R_N}{\L(\H)}  \leq  C_N  \frac{h^N}{\langle \tau \rangle} , \quad \text{ for all } h \in (0,1) , \tau \in \R\setminus I_\eps,
\end{align}
where $\bar\Pi_\eps^\pm = \bar{\chi}_\eps^\pm(P_h)$ (see~\eqref{def:Pieps}) with $\bar{\chi}^\pm$ defined as $\chi^\pm$ and such that  $\bar{\chi}^\pm \chi^\pm =\chi^\pm$. 
If~\eqref{e:hyp-resP} holds, then for all $h \in (0,h_0)$ and $u\in \H$, we have 
\begin{align*}
\frac{1}{T_h}\int_\R \left|\psi\left( \frac{t}{T_h}\right) \right|^2 \nor{e^{\frac{it}{h}\P_h} \Pi_\eps^\pm u}{\H}^2  dt
 \leq  \left(  \frac{G(h)^2}{T_h^2} \nor{\psi'}{L^2(\R)}^2 
+ \mathsf{C}_{N,\psi}  h^{2N-1} T_h \right) \nor{ \Pi_\eps^\pm u }{\H}^2 , \\
\nor{e^{\frac{i \theta T_h }{h}\P_h} \Pi_\eps^\pm u}{\H}^2 
  \leq \left( \frac{G(h)^2}{T_h^2} \frac{\nor{\psi'}{L^2(\R)}^2}{\nor{\psi}{L^2(0,\theta)}^2}   + \frac{\mathsf{C}_{N,\psi} }{\nor{\psi}{L^2(0,\theta)}^2}    h^{2N-1} T_h \right) \nor{ \Pi_\eps^\pm u }{\H}^2 , \quad \text{for all }\theta>0, \\
  \nor{e^{\frac{i T_h }{h}\P_h} \Pi_\eps^\pm u}{\H}^2 
  \leq \left(  (2+\delta)\frac{G(h)^2}{T_h^2}   +C_{N,\delta} h^{2N-1} T_h  \right) \nor{ \Pi_\eps^\pm u }{\H}^2 , \quad \text{for all }\delta \in (0,1),
\end{align*}
with $\mathsf{C}_{N,\psi} =  C_N^2 \left(\nor{\psi^{(N)}}{L^1(\R)} + \nor{\psi}{L^1(\R)} \right)^2$.
\end{theorem}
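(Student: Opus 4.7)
The proof follows the same Fourier-in-time strategy as Theorem~\ref{t:resolvent-implies-long-time}, and the only essential modification concerns the treatment of frequencies $\tau \in \R\setminus I_\eps^\pm$, where the first-order parametrix $(\P_h-\tau)S_1 = \bar\Pi_\eps^\pm + R_1$ used in~\eqref{e:split-param} is replaced by the $N$-th order parametrix provided by~\eqref{param-alg-N-bis-1}--\eqref{param-alg-N-bis-2}. As before, I set $v(t) = e^{it\P_h/h}\Pi_\eps^\pm u$, $w(t) = \psi(t/T_h)v(t)$, $g(t) = \psi'(t/T_h)v(t)$, extended by $0$ for $t\leq 0$, and take the semiclassical Fourier transform of $(\frac{h}{i}\partial_t - \P_h)w = (h/iT_h)g$ to get $(\tau - \P_h)\hat{w}(\tau) = (h/iT_h)\hat{g}(\tau)$. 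The contribution of $\tau \in I_\eps^\pm$ to $\int_\R\|\hat{w}(\tau)\|_\H^2 d\tau$ is bounded by $\frac{G(h)^2}{T_h}\|\psi'\|_{L^2}^2\|\Pi_\eps^\pm u\|_\H^2$ by an argument identical to~\eqref{e:key-res-tau}--\eqref{e:estim-wz-Ieps}, using only~\eqref{e:hyp-resP}.

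For $\tau \notin I_\eps^\pm$, I use $\Pi_\eps^\pm = \bar\Pi_\eps^\pm \Pi_\eps^\pm$ and substitute $\bar\Pi_\eps^\pm = (\P_h-\tau)^N S_N - R_N$ into $\sqrt{2\pi h}\,\hat{w}(\tau) = \int_\R \psi(t/T_h)e^{it(\P_h-\tau)/h}\bar\Pi_\eps^\pm \Pi_\eps^\pm u\,dt$, splitting it as $T_1(\tau) - T_2(\tau)$ in the spirit of~\eqref{e:T1+T2}. For $T_1$, I exploit $(\P_h-\tau)^N e^{it(\P_h-\tau)/h} = (h/i)^N \partial_t^N e^{it(\P_h-\tau)/h}$ and integrate by parts in $t$ exactly $N$ times; the boundary terms vanish since $\psi \in C^\infty_c(0,+\infty)$, producing
$T_1(\tau) = (-h/i)^N T_h^{-N}\int_\R \psi^{(N)}(t/T_h)e^{it(\P_h-\tau)/h}S_N \Pi_\eps^\pm u\,dt$. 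Bounding this using the contraction property of $e^{it\P_h/h}$ and $\|S_N\|_{\L(\H)}\leq C_N/\langle\tau\rangle$ gives $\|T_1(\tau)\|_\H \leq C_N h^N T_h^{1-N}\|\psi^{(N)}\|_{L^1}\langle\tau\rangle^{-1}\|\Pi_\eps^\pm u\|_\H$. For $T_2$, the straight estimate together with $\|R_N\|\leq C_N h^N/\langle\tau\rangle$ yields $\|T_2(\tau)\|_\H \leq C_N h^N T_h\|\psi\|_{L^1}\langle\tau\rangle^{-1}\|\Pi_\eps^\pm u\|_\H$. Using $T_h \geq 1$ to dominate $T_h^{1-N}\leq T_h$, summing and squaring gives $\|\hat{w}(\tau)\|_\H^2 \leq \frac{C_N^2 h^{2N-1} T_h^2}{2\pi\langle\tau\rangle^2}(\|\psi^{(N)}\|_{L^1}+\|\psi\|_{L^1})^2\|\Pi_\eps^\pm u\|_\H^2$, and integrating against $\int\langle\tau\rangle^{-2}d\tau = \pi$ contributes $\leq \tfrac12 \mathsf{C}_{N,\psi}h^{2N-1}T_h^2\|\Pi_\eps^\pm u\|_\H^2$.

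Adding the two contributions, invoking the vector-valued Plancherel theorem, and dividing by $T_h$ yields the first inequality. The pointwise estimate at time $\theta T_h$ follows from the time-averaged bound via Lemma~\ref{l:debile} together with the contraction property of the semigroup, exactly as in the proof of Theorem~\ref{t:resolvent-implies-long-time}. Finally, the optimized pointwise bound with constant $2+\delta$ is obtained by choosing $\psi$ to be a $C^\infty_c$ near-extremizer of the ratio $\|\psi'\|_{L^2}^2/\|\psi\|_{L^2(0,1)}^2$, constructed either by mollifying the piecewise-linear $\psi_{\min,L}$ of Lemma~\ref{l:optimizing-psi} at a scale depending only on $\delta$ and $N$, or by a direct variational construction; this produces finite constants $\|\psi^{(N)}\|_{L^1}$, $\|\psi\|_{L^1}$ depending only on $N$ and $\delta$, absorbed into $C_{N,\delta}$. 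The only real subtlety, and the key point compared to Theorem~\ref{t:resolvent-implies-long-time}, is that the $N$-fold integration by parts truly consumes $N$ derivatives of the cutoff in time, which is why the $C^\infty_c$ assumption on $\psi$ is essential and why the optimization step must be rerun with a smooth approximant rather than $\psi_{\min,L}$ itself.
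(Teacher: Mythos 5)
Your proof is correct and follows essentially the same route as the paper: the same Fourier-in-time/Plancherel argument, with the $N$-th order parametrix \eqref{param-alg-N-bis-1}--\eqref{param-alg-N-bis-2} replacing the first-order one of \eqref{e:split-param}, an $N$-fold integration by parts for $T_1$, the rough bound for $T_2$, and the same passage from the time-averaged estimate to the pointwise ones via Lemma~\ref{l:debile}. Your closing remark about mollifying $\psi_{\min,L}$ to get the $(2+\delta)$ constant with a $C^\infty_c$ weight addresses a point the paper leaves implicit (its written proof stops at the averaged estimate), and it is the correct way to obtain $C_{N,\delta}$.
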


\bnp[Proof of Theorem~\ref{t:resolvent-implies-long-time-smooth}]
We follow the proof of Theorem~\ref{t:resolvent-implies-long-time} until \eqref{e:split-param}. Instead of~\eqref{e:split-param}, we use the refined Parametrix of Assumption~\eqref{param-alg-N-bis-1}. 
We decompose 
\begin{align}
\label{e:T1+T2-bis}
&\sqrt{2\pi h} \ \hat{w}(\tau)  = \int_\R  \psi(t/T_h) e^{\frac{it}{h}(\P_h-\tau)} \bar\Pi_\eps^\pm \Pi_\eps^\pm u \, dt  = T_1(\tau) - T_2 (\tau), \quad \text{with }\\
&T_1(\tau) =  \int_\R  \psi(t/T_h) e^{\frac{it}{h}(\P_h-\tau)} (\P_h-\tau)^N S_N \Pi_\eps^\pm u \, dt, \qquad T_2(\tau) = \int_\R  \psi(t/T_h) e^{\frac{it}{h}(\P_h-\tau)} R_N  \Pi_\eps^\pm u \, dt , \nonumber
\end{align}
and we study the two terms separately.

Concerning the first term $T_1(\tau)$, we remark that $(\P_h-\tau)^N e^{\frac{it}{h}(\P_h-\tau)}= \left(\frac{h}{i}\d_t\right)^N e^{\frac{it}{h}(\P_h-\tau)}$. Hence, for $\tau \in \R$, integrating by parts, we obtain
\begin{align*}
T_1(\tau) & =  \int_\R  \psi(t/T_h) \left(\frac{h}{i}\d_t\right)^N e^{\frac{it}{h}(\P_h-\tau)} S_N \Pi_\eps^\pm u \, dt
 = \int_\R \left[ \left(-\frac{h}{i}\d_t\right)^N \psi(t/T_h) \right] e^{\frac{it}{h}(\P_h-\tau)} S_N \Pi_\eps^\pm u \, dt \\
 & = \left(\frac{ih}{T_h}\right)^N  \int_\R \psi^{(N)}(t/T_h)e^{\frac{it}{h}(\P_h-\tau)} S_N \Pi_\eps^\pm u \, dt .
\end{align*}
Therefore, we have 
\begin{align*}
\nor{T_1(\tau)}{\H} & \leq \left(\frac{h}{T_h}\right)^N \int_\R |\psi^{(N)}(t/T_h)| dt   \nor{S_N \Pi_\eps^\pm u}{\H}  =  \left(\frac{h}{T_h}\right)^N T_h   \nor{\psi^{(N)}}{L^1(\R)}  \nor{S_N \Pi_\eps^\pm u}{\H} .
\end{align*}
Using~\eqref{param-alg-N-bis-2}, we deduce for all $\tau \in \R\setminus I_\eps$
\begin{align}
\label{e:estim-T1-bis}
\nor{T_1(\tau)}{\H} \leq \frac{C_N}{\langle\tau\rangle} \left(\frac{h}{T_h}\right)^N T_h   \nor{\psi^{(N)}}{L^1(\R)}  \nor{\Pi_\eps^\pm u}{\H}   .
\end{align}
We estimate $T_2$ roughly for  $\tau \in  \R\setminus I_\eps$, using~\eqref{param-alg-N-bis-2}: 
\begin{align}
\label{e:estim-T2-bis}
\nor{T_2(\tau)}{\H} \leq  \int_\R  |\psi(t/T_h) | dt \nor{R_N  \Pi_\eps^\pm u}{\H} = T_h \|\psi \|_{L^1(\R)}  \nor{R_N  \Pi_\eps^\pm u}{\H} \leq  \frac{C_N}{\langle\tau\rangle} h^N T_h   \nor{\psi}{L^1(\R)}  \nor{\Pi_\eps^\pm u}{\H}  .
\end{align}
Combining~\eqref{e:T1+T2-bis},~\eqref{e:estim-T1-bis} and~\eqref{e:estim-T2-bis}, we have obtained, for $\tau \in  \R\setminus I_\eps$,
$$
\sqrt{2\pi h} \nor{\hat{w}(\tau) }{\H} \leq \frac{C_N}{\langle\tau\rangle}T_h  \left( \left(\frac{h}{T_h}\right)^N  \nor{\psi^{(N)}}{L^1(\R)}  +  h^N \nor{\psi}{L^1(\R)}   \right) \nor{\Pi_\eps^\pm u}{\H} ,
$$
that is, using $T_h \geq 1$,
$$
\nor{\hat{w}(\tau) }{\H} \leq \frac{C_{N,\psi}}{\langle\tau\rangle}   h^{N-1/2} T_h \nor{ \Pi_\eps^\pm u }{\H} , \quad \tau \in  \R\setminus I_\eps .
$$
with $C_{N,\psi} =C_N \left(\nor{\psi^{(N)}}{L^1(\R)} + \nor{\psi}{L^1(\R)} \right)$.
We thus have 
\begin{align}
\label{e:final-I-eps-comp-bis}
\int_{\R\setminus I_\eps }\nor{\hat{w}(\tau) }{\H}^2 d\tau 
& \leq  \int_{\R\setminus I_\eps } \frac{C_{N,\psi}^2}{\langle\tau\rangle^2}   h^{2N-1} T_h^2  \nor{ \Pi_\eps^\pm u }{\H}^2 d\tau  \leq \pi C_{N,\psi}^2 h^{2N-1} T_h^2 \nor{ \Pi_\eps^\pm u }{\H}^2 .
\end{align}
We finally combine~\eqref{e:estim-wz-Ieps} and \eqref{e:final-I-eps-comp-bis} to obtain 
\begin{align*}
\int_{\R}\nor{\hat{w}(\tau)}{\H}^2 d\tau   
& \leq  \left(  \frac{G(h)^2}{T_h} \nor{\psi'}{L^2(\R)}^2 
+  \pi C_{N,\psi}^2 h^{2N-1} T_h^2 \right) \nor{ \Pi_\eps^\pm u }{\H}^2.
\end{align*}
The vector-valued Plancherel theorem finally yields 
\begin{align*}
\int_\R \left|\psi\left( \frac{t}{T_h}\right) \right|^2 \nor{e^{\frac{it}{h}\P_h} \Pi_\eps^\pm u}{\H}^2  dt
&  = \int_{\R}\nor{w(t)}{\H}^2 dt  = \int_{\R}\nor{\hat{w}(\tau)}{\H}^2 d\tau   \\
& \leq  \left(  \frac{G(h)^2}{T_h} \nor{\psi'}{L^2(\R)}^2 
+  \pi C_{N,\psi}^2 h^{2N-1} T_h^2 \right) \nor{ \Pi_\eps^\pm u }{\H}^2,
\end{align*}
 and hence  the sought result when dividing by $T_h$.
\enp

\subsection{Parametrices in the case of smooth damping}
\label{s:parametrice}
We now explain how to construct the parametrix in~\eqref{param-alg-N-bis-1}-\eqref{param-alg-N-bis-2} in the case of the wave/Klein-Gordon equation, using pseudodifferential calculus (what we have tried to avoid so far).
Throughout the section, the energy window is assumed to be of fixed width and symmetric, \ie, for some $0< \eps_0 < 1$,
\begin{align}
\label{e:defIpm}
I_\eps^\pm := [-1-\eps_0, -1+\eps_0] \cup [1-\eps_0, 1+\eps_0] , \quad  \chi_\eps^\pm(s) = \chi_\eps(s)+\chi_\eps(-s) . 
\end{align}
We further assume (see Section~\ref{s:first-order-system}) that
\begin{align}
\label{e:def-Ph-encore}
P_h = \begin{pmatrix}
h\Lambda_m & 0\\
0 & - h\Lambda_m \end{pmatrix} ,
\end{align}
with $m\geq0$, and that 
\begin{align}
\label{e:defPipm}\Pi_\eps^\pm = \chi_\eps^\pm(P_h)
=\begin{pmatrix}
\chi_\eps^\pm(h\Lambda_m) & 0 \\
0 & \chi_\eps^\pm(-h\Lambda_m)
\end{pmatrix}
= \begin{pmatrix}
\chi_\eps(h\Lambda_m) & 0 \\
0 & \chi_\eps(h\Lambda_m)
\end{pmatrix}
=\chi_\eps(h\Lambda_m)  I_2 ,
\end{align}
since $\chi_\eps^\pm(-s) = \chi_\eps^\pm(s)$ even. 
\begin{definition}
\label{d:class-tau}
For the purposes of this section, we introduce, given a subset $J$ of $\R$, the family $S^{m}(\langle \tau \rangle^k, T^*M; \C^{2\times2})$ of matrices of symbols whose entries depend on two parameters $h \in (0,1),\tau \in J$ and satisfy, in coordinates 
\bnan
\label{e:symb-estim-Srho}
|\d_x^\alpha \d_\xi^\beta a(x,\xi , h,\tau)| \leq C_{\alpha,\beta} \langle \tau \rangle^k  \langle \xi \rangle^{m-|\beta|} , \quad \text{uniformly for } h \in (0,1)  ,\tau \in J.
\enan
 We denote by $\Psi^{m}(\langle \tau \rangle^k, M; \C^{2\times2})$ (or $\Psi^{m}(\langle \tau \rangle^k)$ for short) the associated class of pseudodifferential operators. We remark that $S^{m}(\langle \tau \rangle^k, T^*M; \C^{2\times2}) = \langle \tau \rangle^k S^{m}(1, T^*M; \C^{2\times2})$, where $S^{m}(1, T^*M; \C^{2\times2})$ consists in usual symbols in $S^{m}(T^*M; \C^{2\times2})$ depending on $\tau$ as a parameter, with all seminorms uniformly bounded in $\tau\in J$. Hence, the pseudodifferential calculus of $\Psi^{m}(\langle \tau \rangle^k, M; \C^{2\times2})$ is the same as that of $\langle \tau \rangle^k \Psi^{m}( M; \C^{2\times2})$. Note that the gain in the calculus is $h\langle \xi \rangle^{-1}$ (\ie there is no gain in terms of powers of $\tau$).
 \end{definition}
 Our main goal here is the construction of the following parametrix, to be combined later on with Theorem~\ref{t:resolvent-implies-long-time-smooth}.
 \begin{proposition}
\label{p:parametrix}
Assume that $Q_h \in \Psi^{0}( M; \C^{2\times2})$, $P_h$ is defined by~\eqref{e:def-Ph-encore}, and $\P_h = P_h + ihQ_h$. Then, with $I_\eps^\pm,\Pi_\eps^\pm$ defined in~\eqref{e:defIpm}-\eqref{e:defPipm}, for all $N,N' \in \N^*$ there exist $\mathsf{S}_{N'}\in \Psi^{-\infty}(\langle \tau \rangle^{-N})$ and $\mathsf{R}_{N'} \in \Psi^{-\infty}(h^{N'} \langle \tau \rangle^{-1})$ defined for $\tau \notin I_\eps$ such that 
$$
(\P_h - \tau)^N \mathsf{S}_{N'}  = \Pi_\eps^\pm +\mathsf{R}_{N'} \Pi_\eps^\pm  , \quad \text{ for all } \tau \notin I_\eps.
$$
\end{proposition}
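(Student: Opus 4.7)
The plan is to carry out a semiclassical elliptic parametrix construction for $(\P_h-\tau)^N$ tailored so that the remainder satisfies the required $\langle\tau\rangle^{-1}$ decay at arbitrarily large $|\tau|$. The essential point is to work with the functional calculus of $h\Lambda_m$ rather than with a pseudodifferential symbolic substitute at the leading step, because the latter would produce a $\tau$-independent $O(h^\infty)$-smoothing discrepancy incompatible with the target class $\Psi^{-\infty}(h^{N'}\langle\tau\rangle^{-1})$. Fix a cutoff $\psi \in C^\infty_c(\R;[0,1])$ supported in $(1-\eps_0, 1+\eps_0)\setminus\{0\}$ with $\psi\chi_\eps=\chi_\eps$, so that the exact spectral identity $\psi(h\Lambda_m)\,\Pi_\eps^\pm = \Pi_\eps^\pm$ holds.

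\textbf{Leading parametrix.} For $\tau \notin I_\eps^\pm$, the functions $\phi^\pm_\tau(s) := \psi(s)(\pm s - \tau)^{-N}$ belong to $C^\infty_c(\R)$ with all seminorms bounded by $C_\alpha \langle\tau\rangle^{-N}$ uniformly in $\tau$. Set
\[
S_0 := \mathrm{diag}\bigl(\phi^+_\tau(h\Lambda_m),\, \phi^-_\tau(h\Lambda_m)\bigr) \in \Psi^{-\infty}\bigl(\langle\tau\rangle^{-N}\bigr),
\]
where membership in the class follows from Corollary~\ref{c:functional-calc} applied to each $\phi^\pm_\tau$. Since $P_h = \mathrm{diag}(h\Lambda_m, -h\Lambda_m)$, functional calculus yields the \emph{exact} identity $(P_h-\tau)^N S_0 = \psi(h\Lambda_m)\,I_2$ and hence $(P_h-\tau)^N S_0\,\Pi_\eps^\pm = \Pi_\eps^\pm$. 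Writing $(\P_h-\tau)^N = (P_h-\tau)^N + E_1$ with $E_1$ collecting all monomials containing at least one factor $ihQ_h$, we obtain
\[
(\P_h-\tau)^N S_0\,\Pi_\eps^\pm = \Pi_\eps^\pm + E_1 S_0\,\Pi_\eps^\pm.
\]
Computing the principal matrix symbol of the leading term $ihN\,(P_h-\tau)^{N-1}Q_h S_0$, one observes that each factor $(\pm|\xi|_x - \tau)^{-N}$ in $s_0$ cancels against a factor $(\pm|\xi|_x-\tau)^{N-1}$ to leave exactly one $(\pm|\xi|_x-\tau)^{-1}$, whence $E_1 S_0 \in h\,\Psi^{-\infty}(\langle\tau\rangle^{-1})$.

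\textbf{Iteration and assembly.} Inductively, suppose we have built $\mathsf{S}^{(k)} := \sum_{j=0}^{k} h^j S_j$ with $(\P_h-\tau)^N \mathsf{S}^{(k)}\,\Pi_\eps^\pm = \Pi_\eps^\pm + h^{k+1} R_{k+1}\,\Pi_\eps^\pm$ and $R_{k+1} \in \Psi^{-\infty}(\langle\tau\rangle^{-1})$. Define the next correction by
\[
S_{k+1} := -\,\mathrm{diag}\bigl(\phi^+_\tau(h\Lambda_m),\phi^-_\tau(h\Lambda_m)\bigr) R_{k+1} \in \Psi^{-\infty}\bigl(\langle\tau\rangle^{-N-1}\bigr),
\]
so that $(P_h-\tau)^N S_{k+1} = -\psi(h\Lambda_m)\,R_{k+1}$. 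Using the near-commutativity $\psi(h\Lambda_m)\,R_{k+1}\,\Pi_\eps^\pm = R_{k+1}\,\Pi_\eps^\pm + O_{\Psi^{-\infty}}(h\langle\tau\rangle^{-1})\,\Pi_\eps^\pm$ (since $\psi(h\Lambda_m)$ has scalar principal symbol and $\psi\chi_\eps=\chi_\eps$), and absorbing the $E_1 S_{k+1}$ contribution into the next-order remainder, one advances the remainder from $h^{k+1}$ to $h^{k+2}$ while preserving the class $R_{k+2}\in\Psi^{-\infty}(\langle\tau\rangle^{-1})$. Iterating $N'$ times and setting $\mathsf{S}_{N'} := \mathsf{S}^{(N'-1)}\,\Pi_\eps^\pm \in \Psi^{-\infty}(\langle\tau\rangle^{-N})$ gives
\[
(\P_h-\tau)^N \mathsf{S}_{N'} = \Pi_\eps^\pm + \mathsf{R}_{N'}\,\Pi_\eps^\pm, \qquad \mathsf{R}_{N'} \in \Psi^{-\infty}\bigl(h^{N'}\langle\tau\rangle^{-1}\bigr),
\]
as required.

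\textbf{Main difficulty.} The key technical delicacy is the book-keeping of the $\tau$-power: at every step the remainder must decay like $\langle\tau\rangle^{-1}$, and this is not automatic. Replacing $\psi(h\Lambda_m)$ by a pseudodifferential symbolic substitute $\mathrm{Op}(\psi(|\xi|_x))\,I_2$ would introduce a $\tau$-independent $O(h^\infty)$ smoothing error which \emph{cannot} be absorbed into $\Psi^{-\infty}(h^{N'}\langle\tau\rangle^{-1})$ when $|\tau|$ is allowed to grow without bound. Using functional calculus throughout keeps $\psi(h\Lambda_m)\,\Pi_\eps^\pm = \Pi_\eps^\pm$ as an exact identity and ensures that each $Q_h$-correction introduces at most one new inverse factor $(\pm|\xi|_x-\tau)^{-1}$; the $\langle\tau\rangle^{-1}$ weight thus propagates cleanly through the iteration. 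The matrix structure of $Q_h$ poses no obstruction since the principal symbol of $P_h-\tau$ is diagonal and invertible on $\mathrm{supp}\,\psi(|\xi|_x)$.
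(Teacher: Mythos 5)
Your construction is essentially the paper's own proof: you invert $(P_h-\tau)^N$ exactly by functional calculus of $h\Lambda_m$ behind a spectral cutoff equal to $1$ near $\supp\chi_\eps$ (the paper's $(P_h-\tau)^{-N}\tilde{\Pi}_\eps^\pm$ of Lemma~\ref{l:solving}), treat the $ihQ_h$ terms perturbatively so each carries one factor of $h$ while the $\langle\tau\rangle^{-1}$ weight is preserved, and iterate $N'$ times exactly as in the paper's induction. The only cosmetic differences are that you absorb the cutoff mismatch $\psi(h\Lambda_m)R_{k+1}\Pi_\eps^\pm-R_{k+1}\Pi_\eps^\pm$ via an $O(h\langle\tau\rangle^{-1})$ commutator bound where the paper uses a disjoint-support $O(h^\infty)$ argument, and that the relevant functional-calculus input with uniform $\tau$-dependence is Theorem~\ref{t:thm-Kuster} (as invoked in the paper) rather than Corollary~\ref{c:functional-calc}.
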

Note that~\eqref{param-alg-N-bis-1}-\eqref{param-alg-N-bis-2} are then satisfied if we take $N=N'$ in this proposition and 
\begin{align}
\label{e:def-Q-b}
Q_h = \frac{b}{2} \begin{pmatrix}
1& 1 \\
1 & 1
\end{pmatrix} ,
\end{align}
and further require that $b \in C^\infty(M)$ so that $Q_h\in \Psi^0(M;\C^{2\times2})$.
As a preparation, we first prove the following lemma.
\begin{lemma}
\label{l:solving}
Assume that $Q_h \in \Psi^{0}( M; \C^{2\times2})$, $P_h$ is defined by~\eqref{e:def-Ph-encore}, and $\P_h = P_h + ihQ_h$.
For any $N\in \N$, $k \in \R$ and $A \in \Psi^0(\langle \tau \rangle^k)$, there exist $S \in \Psi^{-\infty}(\langle \tau \rangle^{k-N})$ and $R \in \Psi^{-\infty}(h \langle \tau \rangle^{k-1})+ \Psi^{-\infty}(h^\infty\langle \tau \rangle^k)$ defined for $\tau \notin I_\eps$ such that 
$$
 (\P_h-\tau)^N S = A \Pi_\eps^\pm + R \Pi_\eps^\pm , \quad \text{ for all } \tau \notin I_\eps^\pm.
$$
If moreover $A=1$ (hence $k=0$), then one can choose $S$ so that $R \in \Psi^{-\infty}(h \langle \tau \rangle^{-1})$.
\end{lemma}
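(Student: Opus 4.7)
The idea is to microlocally invert $(\P_h-\tau)^N$ on the range of $\Pi_\eps^\pm$. Since for $\tau\notin I_\eps^\pm$ the principal symbol $\mathrm{diag}(|\xi|_x-\tau,-|\xi|_x-\tau)$ of $P_h-\tau$ is elliptic on $\supp\chi_\eps^\pm(|\xi|_x)$ with inverse of size $\lesssim\langle\tau\rangle^{-1}$, I would pick $\tilde{\chi}_\eps\in C^\infty_c(\Int(I_\eps^\pm))$ equal to $1$ on a neighborhood of $\supp\chi_\eps^\pm$ and set $\tilde{\Pi}_\eps^\pm := \tilde{\chi}_\eps(P_h)$. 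By functional calculus for $h\Lambda_m$ (Corollary~\ref{c:functional-calc}), the operator
\begin{equation*}
T_N \,:=\, (P_h-\tau)^{-N}\tilde{\Pi}_\eps^\pm \,=\, \mathrm{diag}\!\left(\tfrac{\tilde{\chi}_\eps(h\Lambda_m)}{(h\Lambda_m-\tau)^N},\tfrac{\tilde{\chi}_\eps(h\Lambda_m)}{(-h\Lambda_m-\tau)^N}\right)
\end{equation*}
is well-defined for $\tau\notin I_\eps^\pm$ and belongs to $\Psi^{-\infty}(\langle\tau\rangle^{-N})$, with $(P_h-\tau)^N T_N = \tilde{\Pi}_\eps^\pm$ and $\tilde{\Pi}_\eps^\pm\Pi_\eps^\pm = \Pi_\eps^\pm$.

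Next I take the ansatz $S := T_N A\,\Pi_\eps^\pm$, which lies in $\Psi^{-\infty}(\langle\tau\rangle^{k-N})$. Expanding $(\P_h-\tau)^N = (P_h-\tau+ihQ_h)^N$ as a noncommutative binomial and using $(P_h-\tau)^NT_N = \tilde{\Pi}_\eps^\pm$ together with $\tilde{\Pi}_\eps^\pm A\Pi_\eps^\pm = A\Pi_\eps^\pm + [\tilde{\Pi}_\eps^\pm,A]\Pi_\eps^\pm$, I obtain
\begin{equation*}
(\P_h-\tau)^N S \,=\, A\Pi_\eps^\pm + [\tilde{\Pi}_\eps^\pm,A]\Pi_\eps^\pm + \sum_{j=1}^{N} h^j W_j,
\end{equation*}
where each $W_j$ is a sum of products of $N-j$ factors $(P_h-\tau)$ and $j$ factors $iQ_h$ applied to $T_N A\,\Pi_\eps^\pm$. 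Commuting each $(P_h-\tau)$ past each $Q_h$ (each commutator buys a factor $h$ and preserves the $\tau$-weight, since $[Q_h,P_h]\in h\Psi^0(1)$) and grouping the $N-j$ remaining $(P_h-\tau)$'s against $T_N$ via $(P_h-\tau)^{N-j}T_N = (P_h-\tau)^{-j}\tilde{\Pi}_\eps^\pm \in \Psi^{-\infty}(\langle\tau\rangle^{-j})$, I conclude that $h^jW_j\in\Psi^{-\infty}(h^j\langle\tau\rangle^{k-j})\cdot\Pi_\eps^\pm$. Since $h^j\langle\tau\rangle^{k-j}\leq h\langle\tau\rangle^{k-1}$ for $j\geq 1$ and $h\leq 1$, this yields a contribution $R'\Pi_\eps^\pm$ with $R'\in\Psi^{-\infty}(h\langle\tau\rangle^{k-1})$.

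It remains to handle the commutator $[\tilde{\Pi}_\eps^\pm,A]\Pi_\eps^\pm$. I pick a third cutoff $\chi^{(1)}_\eps\in C^\infty_c(\R)$ with $\chi^{(1)}_\eps = 1$ on $\supp\chi_\eps^\pm$ and $\supp\chi^{(1)}_\eps\subset\{\tilde{\chi}_\eps\equiv 1\}$, so that $\chi^{(1)}_\eps(h\Lambda_m)\Pi_\eps^\pm = \Pi_\eps^\pm$ by functional calculus, and set $R'' := [\tilde{\Pi}_\eps^\pm,A]\chi^{(1)}_\eps(h\Lambda_m)I_2$, which yields $R''\Pi_\eps^\pm = [\tilde{\Pi}_\eps^\pm,A]\Pi_\eps^\pm$. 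The key point is that all derivatives of $\tilde{\chi}_\eps(|\xi|_x)$ vanish identically on $\supp\chi^{(1)}_\eps$; consequently every term in the asymptotic symbolic expansion of the composition $[\tilde{\Pi}_\eps^\pm,A]\chi^{(1)}_\eps(h\Lambda_m)$ vanishes, placing $R''\in\Psi^{-\infty}(h^\infty\langle\tau\rangle^k)$. Taking $R := R'+R''$ gives the general statement. For the special case $A=I_2$, the commutator $[\tilde{\Pi}_\eps^\pm,I_2]$ vanishes identically, so $R''=0$ and $R = R'\in\Psi^{-\infty}(h\langle\tau\rangle^{-1})$.

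The most delicate step is the $h^\infty$ bound on $R''$: one must check that not only the principal symbol but every order in the symbolic composition formula vanishes, which is exactly what the support-separation $\supp\chi^{(1)}_\eps\subset\{\tilde{\chi}_\eps\equiv 1\}$ provides. A secondary point is the careful bookkeeping of the $\langle\tau\rangle$-weight through the many commutators of $Q_h$ with $(P_h-\tau)$, which requires noting that neither $[Q_h,P_h]$ nor Poisson brackets involving $\tilde{\chi}_\eps$ and the symbol of $A$ differentiate in $\tau$, so the weight is preserved.
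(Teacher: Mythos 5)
Your proof is correct and follows essentially the same route as the paper: the same ansatz $S=(P_h-\tau)^{-N}\tilde{\Pi}_\eps^\pm A\Pi_\eps^\pm$ built from functional calculus with the elliptic bound $\langle\tau\rangle^{-N}$ off $I_\eps^\pm$, the same $O(h\langle\tau\rangle^{k-1})$ error from the $ihQ_h$ part, and the same $O(h^\infty\langle\tau\rangle^k)$ cutoff-mismatch term that disappears when $A=\id$. The only differences are cosmetic: where you expand the noncommutative binomial and commute factors by hand and introduce the auxiliary cutoff $\chi^{(1)}_\eps$, the paper gets the first error in one stroke from the product rule applied to $(\P_h-\tau)^N\tilde{\Pi}_\eps^\pm$ and the second directly from the disjoint-support statement of its functional-calculus corollary (your $[\tilde{\Pi}_\eps^\pm,A]\Pi_\eps^\pm$ equals $(\tilde{\Pi}_\eps^\pm-\id)A\Pi_\eps^\pm$ since $\tilde{\Pi}_\eps^\pm\Pi_\eps^\pm=\Pi_\eps^\pm$, which is exactly that statement).
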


\bnp
We first define $\tilde{\Pi}_\eps^\pm = \tilde{\chi}_\eps^\pm (P_h)$ with $\tilde{\chi}_\eps^\pm$ having the same properties as $\chi_\eps^\pm$ and equal to one in a neighborhood of $\supp\chi_\eps^\pm$. Then, for $\tau \notin I_\eps^\pm$, we may define the operator $(P_h-\tau)^{N}\tilde{\Pi}_\eps^\pm$ via functional calculus (see the discussion preceding~\eqref{e:estim-elliptic}). It satisfies (see again the discussion preceding~\eqref{e:estim-elliptic})
\begin{equation*}
\nor{(P_h-\tau)^{-N} \tilde{\Pi}_\eps^\pm y }{L^2(\M;\C^2)} \leq \left(\frac{4}{\alpha \eps_0 \langle\tau\rangle}\right)^{N}  \nor{\tilde{\Pi}_\eps^\pm y }{L^2(\M;\C^2)}, \quad \tau \in  \R\setminus I_\eps^\pm.
\end{equation*}
According to Theorem~\ref{t:thm-Kuster} (and using the remark in Definition~\ref{d:class-tau} that the calculus in $\Psi^{m}(\langle \tau \rangle^k, T^*M; \C^{2\times2})$ reduces to the calculus in $\Psi^{m}(M; \C^{2\times2})$  with symbols depending on $\tau$ as a parameter, with all derivatives uniformly bounded in $\tau \in \R\setminus I_\eps^\pm$), we have $(P_h-\tau)^{-N} \tilde{\Pi}_\eps^\pm \in \Psi^{-\infty}(\langle \tau \rangle^{-N})$ for $\tau \notin I_\eps^\pm$ with (according to~\eqref{e:def-Ph-encore}-\eqref{e:defPipm}) principal symbol
$$
 \begin{pmatrix}
(|\xi|_x-\tau)^{-N} \tilde{\chi}_\eps^\pm(|\xi|_x)& 0\\
0 & (- |\xi|_x-\tau)^{-N} \tilde{\chi}_\eps^\pm(|\xi|_x)
\end{pmatrix}
.$$
We then set $S := (P_h-\tau)^{-N}\tilde{\Pi}_\eps^\pm A \Pi_\eps^\pm$. We notice that $\P_h-\tau = P_h-\tau + i h Q_h$ where $(\P_h-\tau)\tilde{\Pi}_\eps^\pm\in \Psi^{-\infty}(\langle \tau \rangle)$ and $Q_h\in \Psi^0(M; \C^{2\times2})$, and hence, according to the product rule of pseudodifferential calculus, 
$$
(\P_h-\tau)^N \tilde{\Pi}_\eps^\pm= (P_h-\tau + i h Q_h)^N\tilde{\Pi}_\eps^\pm = (P_h-\tau)^N\tilde{\Pi}_\eps^\pm + R_N \tilde{\Pi}_\eps^\pm, \quad R_N\tilde{\Pi}_\eps^\pm \in \Psi^{-\infty}( h\langle \tau \rangle^{N-1}) .
$$
As a consequence, we obtain 
$$
(\P_h-\tau)^N S=  \left((P_h-\tau)^N + R_N \right)\tilde{\Pi}_\eps^\pm(P_h-\tau)^{-N}A \Pi_\eps^\pm = \tilde{\Pi}_\eps^\pm A \Pi_\eps^\pm + R_N \tilde{\Pi}_\eps^\pm(P_h-\tau)^{-N}A \Pi_\eps^\pm .
$$
Since $\tilde{\chi}= 1$ on a neighborhood of $\supp(\chi)$, we have from functional calculus and the product rule 
\begin{align*}
 \tilde{\Pi}_\eps^\pm A \Pi_\eps^\pm = A \Pi_\eps^\pm + R' \Pi_\eps^\pm, \quad R' \in \Psi^{-\infty}(h^\infty \langle \tau \rangle^k) , \\
R'' :=  R_N \tilde{\Pi}_\eps^\pm(P_h-\tau)^{-N}A  \in  \Psi^{-\infty}( h\langle \tau \rangle^{N-1}\langle \tau \rangle^{-N}\langle \tau \rangle^{k}).
\end{align*}
This concludes the proof of the first statement with $R=R'+R''$.  To prove the second statement, we only need to remark that if $A=1$, then $R' =0$ and $R=R''\in  \Psi^{-\infty}( h \langle \tau \rangle^{-1})$.
\enp

We next prove Proposition~\ref{p:parametrix}. This is a classical parametrix construction, see \eg~\cite[Proposition~E.32]{DZ:book}.
 \bnp[Proof of Proposition~\ref{p:parametrix}]
 The proof proceeds by induction on $N'$. The first step follows from the last statement in Lemma~\ref{l:solving}, namely, there are $S_1,R_1$ such that
 \begin{align}
\label{e:split-param-bis}
 (\P_h - \tau)^N S_1 = \Pi_\eps^\pm - R_1\Pi_\eps^\pm , \quad \text{ with } \quad 
  S_1   \in \Psi^{-\infty}(\langle \tau \rangle^{-N}) , \quad R_1   \in \Psi^{-\infty}(h \langle \tau \rangle^{-1}).
\end{align}
Next, we solve the errors away by induction  with Lemma~\ref{l:solving}. Given $j \in \N^*$ and $R_{j}  \in \Psi^{-\infty}(h^j \langle \tau \rangle^{-1})$, Lemma~\ref{l:solving} gives the existence of $S_{j+1}\in \Psi^{-\infty}(h^j \langle \tau \rangle^{-N-1})$ and $R_{j+1} \in \Psi^{-\infty}(h^{j+1} \langle \tau \rangle^{-1})$ such that  
 \begin{align*}
(\P_h - \tau)^N S_{j+1} = R_j \Pi_\eps - R_{j+1}\Pi_\eps .
\end{align*}
Setting $\mathsf{S}_{N'} = \sum_{j=1}^{N'} S_j \in \Psi^{-\infty}(\langle \tau \rangle^{-N})$, we have obtained
\begin{align*}
(\P_h - \tau)^N \mathsf{S}_{N'} & = (\P_h - \tau)^N S_1 + \sum_{j=1}^{N'-1} (\P_h - \tau)^N S_{j+1} = \Pi_\eps^\pm - R_1\Pi_\eps^\pm + \sum_{j=1}^{N'-1} R_j \Pi_\eps - R_{j+1}\Pi_\eps \\
& = \Pi_\eps^\pm - R_{N'} \Pi_\eps^\pm ,
\end{align*}
where $R_{N'} \in \Psi^{-\infty}(h^{N'} \langle \tau \rangle^{-1})$, which concludes the proof of the proposition.
 \enp

 \subsection{End of the proof of Theorem~\ref{t:semiclassic-intro}}
 \label{s:proof-thm-smooth-h}
As a direct corollary of Theorem~\ref{t:resolvent-implies-long-time-smooth} and Proposition~\ref{p:parametrix}, we deduce a proof of Theorem~\ref{t:semiclassic-intro}.
Note that, for a general $b\in L^\infty$, we could also reformulate (without using Proposition~\ref{p:parametrix}) Theorem~\ref{t:resolvent-implies-long-time} (instead of Theorem~\ref{t:resolvent-implies-long-time-smooth}) in terms of the semigroup $(e^{t\A_m})_{t\in \R}$, that is to say for solutions to the damped Klein-Gordon/wave equations. We do not formulate the associated result for the sake of concision.

\bnp[Proof of Theorem~\ref{t:semiclassic-intro}]
We apply Theorem~\ref{t:resolvent-implies-long-time-smooth} to $\H = L^2(M;\C^2)$ and $\P_h= h \P_m$, that is, $P_h$ given by \eqref{e:def-Ph-encore}, $Q_h$ by~\eqref{e:def-Q-b}, $I_\eps^\pm$ by~\eqref{e:defIpm}, and $\Pi_\eps^\pm$ by~\eqref{e:defPipm}.
From Item~\ref{i:resolvantes} in Corollary~\ref{corollary-Pm-Am-reso}, we have 
\begin{align}
\label{e:totototo}
\nor{(z - \A_m)^{-1}}{\L\big(H^1_m \times L^2\big)} =  \nor{(z  - i\P_m)^{-1}}{\L\big(L^2(M;\C^2)\big)}= \nor{(z  - i\P_h/h)^{-1}}{\L\big(L^2(M;\C^2)\big)} =h \nor{(zh  - i\P_h)^{-1}}{\L\big(L^2(M;\C^2)\big)} . 
\end{align}
Notation~\eqref{e:asspt-G} thus implies (taking $z=i\frac{\tau}{h}$ in the above identity)  
\begin{align}
\label{e:totototo-2}
h \nor{(\tau  - \P_h)^{-1}}{\L\big(L^2(M;\C^2)\big)}  =\nor{(i\tau/h - \A_m)^{-1}}{\L\big(H^1_m \times L^2\big)}  \leq G(h), \quad \text{ for } \tau \in I_\eps^\pm , h \in(0,h_0) ,
\end{align}
which is Assumption~\eqref{e:hyp-resP} of Theorem~\ref{t:resolvent-implies-long-time-smooth} (on $I_\eps^\pm$).
From Item~\ref{i:Energ-am}  in Corollary~\ref{corollary-Pm-Am-reso} (taken at $t=0$) we have
$ \nor{V}{H^1_m\times L^2} =  \nor{\Sigma L_m V}{L^2(M;\C^2)}$ for all $V \in H^1_m\times L^2$. Moreover, according to~\eqref{e:defPipm}, we have $\Pi_\eps^\pm = \chi_\eps^\pm(h\Lambda_m) I_2$ and hence $\Pi_\eps^\pm \Sigma L_m U_0 =\Sigma L_m \Pi_\eps^\pm U_0$ (recall the definition of $L_m$ and $\Sigma$ in~\eqref{e:def-Lm} and~\eqref{e:def-sigma}).
As a consequence, given $U_0 \in H^1_m\times L^2$, with $(u,\d_t u)(t)$ the solution  to~\eqref{eq: stabilization} with data $\Pi_\eps^\pm U_0$ and $V = \Pi_\eps^\pm\Sigma L_m U_0$, we have using again Item~\ref{i:Energ-am} in Corollary~\ref{corollary-Pm-Am-reso}, 
\begin{align*}
\nor{e^{\frac{it}{h}\P_h} V}{\H} &= \nor{e^{it\P_m}\Sigma L_m \Pi_\eps^\pm U_0}{L^2(M;\C^2)} =  \nor{e^{t\A_m} \Pi_\eps^\pm U_0}{H^1_m\times L^2} = \sqrt{2E_m(u(t))},\\
\nor{V }{\H} &= \nor{\Sigma L_m \Pi_\eps^\pm U_0}{L^2(M;\C^2)} = \nor{\Pi_\eps^\pm U_0}{H^1_m\times L^2}=\sqrt{2E_m(u(0))}. 
\end{align*}
The parametrix assumption~\eqref{param-alg-N-bis-1}--\eqref{param-alg-N-bis-2} in this setting is a consequence Proposition~\ref{p:parametrix}.
Theorem~\ref{t:resolvent-implies-long-time-smooth} applies and its conclusion thus rewrites, with $(u,\d_t u)(t)$ the solution to~\eqref{eq: stabilization} with data $\Pi_\eps^\pm U_0 = \big(\chi_\eps^\pm(h\Lambda_m)u_0,\chi_\eps^\pm(h\Lambda_m)u_1\big)$,
\begin{align*}
\frac{1}{T_h}\int_\R \left|\psi\left( \frac{t}{T_h}\right) \right|^2 E_m(u(t)) dt
 \leq  \left(  \frac{G(h)^2}{T_h^2} \nor{\psi'}{L^2(\R)}^2 
+ \mathsf{C}_{N,\psi}  h^{2N-1} T_h \right) E_m(u(0)) , \\
E_m(u(T_h))
  \leq  \left( \frac{G(h)^2}{T_h^2} \frac{\nor{\psi'}{L^2(\R)}^2}{\nor{\psi}{L^2(0,1)}^2}    + \frac{\mathsf{C}_{N,\psi} }{\nor{\psi}{L^2(0,1)}^2}    h^{2N-1} T_h \right) E_m(u(0)) , \\
   E_m(u(T_h))
  \leq  \left( (2+\delta) \frac{G(h)^2}{T_h^2}     +C_{N,\delta}   h^{2N-1} T_h \right) E_m(u(0)) .
\end{align*}
This concludes the proof of of Theorem~\ref{t:semiclassic-intro}.
\enp
We now state and prove an analogue of Theorem~\ref{t:semiclassic-intro} for the wave equation, \ie~\eqref{e:DWE} or equivalently~\eqref{eq: stabilization} with $m=0$. 
\begin{theorem}
\label{t:semiclassic-++}
Assume that $b \in C^\infty(M;\R^+)$, let $\eps \in (0,1)$, and defined $I_\eps$ and $\chi_\eps$ as in Theorem~\ref{t:semiclassic-intro}.
Then, for any $N',N \in \N$, any $\psi \in H^1_{\comp}(\R)$ with $\supp \psi \subset \R^+$, and any $\delta \in (0,1)$, there is $C>0$ such that 
for all $(u_0 , u_1) \in \chi_\eps(h \Lambda_0)(H^1\times L^2)$ and $u$ associated solution to~\eqref{eq: stabilization}, 
for all $h \in (0,1)$ and all $T_h \leq h^{-N}$
\begin{align}
\label{e:toto-++}
\frac{1}{T_h}\int_\R \left|\psi\left( \frac{t}{T_h}\right) \right|^2 E(u(t)) dt
 \leq  \left(  \frac{G(h)^2}{T_h^2} \nor{\psi'}{L^2(\R)}^2 
+ C h^{N'}\right) E(u(0)) , \\
    E(u(T_h)) \leq  \left( (2+\delta) \frac{G(h)^2}{T_h^2}     +C h^{N'} \right) E(u(0)) ,
\end{align}
where 
\begin{align}
\label{e:asspt-G++}
G(h) = \sup_{\tau \in \R , |\tau| \in I_\eps} \nor{(i\tau/h - \A_+)^{-1}}{\L\big(H^1_+ \times L^2\big)}  .
\end{align}
\end{theorem}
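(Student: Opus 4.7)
The plan is to mimic the proof of Theorem~\ref{t:semiclassic-intro} step by step, substituting the operators of Section~\ref{s:KG-waves} for those of Section~\ref{s:KG-reformulation}: $\Lambda_m \rightsquigarrow \Lambda_0$, $\A_m \rightsquigarrow \A_+$, $\P_m \rightsquigarrow \P_+$, $L_m \rightsquigarrow L_+$, and $H^1_m \times L^2 \rightsquigarrow H^1_+\times L^2$, and using Corollary~\ref{c:prop-op-+} (together with Lemma~\ref{l:link-+ou-pas}) in place of Corollary~\ref{corollary-Pm-Am-reso}. The only genuinely new ingredient concerns the zero frequency of $\Lambda_0$, and it is handled automatically by the spectral cutoff: since $\supp(\chi_\eps^\pm) \subset [-1-\eps,-1+\eps]\cup[1-\eps,1+\eps]$ with $\eps<1$, one has $\chi_\eps^\pm(0)=0$, so the functional calculus gives $\chi_\eps(h\Lambda_0) = \chi_\eps(h\Lambda_0)\Pi_+$. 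In particular any datum $U_0 = (u_0,u_1) \in \chi_\eps(h\Lambda_0)(H^1\times L^2)$ satisfies $u_0 \in H^1_+$ and $u_1 \in L^2_+$, so $U_0 = U_0^+$ in the notation of the final lemma of Section~\ref{s:KG-waves}.

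The concrete setup is to apply Theorem~\ref{t:resolvent-implies-long-time-smooth} on $\H = L^2(M;\C^2)$ with
$$
P_h = h\begin{pmatrix} \Lambda_0 & 0 \\ 0 & -\Lambda_0\end{pmatrix}, \qquad Q_h = \frac{b}{2}\begin{pmatrix} 1 & 1 \\ 1 & 1 \end{pmatrix}, \qquad \P_h = h\P,
$$
and $I_\eps^\pm, \chi_\eps^\pm, \Pi_\eps^\pm$ as in~\eqref{e:defIpm}-\eqref{e:defPipm}. The resolvent hypothesis~\eqref{e:hyp-resP} follows from the chain of identities
$$
h \nor{(\tau - \P_h)^{-1}\Pi_\eps^\pm}{\L(L^2)} = \nor{(i\tau/h - i\P_+)^{-1}}{\L(L^2(M;\C^2))} = \nor{(i\tau/h - \A_+)^{-1}}{\L(H^1_+\times L^2)} \leq G(h),
$$
valid for $\tau \in I_\eps^\pm$ (so $\tau \neq 0$), combining Lemma~\ref{l:link-+ou-pas} (to pass from $\P$ to $\P_+$ on the range of $\Pi_\eps^\pm \subset \L^2_+$) with Item~\ref{i:egal-resol+} of Corollary~\ref{c:prop-op-+}. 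The parametrix hypothesis~\eqref{param-alg-N-bis-1}--\eqref{param-alg-N-bis-2} is provided by Proposition~\ref{p:parametrix}, whose proof applies verbatim: the spectral cutoffs $\chi_\eps^\pm(h\Lambda_0)$ (and $\tilde\chi_\eps^\pm(h\Lambda_0)$) belong to $\Psi^{-\infty}(M)$ by semiclassical functional calculus, precisely because their supports exclude the singular frequency $\xi = 0$, so the construction of Lemma~\ref{l:solving} goes through unchanged.

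To translate the conclusion back to the wave energy, use $V_0 := \Sigma L_+ U_0 \in L^2(M;\C^2)$ (well-defined since $u_0 \in H^1_+$ yields $\Lambda_+ u_0 \in L^2$) and Item~\ref{i:++enrgy} of Corollary~\ref{c:prop-op-+} to obtain
$$
E(u(t)) = \tfrac12\nor{e^{it\P_+}V_0}{L^2(M;\C^2)}^2 = \tfrac12\nor{e^{\frac{it}{h}\P_h} V_0}{L^2(M;\C^2)}^2,
$$
where the second equality uses that $V_0 \in \L^2_+$ together with Lemma~\ref{l:link-+ou-pas}\eqref{i:++A++A+}. A direct computation shows that $\Pi_\eps^\pm V_0 = V_0$ whenever $U_0 \in \chi_\eps(h\Lambda_0)(H^1\times L^2)$: indeed $\chi_\eps(h\Lambda_0)$ commutes with $\Lambda_+$ on $H^1_+$ (both are functions of $\Lambda_0$), so $\Pi_\eps^\pm \Sigma L_+ U_0 = \Sigma L_+ (\chi_\eps(h\Lambda_0)u_0, \chi_\eps(h\Lambda_0)u_1) = \Sigma L_+ U_0$. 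Feeding the resulting identities $\nor{V_0}{L^2}^2 = 2E(u(0))$ and $\nor{e^{\frac{it}{h}\P_h}\Pi_\eps^\pm V_0}{L^2}^2 = 2E(u(t))$ into the conclusion of Theorem~\ref{t:resolvent-implies-long-time-smooth} (applied with $N$ large enough so that $h^{2N-1}T_h \leq h^{N'}$ for $T_h \leq h^{-N}$) yields both estimates.

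The only potentially delicate step is the zero-frequency bookkeeping, \ie verifying that the various operator identities collapse correctly on the range of $\Pi_\eps^\pm$; this is entirely algebraic and is already encapsulated in Lemma~\ref{l:link-+ou-pas} and the vanishing $\chi_\eps^\pm(0)=0$. No new analysis is needed.
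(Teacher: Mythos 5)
Your overall strategy is exactly the paper's: apply Theorem~\ref{t:resolvent-implies-long-time-smooth} together with the parametrix of Proposition~\ref{p:parametrix} (with $m=0$), identify resolvents and energies through Corollary~\ref{c:prop-op-+}, and use that $\chi_\eps$ vanishes at $0$ to dispose of the zero frequency. The one step that does not go through as written is your verification of hypothesis~\eqref{e:hyp-resP}. You choose $\H=L^2(M;\C^2)$ and $\P_h=h\P$, but \eqref{e:hyp-resP} is a bound on the \emph{full} resolvent $(\P_h-\tau)^{-1}$ on $\H$ (together with $I_\eps\subset\rho(\P_h)$), whereas your chain of identities only controls $(\P_h-\tau)^{-1}\Pi_\eps^\pm$. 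If you repair this by bounding the full resolvent through Item~\ref{i:resolvents-+ou-pas} of Lemma~\ref{l:link-+ou-pas}, the zero-mode term $\lambda^{-1}\tfrac{\Pi_0}{2}\big(\begin{smallmatrix}1&-1\\-1&1\end{smallmatrix}\big)$ forces you to replace $G(h)$ by something like $\max\{G(h),\,h/(1-\eps)\}$, or $G(h)+Ch$; recovering the statement with exactly $G(h)$ then requires either a lower bound on $G(h)$ (not free, and not in your argument) or absorbing a cross term of size $hG(h)^2/T_h^2$ into $Ch^{N'}$, which fails since the constant in front of $G(h)^2/T_h^2$ in~\eqref{e:toto-++} is exactly $\nor{\psi'}{L^2(\R)}^2$ with no slack and $G(h)$ may grow exponentially in $1/h$. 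The clean fix — and the paper's choice — is to take $\H=\L^2_+$ and $\P_h=h\P_+$ from the start: $\L^2_+$ is invariant, on it the resolvent and semigroup of $\P$ coincide with those of $\P_+$, and Item~\ref{i:egal-resol+} of Corollary~\ref{c:prop-op-+} gives \eqref{e:hyp-resP} with exactly $G(h)$.

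A second, minor slip: $\Pi_\eps^\pm V_0=V_0$ is not literally true, because $\chi_\eps$ is not a projector ($\chi_\eps^2\neq\chi_\eps$). What is true, and all that is needed, is that $V_0$ lies in the range of $\Pi_\eps^\pm$: writing $U_0=\chi_\eps(h\Lambda_0)W_0$ with (without loss of generality) $W_0\in H^1_+\times L^2$, one has $V_0=\Sigma L_+U_0=\Pi_\eps^\pm\big(\Sigma L_+W_0\big)$, so the conclusions of Theorem~\ref{t:resolvent-implies-long-time-smooth}, which are stated for vectors of the form $\Pi_\eps^\pm u$, apply to $V_0$. With these two corrections your argument coincides with the paper's proof.
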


\bnp[Proof of Theorem~\ref{t:semiclassic-++}]
We set $\H=\L^2_+$, $\P_h=h\P_+$, that is, $P_h$ is still given by \eqref{e:def-Ph-encore}  (but acting $\H^1_+\to \L^2_+$), $Q_h$ by~\eqref{e:def-Q-b}, $I_\eps^\pm$ by~\eqref{e:defIpm}, and $\Pi_\eps^\pm$ by~\eqref{e:defPipm}, all taken with $m=0$.
Notation~\eqref{e:asspt-G++} together with Item~\ref{i:egal-resol+} of Corollary~\ref{c:prop-op-+} (and the fact that $\L^2_+$ is normed by $\|\cdot\|_{L^2(M;\C^2)}$) yield
$$
h \nor{(\tau-\P_h)^{-1}}{\L\big(L^2(M;\C^2)\big)}  =\nor{(i\tau/h - \A_+)^{-1}}{\L\big(H^1_+ \times L^2\big)}  \leq G(h), \quad \text{ for } \tau \in I_\eps^\pm , h \in(0,h_0) ,
$$
which is Assumption~\eqref{e:hyp-resP} of Theorem~\ref{t:resolvent-implies-long-time-smooth} (on $I_\eps^\pm$).
From Item~\ref{i:++enrgy}  in Corollary~\ref{c:prop-op-+} (taken at $t=0$) we have
$ \nor{V}{H^1_+\times L^2} =  \nor{\Sigma L_+ V}{L^2(M;\C^2)}$ for all $V \in H^1_+\times L^2$. Moreover, according to~\eqref{e:defPipm}, we have $\Pi_\eps^\pm = \chi_\eps^\pm(h\Lambda_0) I_2$ and hence $\Pi_\eps^\pm \Sigma L_+ U_0 =\Sigma L_+ \Pi_\eps^\pm U_0$ (recall the definition of $L_+$ and $\Sigma$ in~\eqref{e:def-L+} and~\eqref{e:def-sigma}).
As a consequence, given any $U_0 \in H^1\times L^2$, with $(u,\d_t u)(t)$ the solution  to~\eqref{eq: stabilization} with data $\Pi_\eps^\pm U_0 \in H^1_+\times L^2$ and $V = \Pi_\eps^\pm\Sigma L_m U_0$, we have using again Item~\ref{i:++enrgy}  in Corollary~\ref{c:prop-op-+}, 
\begin{align*}
\nor{e^{\frac{it}{h}\P_h} V}{\H} &= \nor{e^{it\P_+}\Sigma L_+ \Pi_\eps^\pm U_0}{L^2(M;\C^2)} =  \nor{e^{t\A_+} \Pi_\eps^\pm U_0}{H^1_+\times L^2} = \sqrt{2E(u(t))},\\
\nor{V }{\H} &= \nor{\Sigma L_+ \Pi_\eps^\pm U_0}{L^2(M;\C^2)} = \nor{\Pi_\eps^\pm U_0}{H^1_+\times L^2}=\sqrt{2E(u(0))}. 
\end{align*}
The parametrix assumption~\eqref{param-alg-N-bis-1}--\eqref{param-alg-N-bis-2} in this setting is a consequence of Proposition~\ref{p:parametrix}.
Theorem~\ref{t:resolvent-implies-long-time-smooth} applies and its conclusion thus rewrites, with $(u,\d_t u)(t)$ the solution to~\eqref{eq: stabilization} with data $\Pi_\eps^\pm U_0 = \big(\chi_\eps^\pm(h\Lambda_0)u_0,\chi_\eps^\pm(h\Lambda_0)u_1\big)$
\begin{align*}
\frac{1}{T_h}\int_\R \left|\psi\left( \frac{t}{T_h}\right) \right|^2 E(u(t)) dt
 \leq  \left(  \frac{G(h)^2}{T_h^2} \nor{\psi'}{L^2(\R)}^2 
+ \mathsf{C}_{N,\psi}  h^{2N-1} T_h \right) E(u(0)) , \\
E(u(T_h))
  \leq  \left( \frac{G(h)^2}{T_h^2} \frac{\nor{\psi'}{L^2(\R)}^2}{\nor{\psi}{L^2(0,1)}^2}    + \frac{\mathsf{C}_{N,\psi} }{\nor{\psi}{L^2(0,1)}^2}    h^{2N-1} T_h \right) E(u(0)) , \\
   E(u(T_h))
  \leq  \left( (2+\delta) \frac{G(h)^2}{T_h^2}     +C_{N,\delta}   h^{2N-1} T_h \right) E(u(0)) .
\end{align*}
This concludes the proof of of Theorem~\ref{t:semiclassic-++}.
\enp

\section{Egorov theorems for non-selfadjoint operators in Ehrenfest time}
\label{s:egorov}
In this section, we prove the Egorov property needed to conclude the proof of Theorem~\ref{t:theorem-holder-0}.
We first collect classical/symbolic estimates linked to the expansion of the homogeneous geodesic flow $\varphi^t$ in Section~\ref{sub:classical-estimates}, and then state and prove in Section~\ref{sub:Egorov} the Egorov property needed in the main part of the proof.

\subsection{Classical estimates}
\label{sub:classical-estimates}
Define $\varphi^t$ the {\em homogeneous} geodesic flow on $T^*M \setminus 0$, that is, the Hamiltonian flow of the symbol $(x,\xi) \mapsto |\xi|_x = \sqrt{g_x^*(\xi,\xi)}$ (where $g^*$ is the metric on $T^*M$). Writing $M_\mu(x,\xi) = (x,\mu \xi)$ for the multiplication in the fibers, we have $M_\mu \circ \varphi^t = \varphi^t \circ M_\mu$ for all $\mu>0, (x,\xi) \in T^*M \setminus 0$. That is to say, $\varphi^t$ is $0$-homogeneous (whence its denomination). Remark that if $g^t$ denotes the usual geodesic flow on $T^*M$, that is, the Hamiltonian flow of the symbol $(x,\xi) \mapsto\frac12|\xi|_x^2 = \frac12 g_x^*(\xi,\xi)$ (see \eg~\cite[Appendix~B1]{DLRL:13} for properties of these flows), then we have $g^t(x,\xi) = \varphi^{t|\xi|_x}(x,\xi)$ for all $(x,\xi) \in T^*M \setminus 0$. In particular, these two flows coincide on the energy layer $\{|\xi|_x=1\}$.
We also denote by $\Upsilon_{\max}$ the maximal expansion rate of the flow, namely,
\begin{equation}
\label{e:def-lambdamax}
 \Upsilon_{\max} = \limsup_{|t|\to + \infty} \frac{1}{|t|} \log \sup_{(x, \xi) \in T^*M, |\xi|_x = 1} \|d \varphi^t(x,\xi)\| ,
 \end{equation}
where $ \|d \varphi^t(x,\xi)\| $ is the operator norm of $d \varphi^t(x,\xi) : T_{(x,\xi)} T^*M \to T_{\varphi^t(x,\xi)} T^*M $ with respect to any smooth norm on the fibers of $T(T^*M)$.
Note that $\Upsilon_{\max} \in [0,+ \infty)$.
We also use energy layers of the form 
\begin{equation}
\label{e:def-Keps}
K_\Gamma = \{(x,\xi) \in T^*M , \quad \Gamma \leq |\xi|_{x}\leq \Gamma^{-1}\} , \quad \text{ for } \Gamma \in(0,1) ,
\end{equation}
that is to say, avoiding the zero section and infinity in the fibers.

The following lemma is a reformulation in the present context of~\cite[Lemma~C.1]{DG:14} (see also~\cite[Equation~(5.6)]{AnNon}). 
\begin{lemma}
\label{l:DG}
For any $\Gamma>0$ and $\Upsilon_1>  \Upsilon_{\max}$, the following statement holds. For all $k\in \N$, there is $C_{k,\Upsilon_1}>0$ such that for all $a \in C^\infty(T^*M)$ with $\supp(a) \subset \Int(K_\Gamma)$, and all $t\in \R$, we have 
 \bnan
 \label{e:estimate-flow}
 \nor{a\circ \varphi^t}{C^k(K_\Gamma)} \leq C_{k,\Upsilon_1} e^{k\Upsilon_1 |t|} \nor{a}{C^k(K_\Gamma)}  .
 \enan
\end{lemma}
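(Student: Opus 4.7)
\medskip\noindent
\textbf{Proof proposal.} The plan is to reduce the estimate on $\|a\circ\varphi^t\|_{C^k(K_\Gamma)}$ to bounds on the derivatives $D^j\varphi^t$ for $1\le j\le k$, via the Faà di Bruno formula, and then to show that each such derivative grows no faster than $e^{j\Upsilon_1 |t|}$ on $K_\Gamma$.

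First, I would fix local coordinates on $T^*M$ covering the compact annular region $K_\Gamma$, and write (by Faà di Bruno, or equivalently by iterated chain rule) the pointwise estimate
\begin{equation*}
\bigl|\partial^\alpha(a\circ\varphi^t)(\rho)\bigr|
\le C_k \sum_{1\le m\le |\alpha|}\ \sum_{\substack{k_1+\cdots+k_m=|\alpha|\\ k_i\ge1}}
\bigl|(D^m a)(\varphi^t\rho)\bigr|\,\prod_{i=1}^m\bigl|D^{k_i}\varphi^t(\rho)\bigr|,
\end{equation*}
valid for $|\alpha|\le k$ and $\rho\in K_\Gamma$. Since $\supp(a)\subset\Int(K_\Gamma)$ and, crucially, $K_\Gamma$ is invariant under $\varphi^t$ (the homogeneous geodesic flow preserves $|\xi|_x$), the sum over $m$ is nonzero only at points $\rho$ such that $\varphi^t\rho\in\supp(a)\subset K_\Gamma$, and there $|(D^m a)(\varphi^t\rho)|\le\|a\|_{C^k(K_\Gamma)}$. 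So the task reduces to bounding $\prod_{i}\|D^{k_i}\varphi^t\|_{L^\infty(K_\Gamma)}$ with $\sum k_i=|\alpha|\le k$ by a constant times $e^{k\Upsilon_1|t|}$.

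Next, for the first derivative $D\varphi^t$, I would combine the defining lim-sup~\eqref{e:def-lambdamax} on the unit cosphere bundle with the $0$-homogeneity $\varphi^t\circ M_\mu=M_\mu\circ\varphi^t$. Differentiating this intertwining relation yields
\begin{equation*}
d\varphi^t(x,\mu\xi)=dM_\mu(\varphi^t(x,\xi))\,d\varphi^t(x,\xi)\,dM_\mu(x,\xi)^{-1},
\end{equation*}
so that for $\mu\in[\Gamma,\Gamma^{-1}]$, $\|d\varphi^t\|$ on $K_\Gamma$ is controlled by $\|d\varphi^t\|$ on $S^*M$ up to a constant depending only on $\Gamma$. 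The definition of $\Upsilon_{\max}$ and the choice $\Upsilon_1>\Upsilon_{\max}$ then give $\|D\varphi^t\|_{L^\infty(K_\Gamma)}\le C_{\Gamma,\Upsilon_1}e^{\Upsilon_1|t|}$ uniformly in $t\in\R$.

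Finally, for $j\ge 2$, I would proceed by induction in $j$ using the variational equations: if $X$ is the Hamiltonian vector field generating $\varphi^t$, then $J^t_j:=D^j\varphi^t$ satisfies a linear ODE of the form $\dot J^t_j=DX(\varphi^t)\cdot J^t_j+B_j(t)$ where $B_j(t)$ is a polynomial in $J^t_1,\dots,J^t_{j-1}$ (and derivatives of $X$ evaluated along the orbit, which are bounded on the compact set $K_\Gamma$). By Grönwall/variation of constants, and the inductive bound $\|J^t_i\|\le C_ie^{i\Upsilon_1|t|}$ for $i<j$, the forcing term is $O(e^{j\Upsilon_1|t|})$, hence so is $J^t_j$. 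Plugging these bounds into the Faà di Bruno sum, every product $\prod_i\|D^{k_i}\varphi^t\|$ with $\sum k_i\le k$ is bounded by $C_ke^{k\Upsilon_1|t|}$, which yields~\eqref{e:estimate-flow}. The main obstacle is the last step: one must carefully verify that the induction really produces the clean exponent $j\Upsilon_1$ (and not something worse like $(j+\text{const})\Upsilon_1$) when iterating Grönwall, which is where the freedom $\Upsilon_1>\Upsilon_{\max}$ (as opposed to $\Upsilon_1=\Upsilon_{\max}$) is essential to absorb the polynomial-in-$|t|$ losses coming from the variation-of-constants integral.
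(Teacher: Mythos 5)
Your argument is correct, and in fact the paper does not prove this lemma at all: it is quoted as a reformulation of Lemma~C.1 in the cited work of Dyatlov--Guillarmou (see also Anantharaman--Nonnenmacher, Eq.~(5.6)), and the proof given there is essentially the one you propose, namely Fa\`a di Bruno combined with bounds $\|D^j\varphi^t\|_{L^\infty(K_\Gamma)}\le C_j e^{j\Upsilon_1|t|}$ obtained from the variational equations along the flow. Two small remarks on your sketch. First, the place where the strict inequality $\Upsilon_1>\Upsilon_{\max}$ is genuinely consumed is the base case $j=1$: the definition of $\Upsilon_{\max}$ as a $\limsup$ only yields, for each $\Upsilon_1>\Upsilon_{\max}$, a constant $C_{\Upsilon_1}$ with $\sup_{S^*M}\|d\varphi^t\|\le C_{\Upsilon_1}e^{\Upsilon_1|t|}$ for all $t$, which you then transport to $K_\Gamma$ by the homogeneity conjugation exactly as you wrote. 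Second, your worry about the inductive step producing a worse exponent is unfounded: for $j\ge 2$ the Duhamel integral has the form $\int_0^{|t|}e^{\Upsilon_1(|t|-s)}e^{j\Upsilon_1 s}\,ds\le \big((j-1)\Upsilon_1\big)^{-1}e^{j\Upsilon_1|t|}$, and since $\Upsilon_{\max}\ge 0$ forces $\Upsilon_1>0$, the forcing rate $j\Upsilon_1$ strictly dominates the propagator rate $\Upsilon_1$, so the induction closes with the clean exponent $j\Upsilon_1$ and no polynomial loss; the only other ingredient is that the coefficients $D^mX$ are evaluated along orbits that remain in the compact flow-invariant set $K_\Gamma$, hence are uniformly bounded. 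With these two points made explicit, your proof is complete and matches the argument of the cited references.
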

We recall the definition of $S^{m}_\rho(T^*M)$ in Appendix~\ref{app:symbol-classes} and deduce from Lemma~\ref{l:DG} the following proposition. 
\begin{proposition}
\label{p:symb-log-times}
Take $\mu, \nu , \rho \geq 0$, $a \in S^{-\infty}_\rho(T^*M)$ such that $\supp(a) \subset \Int(K_\Gamma)$,  $q \in S^{0}_\nu(T^*M)$ such that $\Re(q)\geq0$ on $T^*M$, and set
\bna
a_t(x,\xi)=  a\circ \varphi^{t_3}(x,\xi) e^{-\int_{t_1}^{t_2} q\circ \varphi^s(x,\xi) ds} , \quad t=(t_1,t_2,t_3) .
\ena
Then, for $\mu \geq 0$, we have $$
\Big\{a_t ,  |t_3| \leq \mu \log(h^{-1}) ,- \mu \log(h^{-1}) \leq t_1\leq t_2 \leq \mu \log(h^{-1}) \Big\} \subset S^{-\infty}_{\mu \Upsilon_1 + \max(\rho, \nu)} (T^*M) ,
$$
and this set is a bounded subset of $S^{-\infty}_{\mu \Upsilon_1 + \max(\rho, \nu)} (T^*M)$.
\end{proposition}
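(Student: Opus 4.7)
My plan is the following. The symbol $a_t$ is a product $(a\circ \varphi^{t_3}) \cdot e^{-Q}$, where $Q(x,\xi) = \int_{t_1}^{t_2} q \circ \varphi^s(x,\xi)\,ds$. Since the homogeneous geodesic flow $\varphi^s$ preserves the energy function $|\xi|_x$, it preserves each shell $K_\Gamma$, so $a \circ \varphi^{t_3}$ and hence $a_t$ are supported in $K_\Gamma$, which is compact in $T^*M$. The factors $\langle \xi\rangle^{m-|\beta|}$ appearing in the definition of $S^{-\infty}_{\rho'}$ are therefore harmless, and the whole statement reduces to obtaining $C^k(K_\Gamma)$ bounds on derivatives of $a_t$ of order $k$ of size $h^{-k(\mu \Upsilon_1 + \max(\rho,\nu))}$, uniformly in $(t_1,t_2,t_3)$ in the indicated range.

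First I would fix an auxiliary expansion rate $\Upsilon'_1$ with $\Upsilon_{\max} < \Upsilon'_1 < \Upsilon_1$ and apply Lemma~\ref{l:DG} with $\Upsilon'_1$. Combined with the symbol estimates $\|a\|_{C^k(K_\Gamma)} \leq C_k h^{-k\rho}$ and $\|q\|_{C^k(K_\Gamma)} \leq C_k h^{-k\nu}$ (valid since $K_\Gamma$ is compact so $\langle \xi \rangle^{-|\beta|}$ is bounded), this gives
\begin{equation*}
\|a \circ \varphi^{t_3}\|_{C^k(K_\Gamma)} \leq C_k e^{k \Upsilon'_1 |t_3|} h^{-k\rho} \leq C_k h^{-k(\mu \Upsilon'_1 + \rho)}, \qquad \|q \circ \varphi^s\|_{C^k(K_\Gamma)} \leq C_k h^{-k(\mu \Upsilon'_1 + \nu)},
\end{equation*}
for $|t_3|,|s| \leq \mu \log(h^{-1})$. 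Integrating the second estimate over $s\in[t_1,t_2]$ yields a bound on $\|Q\|_{C^k(K_\Gamma)}$ of size $C_k \mu \log(h^{-1}) h^{-k(\mu \Upsilon'_1 + \nu)}$.

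Next I would control derivatives of $e^{-Q}$. Because $\Re(q)\geq 0$ on $T^*M$ and $t_1 \leq t_2$, we have $\Re(Q) \geq 0$, hence $|e^{-Q}|\leq 1$ pointwise. The Faà di Bruno formula expresses $\partial^\gamma e^{-Q}$ as $e^{-Q}$ times a finite sum of products $\prod_{i} \partial^{\delta_i} Q$ with $\sum_i \delta_i = \gamma$ and at most $|\gamma|$ factors. The previous bound on $\|Q\|_{C^k}$ then produces
\begin{equation*}
\|\partial^\gamma e^{-Q}\|_{C^0(K_\Gamma)} \leq C_\gamma \bigl(\log(h^{-1})\bigr)^{|\gamma|} h^{-|\gamma|(\mu \Upsilon'_1 + \nu)}.
\end{equation*}
Combining with the bound on $a\circ \varphi^{t_3}$ via Leibniz yields
\begin{equation*}
\|\partial^\alpha a_t\|_{C^0(K_\Gamma)} \leq C_\alpha \bigl(\log(h^{-1})\bigr)^{|\alpha|} h^{-|\alpha|(\mu\Upsilon'_1 + \max(\rho,\nu))}.
\end{equation*}
Finally, since $\Upsilon'_1 < \Upsilon_1$, for any fixed $k$ we have $\bigl(\log(h^{-1})\bigr)^k \leq C h^{-k\mu(\Upsilon_1 - \Upsilon'_1)}$ for $h$ sufficiently small, which absorbs the logarithmic factor into the upgrade from $\Upsilon'_1$ to $\Upsilon_1$, yielding the desired bound $h^{-|\alpha|(\mu \Upsilon_1 + \max(\rho,\nu))}$. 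The uniformity of all constants in $(t_1,t_2,t_3)$ gives boundedness of the family in $S^{-\infty}_{\mu\Upsilon_1 + \max(\rho,\nu)}(T^*M)$.

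The only delicate point is the bookkeeping of the logarithmic factors coming both from the integration in $s \in [t_1,t_2]$ and from Faà di Bruno (up to $|\alpha|$ factors of $\log(h^{-1})$). The trick of working with a slightly smaller $\Upsilon'_1$ and absorbing all such powers into the exponential gap $h^{-|\alpha|\mu(\Upsilon_1-\Upsilon'_1)}$ is what makes the conclusion hold for the target expansion rate $\Upsilon_1$ rather than a worse one; it is the only nontrivial step. Everything else is a straightforward assembly of the chain rule, Leibniz rule, Faà di Bruno, and the classical expansion estimate~\eqref{e:estimate-flow}.
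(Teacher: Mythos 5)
Your proof is correct, but it handles the quantitative bookkeeping differently from the paper, and the comparison is worth making explicit. The paper (Lemma~\ref{l:deriv-expo} plus the proof of Proposition~\ref{p:symb-log-times}) runs an induction establishing $\d^\alpha e^{-\int_{t_1}^{t_2} q\circ\varphi^s ds} = r_{t_1,t_2,\alpha}\, e^{-\int_{t_1}^{t_2} q\circ\varphi^s ds}$ with bounds $h^{-(|\alpha|+k)(\mu\Upsilon_1+\nu)}$ on $r_{t_1,t_2,\alpha}$ at the rate $\Upsilon_1$ itself and with no logarithmic loss: the time integral is bounded by $\int_{t_1}^{t_2} e^{(\ell+1)\Upsilon_1|s|}ds \lesssim e^{(\ell+1)\Upsilon_1\max(|t_1|,|t_2|)}$, i.e.\ the exponential growth (using $\Upsilon_1>0$) absorbs the interval length. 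You instead bound the integrand uniformly at an auxiliary rate $\Upsilon'_1\in(\Upsilon_{\max},\Upsilon_1)$, pay a factor $\log(h^{-1})$ per time integral and up to $|\alpha|$ such factors through Fa\`a di Bruno, and then absorb all powers of $\log(h^{-1})$ into $h^{-|\alpha|\mu(\Upsilon_1-\Upsilon'_1)}$. Since the statement is asserted for an arbitrary $\Upsilon_1>\Upsilon_{\max}$, this gives exactly the same conclusion, and your route avoids the induction; the paper's version is marginally sharper (no log loss at a fixed $\Upsilon_1$), but that sharpness is never used.

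Two small points you should patch. First, Lemma~\ref{l:DG} as stated requires $\supp(a)\subset\Int(K_\Gamma)$, so you cannot apply it verbatim to $q$, which is a global symbol of order $0$; the fix is the one the paper uses, namely to insert a flow-invariant cutoff $\tilde\chi(x,\xi)=\eta(|\xi|_x)$ equal to $1$ on a neighbourhood of the relevant sub-shell and supported in $\Int(K_\Gamma)$, so that $\tilde\chi\,(q\circ\varphi^s)=(\tilde\chi q)\circ\varphi^s$ and the lemma applies to $\tilde\chi q$; only the values of $q$ on the invariant shell containing $\varphi^{-t_3}(\supp a)$ ever enter your Leibniz expansion, so this costs nothing. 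Second, your absorption step $(\log(h^{-1}))^{|\alpha|}\le C h^{-|\alpha|\mu(\Upsilon_1-\Upsilon'_1)}$ needs $\mu>0$; the case $\mu=0$ forces $t_1=t_2=t_3=0$ and $a_t=a$, so it is trivial, but say so. With these two remarks added, your argument is complete.
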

This means that $a_t$ satisfies the symbolic estimates of $S^{-\infty}_{\mu \Upsilon_1 + \max(\rho, \nu)} (T^*M)$, uniformly for $(t_1,t_2,t_3)$ in the above range, and in particular all bounded by $\mu \log(h^{-1})$.
Of course, in view of pseudodifferential calculus (see Appendix~\ref{s:calcul-pseudo}), this is only useful/used for those $(\mu ,\rho, \nu)$ such that $\mu \Upsilon_1 + \max(\rho, \nu) <\frac12$.

We need the following lemma in the proof of Proposition~\ref{p:symb-log-times}.
\begin{lemma}
\label{l:deriv-expo}
Let $\mu,\nu \geq0$, and assume $q \in S^0_\nu(T^*M)$. Then, for all $\alpha \in \N^{2n}$, there is a function $r_{t_1,t_2,\alpha} \in C^\infty(T^*M)$ such that
\begin{align}
\label{e:def-ralpha}
\d^\alpha \left(  e^{-\int_{t_1}^{t_2} q\circ \varphi^s ds} \right) = r_{t_1,t_2,\alpha}  \ e^{-\int_{t_1}^{t_2} q\circ \varphi^s ds} , \end{align}
and which satisfies
\begin{align}
\label{e:estim-ralpha}
\nonumber  &\text{for all }\chi \in C^\infty_c(\Int(K_\Gamma)),  \text{ and all }k \in \N ,  \text{ there is }  C_{\alpha,\chi,k}>0  \text{ such that } \\
& \nor{ \chi r_{t_1,t_2,\alpha}}{C^k(K_\Gamma)} \leq C_{\alpha,\chi,k} h^{-(|\alpha| + k)(\mu \Upsilon_1 + \nu)} ,
 \quad  \quad \text{for all }-\mu \log(h^{-1})  \leq t_1 \leq t_2 \leq \mu \log(h^{-1}) . \end{align}
\end{lemma}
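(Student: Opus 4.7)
The plan is to combine the Fa\`a di Bruno formula for $\d^\alpha(e^f)$ with the flow expansion estimate of Lemma~\ref{l:DG} applied to a compactly supported truncation of $q$.

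First, set $f(x,\xi) := -\int_{t_1}^{t_2} q\circ \varphi^s(x,\xi)\, ds$. Identity~\eqref{e:def-ralpha} is proved by induction on $|\alpha|$, using $\d_j(r\, e^f) = (\d_j r + r\, \d_j f)\, e^f$. This shows that $r_{t_1,t_2,\alpha}$ is a universal polynomial (with $\alpha$-dependent structure but combinatorial coefficients independent of $t_1,t_2,f$) in the quantities $\{\d^\beta f : 1\leq|\beta|\leq|\alpha|\}$, in which each monomial has total $f$-derivative order exactly $|\alpha|$. Applying Leibniz's rule once more to $\chi\, r_{t_1,t_2,\alpha}$, we express $\d^\gamma(\chi\, r_{t_1,t_2,\alpha})$, for $|\gamma|\leq k$, as a finite sum of products of the form $(\d^{\gamma_0}\chi)\prod_{i}\d^{\delta_i} f$ with $|\delta_i|\geq 1$ and $|\gamma_0|+\sum_i|\delta_i| = |\alpha|+|\gamma|$. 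In particular $\sum_i|\delta_i|\leq |\alpha|+k$.

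Second, I estimate each factor $\d^\delta f = -\int_{t_1}^{t_2} \d^\delta(q\circ \varphi^s)\, ds$ on $K_\Gamma$, for $1\leq|\delta|\leq|\alpha|+k$. Fix $\Upsilon_1' \in (\Upsilon_{\max},\Upsilon_1)$ and $\Gamma' \in (0,\Gamma)$, together with a cutoff $\tilde\chi \in C^\infty_c(\Int(K_{\Gamma'}))$ equal to $1$ on $K_\Gamma$. The symbolic estimate~\eqref{e:symb-estim-Srho} for $q \in S^0_\nu(T^*M)$ combined with Leibniz yields $\|\tilde\chi q\|_{C^j(K_{\Gamma'})} \leq C_j\, h^{-\nu j}$ for each $j\in\N$. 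Since $\varphi^s$ preserves the function $(x,\xi)\mapsto|\xi|_x$ and hence leaves $K_\Gamma$ invariant, and since $\tilde\chi \equiv 1$ on $K_\Gamma$, we have $q\circ \varphi^s = (\tilde\chi q)\circ \varphi^s$ pointwise on $K_\Gamma$. Applying Lemma~\ref{l:DG} to $\tilde\chi q$ (with $\Upsilon_1'$ in place of $\Upsilon_1$) therefore gives
\[
\sup_{K_\Gamma} |\d^\delta(q\circ\varphi^s)| \leq \|(\tilde\chi q)\circ\varphi^s\|_{C^{|\delta|}(K_{\Gamma'})} \leq C\, e^{|\delta|\Upsilon_1' |s|}\, h^{-\nu|\delta|} .
\]

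Third, integrating in $s\in[t_1,t_2]$ with $|t_1|,|t_2|\leq \mu\log(h^{-1})$ yields
\[
\sup_{K_\Gamma}|\d^\delta f| \leq C\,\mu\log(h^{-1})\cdot h^{-|\delta|\mu\Upsilon_1'}\cdot h^{-\nu|\delta|}.
\]
Because $\Upsilon_1' < \Upsilon_1$, the $\log(h^{-1})$ factor is absorbed using $\log(h^{-1}) \leq C_\epsilon h^{-\epsilon}$ with $\epsilon>0$ chosen so small that $\mu\Upsilon_1'+\epsilon \leq \mu\Upsilon_1$, giving $\sup_{K_\Gamma}|\d^\delta f| \leq C\, h^{-|\delta|(\mu\Upsilon_1+\nu)}$. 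Plugging this bound into the Leibniz expansion of the first paragraph, each product term is controlled on $K_\Gamma$ by $h^{-\sum_i|\delta_i|(\mu\Upsilon_1+\nu)}\leq h^{-(|\alpha|+k)(\mu\Upsilon_1+\nu)}$, which, summed over the finitely many terms, yields~\eqref{e:estim-ralpha}. The main technical points are (i) reducing the non-compactly supported symbol $q$ to the framework of Lemma~\ref{l:DG} by means of a cutoff $\tilde\chi$ and the flow-invariance of $K_\Gamma$, and (ii) absorbing the logarithmic factor coming from the size $\mu\log(h^{-1})$ of the integration window by choosing $\Upsilon_1'<\Upsilon_1$; no genuinely new algebraic difficulty appears.
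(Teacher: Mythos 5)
Your argument is correct, and it rests on the same ingredients as the paper's proof: a cutoff $\tilde\chi$ supported in a slightly larger shell so that, by flow-invariance of $K_\Gamma$, one may replace $q\circ\varphi^s$ by $(\tilde\chi q)\circ\varphi^s$ and invoke Lemma~\ref{l:DG}; the symbolic bounds $\|\tilde\chi q\|_{C^j(K_{\Gamma'})}\leq C_j h^{-\nu j}$ coming from $q\in S^0_\nu(T^*M)$; and the conversion $e^{\Upsilon |s|}\leq h^{-\mu\Upsilon}$ on the logarithmic time window. The difference is organizational: where the paper proves~\eqref{e:def-ralpha}--\eqref{e:estim-ralpha} by induction on $|\alpha|$ through the recursion $r_{t_1,t_2,\alpha+\beta}=\partial^\beta r_{t_1,t_2,\alpha}-r_{t_1,t_2,\alpha}\int_{t_1}^{t_2}\partial^\beta\big(q\circ\varphi^s\big)ds$, you write $r_{t_1,t_2,\alpha}$ in closed form (Fa\`a di Bruno) as a polynomial in the derivatives of $f=-\int_{t_1}^{t_2}q\circ\varphi^s\,ds$ whose monomials have total order $|\alpha|$, and estimate each factor once and for all. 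This is a legitimate, arguably more transparent, alternative bookkeeping of the same estimates.

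Two small remarks. First, the detour through an intermediate rate $\Upsilon_1'\in(\Upsilon_{\max},\Upsilon_1)$ to absorb the factor $\log(h^{-1})$ is not needed: since $\Upsilon_1>\Upsilon_{\max}\geq 0$ is strictly positive, one has $\int_{t_1}^{t_2}e^{|\delta|\Upsilon_1|s|}\,ds\leq \frac{2}{|\delta|\Upsilon_1}\,e^{|\delta|\Upsilon_1\max(|t_1|,|t_2|)}$, so no logarithmic factor ever appears (this is what the paper's proof does implicitly when it bounds the integral by the endpoint exponential). Second, your absorption step requires $\mu(\Upsilon_1-\Upsilon_1')>0$, hence $\mu>0$; the case $\mu=0$ allowed by the statement is degenerate ($t_1=t_2=0$, $f\equiv0$, $r_{t_1,t_2,\alpha}$ trivial) and should be disposed of in one line, or avoided altogether by using the direct bound on the exponential integral just mentioned. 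Neither point affects the validity of your proof.
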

\bnp[Proof of Lemma~\ref{l:deriv-expo}]
We prove the result in local charts by induction on $|\alpha|$. Note first that for $|\alpha|=0$, \ie $\alpha = (0, \cdots, 0)$, we have $r_{t_1,t_2,\alpha} =1$ which satisfies~\eqref{e:estim-ralpha}.
Assume now~\eqref{e:def-ralpha}-\eqref{e:estim-ralpha} for $|\alpha|=m$. Then, taking $\alpha ,\beta \in \N^n$ multiindices with $|\alpha|=m$ and $|\beta| = 1$ (that is, $\d^\beta$ is a first order derivative),  using the Leibniz rule and the induction assumption, we have
\begin{align*}
\d^{\alpha+\beta} \left(  e^{-\int_{t_1}^{t_2} q\circ \varphi^s ds} \right) = \d^\beta \left(  r_{t_1,t_2,\alpha}  e^{-\int_{t_1}^{t_2} q\circ \varphi^s ds} \right) 
=  \left( \d^\beta r_{t_1,t_2,\alpha}  - r_{t_1,t_2,\alpha} \int_{t_1}^{t_2}\d^\beta \big( q\circ \varphi^s \big) ds  \right) e^{-\int_{t_1}^{t_2} q\circ \varphi^s ds} ,
\end{align*}
that is to say $r_{t_1,t_2,\alpha+\beta}=\d^\beta  r_{t_1,t_2,\alpha}  - r_{t_1,t_2,\alpha} \int_{t_1}^{t_2}\d^\beta \big( q\circ \varphi^s \big) ds$. For $k \in \N$, we now estimate (recall that $|\beta|=1$)
\begin{align}
\label{e:interm-Ck-estim}
 \nor{\chi r_{t_1,t_2,\alpha+\beta}}{C^k(K_\Gamma)} & \leq \nor{\chi \d^\beta  r_{t_1,t_2,\alpha} }{C^k(K_\Gamma)} + \nor{\chi r_{t_1,t_2,\alpha} \int_{t_1}^{t_2}\d^\beta \big( q\circ \varphi^s \big) ds }{C^k(K_\Gamma)} \nonumber \\
&  \leq \nor{\tilde{\chi} r_{t_1,t_2,\alpha} }{C^{k+1}(K_\Gamma)} + \int_{t_1}^{t_2} \nor{\chi r_{t_1,t_2,\alpha} \d^\beta \big( q\circ \varphi^s \big) }{C^k(K_\Gamma)}ds ,
 \end{align}
 where we choose (for later use) $\tilde{\chi}$ such that
  \begin{align}
 \label{e:def-tilde-chi}
 \nonumber
&  \tilde{\chi}(x, \xi) = \eta (|\xi|_x)\text{ with }\eta \in C^\infty_c(\R), \supp(\eta) \subset (\Gamma,\Gamma^{-1}) \text{ (and hence $\tilde{\chi} \in C^\infty_c(\Int(K_\Gamma))$)}\\
 & \text{and } \tilde{\chi} = 1 \text{ in a \nhd of }\supp(\chi) .
 \end{align}
The first term in~\eqref{e:interm-Ck-estim} is directly estimated by the induction assumption~\eqref{e:estim-ralpha} as 
$$\nor{\tilde{\chi}r_{t_1,t_2,\alpha} }{C^{k+1}(K_\Gamma)}  \leq  C h^{-(|\alpha| + k+ 1)(\mu \Upsilon_1 + \nu)} =  C  h^{-(m+1 + k)(\mu \Upsilon_1 + \nu)}.$$
Concerning the second term in~\eqref{e:interm-Ck-estim}, we use again the Leibniz rule to obtain
\begin{align*}
\nor{\chi r_{t_1,t_2,\alpha} \d^\beta \big( q\circ \varphi^s \big) }{C^k(K_\Gamma)} & \leq C \sum_{\ell = 0}^k \nor{\tilde{\chi}r_{t_1,t_2,\alpha} }{C^{k-\ell}(K_\Gamma)}  \nor{ \chi \d^\beta \big( q\circ \varphi^s \big) }{C^\ell(K_\Gamma)} \\
&  \leq C \sum_{\ell = 0}^k \nor{\tilde{\chi} r_{t_1,t_2,\alpha} }{C^{k-\ell}(K_\Gamma)}  \nor{\tilde{\chi} q\circ \varphi^s  }{C^{\ell+1}(K_\Gamma)}  
 \end{align*}
 The induction assumption~\eqref{e:estim-ralpha} implies 
 $$\nor{\tilde{\chi}r_{t_1,t_2,\alpha} }{C^{k-\ell}(K_\Gamma)}  \leq C h^{-(|\alpha| + k-\ell )(\mu \Upsilon_1 + \nu)}.
 $$ Concerning the other term, we recall the choice of $\tilde{\chi}$ in \eqref{e:def-tilde-chi} which implies that $\tilde{\chi} q\circ \varphi^s = (\tilde{\chi}q)\circ \varphi^s$. Hence, this second term is estimated with~\eqref{e:estimate-flow} as (recall $t_2\geq t_1$ and $s \in [t_1,t_2] \subset [ - \mu \log (h^{-1}) , \mu \log (h^{-1})]$)
 \begin{align*}
 \nor{\tilde{\chi} q\circ \varphi^s}{C^{\ell+1}(K_\Gamma)} \leq C e^{(\ell+1)\Upsilon_1 |s|} \nor{q}{C^{\ell+1}(K_\Gamma)}  \leq   C e^{(\ell+1)\Upsilon_1 |s|} h^{-\nu (\ell+1)},
\end{align*}
where, in the last inequality we used $q \in S^0_\nu(T^*M)$. Combining the last four estimates in~\eqref{e:interm-Ck-estim} now implies
\begin{align*}
 \nor{\chi r_{t_1,t_2,\alpha+\beta}}{C^k(K_\Gamma)}
   & \leq C  h^{-(m+1 + k)(\mu \Upsilon_1 + \nu)} + C \sum_{\ell = 0}^k  h^{-(m+ k-\ell )(\mu \Upsilon_1 + \nu)}\int_{t_1}^{t_2} e^{(\ell+1)\Upsilon_1 |s|} h^{-\nu (\ell+1)} ds \\
   & \leq C  h^{-(m+1 + k)(\mu \Upsilon_1 + \nu)} + C \sum_{\ell = 0}^k  h^{-(m + k-\ell )(\mu \Upsilon_1 + \nu)} e^{(\ell+1)\Upsilon_1 \max(|t_1|,|t_2|)} h^{-\nu (\ell+1)}  .
    \end{align*}
    Using that $|t_1|, |t_2| \leq \mu \log(h^{-1})$, we obtain
    \begin{align*}
 \nor{\chi r_{t_1,t_2,\alpha+\beta}}{C^k(K_\Gamma)}
   & \leq C  h^{-(m+1 + k)(\mu \Upsilon_1 + \nu)} + C \sum_{\ell = 0}^k  h^{-(m+ k-\ell )(\mu \Upsilon_1 + \nu)} h^{-(\ell+1)\mu \Upsilon_1} h^{-\nu (\ell+1)} \\
   &\leq C  h^{-(m+1 + k)(\mu \Upsilon_1 + \nu)} ,
    \end{align*}
    which is the sought estimate~\eqref{e:estim-ralpha} for $r_{t_1,t_2,\alpha+\beta}$, with $|\alpha + \beta| = m+1$. This concludes the proof of the lemma.
\enp
We can now return to the proof of Proposition~\ref{p:symb-log-times}.
\bnp[Proof of Proposition~\ref{p:symb-log-times}]
In local charts, we write the Leibniz formula, with $\d = \d_{(x,\xi)}$ and $\beta \in \N^{2n}$: 
\begin{align}
\label{e:estim-dbetaa}
|\d^\beta a_t | \leq C \sum_{\gamma + \alpha = \beta } \left| \d^\gamma \big(  a\circ \varphi^{t_3} \big) \right| \left| \d^\alpha \big(   e^{-\int_{t_1}^{t_2} q\circ \varphi^s ds}\big) \right|
=  C \sum_{\gamma + \alpha = \beta } \left| \d^\gamma \big(  a\circ \varphi^{t_3} \big) \right|  \left| r_{t_1,t_2,\alpha}  e^{-\int_{t_1}^{t_2} q\circ \varphi^s ds} \right| ,
\end{align}
where we used~\eqref{e:def-ralpha} for introducing $r_{t_1,t_2,\alpha} $. Using that $\Re(q)\geq 0$ and $t_2 \geq t_1$, we have $\left| e^{-\int_{t_1}^{t_2} q\circ \varphi^s ds} \right| \leq 1$. 
Now, remark that all terms in the sum are compactly supported in $\Int(K_\Gamma)$, since this is the case for $a\circ \varphi^{t_3}$ ($K_\Gamma$ is invariant by $\varphi^s$) and all its derivatives. We introduce an additional energy cutoff $\chi$ such that
  \begin{align}
 \label{e:def-chi-chi}
 \nonumber
& \chi(x, \xi) = \eta (|\xi|_x)\text{ with }\eta \in C^\infty_c(\R), \supp(\eta) \subset (\Gamma, \Gamma^{-1}) \text{ (and hence $\chi \in C^\infty_c(\Int(K_\Gamma))$)}\\
 & \text{and } \chi = 1 \text{ in a \nhd of }\supp(a) .
 \end{align}
In particular, $\chi \circ \varphi^s = \chi$ for all $s\in \R$, and thus $\chi a\circ \varphi^{t_3} = a\circ \varphi^{t_3}$ for all $t_3\in \R$. 
Therefore, we may introduce $\chi$ in~\eqref{e:estim-dbetaa} and then apply~\eqref{e:estim-ralpha} (with $k=0$) to obtain 
\begin{align*}
|\d^\beta a_t |
 \leq C \sum_{\gamma + \alpha = \beta } \left| \d^\gamma \big(  a\circ \varphi^{t_3} \big) \right|  \left|\chi  r_{t_1,t_2,\alpha}  e^{-\int_{t_1}^{t_2} q\circ \varphi^s ds} \right| 
 \leq C \sum_{\gamma + \alpha = \beta } \left| \d^\gamma \big(  a\circ \varphi^{t_3} \big) \right| h^{-|\alpha|(\mu \Upsilon_1 + \nu)} \quad \text{on } T^*M  ,
\end{align*}
uniformly for $-\mu \log(h^{-1})\leq t_1 \leq t_2 \leq \mu \log(h^{-1})$.
Using~\eqref{e:estimate-flow} in this estimate together with the fact that $|t_3| \leq \mu \log(h^{-1})$, we now obtain
\begin{align*}
\langle \xi \rangle^N |\d^\beta a_t | 
\leq  C_{N} \sum_{\gamma + \alpha = \beta } C e^{|\gamma| \Upsilon_1 |t_3|} \nor{a}{C^{|\gamma|}(K_\Gamma)}h^{-|\alpha|(\mu \Upsilon_1 + \nu)} 
\leq C \sum_{\gamma + \alpha = \beta } h^{-\mu \Upsilon_1|\gamma|}h^{-\rho |\gamma|}  h^{-|\alpha|(\mu \Upsilon_1 + \nu)} , 
\end{align*}
where, in the last inequality we also used $a \in S^0_\rho(T^*M)$. This implies in particular that 
\begin{align*}
\langle \xi \rangle^N |\d^\beta a_t | \leq C_{N,\beta} h^{-|\beta| \big( \mu \Upsilon_1+ \max(\rho, \nu) \big)}  \quad \text{on } T^*M  ,
\end{align*}
and hence that  $a_t \in S^{-\infty}_{\mu \Upsilon_1 + \max(\rho, \nu)} (T^*M)$ uniformly for $|t_3| \leq \mu \log(h^{-1})$ and $-\mu \log(h^{-1})\leq t_1\leq t_2\leq \mu \log(h^{-1})$. This concludes the proof of the proposition.
\enp

\subsection{Egorov theorem in Ehrenfest time}
\label{sub:Egorov}
We are now ready to prove an Egorov theorem in Ehrenfest time for the group $\left(e^{it\tilde{P}_Q}\right)_{t\in \R}$ generated by ($i$ times) the operator 
$$
\tilde{P}_Q = \Lambda \tilde{\chi}_0(h\Lambda) + i Q .
$$
where $Q \in \Psi^0_\nu(M)$ and $\tilde{\chi}_0 \in C^\infty(\R;[0,1])$ is such that $\supp( \tilde{\chi}_0) \subset (0, \infty)$ and $ \tilde{\chi}_0=1$ on a neighborhood of $(\Gamma,\Gamma^{-1})$. We rather rewrite this as $(e^{\frac{it}{h}P_Q})_{t\in \R}$ for 
$$
P_Q =h \tilde{P}_Q  = h\Lambda \tilde{\chi}_0(h\Lambda) + ih Q .
$$
Note that we have $h\Lambda \tilde{\chi}_0(h\Lambda) \in \Psi^{-\infty}(M)$ with principal symbol $|\xi|_x \tilde{\chi}_0(|\xi|_x)$ on account to functional calculus~\cite[Theorem~14.9]{Zworski:book} (applied to $f(-h^2\Delta)$ with $f(s) =\sqrt{s}  \tilde{\chi}_0(\sqrt{s}) \in C^\infty_c(\R)$ for $\supp( \tilde{\chi}_0) \subset (0,+\infty)$). In particular, for $(x,\xi) \in K_\Gamma$ (defined in~\eqref{e:def-Keps}), this symbol is $|\xi|_x \tilde{\chi}_0(|\xi|_x) = |\xi|_x$.

In the {\em selfadjoint case} (\ie in case $Q=0$ in the above operator $P_Q$), there are now several classical references for the Egorov theorem in Ehrenfest time (that is to  say, in time of order $\kappa \log(h^{-1})$). We mention the pionnering work of Bouzouina-Robert~\cite{BouzouinaRobert}, the appearance of the dynamical constant $\kappa <\Upsilon_{\max}^{-1}$ in the time-range $[0,\kappa \log(h^{-1})]$ in~\cite[Section~5.2]{AnNon}, \cite[Theorem~7.1]{Riv} and~\cite[Section~3.3 and Appendix~C]{DG:14} and the textbook~\cite[Theorem~11.12]{Zworski:book}. A detailed proof is written in~\cite[Section~2.2.2 and Appendix]{DJN:20} in the selfadjoint case on a compact manifold, and we shall use some of the estimates of this reference.

In the non-selfadjoint semiclassical setting, Egorov theorems in $h$-independent time have been proven by Royer in~\cite{Royer:these}, \cite{Royer:10CPDE} and \cite{Royer:10JDE}. A related Egorov theorem in $h$-independent time for {\em non-autonomous} non-selfadjoint operators is available in~\cite[Appendix~A.2]{LL:16}.

Finally, a version of an Egorov theorem in the non-selfadjoint semiclassical setting in Ehrenfest time is proven in~\cite[Appendix~A.3]{Riviere:12} and \cite[Section~4.1]{Riviere:14}. However, in the last two references, the symbols $a,q$ are taken in $S^{0}_0(T^*M)$.
Theorem~\ref{t:egorov} below relaxes this assumption to symbols in $S^{0}_\nu(T^*M)$ and follows slightly more precisely the constant $\kappa$ in the time range of validity $[0,\kappa \log(h^{-1})]$.

\medskip
We recall that $\Upsilon_{\max}$ is defined in~\eqref{e:def-lambdamax} and $K_\Gamma$ in~\eqref{e:def-Keps} (we shall take $\Gamma = 10^{-1}$ in the following to fix ideas), and that the definition of a quantization procedure $\Op_h$ on $M$ is given in Appendix~\ref{app:op-quant}.
\begin{theorem}
\label{t:egorov}
Let $\Upsilon_1> \Upsilon_{\max}$, and $\mu, \nu , \rho \geq 0$ such that 
$$
\delta := \mu \Upsilon_1 + \max(\rho, \nu) <\frac12.
$$
Assume $Q \in \Psi^{0}_\nu (M)$ has principal symbol $q \in S^{0}_\nu(T^*M)$  such that $\Re(q) \geq 0$ on $T^*M$. 
Then, for all $a \in S^{-\infty}_\rho(T^*M)$ such that $\supp(a) \subset \Int(K_{10^{-1}})$, 
the following hold:
\begin{itemize}
\item the symbols 
$$
a_t(x,\xi) := a \circ \varphi^{-t} (x,\xi) e^{- \int_0^t 2\Re(q) \circ \varphi^{-s}(x,\xi)ds}
$$ belong to a bounded set of $S^{-\infty}_{\delta} (T^*M)$ uniformly for $0\leq t \leq \mu \log(h^{-1})$.
\item the estimate  
\begin{align}\label{e:EGOROV}
  \nor{\left(e^{\frac{it}{h}P_Q}\right)^* \Op_h(a) \left(e^{\frac{it}{h}P_Q}\right) - \Op_h \left(a_t  \right)}{\L(L^2)} \leq C't h^{1- 2\delta} 
\end{align}
holds for all $h \in (0,h_0)$, and $0\leq t \leq \mu \log(h^{-1})$.
\end{itemize}
\end{theorem}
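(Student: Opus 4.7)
\bnp[Proof plan]
The first claim follows directly from Proposition~\ref{p:symb-log-times}. Apply it with $(t_1,t_2,t_3)=(0,t,-t)$ and with the real-valued function $2\Re(q)\in S^0_\nu(T^*M)$ (which satisfies $\Re(2\Re(q))=2\Re(q)\geq 0$) in place of $q$; the required constraints $-\mu\log(h^{-1})\leq t_1\leq t_2\leq \mu\log(h^{-1})$ and $|t_3|\leq \mu\log(h^{-1})$ hold for every $t\in[0,\mu\log(h^{-1})]$, yielding the bounded family in $S^{-\infty}_\delta(T^*M)$.

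For the Egorov estimate, the plan is the standard Duhamel interpolation adapted to the non-selfadjoint semiclassical setting (as in~\cite{AnNon,DG:14,DJN:20} for the selfadjoint case). Set $U(t):=e^{itP_Q/h}$ and introduce, for $0\leq s\leq t$,
\[
F(s,t):= U(t-s)^{*}\,\Op_h(a_s)\,U(t-s),
\]
so that $F(0,t)$ is the Heisenberg-evolved observable while $F(t,t)=\Op_h(a_t)$. A direct computation gives
\[
\partial_s F(s,t) = U(t-s)^{*}\bigl[\,\Op_h(\partial_s a_s) - \tfrac{i}{h}\bigl(\Op_h(a_s)P_Q - P_Q^{*}\Op_h(a_s)\bigr)\,\bigr]\,U(t-s).
\]
The first key step is the preliminary symbolic identity
\[
\partial_s a_s = \{a_s,|\xi|_x\} - 2\Re(q)\,a_s,
\]
obtained by differentiating the definition of $a_s$ and using (i) the invariance of $|\xi|_x$ under its own Hamiltonian flow $\varphi^t$, which gives $\{f\circ\varphi^{-s},|\xi|_x\}=\{f,|\xi|_x\}\circ\varphi^{-s}$; and (ii) $\partial_s(2\Re(q)\circ\varphi^{-s})=\{2\Re(q),|\xi|_x\}\circ\varphi^{-s}$; combined with the Leibniz rule for Poisson brackets.

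The second step is to compute the bracket symbolically. Decomposing
\[
\Op_h(a_s)P_Q - P_Q^{*}\Op_h(a_s) = [\Op_h(a_s),\,h\Lambda\tilde\chi_0(h\Lambda)] + ih\bigl(\Op_h(a_s)Q + Q^{*}\Op_h(a_s)\bigr)
\]
and applying the pseudodifferential calculus in $S^{-\infty}_\delta$ (with gain $h^{1-2\delta}$ per order, using the boundedness of $\{a_s\}$ in $S^{-\infty}_\delta$ from the first claim, and the fact that $\tilde\chi_0\equiv 1$ on $K_{10^{-1}}\supset\supp(a_s)$, so that $\{a_s,|\xi|_x\tilde\chi_0(|\xi|_x)\}=\{a_s,|\xi|_x\}$ as functions), one finds
\[
\tfrac{i}{h}\bigl(\Op_h(a_s)P_Q - P_Q^{*}\Op_h(a_s)\bigr) = \Op_h\bigl(\{a_s,|\xi|_x\} - 2\Re(q)\,a_s\bigr) + R_s,
\]
with $R_s$ in a bounded subset of $h^{1-2\delta}\Psi^{-\infty}_\delta(M)$ uniformly in $s\in[0,\mu\log(h^{-1})]$ (the worst gain, $h^{1-\delta-\nu}$ coming from the $ihQ$ contribution, is majorized by $h^{1-2\delta}$ since $\nu\leq\delta$; the commutator contribution gives the even smaller $h^{2-3\delta}$). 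Combining with the preliminary identity gives $\partial_s F(s,t)=-U(t-s)^{*}R_s\,U(t-s)$.

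The final step is the uniform bound. Calder\'on--Vaillancourt in the mildly exotic class $S_\delta$ (applicable precisely because $\delta<1/2$) yields $\|R_s\|_{\L(L^2)}\leq C h^{1-2\delta}$ uniformly. Since $\Re(q)\geq 0$, sharp G\r{a}rding applied to $\Re(Q)\in\Psi^0_\nu$ gives $\Re(Qv,v)\geq -C'h^{1-2\nu}\|v\|^2$; a direct mass-evolution computation for both $U(t)$ and $U(t)^{*}$ then yields $\|U(t)\|_{\L(L^2)},\|U(t)^{*}\|_{\L(L^2)}\leq e^{C'h^{1-2\nu}t}$, which is bounded by a constant uniformly for $0\leq t\leq \mu\log(h^{-1})$ since $\nu<1/2$. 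Integrating $\partial_s F(s,t)$ in $s$ from $0$ to $t$ and taking norms delivers
\[
\bigl\|(e^{itP_Q/h})^{*}\Op_h(a)\,e^{itP_Q/h} - \Op_h(a_t)\bigr\|_{\L(L^2)} \leq \int_0^t \|U(t-s)^{*}\|\,\|R_s\|\,\|U(t-s)\|\,ds \leq C' t\, h^{1-2\delta},
\]
which is~\eqref{e:EGOROV}. The main obstacle throughout is the uniformity of symbolic and operator bounds over the full Ehrenfest window; this is exactly what Proposition~\ref{p:symb-log-times} supplies, and the restriction $\delta<1/2$ is essential both for Calder\'on--Vaillancourt in $S_\delta$ and for the sharp G\r{a}rding bound on $\|U(t)\|$, $\|U(t)^{*}\|$ to remain $O(1)$ on the Ehrenfest time scale.
\enp
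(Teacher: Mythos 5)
Your overall architecture is exactly the paper's: the first claim via Proposition~\ref{p:symb-log-times}, and then an interpolation $F(s,t)=U(t-s)^{*}\Op_h(a_s)U(t-s)$ (the paper's $A_\tau$ with $\tilde a_t(\tau,\cdot)=a_{t-\tau}$ is the same family up to the reparametrization $s=t-\tau$), the transport identity $\partial_s a_s=\{a_s,|\xi|_x\}-2\Re(q)\,a_s$, the same splitting of $\Op_h(a_s)P_Q-P_Q^{*}\Op_h(a_s)$, and the semigroup bounds from sharp G{\aa}rding to integrate in $s$. All of that is correct. (Minor slip: in the first claim the admissible parameters are $(t_1,t_2,t_3)=(-t,0,-t)$, since $\int_0^t 2\Re(q)\circ\varphi^{-s}ds=\int_{-t}^{0}2\Re(q)\circ\varphi^{\sigma}d\sigma$; your choice $(0,t,-t)$ integrates along $\varphi^{+s}$.)

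The genuine gap is the commutator step. You justify
$\tfrac{i}{h}\bigl[\Op_h(a_s),\,h\Lambda\tilde\chi_0(h\Lambda)\bigr]=\Op_h\bigl(\{a_s,|\xi|_x\}\bigr)+O_{\L(L^2)}(h^{1-2\delta})$
by "applying the pseudodifferential calculus in $S^{-\infty}_\delta$", with the commutator contribution claimed to be $O(h^{2-3\delta})$. Neither assertion is available from the rough exotic calculus: as recalled in Appendix~\ref{s:calcul-pseudo}, for two symbols in $S_\delta$ the identity $\frac1h[\Op_h(a),\Op_h(b)]=\Op_h(\frac1i\{a,b\})+R$ only comes with $R=O(h^{1-4\delta})$, which is \emph{worse} than $h^{1-2\delta}$ and kills the argument for $\delta\geq 1/4$, i.e. precisely in the regime needed to reach Ehrenfest time $\mu<\frac{1}{2\Upsilon_{\max}}$. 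The exponent $h^{2-3\delta}$ you quote would follow from a Moyal-type cancellation of the second-order term, which holds for Weyl quantization on $\R^n$ but not for the chart-based left quantization $\Op_h$ on $M$ used here. The correct remedy, and the one the paper uses, is the refined bilinear commutator expansion of Lemma~\ref{l:DJN-A6} and Corollary~\ref{c:DJN-A6}: because the symbol of $P_0=h\Lambda\tilde\chi_0(h\Lambda)$ lies in the non-exotic class $S^{-\infty}_0$ and the error terms have the structured form ${\bf D}^{2j-4}(d^2a\otimes d^2b)|_{\Diag}$, one gets $[\Op_h(a_s),\Op_h(b)]=\Op_h(\frac{h}{i}\{a_s,b\})+O_{\L(L^2)}(h^{2-2\delta})$, hence $O(h^{1-2\delta})$ after dividing by $h$ — which is the bound you need (your handling of the $ihQ$ terms, with gain $h^{1-\delta-\nu}\leq h^{1-2\delta}$, is fine). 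So the statement of your remainder $R_s=O(h^{1-2\delta})$ is correct, but its justification requires this dedicated lemma rather than the generic $S_\delta$ composition calculus; without it the proof only yields a remainder $t\,h^{1-4\delta}$ and a correspondingly shortened time range.
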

\begin{remark}
\label{e:Upsilon=infty}
Recall that $\Upsilon_{\max} \in [0,+ \infty)$. If $\Upsilon_{\max}=0$, this Egorov statement holds up to times $0\leq t \leq \mu \log(h^{-1})$ for any $\mu>0$ (indeed, the requirement on $\Upsilon_1$ is $\Upsilon_1>0$). Anyways, in this situation, the logarithmic bound in Theorems~\ref{t:theorem-holder-0} and~\ref{t:main-res} is not expected to be relevant (see e.g. the discussion related to~\cite{AL:14,Stahn:17,LeLe:17,Sun:21} on tori in Section~\ref{s:literature}).
\end{remark}

\bnp
For $w \in T^*M$ and $\tau \in [0,t]$, we set:
$$
\tilde{a}_t(\tau ,w) = a \circ \varphi^{\tau-t}(w) e^{- \int_\tau^t 2\Re(q) \circ \varphi^{\tau-s} (w)ds} = a \circ \varphi^{\tau-t}(w) e^{- \int_0^{t-\tau} 2\Re(q) \circ \varphi^{-s} (w)ds} ,
$$
so that $a_t(w) = \tilde{a}_t(0 ,w)$.
According to Proposition~\ref{p:symb-log-times}, for $\mu \geq 0$, the family of symbols $\{\tilde{a}_t(\tau, \cdot) , 0\leq \tau \leq t \leq \mu\log(h^{-1})\}$ is a bounded subset of $S^{-\infty}_{\delta} (T^*M)$. 
Moreover, differenciating with respect to $\tau$ the definition of $\tilde{a}_t$ (see~\eqref{e:symbol-tau-egorov} below and Proposition~\ref{p:symb-log-times}) shows that 
\begin{align}
\label{e:dtau-bdd}
\{\d_\tau \tilde{a}_t(\tau, \cdot) , 0\leq \tau \leq t \leq \mu\log(h^{-1})\} \text{ is a bounded subset of } h^{-\rho} S^{-\infty}_{\delta} (T^*M). 
\end{align}
We have, for $z= \varphi^\tau(w) \in T^*M$,  $\tilde{a}_t(\tau ,\varphi^{-\tau}(z)) = a \circ \varphi^{-t}(z) e^{- \int_\tau^t 2 \Re(q) \circ \varphi^{-s} (z)ds}$ and hence 
$$
\frac{d}{d\tau} \left(\tilde{a}_t(\tau ,\varphi^{-\tau}(z)) \right)  =  2\Re(q) \circ \varphi^{-\tau} (z)\tilde{a}_t(\tau ,\varphi^{-\tau}(z)) \in S^{-\infty}_{\delta} (T^*M) ,
$$
as well as (with $p(x,\xi) = |\xi|_x$)
$$
\frac{d}{d\tau} \left(\tilde{a}_t(\tau ,\varphi^{-\tau}(z)) \right)  = (\d_\tau \tilde{a}_t) (\tau ,\varphi^{-\tau}(z)) - \{p ,\tilde{a}_t(\tau ,\cdot ) \}(\varphi^{-\tau}(z)) .
$$
(Note that both terms in the right-hand side of this identity are only in $ h^{-\rho} S^{-\infty}_{\delta} (T^*M)$, whereas their difference is in $S^{-\infty}_{\delta} (T^*M)$).
As a consequence of these two identities, and writing $z= \varphi^\tau(w)$ we obtain 
\begin{equation}
\label{e:symbol-tau-egorov}
 (\d_\tau \tilde{a}_t) (\tau ,w) - \{p ,\tilde{a}_t(\tau ,\cdot ) \}(w) =  2\Re(q)(w) \tilde{a}_t(\tau ,w) .
\end{equation}
Now, assuming $\delta < \frac12$, we quantify $\tilde{a}_t(\tau ,\cdot)$ into $\Op_h( \tilde{a}_t(\tau ,\cdot))$ according to~\eqref{eq: def psiDO on M}. These operators remain in a bounded set of $\Psi^{-\infty}_{\delta} (M)$ (see Appendix~\ref{app:op-quant} for a definition) for $0\leq \tau \leq t \leq \mu\log(h^{-1})$, and, in particular, are bounded on $L^2(M)$ (see Appendix~\ref{s:calcul-pseudo}).
Setting 
$$
A_\tau = \left(e^{\frac{i\tau}{h}P_Q}\right)^* \Op_h( \tilde{a}_t(\tau ,\cdot)) \left(e^{\frac{i\tau}{h}P_Q}\right) ,
$$
and writing $P_0=h\Lambda  \tilde{\chi}_0(h\Lambda)$, we have
\begin{align}
\label{e:estimate-ddtatau}
\frac{d}{d\tau} A_\tau & = \left(e^{\frac{i\tau}{h}P_Q}\right)^* \left(\left(\frac{i}{h}P_Q\right)^*  \Op_h( \tilde{a}_t(\tau ,\cdot)) +  \Op_h( \tilde{a}_t(\tau ,\cdot)) \left(\frac{i}{h}P_Q\right) 
+\Op_h(\d_\tau \tilde{a}_t(\tau ,\cdot))  \right) \left(e^{\frac{i\tau}{h}P_Q}\right)  \nonumber\\
& = \left(e^{\frac{i\tau}{h}P_Q}\right)^* \left(- \frac{i}{h}\left(P_0 - ihQ^* \right) \Op_h( \tilde{a}_t(\tau ,\cdot))  \right. \nonumber \\
& \quad \left. +  \frac{i}{h} \Op_h( \tilde{a}_t(\tau ,\cdot)) \left(P_0 + ihQ \right) 
+\Op_h(\d_\tau \tilde{a}_t(\tau ,\cdot))  \right) \left(e^{\frac{i\tau}{h}P_Q}\right) \nonumber \\
& = \left(e^{\frac{i\tau}{h}P_Q}\right)^* B_h  \left(e^{\frac{i\tau}{h}P_Q}\right) ,
\end{align}
with 
\begin{equation}
\label{e:op-in-egor}
B_h :=\frac{i}{h}\left[  \Op_h( \tilde{a}_t(\tau ,\cdot)) , P_0\right]
- Q^* \Op_h( \tilde{a}_t(\tau ,\cdot)) - \Op_h( \tilde{a}_t(\tau ,\cdot)) Q 
+\Op_h(\d_\tau \tilde{a}_t(\tau ,\cdot)) .
\end{equation}
Applying Corollary~\ref{c:DJN-A6} to $b\in S^{-\infty}(T^*M)$ such that $\supp(b) \subset \{|\xi|_x\leq 10\}$ and $P_0 = \Op_h(b) + O_{\L(L^2(M))}(h^{\infty})$ together with $\tilde{a}_t(\tau ,\cdot)$, which remain in a bounded subset of $S^{-\infty}_{\delta} (T^*M)$ with $\supp (\tilde{a}_t(\tau ,\cdot)) \subset K_{10^{-1}}\subset \{|\xi|_x<10\}$), we obtain that  $\frac{i}{h}\left[  \Op_h( \tilde{a}_t(\tau ,\cdot)) , P_0\right] \in h^{-\delta} \Psi^{-\infty}_\delta(M)$ together with
$$\frac{i}{h}\left[  \Op_h( \tilde{a}_t(\tau ,\cdot)) , P_0\right] = \Op_h \left( \{ \tilde{a}_t(\tau ,\cdot) ,p \} \right)+ O_{\L(L^2(M))}(h^{1-2\delta}) . 
$$
This, together with application of the composition rule (see Appendix~\ref{s:calcul-pseudo}) in the class $\Psi^{-\infty}_{\delta} (M)$ and \eqref{e:op-in-egor} then imply 
$$
B_h = \Op_h \big( \{ \tilde{a}_t(\tau ,\cdot) ,p \} - \ovl{q} \tilde{a}_t(\tau ,\cdot) - q \tilde{a}_t(\tau ,\cdot) + \d_\tau \tilde{a}_t(\tau ,\cdot)  \big) + O_{\L(L^2(M))}(h^{1-2\delta}) ,
$$
for $0\leq \tau \leq t \leq \mu\log(h^{-1})$.
According to~\eqref{e:symbol-tau-egorov}, this is 
$$
B_h = O_{\L(L^2(M))}(h^{1-2\delta}) , \quad \text{ for }0\leq \tau \leq t \leq \mu\log(h^{-1}) .
$$
With Lemma~\ref{l:bound-etip} applied to $ \left(e^{\frac{i\tau}{h}P_Q}\right)$ and $\left(e^{\frac{i\tau}{h}P_Q}\right)^* =  e^{-\frac{i\tau}{h}P_Q^*}$, and the fact that $\Re(q)\geq 0$, 
we can now estimate the operator norm of the right hand-side of~\eqref{e:estimate-ddtatau} as 
$$\nor{\frac{d}{d\tau} A_\tau}{\L(L^2)} \leq  \nor{e^{\frac{i\tau}{h}P_Q^*}}{\L(L^2)} \nor{B_h}{\L(L^2)}\nor{e^{\frac{i\tau}{h}P_Q}}{\L(L^2)} 
 \leq  Ch^{1- 2\delta} e^{2C_0h^{1-2\nu} t}  
\leq C' h^{1- 2\delta},
$$
uniformly for $h \in (0,h_0)$ and $0\leq \tau \leq t \leq \mu \log(h^{-1})$.
Integrating on $[0,t]$ then in particular implies that 
$$
\nor{A_t -A_0}{\L(L^2)}
\leq C't h^{1- 2\delta} \quad  \text{for all }h \in (0,h_0), 0\leq t \leq \mu \log(h^{-1}) ,
$$
which is precisely~\eqref{e:EGOROV}.
\enp

\section{From propagation in logarithmic times to logarithmic lower resolvent bounds}
\label{s:proof-main}
In this section, we conclude the proof of Theorems~\ref{t:theorem-holder-0} and formulate a more precise version in Theorem~\ref{t:main-res} (see also Corollary~\ref{c:ralalalala}). 
To this aim, we combine the abstract result of Theorem~\ref{t:resolvent-implies-long-time} (resolvent estimate implies long time bound) together with long time (logarithmic) propagation of a coherent state, using the Egorov Theorem~\ref{t:egorov}.
Recall that the open set $\omega \subset M$ satisfies the Geometric Control Condition (GCC) if one of the following equivalent conditions is satisfied ($\pi$ denotes the canonical projection $S^*M\to M$): 
\begin{align}
\label{e:GCC-T}
&\text{there is }T>0 \text{ such that for all } \zeta \in S^*M, \text{ there is } t \in (0,T) \text{ such that } \pi \circ\varphi_t(\zeta) \in \omega , \\
\label{e:GCC-infty}
& \text{for all } \zeta \in S^*M, \text{ there is } t >0 \text{ such that } \pi \circ\varphi_t(\zeta) \in \omega .
\end{align}
That~\eqref{e:GCC-infty} implies~\eqref{e:GCC-T} follows from a classical compactness argument (see \eg~\cite[Section~5.1]{LeLe:17}). Recall that $\Upsilon_{\max}$ is defined in~\eqref{e:def-lambdamax}.
\begin{theorem}
\label{t:main-res}
Assume that $b\in C^0(M;\R_+)$ admits a nondecreasing modulus of continuity $\omega$ such that $\omega(\eps)\log(\eps)\to 0$ as $\eps\to 0^+$, and that the set $\{b>0\}$ does not satisfy the Geometric Control Condition. Let $\rho , \mu$ such that
\begin{align}
\label{e:schnorcombre}
\rho \geq 0 ,\quad  \mu>0, \quad \mu \Upsilon_{\max} +  \rho < \frac12 .
\end{align}
Then, for all $\eta>0$, there is $h_0 >0$ such that 
\begin{align*}
 \sup_{\tau \in I_{h^\rho}}\nor{(i\tau/h - \A_m)^{-1}}{\L\big(H^1_m \times L^2\big)} 
\geq \frac{1}{\sqrt{2}}(\mu-\eta) \log(h^{-1}), \quad \text{ for } h \in (0,h_0) ,\\
 \sup_{\tau \in I_{h^\rho}}\nor{(i\tau/h - \A_+)^{-1}}{\L\big(H^1_+ \times L^2\big)} 
\geq \frac{1}{\sqrt{2}}(\mu-\eta) \log(h^{-1}), \quad \text{ for } h \in (0,h_0) ,
\end{align*}
with $I_{h^\rho} := [1-h^\rho, 1+h^\rho]$ if $\rho>0$ and $I_{h^\rho} := [1-\eps, 1+\eps]$ if $\rho=0$, where $\eps>0$ is $h$-independent (but the constant $h_0 >0$ then depends on $\eps$).
\end{theorem}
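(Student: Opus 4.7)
The plan is a contradiction argument combining Theorem~\ref{t:resolvent-implies-long-time} (resolvent bound $\Rightarrow$ pointwise decay on spectrally localized data) with a near-conservation of $L^2$ norm for coherent states localized in the undamped set, established \emph{via} the Egorov Theorem~\ref{t:egorov} in Ehrenfest time $T_h = \mu\log(h^{-1})$. Fix $\eta > 0$ and assume, for contradiction, that $G(h) := \sup_{\tau \in I_{h^\rho}}\nor{(i\tau/h - \A_m)^{-1}}{\L(H^1_m\times L^2)} < \frac{\mu-\eta}{\sqrt 2}\log(h^{-1})$ along a sequence $h \to 0$. By Item~\ref{i:resolvantes} of Corollary~\ref{corollary-Pm-Am-reso} and~\eqref{e:totototo-2}, this is a resolvent bound for the hyperbolic operator $\P_h = h\P_m$ on $I_\eps^\pm$ with $\eps(h) = h^\rho$. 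Applying~\eqref{e:estim-pointwise-opt} of Theorem~\ref{t:resolvent-implies-long-time} with $T_h = \mu\log(h^{-1})$, for any $V \in L^2(M;\C^2)$ and $\delta \in (0,1)$,
\[
\nor{e^{iT_h\P_h/h}\Pi_\eps^\pm V}{L^2}^2 \leq \left((2+\delta)\frac{G(h)^2}{T_h^2} + \frac{C_Q}{\delta^2}\,h^{1-2\rho}T_h\right)\nor{\Pi_\eps^\pm V}{L^2}^2.
\]
Under the hypothesis, the first term is at most $(2+\delta)(\mu-\eta)^2/(2\mu^2) < 1$ for $\delta$ small, while the remainder is $o(1)$ since $\rho < 1/2$. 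Hence there is $c_0 < 1$ with $\nor{e^{iT_h\P_h/h}\Pi_\eps^\pm V}{L^2}^2 \leq c_0\nor{\Pi_\eps^\pm V}{L^2}^2$ for every $V$ and every $h$ small.

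Since GCC fails, the undamped set $S$ of~\eqref{e:defsingularS} is nonempty; pick $\rho_0 = (x_0,\xi_0) \in S$ with $|\xi_0|_{x_0}=1$, so that $b(\pi\circ\varphi^s(\rho_0)) = 0$ for all $s \in \R$. Because $b$ is only continuous, first regularize it to $b_{h^\nu}$ from Appendix~\ref{e:reg-b}, satisfying $\|b-b_{h^\nu}\|_{L^\infty} = O(\omega(h^\nu))$ and $b_{h^\nu} \in S^0_\nu$ (as a symbol on $T^*M$); the substitution is harmless on the time interval $[0,T_h]$ by a Duhamel estimate, because $T_h\|b-b_{h^\nu}\|_{L^\infty} = O(\omega(h^\nu)\log(h^{-1})) = o(1)$ by the hypothesis $\omega(\eps)\log\eps\to 0$. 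Choose $\nu > 0$ small and $\Upsilon_1 > \Upsilon_{\max}$ close to $\Upsilon_{\max}$ so that $\delta' := \mu\Upsilon_1 + \max(\rho,\nu) < \tfrac12$; this is possible by~\eqref{e:schnorcombre}. The hyperbolic reduction of Section~\ref{s:KG-reformulation} together with Corollary~\ref{c:energy-cutoff-final} reduce the question to the scalar half-wave propagator $e^{isP_Q^+/h}$, where $P_Q^+ = h\Lambda_m\tilde\chi_0(h\Lambda_m) + ih\,Q_h'$ and $Q_h' := \tilde\chi_0(h\Lambda_m)\frac{b_{h^\nu}}{2}\tilde\chi_0(h\Lambda_m) \in \Psi^0_\nu$, up to an $o(1)$ error (using $T_h h^{1-2\nu} = o(1)$). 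Take $V$ to be a unit-normalized semiclassical coherent state microlocalized at $\rho_0$ (say $V = \Op_h(a_0)W$ with $a_0 \in S^{-\infty}_\rho$ a bump near $\rho_0$ and $W$ a rescaled Gaussian in a chart), chosen so that $\nor{\Pi_\eps^\pm V}{L^2} = (1+o(1))\nor{V}{L^2}$ by microlocal ellipticity of $\chi_\eps^\pm(h\Lambda_m)$ on $\{|\xi|_x=1\}$.

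The contradiction is delivered by the following near-conservation of norm. Setting $u(s) := e^{isP_Q^+/h}V$, the dissipative energy identity reads
\[
\nor{e^{iT_h P_Q^+/h} V}{L^2}^2 = \nor{V}{L^2}^2 - 2\int_0^{T_h}\big\langle\Re(Q_h')u(s),u(s)\big\rangle\,ds.
\]
Applying Theorem~\ref{t:egorov} to the observable $Q_h'$, whose principal symbol $q = \tilde\chi_0(|\xi|_x)^2\,b_{h^\nu}(x)/2$ satisfies $\Re q \geq 0$, and pairing the propagated symbol with the coherent state localized at $\rho_0$, one obtains, for $0 \leq s \leq T_h$,
\[
\big\langle\Re(Q_h')u(s),u(s)\big\rangle = \Re\bigl(q(\varphi^{-s}\rho_0)\bigr)\,\nor{V}{L^2}^2 + O\bigl(sh^{1-2\delta'}\bigr)\nor{V}{L^2}^2.
\]
Since $\rho_0 \in S$ gives $b(\pi\circ\varphi^{-s}\rho_0) = 0$, the regularization properties of Appendix~\ref{e:reg-b} yield $q(\varphi^{-s}\rho_0) \leq C\omega(h^\nu)$. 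Integrating over $[0,T_h]$ and using that $T_h\omega(h^\nu) = o(1)$ and $T_h^2 h^{1-2\delta'} = o(1)$ (the latter from $\delta' < \tfrac12$), the dissipation integral is $o(1)\nor{V}{L^2}^2$. Therefore $\nor{e^{iT_h P_Q^+/h} V}{L^2}^2 \geq (1-o(1))\nor{V}{L^2}^2$ which, unwinding the reductions, gives $\nor{e^{iT_h\P_h/h}\Pi_\eps^\pm V}{L^2}^2 \geq (1-o(1))\nor{\Pi_\eps^\pm V}{L^2}^2$, contradicting the upper bound of the first paragraph.

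The main technical obstacle is precisely this integrated Egorov estimate over the Ehrenfest window $[0,\mu\log h^{-1}]$: keeping both the Egorov symbol-class error $O(T_h^2 h^{1-2\delta'})$ and the damping error $O(T_h\omega(h^\nu))$ of size $o(1)$ forces one to exploit simultaneously the compatibility~\eqref{e:schnorcombre} between the expansion rate $\Upsilon_{\max}$ of the flow and the semiclassical parameters $(\rho,\nu)$, and the modulus-of-continuity hypothesis $\omega(\eps)\log\eps \to 0$ on the damping coefficient. The corresponding lower bound for $\A_+$ (the wave case $m=0$) is proved identically after substituting the reduction of Section~\ref{s:KG-reformulation} by that of Section~\ref{s:KG-waves}, Theorem~\ref{t:semiclassic-intro} by Theorem~\ref{t:semiclassic-++}, and Corollary~\ref{corollary-Pm-Am-reso} by Corollary~\ref{c:prop-op-+}.
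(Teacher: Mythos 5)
Your proposal is correct and, despite the contradiction framing, follows the same architecture as the paper's proof: estimate~\eqref{e:estim-pointwise-opt} of Theorem~\ref{t:resolvent-implies-long-time} with $\eps(h)=h^{\rho}$ and $T_h=\mu\log(h^{-1})$, regularization $b\mapsto b_{h^{\nu}}$ with the hypothesis $\omega(\eps)\log\eps\to0$ killing the $T_h\,\omega(\kappa h^{\nu})$ error, diagonalization to a scalar half-wave generator via Corollary~\ref{c:energy-cutoff-final}, and the Egorov Theorem~\ref{t:egorov} paired with a coherent state at a point of the undamped set, under the constraint $\mu\Upsilon_1+\max(\rho,\nu)<\tfrac12$ coming from~\eqref{e:schnorcombre}. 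The one genuine local difference is the last step: the paper conjugates the \emph{spectral cutoff} observable $\chi_h(h\Lambda_m)^2$ by the flow and evaluates the propagated symbol $\chi_h^2\,e^{-2\int b_{h^\nu}\circ\varphi^{-s}}$ at $\zeta_0$, which yields $1-o(1)$ directly and only requires $\nor{\Pi_\eps u}{L^2}\leq\nor{u}{L^2}$ on the right-hand side; you instead use the exact $L^2$ dissipation identity and conjugate the \emph{damping} observable, showing the total dissipated energy is $o(1)$. Your variant works, but it costs two extra (fixable) verifications that the paper's route avoids: you need the ellipticity claim $\nor{\Pi_\eps^\pm V}{L^2}=(1+o(1))\nor{V}{L^2}$ for the coherent state, and your observable $Q_h'=\tilde\chi_0(h\Lambda_m)\tfrac{b_{h^\nu}}{2}\tilde\chi_0(h\Lambda_m)$ does not literally satisfy the hypotheses of Theorem~\ref{t:egorov} (it is not of the form $\Op_h(a)$ with $\supp(a)\subset\Int(K_{10^{-1}})$); one must choose $\tilde\chi_0$ supported in, say, $(1/2,2)$ and replace $Q_h'$ by the quantization of its symbol, the discrepancy being $O(h^{1-2\nu})$ per unit time and hence $o(1)$ after integration over $[0,T_h]$ (similarly, the omitted coherent-state pairing error $O(h^{1/2-\delta'})$ integrates to $o(1)$). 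With those points made explicit, your argument is a complete alternative implementation of the same proof.
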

Setting $\lambda = h^{-1},\lambda_0 = h_0^{-1}$ and $z=\frac{\tau}{h}$, the following two statements become particular cases of Theorem~\ref{t:main-res}.
\begin{corollary}
\label{c:ralalalala}
Under the assumptions of Theorem~\ref{t:main-res}, for  $\bullet=m$ or $\bullet=+$, the following two statements hold. For all $\eps>0$, there are $\rho,\lambda_0>0$ such that 
$$
 \sup_{z \in [\lambda (1-\lambda^{-\rho}) , \lambda (1 + \lambda^{-\rho})]} \nor{(iz - \A_\bullet)^{-1}}{\L\big(H^1_\bullet \times L^2\big)} \geq \left(\frac{1}{2\sqrt{2}\Upsilon_{\max}}-\eps\right) \log(\lambda), \quad \text{ for } \lambda \geq \lambda_0 .
 $$
For all $\eps\in (0,1/2]$, there are $c_\eps, \lambda_0>0$ such that 
$$
 \sup_{z \in [\lambda (1-\lambda^{-\frac12+\eps}) , \lambda (1 + \lambda^{-\frac12+\eps})]} \nor{(iz - \A_\bullet)^{-1}}{\L\big(H^1_\bullet \times L^2\big)}  \geq c_\eps \log(\lambda), \quad \text{ for } \lambda \geq \lambda_0 . 
 $$
\end{corollary}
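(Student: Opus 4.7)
The corollary is a direct reparametrization of Theorem~\ref{t:main-res}. My plan is to set $\lambda := h^{-1}$ (so $\log(h^{-1}) = \log(\lambda)$) and perform the spectral change of variables $z := \tau/h = \tau\lambda$ in the resolvent. Under this substitution, for any $\rho \geq 0$, the set $\tau \in I_{h^\rho} = [1-h^\rho, 1+h^\rho]$ corresponds exactly to $z \in [\lambda(1-\lambda^{-\rho}), \lambda(1+\lambda^{-\rho})]$, the condition $h \in (0,h_0)$ becomes $\lambda \geq \lambda_0 := h_0^{-1}$, and the norms of $(iz - \A_\bullet)^{-1}$ and $(i\tau/h - \A_\bullet)^{-1}$ are identical. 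Both claims of the corollary are then obtained from Theorem~\ref{t:main-res} by choosing $(\mu,\rho,\eta)$ appropriately, simultaneously for $\bullet \in \{m,+\}$. (If $\Upsilon_{\max}=0$, the constant $\frac{1}{2\sqrt{2}\,\Upsilon_{\max}}$ in the first statement is infinite and the bound is vacuous/irrelevant, in line with Remark~\ref{e:Upsilon=infty}; I therefore assume $\Upsilon_{\max}>0$ from now on.)

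For the first statement, given $\eps > 0$, I pick $\rho \in (0, 1/2)$ small enough that
\begin{equation*}
\frac{1 - 2\rho}{2\sqrt{2}\,\Upsilon_{\max}} \geq \frac{1}{2\sqrt{2}\,\Upsilon_{\max}} - \frac{\eps}{2}.
\end{equation*}
The admissibility condition~\eqref{e:schnorcombre} then allows any $\mu < \frac{1-2\rho}{2\Upsilon_{\max}}$; I take such a $\mu$ close enough to this supremum, and then $\eta > 0$ small enough that $\frac{1}{\sqrt{2}}(\mu - \eta) \geq \frac{1}{2\sqrt{2}\,\Upsilon_{\max}} - \eps$. Applying Theorem~\ref{t:main-res} with these $(\mu,\rho,\eta)$ and translating via the change of variables above yields the desired lower bound.

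For the second statement, the exponent of the window is prescribed by $-1/2 + \eps$, so I am forced to set $\rho := 1/2 - \eps \in [0,1/2)$. Then~\eqref{e:schnorcombre} demands $\mu\,\Upsilon_{\max} < \eps$; I take $\mu := \eps/(2\Upsilon_{\max})$ and $\eta := \mu/2$. Theorem~\ref{t:main-res} then furnishes some $\lambda_0 = h_0^{-1}$ and provides the lower bound with the explicit constant
\begin{equation*}
c_\eps := \frac{\mu - \eta}{\sqrt{2}} = \frac{\eps}{4\sqrt{2}\,\Upsilon_{\max}} > 0.
\end{equation*}
No genuine obstacle appears; the argument is purely algebraic parameter-counting. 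The only minor point worth checking is the boundary case $\eps = 1/2$ (so $\rho = 0$), in which Theorem~\ref{t:main-res} is applied with an $h$-independent interval $[1-\eps',1+\eps']$; since this interval is contained in $[\lambda(1-\lambda^0),\lambda(1+\lambda^0)] = [0,2\lambda]$ after rescaling, the supremum over the smaller set is automatically a lower bound for the supremum over the larger one, so the conclusion transfers without loss.
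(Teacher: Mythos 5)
Your proposal is correct and follows exactly the route the paper takes: the paper's own justification is the one-line remark that setting $\lambda=h^{-1}$, $\lambda_0=h_0^{-1}$, $z=\tau/h$ makes both statements particular cases of Theorem~\ref{t:main-res}, with the first obtained by taking $\mu$ close to $\frac{1}{2\Upsilon_{\max}}$ and then deducing $\rho$, and the second by fixing $\rho=\frac12-\eps$ and then deducing $\mu$. You have merely made the parameter-counting explicit (including the harmless boundary case $\rho=0$ and the convention $\Upsilon_{\max}>0$, which the paper also adopts), so nothing further is needed.
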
 
The first statement consists in fixing in~\eqref{e:schnorcombre} $\mu < \frac{1}{2\Upsilon_{\max}}$ as close as possible to $\frac{1}{2\Upsilon_{\max}}$; then deduce $\rho$.
 The second statement consists in fixing in~\eqref{e:schnorcombre} $\rho=\frac12-\eps$; then deduce $\mu$. 
 Note that in the first statement of Corollary~\ref{c:ralalalala}, we have implicitly assumed $\Upsilon_{\max}>0$; in case $\Upsilon_{\max}=0$ we refer Remark~\ref{e:Upsilon=infty}. 
 
 \bnp[Proof of Theorem~\ref{t:main-res}]
We first use the formulation of the equation as a first order hyperbolic system as in Theorem~\ref{t:semiclassic-intro} (case $m>0$) and Theorem~\ref{t:semiclassic-++} (case $m=0$). We only give the details for the case $m>0$ for simplicity, the case $m=0$ being treated as in the proof of Theorem~\ref{t:semiclassic-++}.
We put the problem in the setting of Theorem~\ref{t:resolvent-implies-long-time} with $\P_h := h\P_m$ where $\P_m$ is defined in~\eqref{e:damp-wave-hyp}. That is to say,
$$
\H = L^2(M; \C^2) , \quad P_h = h \Lambda_m I_\pm , \quad D(P_h)  =H^2(M;\C^2) ,
 \quad h Q_h = h \frac{b}{2}
 \begin{pmatrix}
1& 1 \\
1 &1
\end{pmatrix} .
$$
Moreover, we write \begin{align}
\label{e:def-eps}
\eps(h) = h^{\rho} , \text{ if }\rho >0 , \quad \text{ or } \quad \eps(h) =\eps \in (0,1)\text{ fixed,  if } \rho=0 , 
\end{align}
 and set
$
T_h := \mu \log(h^{-1}) .
$
We further let $\chi, \tilde{\chi} \in C^\infty_c(\R; [0,1])$ with $\chi, \tilde{\chi} \geq0$, $\supp\chi ,  \tilde{\chi}\subset (-1,1)$, $\chi(0)=1$, and $\tilde{\chi} = 1$ in a neighborhood of $\supp(\chi)$. We then set 
$$
\chi_h(s) = \chi \left((s-1)\eps(h)^{-1}\right) , \quad \tilde{\chi}_h(s) = \tilde{\chi} \left((s-1)\eps(h)^{-1}\right) ,
$$
so that $\supp(\chi_h) \subset \Int(I_\eps)$. We then define (using functional calculus for selfadjoint operators) as in~\eqref{def:Pieps}, 
 \begin{equation}
 \label{e:proj-pipi}
\chi_h \left( P_h \right)  = \chi_h (h \Lambda_m I_\pm ) 
 = 
  \begin{pmatrix}
\chi_h (h \Lambda_m) & 0 \\
0 &\chi_h (- h \Lambda_m)
\end{pmatrix}
 = 
  \begin{pmatrix}
\chi_h (h \Lambda_m) & 0 \\
0 & 0
\end{pmatrix}  ,
\end{equation}
where we have used in the last equality that $\supp \chi_h \subset \R_+^*$ together with $\Sp(-h\Lambda_m) \subset \R^-$.
Note that we have similarly
 \begin{equation*}
\tilde{\chi}_h(P_h)  = 
  \begin{pmatrix}
\tilde{\chi}_h (h \Lambda_m) & 0 \\
0 & 0
\end{pmatrix}  ,
\end{equation*}
and they satisfy $\tilde{\chi}_h( P_h)  \chi_h(P_h)=   \chi_h(P_h)$.
According to Theorem~\ref{t:thm-Kuster}, we have $ \chi_h(P_h),\tilde{\chi}_h( P_h) \in \Psi^0_{\rho}(M;\C^{2\times2})$ with respective principal symbols
$$
\sigma_h \left( \chi_h ( P_h )  \right) = 
  \begin{pmatrix}
\chi_h (|\xi|_x) & 0 \\
0 & 0
\end{pmatrix} ,
\quad 
\sigma_h \left( \tilde{\chi}_h ( P_h )  \right) = 
  \begin{pmatrix}
\tilde{\chi}_h (|\xi|_x) & 0 \\
0 & 0
\end{pmatrix} .
$$
Application of estimate~\eqref{e:estim-pointwise-opt} in Theorem~\ref{t:resolvent-implies-long-time} in this context yields for all $u \in L^2(M;\C^2)$
\begin{align}
\label{e:estim-bis} 
\nor{e^{\frac{i T_h }{h}\P_h}  \chi_h ( P_h )  u}{L^2(M;\C^2)}^2 
  \leq \left(  (2+\tilde{\delta})\frac{G(h)^2}{T_h^2}   + \frac{C_b}{\tilde{\delta}^2}  \frac{h T_h}{\eps(h)^2} \right) \nor{  \chi_h ( P_h )  u }{L^2(M;\C^2)}^2 , \quad \text{for all }\tilde{\delta} \in (0,1),
\end{align}
with, according to~\eqref{e:hyp-resP}
\begin{align}
\label{e:sup-res}
G(h) = h \sup_{\tau \in I_\eps} \nor{(\tau  - \P_h)^{-1}}{\L\big(L^2(M;\C^2)\big)} =  h \sup_{\tau \in I_\eps} \nor{(\tau  - h\P_m)^{-1}}{\L\big(L^2(M;\C^2)\big)}  = \sup_{\tau \in I_\eps}\nor{(i\tau/h - \A_m)^{-1}}{\L\big(H^1_m \times L^2\big)} 
 \end{align}
where we have used~\eqref{e:totototo}-\eqref{e:totototo-2} in the last equality.
Our goal is to produce a lower bound for the left-hand side and an upper bound for the right hand-side of Estimate~\eqref{e:estim-bis} 
for a well chosen family of functions $u=u_h$.  To this aim, we proceed in several steps.
 
\medskip
\noindent
\textbf{Step 1: replacing $b$ by $b_{h^\nu}$.}
We first replace the damping function $b$ by its regularization $b_{h^\nu}$ given by Corollary~\ref{cor:def-b-eps}, for some $0<\nu< 1/2$ (to be fixed later on). We denote by 
$$
\P^\nu := \Lambda_m I_\pm+ i \frac{b_{h^\nu}}{2}
 \begin{pmatrix}
1& 1 \\
1 &1
\end{pmatrix} .
$$
and using Lemma~\ref{l:perturbation} and Corollary~\ref{cor:def-b-eps}, we have that
\begin{align}
\label{e:replace-b-bnu}
\nor{e^{it\P} - e^{it\P^\nu}}{\L(L^2)} \leq t \nor{b-b_{h^\nu}}{L^\infty(M)}  \leq  t C_0 \omega(\kappa h^\nu) , \quad t \geq 0 . 
\end{align}
The interest is that now $b_{h^\nu}\in \Psi^0_\nu(M)$. 

\medskip
\noindent
\textbf{Step 2: Replacing $\P^\nu$ by its diagonal part $\P_{\diag}$ and reducing to a scalar equation.} 
According to Corollary~\ref{c:energy-cutoff-final}, we have 
$$
\nor{e^{it\P^\nu} \chi_0(h\Lambda_m) I_2 -  e^{it\P_{\diag}} \chi_0(h\Lambda_m) I_2}{\L(L^2)}  \leq C\left(th^{1-2\nu}+h\right) ,   
$$
where $\chi_0,\tilde{\chi}_0$ are fixed cutoff functions defined in~\eqref{e:def-cut-off-energy-0} and  
 \begin{align}
\label{e:def-Pd}
\P_{\diag} =
\begin{pmatrix}
\Lambda_m \tilde{\chi}_0(h\Lambda_m) &0  \\
0 &-\Lambda_m \tilde{\chi}_0(h\Lambda_m)
\end{pmatrix}   
    + i
 \begin{pmatrix}
\tilde{\chi}_0(h\Lambda_m) b_{h^\nu} \tilde{\chi}_0(h\Lambda_m) &0  \\
0 &\tilde{\chi}_0(h\Lambda_m) b_{h^\nu} \tilde{\chi}_0(h\Lambda_m)
\end{pmatrix} .
\end{align}
Moreover, notice that $\chi_0(h\Lambda_m)\chi_h(h\Lambda_m) =\chi_h(h\Lambda_m)$ and thus, for all $v\in L^2(M;\C^2)$
$$
\nor{e^{it\P^\nu} \chi_h(h\Lambda_m) v -  e^{it\P_{\diag}} \chi_h(h\Lambda_m) v}{L^2(M;\C^2)}  \leq C\left(th^{1-2\nu}+h\right) \nor{\chi_h(h\Lambda_m) v}{L^2(M;\C^2)} ,  
$$
As a consequence, writing $u=(u_1,u_2)^t$ and remarking that $\chi_h( P_h) u = \chi_h(h\Lambda_m)I_2 (u_1,0)^t$ on account to~\eqref{e:proj-pipi}, 
we have $$e^{it\P^\nu} \chi_h( P_h) u =  e^{it\P^\nu} \chi_h(h\Lambda_m)\begin{pmatrix}
 u_1\\
0
\end{pmatrix},
$$ and thus 
$$
e^{it\P^\nu} \chi_h( P_h) u = 
e^{it\P_{\diag}} \chi_h(h\Lambda_m) (u_1,0)^t
+ O\left(th^{1-2\nu}+h\right) \nor{u}{L^2(M;\C^2)}
$$
Recalling the expression of $\P_{\diag}$ in~\eqref{e:def-Pd}, and combining with~\eqref{e:replace-b-bnu} we have now obtained, for all $u=(u_1,u_2)\in L^2(M;\C^2)$
\begin{align}
\label{e:numerobis}
\nor{e^{it\P} \chi_h( P_h) u}{L^2(M;\C^2)} = 
\nor{e^{it \P_{\hw}} \chi_h(h\Lambda_m) u_1 }{L^2(M)}
+ O\left(th^{1-2\nu}+h + t\omega(\kappa h^\nu)\right) \nor{u}{L^2(M;\C^2)} ,
\end{align}
with 
$$
\P_{\hw} = \Lambda_m \tilde\chi_0(h\Lambda_m) + i \tilde\chi_0(h\Lambda_m) b_{h^\nu} \tilde\chi_0(h\Lambda_m) .
$$

\medskip
\noindent
\textbf{Step 3: Using the Egorov theorem.} 
 From now on, we set $\delta= \max(\rho,\nu)$.
Using Lemma~\ref{l:1-BB}, we write for $t\geq 0$
$$
\nor{e^{it \P_{\hw}} \chi_h(h\Lambda_m) -\chi_h(h\Lambda_m)e^{it \P_{\hw}} }{\L(L^2)} \leq t \nor{ [\P_{\hw} ,  \chi_h(h\Lambda_m) ]}{\L(L^2)}
\leq t \nor{ [b_{h^\nu},  \chi_h(h\Lambda_m) ]}{\L(L^2)} \leq  t h^{1-2\delta},
$$
on account to the $\Psi^0_{\delta}(M)$ calculus. Hence, $e^{it \P_{\hw}} \chi_h(h\Lambda_m) = \chi_h(h\Lambda_m)e^{it \P_{\hw}} + O(t h^{1-2\delta})$.
Next, we evaluate  
$$
\nor{\chi_h(h\Lambda_m)e^{it \P_{\hw}} u_1}{L^2(M)}^2
= \left( \left(e^{it \P_{\hw}}\right)^\ast \chi_h(h\Lambda_m)^2 \left( e^{it \P_{\hw}} \right) u_1 , u_1\right)_{L^2(M)} .
$$
Using the Egorov Theorem~\ref{t:egorov}, together with the fact that $(x,\xi)\mapsto |\xi|_x$ is invariant by $\varphi^{-t}$, we have for any $\mu$ such that $\mu \Upsilon_1 +\delta <\frac12$, $h \in (0,h_0)$, and $0\leq t \leq \mu \log(h^{-1})$,
\begin{align*}
\left(e^{it \P_{\hw}}\right)^\ast \chi_h(h\Lambda_m)^2 \left( e^{it \P_{\hw}} \right)
=  \Op_h \left(\chi_{h^\rho}^2(|\xi|_x)e^{- 2 \int_0^t  b_{h^\nu} \circ \pi \circ \varphi^{-s} (x,\xi)ds} \right)  + R'_t ,  \quad
 \nor{R'_t}{\L(L^2)} \leq C' t h^{1- 2(\mu \Upsilon_1 + \delta)} . 
\end{align*}
This yields 
\begin{align}
\label{e:egorov-use}
\nor{e^{it \P_{\hw}} \chi_h(h\Lambda_m) u_1 }{L^2(M)}^2
 =  \left( \Op_h \left(\chi_h^2(|\xi|_x)e^{- 2 \int_0^t  b_{h^\nu} \circ \pi \circ \varphi^{-s} (x,\xi)ds} \right) u_1 , u_1 \right)_{L^2} + \left( R'_t u_1 , u_1 \right)_{L^2} ,
\end{align}
with $\nor{R'_t}{\L(L^2)} \leq C' t h^{1- 2(\mu \Upsilon_1 +\delta)}$. 
 
\medskip
\noindent
\textbf{Step 4: Using the lack of GCC and a coherent state.}
Now, the assumption that GCC is not satisfied ensures (see~\eqref{e:GCC-infty}) the existence of $\zeta_0 = (x_0, \xi_0) \in T^*M$ such that $|\xi_0|_{x_0} = 1$ and $b \circ \pi \circ \varphi^{-s} (\zeta_0) = 0$ for all $s \in \R_+$. 
As a consequence of Corollary~\ref{cor:def-b-eps}, we have $b_{h^\nu}=0$ on $\{b=0\}$, and hence 
\begin{align}
\label{e:bnu-0}
b_{h^\nu} \circ \pi \circ \varphi^{-s} (\zeta_0) = 0 \quad \text{ for all } s \in \R_+ .
\end{align}
According to Corollary~\ref{c:coherent-state}, there is an $h$-dependent family of functions $u_h^{\zeta_0} \in C^\infty(M)$ such that $\|u_h^{\zeta_0}\|_{L^2(M)}=1$ and for all $\delta \in [0,1/2)$ and  $a \in S^m_\delta(T^*M)$, we have
\begin{align*}
\left| \left(\Op_h(a)  u_h^{\zeta_0} , u_h^{\zeta_0} \right)_{L^2} - a(\zeta_0) \right| \leq C_a h^{\frac12-\delta} , \quad h \in (0,h_0) ,
\end{align*}
where $C_a$ depends on a finite number of seminorms of $a$ in $S^m_\delta(T^*M)$. 
Taking $u_1 = u_h^{\zeta_0}$ in Equation~\eqref{e:egorov-use} yields, for $\mu$ such that $\mu \Upsilon_1 + \delta <\frac12$, and $0\leq t \leq T_h= \mu \log(h^{-1})$,
\begin{align}
\label{e:asymptotics-log-t}
\nor{e^{it \P_{\hw}} \chi_h(h\Lambda_m) u_h^{\zeta_0} }{L^2(M)}^2
& =  \left( \Op_h \left( \chi_h^2(|\xi|_x)e^{- 2 \int_0^t  b_{h^\nu} \circ \pi \circ \varphi^{-s} (x,\xi)ds} \right) u_h^{\zeta_0} , u_h^{\zeta_0} \right)_{L^2} +  O\left( t h^{1- 2(\mu \Upsilon_1 + \delta)} \right)\nonumber \\
& = \chi_h^2(|\xi_0|_{x_0})e^{- 2 \int_0^t  b_{h^\nu} \circ \pi \circ \varphi^{-s} (x_0,\xi_0)ds} + O\left(h^{\frac12-\delta}\right) +  O\left( t h^{1- 2(\mu \Upsilon_1 + \delta)} \right) \nonumber \\
& = 1 + O\left(h^{\frac12-(\mu \Upsilon_1 + \delta)}\right) +  O\left( t h^{1- 2(\mu \Upsilon_1 + \delta)} \right) ,
\end{align}
where we have used that $|\xi_0|_{x_0}=1$ together with~\eqref{e:bnu-0}. We have also used in the second line that the symbol $\chi_h^2(|\xi|_x)e^{- 2 \int_0^t  b_{h^\nu} \circ \pi \circ \varphi^{-s} (x,\xi)ds}$ remains in a bounded set of $S^{-\infty}_{\mu \Upsilon_1 + \delta} (T^*M)$ uniformly for $0\leq t \leq \mu \log(h^{-1})$, as a consequence of the first item in Theorem~\ref{t:egorov}.

Combined with~\eqref{e:numerobis}, this now reads for $u=(u_h^{\zeta_0},0)^t$,
\begin{align}
\label{e:numeroter}
\nor{u}{L^2(M;\C^2)} = 1 , \qquad 
\nor{e^{it\P} \chi_h( P_h) u}{L^2(M;\C^2)}
= 1 + O\left(h^{\frac12-(\mu \Upsilon_1 + \delta)} + t h^{1- 2(\mu \Upsilon_1 + \delta)} +  t\omega(\kappa h^\nu)  \right) ,
\end{align}
for  $\mu \Upsilon_1 + \delta <\frac12$ and $0\leq t \leq T_h= \mu \log(h^{-1})$. 

\medskip
\noindent
\textbf{Step 5: Application of~\eqref{e:estim-bis} and conclusion of the proof.}
We come back to~\eqref{e:estim-bis}: we notice that~\eqref{e:numeroter} furnishes an asymptotics of the left-hand side for $t= T_h= \mu \log(h^{-1})$, whereas $\nor{  \chi_h( P_h) u}{L^2(M;\C^2)} \leq \nor{u}{L^2(M;\C^2)} =1$. 
Applying~\eqref{e:estim-bis} yields, for all $\tilde{\delta}\in (0,1)$, 
$$
1 - O\left(h^{\frac12-(\mu \Upsilon_1 + \delta)} + \log(h^{-1}) h^{1- 2(\mu \Upsilon_1 + \delta)} +  \log(h^{-1})\omega(\kappa h^\nu)  \right)
  \leq    (2+\tilde{\delta})\frac{G(h)^2}{\mu^2 \log(h^{-1})^2}   + \frac{C_b \mu}{\tilde{\delta}^2}  \frac{h \log(h^{-1})}{\eps(h)^2}  
$$
where, recall,  $\eps(h)$ is given by~\eqref{e:def-eps}. In any case, using $(2+\tilde{\delta})^{-1} \geq \frac{1-\tilde{\delta}}{2}$, this yields, uniformly in $\tilde{\delta}\in (0,1)$, asymptotically as $h \to 0^+$,
\begin{equation}
\label{e:tatata}
\frac{1-\tilde{\delta}}{2}\left( 1 - O\left(h^{\frac12-(\mu \Upsilon_1 + \delta)} + \log(h^{-1}) h^{1- 2(\mu \Upsilon_1 + \delta)} +  \log(h^{-1})\omega(\kappa h^\nu) +  \tilde{\delta}^{-2}   h^{1-2\rho} \log(h^{-1}) \right)\right)
  \leq \frac{G(h)^2}{\mu^2 \log(h^{-1})^2}  .
\end{equation}
According to the assumption, we have $\log(h^{-1}) \omega(\kappa h^\nu)  = \nu^{-1} \log(\kappa)\omega(\kappa h^\nu) - \nu^{-1}\log(\kappa h^\nu) \omega(\kappa h^\nu) \to 0$ as $h \to 0$. 
Hence, choosing \eg $\tilde{\delta}=h^{\frac{1-2\rho}{4}}$, we have obtained that for any $\Upsilon_1>\Upsilon_{\max}$, any $\nu \in(0,1/2),\rho \in [0,1/2)$ and $\mu$ such that $\mu \Upsilon_1+ \max(\nu,\rho)< \frac12$ we have
\begin{equation}\label{e:ponpon}
\frac{1}{2}- o(1)  \leq \frac{G(h)^2}{\mu^2 \log(h^{-1})^2}  , \quad \text{ as }h\to0^+ .
\end{equation}
where $o(1)$ depends on the choice of $\Upsilon_1,\rho,\nu, \mu$.  If $\rho>0$, we choose $\nu =\rho$ and we see that if~\eqref{e:schnorcombre} is satisfied, then we may choose $\Upsilon_1>\Upsilon_{\max}$ such that $\mu \Upsilon_1+ \max(\nu,\rho)< \frac12$ is satisfied.
If $\rho=0$,  we see that if~\eqref{e:schnorcombre} is satisfied, then we may choose $\Upsilon_1>\Upsilon_{\max}$ and $\nu >0$ (small enough) such that $\mu \Upsilon_1+ \max(\nu,\rho)< \frac12$ is satisfied. In both cases,~\eqref{e:ponpon} holds.

Recalling the definition of $G(h)$ in~\eqref{e:sup-res} then concludes the proof of the theorem.
\enp

\begin{remark}
It also follows from~\eqref{e:tatata} that there exists $\gamma_0>0$ such that if $\limsup_{\eps\to 0^+} \omega(\eps)\log(\eps) <\gamma_0$, then we have $\limsup_{h\to 0^+} \frac{G(h)^2}{\log(h^{-1})^2}  >0$.
\end{remark}

\section{Resolvent estimates and semigroups: abstract setting}
\label{s:abstract-section}
In this section, we first introduce an abstract setting for proving results like Theorem~\ref{t:thm-classiq-A-0}. Second, we put the Klein-Gordon/wave equations in this setting and check the abstract assumptions (using semiclassical analysis), to finally deduce a proof of Theorem~\ref{t:thm-classiq-A-0}. Note that this section somehow presents a non-semiclassical counterpart of Section~\ref{s:semiclassical-resolvent} and is independent from Sections~\ref{s:semiclassical-resolvent},~\ref{s:egorov} and~\ref{s:proof-main}.
Let $\scrH$ be a complex Hilbert space, let 
\begin{itemize}
\item $\scrP_0: D(\scrP_0) \subset \scrH \to \scrH$ be a selfadjoint operator,
\item $\scrB: D(\scrB) \subset \scrH \to \scrH$ be a closed {\em accretive} operator on $\scrH$, in the sense $\Re(\scrB u, u)_{\scrH} \geq 0$ for all $u \in D(\scrB).$
\end{itemize} 
We assume further that $D(\scrP_0) \subset D(\scrB) \subset \scrH$, so that ($\scrB$ has dense domain and), with 
$$
\scrP = \scrP_0 +i  \scrB, 
$$
we have $D(\scrP)= D(\scrP_0)$. Note also that 
\begin{align*}
\Im(\scrP u , u)_{\scrH} = \Re(\scrB u , u)_{\scrH} \geq 0, \quad \text{ for all }u \in D(\scrP_0).
\end{align*}
 We assume that $i\scrP$ generates a semigroup $(e^{it\scrP})_{t\in \R_+}$ (which, given accretivity of $\scrB$ and selfadointness of $\scrP_0$, is equivalent to $i\scrP$ being {\em maximal} dissipative). This is \eg the case by classical perturbative arguments if there is $a \in [0,1)$ and $b>0$ such that $\|\scrB u \|_\scrH \leq a \|\scrP_0 u\|_\scrH+ b \|u\|_\scrH$ for all $u \in D(\scrP_0)$, see \eg~\cite[Chapter~3, Theorem~3.2]{Pazy:83}.
The assumptions on $\scrP$ imply that $\{z\in \C,  \Im(z)<0\} \subset \rho(\scrP)$ and that $(e^{it\scrP})_{t\in \R_+}$ is a contraction semigroup.

\begin{remark}
Note that if one starts with an operator $\scrP$ such that $i\scrP$ is maximal dissipative, there is no uniqueness in the decomposition $\scrP=\scrP_0 +i  \scrB$ with $\scrP_0 $ and $\scrB$ satisfying the above assumptions.
A natural choice might be to take $\P_0=\Re(\P)=\frac{1}{2}(\scrP+\scrP^*)$ and $\scrB =\Im(\scrP) = \frac{1}{2i}(\scrP-\scrP^*)$.
\end{remark}
%

\subsection{Modified resolvent estimates}
\label{s:modified-resolvent}
Now, given a function $\mathsf{M} \in C^0(\R;\R_+^*)$, bounded from below, we define $\mathsf{M}(\scrP_0)$, with domain $D(\mathsf{M}(\scrP_0))$, and $\mathsf{M}(\scrP_0)^{-1}= \left(\frac{1}{\mathsf{M}}\right)(\scrP_0)$ from continuous calculus of selfadjoint operators.
Note that $\mathsf{M}$ is bounded from below so that $\mathsf{M}(\scrP_0)^{-1}$ is a bounded operator. 

For concision, we set $\mathsf{M}_\eps\left(\cdot\right) = \mathsf{M}\left(\frac{\cdot}{1-\eps}\right)$
and hence $\mathsf{M}_\eps\left(\scrP_0\right) = \mathsf{M}\left(\frac{\scrP_0}{1-\eps}\right)$.

 \begin{lemma}
\label{l:res-est-res}
Under the above structure Assumptions on $\scrP$, we assume further that 
\begin{enumerate} 
\item\label{asspt-res-estim} $\R \subset \rho(\scrP)$ and there is an even function $\mathsf{M} \in C^0(\R;\R_+^*)$, nondecreasing on $\R_+$, such that $\nor{(\tau - \scrP)^{-1}}{\L(\scrH)}\leq \mathsf{M}(\tau)$ for all $\tau \in \R$;
\item\label{asspt-reg-B} there exist $\eps \in (0,1),\delta \in (0,1]$ and $C>0$ such for all $u \in D(\scrP_0)$,
$$
 \nor{\langle \scrP_0 \rangle^{-1+\delta} \mathsf{M}_\eps(\scrP_0)^{-1} \scrB u}{\scrH} \leq C \nor{\mathsf{M}_\eps(\scrP_0)^{-1} u}{\scrH} .
$$
\end{enumerate}
Then there exists $C_0 >0$ such that 
\begin{align}
\nor{\mathsf{M}_\eps\left(\scrP_0\right)^{-1}  u }{\scrH} \leq C_0 \nor{(\scrP+ i\alpha - \tau)u }{\scrH} , \quad \text{ for all } u \in D(\scrP_0),\tau \in \R , \alpha \in [0,1]\label{e:bdd-res-op-1}  .
\end{align}
\end{lemma}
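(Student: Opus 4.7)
My plan is to combine a spectral decomposition of $u$ with respect to $\scrP_0$ with the resolvent bound from Assumption~\ref{asspt-res-estim}. The key observation is that the function $g(s) := \mathsf{M}_\eps(s)^{-1} = \mathsf{M}(s/(1-\eps))^{-1}$ satisfies $g(s) \geq 1/\mathsf{M}(\tau)$ when $|s|\leq (1-\eps)|\tau|$ and $g(s) < 1/\mathsf{M}(\tau)$ when $|s|>(1-\eps)|\tau|$, since $\mathsf{M}$ is even and nondecreasing on $\R_+$. Throughout the proof $g(\scrP_0)$ and any spectral projectors of $\scrP_0$ commute with $\scrP_0$.

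\textbf{Step 1 (resolvent estimate off the real axis).} A Neumann series argument applied to $(\scrP-\tau)^{-1}$ gives $\|(\scrP-\tau+i\alpha)^{-1}\|_{\L(\scrH)}\leq 2\mathsf{M}(\tau)$ for $\alpha\in [0,1/(2\mathsf{M}(\tau))]$; for $\alpha>1/(2\mathsf{M}(\tau))$ the contraction of the semigroup $(e^{it\scrP})_{t\geq 0}$ yields $\|(\scrP-\tau+i\alpha)^{-1}\|\leq 1/\alpha\leq 2\mathsf{M}(\tau)$. Hence $\|u\|\leq C\mathsf{M}(\tau)\|(\scrP+i\alpha-\tau)u\|$ uniformly in $\tau\in\R, \alpha\in[0,1]$.

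\textbf{Step 2 (small frequencies).} For $|\tau|\leq \tau_0$ (a constant fixed later) continuity of $\mathsf{M}$ gives $\mathsf{M}(\tau)\leq \sup_{[-\tau_0,\tau_0]}\mathsf{M}<\infty$, so from Step~1 and boundedness of $g(\scrP_0)$:
\begin{equation*}
\|\mathsf{M}_\eps(\scrP_0)^{-1}u\|\leq \|g\|_{L^\infty}\|u\|\leq C\|(\scrP+i\alpha-\tau)u\|.
\end{equation*}

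\textbf{Step 3 (spectral splitting for $|\tau|\geq \tau_0$).} Introduce the orthogonal spectral projectors $\Pi_1=\mathds{1}_{\{|s|\leq(1-\eps)|\tau|\}}(\scrP_0)$ and $\Pi_2=I-\Pi_1$. On the range of $\Pi_2$, $g(\scrP_0)<1/\mathsf{M}(\tau)$, so using Step~1:
\begin{equation*}
\|g(\scrP_0)\Pi_2 u\|\leq \tfrac{1}{\mathsf{M}(\tau)}\|u\|\leq C\|(\scrP+i\alpha-\tau)u\|.
\end{equation*}
On the range of $\Pi_1$, $(\scrP_0-\tau)$ is invertible with $\|\Pi_1(\scrP_0-\tau)^{-1}\|\leq 1/(\eps|\tau|)$, and $\langle \scrP_0\rangle\leq C\langle\tau\rangle$. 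Using that $\Pi_1$, $g(\scrP_0)$ and $(\scrP_0-\tau)^{-1}$ all commute, write
\begin{equation*}
g(\scrP_0)\Pi_1 u=\Pi_1(\scrP_0-\tau)^{-1}g(\scrP_0)\Pi_1\bigl[(\scrP+i\alpha-\tau)u-i(\scrB+\alpha)u\bigr].
\end{equation*}
Assumption~\ref{asspt-reg-B} combined with $\|\Pi_1\langle\scrP_0\rangle^{1-\delta}\|\leq C\langle\tau\rangle^{1-\delta}$ gives $\|g(\scrP_0)\Pi_1\scrB u\|\leq C\langle\tau\rangle^{1-\delta}\|g(\scrP_0)u\|$, so taking norms yields, for $|\tau|\geq\tau_0$,
\begin{equation*}
\|g(\scrP_0)\Pi_1 u\|\leq \tfrac{C}{|\tau|}\|(\scrP+i\alpha-\tau)u\|+\tfrac{C}{\eps}\langle\tau\rangle^{-\delta}\|g(\scrP_0)u\|+\tfrac{C}{\eps|\tau|}\|g(\scrP_0)\Pi_1 u\|.
\end{equation*}

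\textbf{Step 4 (absorption).} Adding the $\Pi_1$ and $\Pi_2$ estimates and using $\|g(\scrP_0)u\|\leq \|g(\scrP_0)\Pi_1 u\|+\|g(\scrP_0)\Pi_2 u\|$, one chooses $\tau_0$ large enough that $\tfrac{C}{\eps}\langle\tau\rangle^{-\delta}\leq 1/4$ and $\tfrac{C}{\eps|\tau|}\leq 1/4$ for $|\tau|\geq\tau_0$, then absorbs the two small terms into the left-hand side. Combining with Step~2 produces the uniform bound~\eqref{e:bdd-res-op-1}.

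\textbf{Main obstacle.} The delicate point is controlling $\scrB u$ on the range of $\Pi_1$: Assumption~\ref{asspt-reg-B} costs a factor $\langle\scrP_0\rangle^{1-\delta}\leq C\langle\tau\rangle^{1-\delta}$, which must be beaten by the gain $1/(\eps|\tau|)$ coming from the ellipticity of $(\scrP_0-\tau)$ on $\Pi_1$. It is precisely the hypothesis $\delta>0$ in Assumption~\ref{asspt-reg-B} that makes $\langle\tau\rangle^{-\delta}\to 0$, allowing absorption for $|\tau|$ large; the rough estimate of Step~2 then disposes of bounded $\tau$.
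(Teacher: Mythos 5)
Your proof is correct and follows essentially the same strategy as the paper's: extend the resolvent bound to $\{\Im z\geq 0\}$ by a Neumann series plus the contraction property, split the frequencies of $\scrP_0$ relative to $\tau$, use the monotonicity of $\mathsf{M}$ (so that $\mathsf{M}_\eps(\scrP_0)^{-1}\leq \mathsf{M}(\tau)^{-1}$) on one piece, use ellipticity of $\scrP_0-\tau$ together with the decomposition $(\scrP_0-\tau)u=(\scrP+i\alpha-\tau)u-i(\scrB+\alpha)u$ and Assumption~\ref{asspt-reg-B} on the other, and absorb the resulting $O(\langle\tau\rangle^{-\delta})$ terms for $|\tau|$ large, treating bounded $\tau$ trivially. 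The differences are only in implementation: you use sharp spectral projectors at $|s|=(1-\eps)|\tau|$, so the high frequencies $|s|\geq(1-\eps)|\tau|$ (including the window around $-\tau$ and $|s|\gg|\tau|$) are routed through the monotonicity of $\mathsf{M}$ and the resolvent bound, whereas the paper's smooth cutoff $\chi(\scrP_0/\tau)$ is localized near $+\tau$ only and everything else goes through the elliptic estimate; and on your low-frequency block you exploit $\|\Pi_1\langle\scrP_0\rangle^{1-\delta}\|\leq C\langle\tau\rangle^{1-\delta}$ rather than the paper's Young-type splitting $(|\scrP_0|+|\tau|)^{-1}\leq 2|\tau|^{-\delta}\langle\scrP_0\rangle^{-1+\delta}$ — both yield exactly the $\langle\tau\rangle^{-\delta}$ gain that makes the absorption work, so nothing essential is changed.
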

Assumption~\ref{asspt-res-estim} is a standard resolvent estimate. Assuming $\R \subset \rho(\scrP)$, the smallest function $\mathsf{M}$ satisfying Assumption~\ref{asspt-res-estim} is 
$$
\mathsf{M}_\scrP (\tau) := \sup_{r\in \R, |r|\leq |\tau|}\nor{(r - \scrP)^{-1}}{\L(\scrH)}, \quad \tau \in \R .
$$
Assumption~\ref{asspt-reg-B} may be formally rephrased as $\langle \scrP_0 \rangle^{-1+\delta} \mathsf{M}_\eps\left(\scrP_0\right)^{-1} \scrB \mathsf{M}_\eps\left(\scrP_0\right)$ extending as a bounded operator $\scrH \to \scrH$. It will be satisfied for instance if  $\scrB$ commutes reasonably well with $\scrP_0$ (and hence with $\mathsf{M}_\eps\left(\scrP_0\right)$), and if $\langle \scrP_0 \rangle^{-1+\delta} \scrB$ is bounded (that is, if $\scrB$ is not too unbounded with respect to $\scrP$).
We will then need to produce conditions under which an upper bound function $\mathsf{M}$ on the resolvent (\ie satisfying Assumption~\ref{asspt-res-estim}) also satisfies Assumption~\ref{asspt-reg-B}. This will be achieved in Section~\ref{s:Mand-asspt-2} below.


\bnp
Let us first check that Assumption~\ref{asspt-res-estim} implies 
\begin{align}
\label{e:resolvent-half-plane}
\nor{(\scrP+i \alpha - \tau)^{-1}}{\L(\scrH)} \leq 2 \mathsf{M}(\tau) \quad \text{for all } \tau \in \R, \alpha \geq 0 .
\end{align}
Indeed, from the resolvent identity, we have for all $\alpha \geq 0$
\begin{align*}
\nor{(\scrP+i \alpha - \tau)^{-1}}{\L(\scrH)}& \leq \nor{(\scrP-\tau)^{-1}}{\L(\scrH)} + \alpha\nor{(\scrP+i \alpha - \tau)^{-1}}{\L(\scrH)}  \nor{(\scrP-\tau)^{-1}}{\L(\scrH)}\\
& \leq \mathsf{M}(\tau) \left( 1+ \alpha \nor{(\scrP+i \alpha - \tau)^{-1}}{\L(\scrH)} \right) .
\end{align*}
As a consequence, we obtain
\begin{align*}
\nor{(\scrP+i \alpha - \tau)^{-1}}{\L(\scrH)} \leq \frac{\mathsf{M}(\tau)}{1-\alpha \mathsf{M}(\tau)} , \quad \text{ for all }\tau \in \R , \alpha \in \left[0, \mathsf{M}(\tau)^{-1}\right) ,
\end{align*}
and in particular, 
\begin{align*}
\nor{(\scrP+i \alpha - \tau)^{-1}}{\L(\scrH)} \leq 2 \mathsf{M}(\tau), \quad \text{ for all }\tau \in \R , \alpha \in \left[0, (2\mathsf{M}(\tau))^{-1}\right] .
\end{align*}
Since $i\scrP$ generates a contraction semigroup, we also have $\nor{(\scrP+i \alpha - \tau)^{-1}}{\L(\scrH)}  \leq \frac{1}{\alpha} \leq 2 \mathsf{M}(\tau)$ for all $\alpha \geq (2\mathsf{M}(\tau))^{-1}$ and~\eqref{e:resolvent-half-plane} follows from the last two inequalities.


Now let $\eps\in (0,1)$ be given by Assumption~\ref{asspt-reg-B} and define $\chi \in C^\infty_c(\R; [0,1])$ such that $\supp\chi \subset (1-\eps, 1+\eps)$ and $\chi= 1$ on $(1-\eps/2, 1+\eps/2)$. We write 
\begin{align}
\label{e:estimMA_0}
\nor{\mathsf{M}_\eps\left(\scrP_0\right)^{-1} u }{\scrH} \leq\nor{\chi\left(\scrP_0/\tau\right)\mathsf{M}_\eps\left(\scrP_0\right)^{-1}u}{\scrH}+\nor{\left(1-\chi\right)\left(\scrP_0/\tau\right)\mathsf{M}_\eps\left(\scrP_0\right)^{-1}u}{\scrH}.
\end{align}
Concerning the first term, on $\supp\chi(s/ \tau)$, we have $|s/\tau|=s/\tau\geq1-\eps>0$, hence $\frac{|s|}{1-\eps}\geq |\tau|$, and thus $\mathsf{M}(\tau)=\mathsf{M}(|\tau|)  \leq \mathsf{M}(\frac{|s|}{1-\eps}) = \mathsf{M}(\frac{s}{1-\eps})$ since $\mathsf{M}$ is even and non-decreasing on $\R_+$. Inverting this inequality implies 
$\mathsf{M}(\frac{\scrP_0}{1-\eps})^{-1}\chi\left(\scrP_0/\tau\right) \leq \mathsf{M}(\tau)^{-1}\chi\left(\scrP_0/\tau\right)$ in  the sense of selfadjoint operators. As a consequence, we have
$$
\nor{\mathsf{M}_\eps(\scrP_0)^{-1}\chi\left(\scrP_0/\tau\right) u}{\scrH} \leq \mathsf{M}(\tau)^{-1} \nor{\chi\left(\scrP_0/\tau\right)u}{\scrH} \leq \mathsf{M}(\tau)^{-1} \nor{u}{\scrH}.
$$
From~\eqref{e:resolvent-half-plane}, we deduce $\nor{u}{\scrH}\leq 2\mathsf{M}(\tau) \nor{(\scrP+i\alpha-\tau)u}{\scrH}$ for all $u \in D(\scrP_0)$, $\tau \in \R, \alpha \geq 0$, which, combined with the previous line, yields
\begin{equation}
\label{e:MchiA_0est}
\nor{\mathsf{M}_\eps(\scrP_0)^{-1}\chi\left(\scrP_0/\tau\right) u}{\scrH} \leq 2\nor{(\scrP+ i \alpha-\tau )u}{\scrH}. 
\end{equation}
We now estimate the second term in~\eqref{e:estimMA_0}. To this aim, notice first that there is a constant $C_\eps>0$ such that
$$
C_\eps |z-1| \geq |z| + 1 ,\quad \text{ for all } z \in \R \setminus [1-\eps/2 , 1+\eps/2] .
$$
Recalling that $\chi=1$ on $(1-\eps/2, 1+\eps/2)$ and $\chi \geq 0$, this implies 
$$
1-\chi(z) \leq C_\eps \frac{|z-1|}{|z|+1} (1-\chi(z))  ,\quad \text{ for all } z \in \R, 
$$
and thus 
$$
1-\chi(s/\tau) \leq C_\eps\frac{|s-\tau|}{|s| + |\tau|}\left( 1-\chi(s/\tau)  \right) , \quad s,\tau \in \R , \tau\neq 0.
$$
As a consequence, we have, for all $|\tau| \geq 1$,
\begin{align*}
\nor{\left(1-\chi\right)\left(\scrP_0/\tau\right)\mathsf{M}_\eps(\scrP_0)^{-1}u}{\scrH} 
& \leq C_\eps \nor{\left(1-\chi\right)\left(\scrP_0/\tau\right)(\scrP_0-\tau)(|\scrP_0| + |\tau| )^{-1}\mathsf{M}_\eps(\scrP_0)^{-1}u}{\scrH}\\
& \leq C_\eps \nor{(|\scrP_0| + |\tau| )^{-1}\mathsf{M}_\eps(\scrP_0)^{-1}(\scrP_0-\tau)u}{\scrH} .
\end{align*}
Writing $(\scrP_0-\tau)u = (\scrP+i \alpha-\tau)u - i(\scrB+\alpha) u$, and using the boundedness of $(|\scrP_0| + |\tau|)^{-1}$ for $|\tau|\geq 1$ and $\mathsf{M}_\eps(\scrP_0)^{-1}$, this implies 
\begin{align*}
\nor{\left(1-\chi\right)\left(\scrP_0/\tau\right)\mathsf{M}_\eps(\scrP_0)^{-1}u}{\scrH} 
& \leq C_\eps \nor{(\scrP+i\alpha-\tau) u}{\scrH}
 +  C_\eps \nor{(|\scrP_0| + |\tau| )^{-1}\mathsf{M}_\eps(\scrP_0)^{-1} \scrB u}{\scrH} +C_\eps\nor{(|\scrP_0| + |\tau| )^{-1}\mathsf{M}_\eps(\scrP_0)^{-1} u}{\scrH} ,
\end{align*}
for $|\alpha| \leq 1$.
Now, from the Young inequality, we remark that $|\tau|^\delta \langle s \rangle^{1-\delta} \leq |\tau| + \langle s \rangle \leq 2(|\tau|+|s|)$ for $|\tau|\geq 1$. Hence using that $(|s| + |\tau| )^{-1} \leq 2 |\tau|^{-\delta}\langle s \rangle^{-1+\delta}$, we obtain, for all $\delta \in [0,1]$,
\begin{align*}
\nor{\left(1-\chi\right)\left(\scrP_0/\tau\right)\mathsf{M}_\eps(\scrP_0)^{-1}u}{\scrH} 
& \leq C_\eps \nor{(\scrP+i\alpha-\tau) u}{\scrH}
 +  2C_\eps |\tau|^{-\delta} \nor{\langle \scrP_0 \rangle^{-1+\delta} \mathsf{M}_\eps(\scrP_0)^{-1} \scrB u}{\scrH} +C_\eps |\tau|^{-\delta}\nor{\mathsf{M}_\eps(\scrP_0)^{-1} u}{\scrH}\\
 &  \leq C_\eps \nor{(\scrP+i\alpha-\tau) u}{\scrH}
 +   C_\eps' |\tau|^{-\delta} \nor{\mathsf{M}_\eps(\scrP_0)^{-1} u}{\scrH} ,
\end{align*}
for $\tau \geq 1,|\alpha| \leq 1$, where, in the last line, we have used Assumption~\ref{asspt-reg-B}.

Combining the last two lines with~\eqref{e:estimMA_0}-\eqref{e:MchiA_0est}, we have obtained, for all $|\tau|\geq 1, \alpha \in [0,1]$,  
$$
\nor{\mathsf{M}_\eps(\scrP_0)^{-1} u }{\scrH} \leq (1+ C_\eps)  \nor{(\scrP+i\alpha-\tau) u}{\scrH}
 +  C_\eps'' |\tau|^{-\delta} \nor{\mathsf{M}_\eps(\scrP_0)^{-1} u}{\scrH}.
$$
which proves the existence of $\tau_0\geq1$ large such that~\eqref{e:bdd-res-op-1} holds for all $|\tau| \geq \tau_0$.

For $|\tau|\leq \tau_0$, we use the boundedness of $\mathsf{M}_\eps(\scrP_0)^{-1}$ together with~\eqref{e:resolvent-half-plane} to write
$$
\nor{\mathsf{M}_\eps(\scrP_0)^{-1}  u }{\scrH} \leq C \nor{u}{\scrH} \leq C \mathsf{M}(\tau_0) \nor{(\scrP+i\alpha-\tau) u}{\scrH} \quad \text{ for all }|\tau| \leq \tau_0 , \alpha \geq0.
$$
This concludes the proof of~\eqref{e:bdd-res-op-1} for all $\tau \in \R$.
\enp

\subsection{From resolvents to integrated semigroup}
\label{s:from-res-to-integrated}

The next result is essentially that of~\cite[Theorem~4.7]{BCT:16} or~\cite[Theorem~2.3]{Miller:12} in the present context. It is also very close to~\cite{BZ:04} and some proofs of the Gearhart-Huang-Pr\"uss-Greiner Theorem, see \eg~\cite[Chapter~V.1]{EN:book} or~\cite{HS:10}. We formulate the result with an abstract operator $\scrC$; we will only use it with $\scrC = \mathsf{M}_\eps\left(\scrP_0\right)^{-1}$ through Lemma~\ref{l:res-est-res} (for which Assumption~\eqref{e:bdd-res-op-1-bis} below is precisely~\eqref{e:bdd-res-op-1}).

\begin{lemma}
\label{l:Psi-T-classical}
Let $\scrP$ be such that $\R \subset \rho(\scrP)$, $i\scrP$ generates a contraction semigroup $(e^{it\scrP})_{t\in\R_+}$, and $\scrC$ be a bounded operator. Assume there is $\alpha_0>0$ such that 
\begin{align}
 \label{e:bdd-res-op-1-bis}
\nor{\scrC u }{\scrH} \leq C_0 \nor{(\scrP- \tau +i \alpha)u }{\scrH} , \quad \text{ for all } u \in D(\scrP_0),\tau \in \R, \alpha \in [0,\alpha_0] .
\end{align}
Then, for all $\Psi \in H^1_{\loc}(\R)$ such that $\supp(\Psi) \subset [0,+\infty)$ and $\Psi' \in L^2(\R)$, for all $T>0$ and all $u \in \scrH$, we have
\begin{align}
\label{e:Psi-T-classical}
\frac{1}{T} \int_\R \Psi \left(\frac{t}{T}\right)^2 \nor{\scrC e^{it\scrP}u}{\scrH}^2 dt \leq \frac{C_0^2}{T^2}\nor{\Psi'}{L^2(\R)}^2 \nor{u}{\scrH}^2 .
\end{align}

\end{lemma}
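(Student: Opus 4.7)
My plan is to follow the ``Plancherel-in-time'' argument used in the proof of Theorem~\ref{t:resolvent-implies-long-time}, now without semiclassical rescaling: the function $\Psi$ plays the role of $\psi$ there, the bounded operator $\scrC$ replaces the spectral projector $\Pi_\eps$, and hypothesis~\eqref{e:bdd-res-op-1-bis} taken at $\alpha=0$ plays the role of the resolvent bound~\eqref{e:hyp-resP}. I will first prove the inequality under the auxiliary assumptions $\Psi\in H^1_{\comp}(\R)$ with $\supp\Psi\subset(0,+\infty)$ and $u\in D(\scrP_0)$, and address the extension afterwards.

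Set $v(t):=e^{it\scrP}u$, $w(t):=\Psi(t/T)\,v(t)$, and $g(t):=\Psi'(t/T)\,v(t)$, all extended by zero for $t<0$ (consistent with $\supp\Psi\subset[0,+\infty)$). Under the auxiliary assumptions, $v\in C^1(\R_+;\scrH)\cap C^0(\R_+;D(\scrP_0))$ satisfies $\partial_t v = i\scrP v$, and $w\in H^1_{\comp}(\R;\scrH)$ solves $(\partial_t - i\scrP)w = \tfrac{1}{T}g$ in $L^2_{\comp}(\R;\scrH)$; taking the unitarily-normalized Fourier transform in $t$ produces $(\scrP-\tau)\hat w(\tau)=\tfrac{i}{T}\hat g(\tau)$ for a.e.\ $\tau\in\R$, with $\hat w(\tau)\in D(\scrP_0)$. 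Applying hypothesis~\eqref{e:bdd-res-op-1-bis} with $\alpha=0$ pointwise in $\tau$, using that $\scrC$ is bounded (so $\widehat{\scrC w}=\scrC\hat w$), the vector-valued Plancherel theorem, and the contraction bound $\nor{g(t)}{\scrH}\leq|\Psi'(t/T)|\nor{u}{\scrH}$, one gets
\begin{equation*}
\int_\R\nor{\scrC w(t)}{\scrH}^2 dt = \int_\R\nor{\scrC\hat w(\tau)}{\scrH}^2 d\tau \leq \tfrac{C_0^2}{T^2}\int_\R\nor{\hat g(\tau)}{\scrH}^2 d\tau = \tfrac{C_0^2}{T^2}\nor{g}{L^2(\R;\scrH)}^2 \leq \tfrac{C_0^2}{T}\nor{\Psi'}{L^2(\R)}^2\nor{u}{\scrH}^2,
\end{equation*}
which, together with $\nor{\scrC w(t)}{\scrH}^2 = |\Psi(t/T)|^2\nor{\scrC e^{it\scrP}u}{\scrH}^2$, yields~\eqref{e:Psi-T-classical} after dividing by $T$.

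The extension from $u\in D(\scrP_0)$ to $u\in\scrH$ is immediate by density of $D(\scrP_0)$ in $\scrH$ and boundedness of $\scrC$. The main obstacle is the extension to general $\Psi$: a function with only $\Psi'\in L^2$ and $\supp\Psi\subset[0,+\infty)$ may grow like $\sqrt{t}$ (as in $\Psi(t)=\min\{t,1\}$, which lies in $L^\infty\setminus L^2$), so that $w\notin L^2(\R;\scrH)$ and its Fourier transform is only a tempered distribution. The cleanest way to handle this is to recast the argument as a bound on a Fourier multiplier acting on $g$: using $\Psi(0)=0$, the identity $\Psi(t/T)=\tfrac{1}{T}\int_0^t\Psi'(s/T)ds$, and $e^{i(t-s)\scrP}e^{is\scrP}=e^{it\scrP}$, one obtains directly the Duhamel-type representation $w(t)=\tfrac{1}{T}\int_0^t e^{i(t-s)\scrP}g(s)\,ds$; the map $g\mapsto\scrC(k\ast g)$ with kernel $k(t):=\mathds{1}_{[0,+\infty)}(t)\,e^{it\scrP}$ then has Fourier symbol $i\scrC(\scrP-\tau+i0)^{-1}$ (as the $\alpha\to 0^+$ limit of $i\scrC(\scrP-\tau+i\alpha)^{-1}$), which~\eqref{e:bdd-res-op-1-bis} bounds by $C_0$ in $\L(\scrH)$ uniformly in $\tau$, so vector-valued Plancherel yields the same estimate on $\nor{\scrC w}{L^2(\R;\scrH)}$ with the sharp constant $C_0^2$.
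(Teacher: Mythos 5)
Your overall strategy coincides with the paper's: a ``Plancherel-in-time'' argument converting \eqref{e:bdd-res-op-1-bis} into an $L^2_t$ bound, and your first stage (compactly supported $\Psi$, $u\in D(\scrP_0)$, hypothesis used at $\alpha=0$) is correct, as is the density argument in $u$. The weak point is the extension to general $\Psi$, which is the only delicate part of the lemma and the reason the hypothesis is stated for all $\alpha\in[0,\alpha_0]$ rather than only $\alpha=0$. Your Duhamel representation $w=\tfrac1T k\ast g$ with $k(t)=\mathds{1}_{[0,+\infty)}(t)e^{it\scrP}$ is correct, but the concluding sentence --- that $g\mapsto \scrC(k\ast g)$ ``has Fourier symbol $i\scrC(\scrP-\tau+i0)^{-1}$ \dots so vector-valued Plancherel yields the estimate'' --- assumes what must be proved: $k$ is not integrable in $\L(\scrH)$-norm, $k\ast g$ is not yet known to lie in $L^2(\R;\scrH)$, and Plancherel cannot be invoked before one knows that $\scrC(k\ast g)\in L^2(\R;\scrH)$ with Fourier transform equal to the product of the symbol with $\hat g$; that identity \emph{is} the estimate, so as written the last step is circular.

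To close it you must actually run the $\alpha$-regularization you only name in parentheses: for $\alpha\in(0,\alpha_0]$ set $k_\alpha(t)=\mathds{1}_{[0,+\infty)}(t)e^{-\alpha t}e^{it\scrP}$; since $(e^{it\scrP})_{t\ge0}$ is a contraction semigroup, $k_\alpha$ decays exponentially and its Fourier transform is $i(\scrP-\tau+i\alpha)^{-1}$ (Laplace representation of the resolvent), so \eqref{e:bdd-res-op-1-bis} with $\alpha>0$ gives the uniform multiplier bound and hence $\nor{\scrC(k_\alpha\ast g)}{L^2(\R;\scrH)}\le C_0\nor{g}{L^2(\R;\scrH)}$; then $(k_\alpha\ast g)(t)\to(k\ast g)(t)$ for each fixed $t$ by dominated convergence, and Fatou transfers the bound to $k\ast g$, giving \eqref{e:Psi-T-classical}. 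This is exactly the device of the paper's proof, which introduces the damping at the level of the trajectory from the outset, $v_\alpha(t)=e^{-\alpha t}e^{it\scrP}u$, so that $w_\alpha=\psi v_\alpha$ lies in $L^2(\R;\scrH)$ for every admissible $\Psi$, applies \eqref{e:bdd-res-op-1-bis} with $\alpha\in(0,\alpha_0]$ on the Fourier side, and only lets $\alpha\to0^+$ at the very end; that single-pass argument makes your separate compactly supported stage (and any use of the hypothesis at $\alpha=0$) unnecessary. So the missing piece is short and routine, but it is the crux of the lemma and needs to be written out.
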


Note that the choice made in~\cite[Theorem~4.7]{BCT:16} is to take $\Psi(t) =  \mathds{1}_{\R_+}(t) \min(t,1)$, so that $\Psi' = \mathds{1}_{[0,1]}$. In this case,~\eqref{e:Psi-T-classical} reads
$$
\frac{1}{T} \int_0^T t^2\nor{\scrC e^{it\scrP}u}{\scrH}^2 dt 
+ T  \int_T^{\infty} \nor{\scrC e^{it\scrP}u}{\scrH}^2 dt\leq C_0^2 \nor{u}{\scrH}^2 , \quad \text{ for all }T >0, u \in \scrH .
$$

Another way of formulating the result of Lemma~\ref{l:Psi-T-classical} is simply to write 
 \begin{align*}
\nor{ \psi \scrC e^{it\scrP}}{L^2(\R;\L(\scrH))} 
\leq \nor{\psi'}{L^2(\R)}\sup_{\tau \in \R, \alpha \in [0,\alpha_0]}\nor{\scrC(\scrP- \tau+i\alpha)^{-1}}{\L(\scrH)} ,
\end{align*}
 for all $\psi \in H^1_{\loc}(\R)$ such that $\supp(\psi) \subset [0,+\infty)$ and $\psi' \in L^2(\R)$.

\bnp[Proof of Lemma~\ref{l:Psi-T-classical}]
We first take any test function $\psi \in H^1_{\loc}(\R)\subset C^0(\R)$ such that $\supp(\psi) \subset [0,\infty)$ and $\psi' \in L^2(\R)$. 
Next, taking $\alpha \in (0,\alpha_0]$, $u \in D(\scrP)$, we set $v_\alpha(t) := e^{-\alpha t}e^{it\scrP}u = e^{it(\scrP + i\alpha)}u \in C^0(\R^+; D(\scrP))\cap C^1(\R^+;\scrH)$ defined for $t\geq0$, and $w_\alpha(t) = \psi(t)v_\alpha(t)$, continued by $0$ for $t\leq 0$.
We have $\| w_\alpha(t)\|_{\scrH} \leq |\psi(t)|e^{-\alpha t}\|u\|_{\scrH}$ and $\| \scrP w_\alpha(t)\|_{\scrH} \leq |\psi(t)|e^{-\alpha t}\|\scrP u\|_{\scrH}$ for all $t\in \R$, so that $w_\alpha \in C^0(\R; D(\scrP))\cap C^1(\R;\scrH) \cap L^p(\R ; D(\scrP)) \cap W^{1,p}(\R ; \scrH) $ for all $p \in [1,+\infty]$, together with 
$$
\left(\frac{\d_t}{i} - (\scrP+i\alpha) \right)v_\alpha(t)=0, \quad \text{ for } t\in \R_+^* , 
$$
whence
\begin{equation}
\label{e:eq-trunk-time-A}
\left(\frac{\d_t}{i} - (\scrP+i\alpha) \right)w_\alpha(t)= \frac{1}{i}\psi'(t) v_\alpha (t) \quad \text{ for }t\in \R . 
\end{equation}
We take the Fourier transform in time of~\eqref{e:eq-trunk-time-A} to obtain 
\begin{equation}
\left(\tau - \scrP -i\alpha \right) \hat{w}_\alpha(\tau)= \frac{1}{i} \F(\psi' v_\alpha)(\tau)  . 
\end{equation}
Now, applying~\eqref{e:bdd-res-op-1-bis} to $\hat{w}_\alpha(\tau) \in D(\scrP)$, we deduce
$$
\nor{\scrC \hat{w}_\alpha(\tau) }{\scrH} \leq C_0 \nor{(\scrP+i\alpha-\tau) \hat{w}_\alpha(\tau) }{\scrH} = C_0\nor{\F(\psi' v_\alpha)(\tau) }{\scrH} , \quad \text{ for all } \tau \in \R 
$$
With the Plancherel theorem, and the fact that $(e^{it\scrP})_{t\in\R_+}$ is a contraction semigroup, we have, for all $\alpha \in [0,\alpha_0]$,  
$$
\int_\R \nor{\F(\psi' v_\alpha)(\tau) }{\scrH}^2  d\tau = \int_\R \nor{\psi'(t) v_\alpha(t)}{\scrH}^2  dt = \int_\R |\psi'(t)|^2 e^{-2\alpha t}\nor{e^{it\scrP} u}{\scrH}^2 dt \leq \nor{\psi'}{L^2(\R)}^2 \nor{u}{\scrH}^2 .
$$
The Plancherel theorem also yields 
$$
\int_\R \nor{\scrC \hat{w}_\alpha(\tau) }{\scrH}^2 d\tau =  \int_\R \nor{\scrC w_\alpha(t) }{\scrH}^2 dt .
$$
Together with the previous two inequalities, this implies for all $\alpha \in [0,\alpha_0]$, 
$$
\int_\R  |\psi(t)|^2e^{-2\alpha t}\|  \scrC e^{it\scrP}u \|_{\scrH}^2 dt  =  \int_\R \nor{\scrC w_\alpha(t) }{\scrH}^2 dt  \leq C_0^2 \nor{\psi'}{L^2(\R)}^2 \nor{u}{\scrH}^2 .
$$
Letting $\alpha\to 0^+$, we deduce that $t\mapsto \psi(t)\scrC e^{it\scrP}u \in L^2(\R ; \scrH)$ with 
$$
 \int_\R |\psi(t)|^2 \nor{\scrC e^{it\scrP}u}{\scrH}^2 dt \leq C_0^2 \nor{\psi'}{L^2(\R)}^2 \nor{u}{\scrH}^2 .
$$
Note that this is true for any function $\psi$ such that $\psi' \in L^2(\R)$ and $\supp(\psi)\subset \R_+$. 
Choosing $\psi(t)= \Psi \left(\frac{t}{T}\right)$ for $T>0$ and $\Psi \in H^1_{\comp}(\R)$ with $\supp(\Psi)\subset(0,\infty)$, and noticing that 
$$
\nor{\psi'}{L^2(\R)}^2 = \frac{1}{T^2}\int_\R \left| \Psi' \left(\frac{t}{T}\right) \right|^2 dt = \frac{1}{T} \nor{\Psi'}{L^2(\R)}^2 ,
$$
we obtain~\eqref{e:Psi-T-classical}, which concludes the proof of the lemma for $u \in D(\scrP)$. The result for $u \in \scrH$ follows by density.
\enp

\subsection{Admissible $\mathsf{M}$ and Assumption~\ref{asspt-reg-B} of Lemma~\ref{l:res-est-res}}
\label{s:Mand-asspt-2}
In this section, we describe sufficient conditions on the operators $\scrP_0,\scrB$ and the function $\mathsf{M}$ for Assumption~\ref{asspt-reg-B} in Lemma~\ref{l:res-est-res} to be satisfied.
We first formulate them in an abstract way. To this aim, we introduce a dyadic partition of unity:
There exist two even functions $\phi , \varphi \in C^\infty(\R ; [0,1])$ such that $\supp(\phi)\subset (-1,1)$, $\supp(\varphi)\subset (-2,-1/2)\cup (1/2,2)$ and 
\begin{align*}
\phi(s) + \sum_{j\in \N} \varphi(2^{-j} s) = 1,  \text{ for all } s\in \R . 
\end{align*}
We also consider another even function $\tilde{\varphi} \in C^\infty(\R ; [0,1])$ satisfying $\tilde{\varphi}=1$ on a neighborhood of $\supp(\varphi)$ and $\supp(\tilde{\varphi})\subset (-2,-1/2)\cup (1/2,2)$.
The parameter $\eps$ does not play any role here, and we thus write $\mathsf{M}_\eps = \mathsf{M}(\frac{\cdot}{1-\eps})$. 

\begin{proposition}
\label{p:dyadic}
Let $\delta\in(0,1]$, $\scrP_0 : \D(\scrP_0)\subset \scrH \to \scrH$ be a selfadjoint operator, $\scrB: \D(\scrB)\subset \scrH \to \scrH$ such that $D(\scrP_0)\subset D(\scrB)$ with continuous embedding, and $\mathsf{M}_\eps \in C^0(\R;\R_+^*)$ be an even function  which is nondecreasing on $\R_+$.  Assume that 
\begin{align}
\label{e:sum-M-N}
& \sum_{k\in \N}  \mathsf{M}_\eps (2^{k+1}) \nor{(\id - \tilde{\varphi}(2^{-k} \scrP_0))\scrB \varphi(2^{-k} \scrP_0)}{\L(\scrH)}  < +\infty  ,\\
\label{e:diag-terms}
&  \sum_{k\in \N}  2^{-(1-\delta)k} \mathsf{M}_\eps (2^{k-1})^{-1}\mathsf{M}_\eps (2^{k+1})  \nor{\tilde{\varphi}(2^{-k} \scrP_0) \scrB \varphi(2^{-k} \scrP_0) }{\L(\scrH)} <+\infty  .
\end{align}
Then the operator $\langle\scrP_0\rangle^{-1+\delta}\mathsf{M}_\eps(\scrP_0)^{-1}\scrB\mathsf{M}_\eps(\scrP_0)$ extends from $\left\{\chi(\scrP_0)w,\chi\in C^\infty_c(\R),w\in\scrH\right\}$ as a bounded operator $\scrH \to \scrH$ (and hence Assumption~\ref{asspt-reg-B} in Lemma~\ref{l:res-est-res} is satisfied).
\end{proposition}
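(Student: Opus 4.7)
The plan is to exploit the given dyadic partition of unity to decompose the identity on the right of
\[
A := \langle\scrP_0\rangle^{-1+\delta}\mathsf{M}_\eps(\scrP_0)^{-1}\scrB\mathsf{M}_\eps(\scrP_0),
\]
and to reduce its boundedness, via functional calculus for $\scrP_0$, to exactly the two summability hypotheses \eqref{e:sum-M-N} and \eqref{e:diag-terms}.

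I first fix $u=\chi(\scrP_0)w$ with $\chi\in C^\infty_c(\R)$, so that the sum $u = \phi(\scrP_0)u + \sum_{k\in\N}\varphi(2^{-k}\scrP_0) u$ (coming from the partition of unity) is in fact finite. Since $\mathsf{M}_\eps(\scrP_0)$ commutes with each $\phi(\scrP_0)$ and $\varphi(2^{-k}\scrP_0)$, this gives $Au = A_{-1}u + \sum_{k\ge 0}A_k u$ with
\[
A_k := \langle\scrP_0\rangle^{-1+\delta}\mathsf{M}_\eps(\scrP_0)^{-1}\scrB\mathsf{M}_\eps(\scrP_0)\varphi(2^{-k}\scrP_0),\quad k\ge 0,
\]
and $A_{-1}$ defined analogously with $\phi(\scrP_0)$ in place of $\varphi(2^{-k}\scrP_0)$. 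For each $k\ge 0$, I then split $A_k = B_k + C_k$ by inserting $\id = \tilde\varphi(2^{-k}\scrP_0) + (\id-\tilde\varphi(2^{-k}\scrP_0))$ immediately to the left of $\scrB$.

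The essential functional-calculus identity is $\mathsf{M}_\eps(\scrP_0)\varphi(2^{-k}\scrP_0) = \varphi(2^{-k}\scrP_0)[\mathsf{M}_\eps\tilde\varphi(2^{-k}\cdot)](\scrP_0)$, a consequence of $\tilde\varphi(2^{-k}\cdot)\varphi(2^{-k}\cdot)=\varphi(2^{-k}\cdot)$. Using it, I rewrite, for any $u\in\scrH$,
\[
B_k u = \langle\scrP_0\rangle^{-1+\delta}\mathsf{M}_\eps(\scrP_0)^{-1}\,\bigl[\tilde\varphi(2^{-k}\scrP_0)\scrB\varphi(2^{-k}\scrP_0)\bigr]\, w_k,\qquad w_k := [\mathsf{M}_\eps\tilde\varphi(2^{-k}\cdot)](\scrP_0)u,
\]
and similarly for $C_k$ with $\tilde\varphi(2^{-k}\scrP_0)$ replaced by $\id-\tilde\varphi(2^{-k}\scrP_0)$ in the central bracket. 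By the spectral theorem, $\|w_k\|_\scrH \le \mathsf{M}_\eps(2^{k+1})\|u\|_\scrH$, using that $\supp\tilde\varphi(2^{-k}\cdot)\subset\{|s|\le 2^{k+1}\}$, $\mathsf{M}_\eps$ is even and nondecreasing on $\R_+$. The central bracket is precisely the operator appearing in \eqref{e:diag-terms} (respectively \eqref{e:sum-M-N}); moreover, applying that bracket to $w_k$ produces a vector lying in $\mathrm{Range}(\tilde\varphi(2^{-k}\scrP_0))\subset \mathds{1}_{\supp\tilde\varphi(2^{-k}\cdot)}(\scrP_0)\scrH$ (respectively in $\mathrm{Range}(\id-\tilde\varphi(2^{-k}\scrP_0))$), on which $\langle\scrP_0\rangle^{-1+\delta}\mathsf{M}_\eps(\scrP_0)^{-1}$ acts with norm controlled, again by the spectral theorem, by $C\,2^{-k(1-\delta)}\mathsf{M}_\eps(2^{k-1})^{-1}$ (respectively by $\mathsf{M}_\eps(0)^{-1}$, using only $\mathsf{M}_\eps\ge\mathsf{M}_\eps(0)>0$). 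Chaining these three estimates yields
\begin{align*}
\|B_k\|_{\L(\scrH)} &\le C\,2^{-k(1-\delta)}\mathsf{M}_\eps(2^{k-1})^{-1}\mathsf{M}_\eps(2^{k+1})\,\bigl\|\tilde\varphi(2^{-k}\scrP_0)\scrB\varphi(2^{-k}\scrP_0)\bigr\|_{\L(\scrH)},\\
\|C_k\|_{\L(\scrH)} &\le \mathsf{M}_\eps(0)^{-1}\mathsf{M}_\eps(2^{k+1})\,\bigl\|(\id-\tilde\varphi(2^{-k}\scrP_0))\scrB\varphi(2^{-k}\scrP_0)\bigr\|_{\L(\scrH)},
\end{align*}
so that $\sum_k\|B_k\|_{\L(\scrH)}$ and $\sum_k\|C_k\|_{\L(\scrH)}$ are finite by \eqref{e:diag-terms} and \eqref{e:sum-M-N} respectively.

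Finally, $A_{-1}$ is handled directly: $[\mathsf{M}_\eps\phi](\scrP_0)$ is bounded on $\scrH$ with range in the bounded spectral subspace of $\scrP_0$ associated to $[-1,1]$, which is contained in $D(\scrP_0)$ with continuous embedding; the hypothesis $D(\scrP_0)\hookrightarrow D(\scrB)$ then makes $\scrB[\mathsf{M}_\eps\phi](\scrP_0)$ bounded on $\scrH$, and composition with $\langle\scrP_0\rangle^{-1+\delta}\mathsf{M}_\eps(\scrP_0)^{-1}$ (bounded by $\mathsf{M}_\eps(0)^{-1}$) gives $\|A_{-1}\|_{\L(\scrH)}<\infty$. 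Adding the three contributions produces a uniform bound $\|Au\|_\scrH \le C\|u\|_\scrH$ on the dense subspace $\{\chi(\scrP_0)w:\chi\in C^\infty_c(\R),\,w\in\scrH\}$, so $A$ extends uniquely to a bounded operator on $\scrH$. The one slightly delicate step, on which the matching of hypotheses hinges, is the functional-calculus commutation that exposes the exact left--right structure $\tilde\varphi(2^{-k}\scrP_0)\scrB\varphi(2^{-k}\scrP_0)$ (and its complement) as the central factor; once this is in place, the analytic part reduces to routine spectral-theorem $L^\infty$ estimates.
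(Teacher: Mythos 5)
Your proof is correct and follows essentially the same route as the paper: the dyadic splitting of $\mathsf{M}_\eps(\scrP_0)$ on the right of $\scrB$, the insertion of $\tilde\varphi(2^{-k}\scrP_0)$ versus $\id-\tilde\varphi(2^{-k}\scrP_0)$ on the left of $\scrB$, and spectral-theorem bounds that match exactly the two summability hypotheses. The only (harmless) difference is presentational: you work on vectors $u=\chi(\scrP_0)w$ with a finite sum and make the factorization $\mathsf{M}_\eps(\scrP_0)\varphi(2^{-k}\scrP_0)=\varphi(2^{-k}\scrP_0)[\mathsf{M}_\eps\tilde\varphi(2^{-k}\cdot)](\scrP_0)$ explicit, whereas the paper argues directly on operator norms using monotonicity of $\mathsf{M}_\eps$.
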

Note that the continuity in the embedding $D(\scrP_0)\subset D(\scrB)$ implies that 
\begin{align}
\label{e:bdd-phi}\scrB\chi(\scrP_0)\in\L(\scrH),\quad \text{ for all }t\chi\in C^\infty_c(\R),
\end{align} so that each term in the sums~\eqref{e:sum-M-N}--\eqref{e:diag-terms} makes sense.
The following is a direct corollary of Proposition~\ref{p:dyadic}.
\begin{corollary}
\label{c:dyadic}
Assume $\B$ is bounded and there are $N,C,\lambda_0>0$ and $\eta\in(0,1]$ such that 
\begin{align}
\label{e:sum-M-N-bis}
& \mathsf{M} (\lambda) \leq C \lambda^{N-\eta} , \qquad  \nor{(\id - \tilde{\varphi}(\lambda^{-1}\scrP_0))\scrB \varphi(\lambda^{-1}\scrP_0)}{\L(\scrH)} \leq C\lambda^{-N} , \quad \text{ for all } \lambda \geq \lambda_0 , \\
\label{e:diag-terms-bis}
& \mathsf{M} (4\lambda) \leq C \lambda^{1-\eta} \mathsf{M} (\lambda) ,\quad \text{ for all } \lambda \geq \lambda_0  .
\end{align}
Then, for all $\eps \in [0,1)$ and for all $\delta\in (0,\eta)$, Assumption~\ref{asspt-reg-B} in Lemma~\ref{l:res-est-res} is satisfied. 
\end{corollary}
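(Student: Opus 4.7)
The plan is to verify directly that the hypotheses of Proposition~\ref{p:dyadic} follow from the assumptions of the corollary, term by term in the two dyadic sums. There is no deeper idea required; the only thing to check is that all exponents of $2^k$ coming out of the estimates are strictly negative, which will use the two inequalities $N > 0$ (to handle the far off-diagonal part) and $\delta < \eta$ (to handle the near-diagonal part). Throughout the plan I write $c_\eps = (1-\eps)^{-1}$ so that $\mathsf{M}_\eps(s) = \mathsf{M}(c_\eps s)$, and I will allow implicit constants to depend on $C, \lambda_0, \eps$ without repeating this.

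For the series in the first hypothesis of Proposition~\ref{p:dyadic}, I use the polynomial upper bound in the first part of \eqref{e:sum-M-N-bis} to write, for $k$ large enough,
\[
\mathsf{M}_\eps(2^{k+1}) = \mathsf{M}(c_\eps 2^{k+1}) \leq C' \, 2^{k(N-\eta)}.
\]
Applying the off-diagonal operator estimate in \eqref{e:sum-M-N-bis} with $\lambda = 2^k$ then gives
\[
\mathsf{M}_\eps(2^{k+1}) \, \bigl\|(\id - \tilde\varphi(2^{-k}\scrP_0))\scrB\varphi(2^{-k}\scrP_0)\bigr\|_{\L(\scrH)} \leq C'' \, 2^{k(N-\eta)} \cdot 2^{-kN} = C'' \, 2^{-k\eta},
\]
which is summable since $\eta > 0$. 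Finitely many small-$k$ terms are harmless: by~\eqref{e:bdd-phi} each of them is bounded (using that $\scrB$ is in fact assumed bounded here makes this even more immediate).

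For the second series, the key step is the doubling-type inequality in \eqref{e:diag-terms-bis}. Applying it with $\lambda = c_\eps 2^{k-1}$ yields
\[
\mathsf{M}_\eps(2^{k+1}) = \mathsf{M}(4 \cdot c_\eps 2^{k-1}) \leq C \,(c_\eps 2^{k-1})^{1-\eta}\,\mathsf{M}(c_\eps 2^{k-1}) \leq C''' \, 2^{k(1-\eta)} \, \mathsf{M}_\eps(2^{k-1}),
\]
so that $\mathsf{M}_\eps(2^{k-1})^{-1}\mathsf{M}_\eps(2^{k+1}) \leq C''' 2^{k(1-\eta)}$. Since $\scrB$ is bounded and $\tilde\varphi,\varphi$ are bounded by $1$, the operator norm $\|\tilde\varphi(2^{-k}\scrP_0)\scrB\varphi(2^{-k}\scrP_0)\|_{\L(\scrH)} \leq \|\scrB\|_{\L(\scrH)}$ is uniformly bounded in $k$. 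Combining, each term of the series is bounded by
\[
2^{-(1-\delta)k} \cdot C''' 2^{k(1-\eta)} \cdot \|\scrB\|_{\L(\scrH)} = C'''' \, 2^{-k(\eta - \delta)},
\]
which is summable precisely because $\delta < \eta$. This is the unique place where the strict inequality $\delta < \eta$ is used, and it is what forces the statement of the corollary to require $\delta \in (0,\eta)$ rather than $\delta \in (0,\eta]$.

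The only mild subtlety, which I would address in one line, is that $c_\eps 2^{k\pm 1}$ may fall below $\lambda_0$ for small $k$, so both bounds in~\eqref{e:sum-M-N-bis}--\eqref{e:diag-terms-bis} hold only for $k \geq k_0(\eps, \lambda_0)$; the finitely many initial terms contribute a finite amount (all factors are bounded by continuity of $\mathsf{M}$ and by~\eqref{e:bdd-phi}), and therefore both sums \eqref{e:sum-M-N} and \eqref{e:diag-terms} converge. Proposition~\ref{p:dyadic} then yields the conclusion. No step in this plan is an obstacle; the entire proof is a bookkeeping exercise in geometric series and the ``hardest'' point is merely to observe that the doubling condition \eqref{e:diag-terms-bis} is exactly tuned so that the gain $2^{-k\eta}$ from the temperance exponent survives the loss $2^{k(1-\eta)}$ coming from comparing $\mathsf{M}$ at neighboring dyadic scales.
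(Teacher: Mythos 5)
Your proof is correct and follows essentially the same route as the paper: bound $\mathsf{M}_\eps(2^{k+1})$ polynomially and pair it with the $2^{-kN}$ off-diagonal estimate to get a summable $2^{-k\eta}$ for \eqref{e:sum-M-N}, then use the doubling condition at $\lambda=\frac{2^{k-1}}{1-\eps}$ together with boundedness of $\scrB$ to get $2^{-k(\eta-\delta)}$ for \eqref{e:diag-terms}, summable since $\delta<\eta$. Your explicit remark about the finitely many small-$k$ terms (where $c_\eps 2^{k\pm1}<\lambda_0$) is a small tidy addition the paper glosses over with ``for $k$ sufficiently large''.
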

Note that in~\eqref{e:diag-terms-bis}, the ``gain'' $\lambda^{1-\eta}$ (as compared to $\eta=1$) is neither optimal, nor seemingly useful in applications.
The important remaining assumption is the second part of~\eqref{e:sum-M-N-bis}. 
It quantifies how the operator $\scrB$ ``mixes'' the frequencies of $\scrP_0$ at ``high-frequency''; in application to the damped wave operator, it is henceforth naturally related to the regularity of the function $b$.
We study its validity in that context via semiclassical analysis in Section~\ref{s:app-damped-semiclass} below.

\bnp[Proof of Corollary~\ref{c:dyadic}]
Note first that the boundedness of $\scrB$ directly implies~\eqref{e:bdd-phi}. Next,~\eqref{e:sum-M-N-bis} implies that for $k$ sufficiently large
$\mathsf{M}_\eps (2^{k+1})\leq C\left( \frac{2^{k+1}}{1-\eps}\right)^{N-\eta}$ and 
$$
 \sum_{k\in \N}  \mathsf{M}_\eps (2^{k+1}) \nor{(\id - \tilde{\varphi}(2^{-k} \scrP_0))\scrB \varphi(2^{-k} \scrP_0)}{\L(\scrH)} \leq  \sum_{k\in \N} C_\eps 2^{(N-\eta)k} C2^{-kN} < +\infty , 
$$
hence~\eqref{e:sum-M-N} (for all $\eps \in [0,1)$). Finally,~\eqref{e:diag-terms-bis} implies for $k$ sufficiently large
$\mathsf{M}_\eps (2^{k+1})\leq C\left( \frac{2^{k-1}}{1-\eps}\right)^{1-\eta}\mathsf{M}_\eps (2^{k-1})$. Using boundedness of $\scrB$, this yields
$$
 \sum_{k\in \N}  2^{-(1-\delta)k} \mathsf{M}_\eps (2^{k-1})^{-1}\mathsf{M}_\eps (2^{k+1})  \nor{\tilde{\varphi}(2^{-k} \scrP_0) \scrB \varphi(2^{-k} \scrP_0) }{\L(\scrH)} 
 \leq  \sum_{k\in \N}  2^{-(1-\delta)k}  C_\eps 2^{(1-\eta)k} < +\infty ,
 $$
 as soon as $\delta\in (0,\eta)$. Application of Proposition~\ref{p:dyadic} then concludes the proof.
\enp

\bnp[Proof of Proposition~\ref{p:dyadic}]
We decompose
\begin{align*}
\langle \scrP_0 \rangle^{-1+\delta} \mathsf{M}_\eps (\scrP_0)^{-1} \scrB \mathsf{M}_\eps (\scrP_0) & = T_1 + T_2 , \quad \text{with}\\
T_1 &= \langle \scrP_0 \rangle^{-1+\delta} \mathsf{M}_\eps (\scrP_0)^{-1} \scrB \phi(\scrP_0) \mathsf{M}_\eps (\scrP_0),  \\
T_2 &=  \langle \scrP_0 \rangle^{-1+\delta} \mathsf{M}_\eps (\scrP_0)^{-1} \scrB (\id- \phi(\scrP_0)) \mathsf{M}_\eps (\scrP_0) .
\end{align*}
The first term is bounded by assumption since, using $\supp(\phi) \subset (-1,1)$, the monotonicity of $\mathsf{M}_\eps$, and~\eqref{e:bdd-phi}, we have
$$
\nor{\langle \scrP_0 \rangle^{-1+\delta} \mathsf{M}_\eps (\scrP_0)^{-1} \scrB \phi(\scrP_0) \mathsf{M}_\eps (\scrP_0)}{\L(\scrH)} \leq \nor{\langle \scrP_0 \rangle^{-1+\delta} \mathsf{M}_\eps (\scrP_0)^{-1}}{\L(\scrH)} \nor{\scrB \phi(\scrP_0)}{\L(\scrH)}\mathsf{M}_\eps(1) <+\infty .
$$
The second term is
\begin{align*}
T_2 & =  \sum_{k\in \N}  \langle \scrP_0 \rangle^{-1+\delta} \mathsf{M}_\eps (\scrP_0)^{-1} \scrB \varphi(2^{-k} \scrP_0)\mathsf{M}_\eps (\scrP_0) .
\end{align*}
We write $\scrB \varphi(2^{-k} \scrP_0) =  \tilde{\varphi}(2^{-k} \scrP_0) \scrB \varphi(2^{-k} \scrP_0) + (\id - \tilde{\varphi}(2^{-k} \scrP_0))\scrB \varphi(2^{-k} \scrP_0)$,
and decompose accordingly 
$
T_2 = T_{2,1} + T_{2,2},
$
with 
\begin{align*}
T_{2,1} & =  \sum_{k\in \N}  \langle \scrP_0 \rangle^{-1+\delta} \mathsf{M}_\eps (\scrP_0)^{-1} \tilde{\varphi}(2^{-k} \scrP_0) \scrB \varphi(2^{-k} \scrP_0)\mathsf{M}_\eps (\scrP_0) ,\\
T_{2,2} & =  \sum_{k\in \N}  \langle \scrP_0 \rangle^{-1+\delta} \mathsf{M}_\eps (\scrP_0)^{-1} (\id - \tilde{\varphi}(2^{-k} \scrP_0))\scrB \varphi(2^{-k} \scrP_0)\mathsf{M}_\eps (\scrP_0) .
\end{align*}
Using that $\mathsf{M}_\eps$ is (even and) nondecreasing on $\R_+$ and that $\supp \varphi \subset(-2,2)$, we obtain
\begin{align*}
\nor{T_{2,2}}{\L(\scrH)} & \leq  \sum_{k\in \N} \nor{ \langle \scrP_0 \rangle^{-1+\delta} \mathsf{M}_\eps (\scrP_0)^{-1} }{\L(\scrH)} \nor{(\id - \tilde{\varphi}(2^{-k} \scrP_0))\scrB \varphi(2^{-k} \scrP_0)}{\L(\scrH)} \mathsf{M}_\eps (2^{k+1})   <+ \infty .
\end{align*}
by assumption~\eqref{e:sum-M-N}.

Concerning the term $T_{2,1}$, we remark that the function $\langle \cdot \rangle^{-1+\delta} \mathsf{M}_\eps (\cdot)^{-1}$ is (even and) nonincreasing on $\R_+$. Using further that $\supp \tilde\varphi \subset(-\infty,-1/2) \cup (1/2,+ \infty)$ and $\supp \varphi \subset(-2,2)$, we obtain
\begin{align*}
\nor{T_{2,1}}{\L(\scrH)} &\leq  \sum_{k\in \N} \langle 2^{k-1} \rangle^{-1+\delta} \mathsf{M}_\eps (2^{k-1})^{-1} \nor{\tilde{\varphi}(2^{-k} \scrP_0) \scrB \varphi(2^{-k} \scrP_0) }{\L(\scrH)} \mathsf{M}_\eps (2^{k+1}) \\
& \leq  \sum_{k\in \N}  2^{-(1-\delta)(k-1)} \mathsf{M}_\eps (2^{k-1})^{-1}\mathsf{M}_\eps (2^{k+1})  \nor{\tilde{\varphi}(2^{-k} \scrP_0) \scrB \varphi(2^{-k} \scrP_0) }{\L(\scrH)} <+\infty ,
\end{align*}
according to~\eqref{e:diag-terms}.
\enp

\subsection{Application to damped Klein-Gordon equations}
\label{s:app-damped-semiclass}
In this section, we put the Klein-Gordon equation in the framework of the above sections, and check Condition~\eqref{e:sum-M-N-bis} assuming regularity of $b$ (using semiclassical analysis).
We start with the following lemma.
\begin{lemma}
\label{l:b-mix-freq}
Assume $b\in C^\infty(M)$ and let $m\geq0$, then for all $N\in \N$, there is $C_N>0$ such that 
\begin{equation}
\label{e:b-mix-infty}
\nor{(1 - \tilde{\varphi}(h\Lambda_m)) b  \varphi(h\Lambda_m)}{\L(L^2(M))} \leq C_N h^{N} , \quad \text{ for all } h \in (0,h_0) .
\end{equation}
Assume $b\in C^0(M)$ has continuity modulus $\omega$, then for all $\nu<\frac12$, there are $C,\kappa>0$ such that 
$$
\nor{(1 - \tilde{\varphi}(h\Lambda_m)) b  \varphi(h\Lambda_m)}{\L(L^2(M))} \leq C\omega(\kappa h^\nu) , \quad \text{ for all } h \in (0,h_0) .
$$
\end{lemma}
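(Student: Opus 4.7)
\textbf{Proof plan for Lemma~\ref{l:b-mix-freq}.} The core observation is that $1-\tilde\varphi$ vanishes on a neighborhood of $\supp(\varphi)$, so the operators $(1 - \tilde{\varphi}(h\Lambda_m))$ and $\varphi(h\Lambda_m)$ are ``spectrally disjoint''. By functional calculus (Theorem~\ref{t:thm-Kuster} / \cite[Theorem~14.9]{Zworski:book}), these are pseudodifferential operators with principal symbols $(1-\tilde\varphi)(|\xi|_x)$ and $\varphi(|\xi|_x)$, whose supports in $T^*M$ are disjoint (and uniformly separated).

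For the smooth case ($b \in C^\infty$), I would simply invoke the composition calculus in $\Psi^0(M)$. Multiplication by $b$ is a $\Psi^0(M)$ operator with principal symbol $b(x)$. The operator $\varphi(h\Lambda_m)$ lies in $\Psi^{-\infty}(M)$. The composed operator $(1-\tilde{\varphi}(h\Lambda_m))\, b\, \varphi(h\Lambda_m)$ thus lies in $\Psi^{-\infty}(M)$, and by the stationary phase / composition formula (Appendix~\ref{s:calcul-pseudo}), its full symbol is an asymptotic sum of derivatives of products $(1-\tilde\varphi)(|\xi|_x)\,b(x)\, \varphi(|\xi|_x)$ and derivatives thereof. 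Since $\supp(1-\tilde\varphi)\cap\supp(\varphi)=\emptyset$ and this disjointness is preserved under $\xi$-differentiation, every term in the expansion vanishes identically. Hence the operator is $O(h^N)$ in $\L(L^2(M))$ for every $N$, which gives~\eqref{e:b-mix-infty}.

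For the $C^0$ case with modulus of continuity $\omega$, the obvious obstacle is that $b$ is not a pseudodifferential symbol in any class. The standard remedy is to regularize: let $b_{h^\nu}$ be the $\nu$-regularization from Corollary~\ref{cor:def-b-eps} (Appendix~\ref{e:reg-b}), which belongs to $S^0_\nu(T^*M)$ and satisfies $\|b-b_{h^\nu}\|_{L^\infty(M)} \leq C_0\,\omega(\kappa h^\nu)$. Write
\begin{align*}
(1 - \tilde{\varphi}(h\Lambda_m))\, b\, \varphi(h\Lambda_m)
= (1 - \tilde{\varphi}(h\Lambda_m))(b-b_{h^\nu})\,\varphi(h\Lambda_m)
+ (1 - \tilde{\varphi}(h\Lambda_m))\,b_{h^\nu}\,\varphi(h\Lambda_m).
\end{align*}
The first term is bounded in $\L(L^2)$ by $\|1-\tilde\varphi(h\Lambda_m)\|\,\|b-b_{h^\nu}\|_{L^\infty}\,\|\varphi(h\Lambda_m)\| \leq C \omega(\kappa h^\nu)$, using $L^2$-boundedness of functions of $\Lambda_m$. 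For the second term, provided $\nu<\frac12$, the operator $b_{h^\nu}$ lies in $\Psi^0_\nu(M)$, and $1-\tilde\varphi(h\Lambda_m), \varphi(h\Lambda_m)$ lie in $\Psi^0(M)\subset\Psi^0_\nu(M)$ with disjoint principal supports. The same composition argument as in the smooth case, now carried out in the mildly exotic calculus $\Psi^{-\infty}_\nu(M)$ (where composition remainders gain $h^{1-2\nu}$ each step), yields an $O(h^N)$ bound for every $N$. Summing both contributions gives the claimed bound $C\,\omega(\kappa h^\nu)$ (the $O(h^N)$ contribution being absorbed for any reasonable modulus).

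The main technical point to verify carefully is the second step: that spectral disjointness of $1-\tilde\varphi(h\Lambda_m)$ and $\varphi(h\Lambda_m)$ survives composition with the exotic-class symbol $b_{h^\nu}\in S^0_\nu$ and still produces an $O(h^\infty)$ operator, despite the $h^{-\nu}$-type loss in each symbolic iteration. This is handled by iterating the composition formula $N$ times: each step produces a remainder in $h^{1-2\nu}\Psi^{-\infty}_\nu$, and after $N$ steps the accumulated remainder is of order $h^{N(1-2\nu)}$, which is $O(h^{N'})$ for any $N'$ upon choosing $N$ large enough since $\nu<\tfrac12$; the leading symbolic terms all vanish by disjoint supports.
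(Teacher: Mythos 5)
Your proposal is correct and follows essentially the same route as the paper: the smooth case via spectral/support disjointness of $1-\tilde\varphi$ and $\varphi$ combined with the (exotic-class) functional/pseudodifferential calculus of Corollary~\ref{c:functional-calc}, and the $C^0$ case via the regularization $b_{h^\nu}$ of Corollary~\ref{cor:def-b-eps}, a triangle inequality splitting, the $L^\infty$ bound $\|b-b_{h^\nu}\|_{L^\infty}\leq C\omega(\kappa h^\nu)$, and an $O(h^\infty)$ bound for the regularized term in $\Psi^0_\nu(M)$ with $\nu<\tfrac12$. The only difference is presentational: you spell out the iterated composition argument in the exotic calculus where the paper simply invokes Corollary~\ref{c:functional-calc}.
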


\bnp
The first property directly follows from the fact that $\supp(1-\tilde{\varphi}) \cap \supp \varphi = \emptyset$, together with properties of functional calculus in Corollary~\ref{c:functional-calc}.

Concerning the second property, we introduce the regularized function $b_{h^\nu}$ defined in Appendix~\ref{e:reg-b} and write 
$$
\nor{(1 - \tilde{\varphi}(h\Lambda_m)) b  \varphi(h\Lambda_m)}{\L(L^2(M))} \leq \nor{(1 - \tilde{\varphi}(h\Lambda_m)) b_{h^\nu}  \varphi(h\Lambda_m)}{\L(L^2(M))} 
+ \nor{b -b_{h^\nu}}{L^\infty(M)} .
$$
According the properties of functional calculus in Corollary~\ref{c:functional-calc}, we have for all $N \in \N$,
$$
\nor{(1 - \tilde{\varphi}(h\Lambda_m)) b_{h^\nu}  \varphi(h\Lambda_m)}{\L(L^2(M))} \leq C_N h^{N} .
$$
Now using Corollary~\ref{cor:def-b-eps}, we have $ \nor{b -b_{h^\nu}}{L^\infty(M)} \leq C\omega(\kappa h^\nu)$, which, combined with the previous two inequalities concludes the proof of the second statement. 
\enp

\begin{theorem}
\label{t:thm-classiq-A}
Let $m>0$. Assume $b\in C^0(M;\R^+)$ is positive on a non-empty open set, $\mathsf{M}  \in C^0(\R;\R_+^*)$ is an even function  which is nondecreasing on $\R^+$. Assume that there exist $C,\lambda_0>0$ (possibly large) and $\eta \in (0,1]$ (small) such that
\begin{enumerate}
\item $\nor{(i\tau \id- \A_m)^{-1}}{\L(H^1_m\times L^2)} \leq \mathsf{M}(\tau)$ for all $\tau \in \R$;
\item \label{i:growth-M-temperate}  $\mathsf{M} (4\lambda) \leq C \lambda^{1-\eta} \mathsf{M} (\lambda)$ for all $\lambda \geq \lambda_0$;
\item \label{i:reg-of-b}
\begin{enumerate}
\item either $b\in C^\infty(M)$ and there is $N>0$ such that $\mathsf{M} (\lambda) \leq C \lambda^{N}$ for $\lambda\geq \lambda_0$;
\item or $b$ is $\alpha$-H\"older continuous and $\mathsf{M} (\lambda) \leq C \lambda^{\frac{\alpha}{2}-\eta}$ for $\lambda\geq \lambda_0$;
\end{enumerate}
\end{enumerate}
Then, for all $\eps \in (0,1)$, there is $C_0>0$ such that for all $\Psi \in H^1_{\loc}(\R)$ with $\supp(\Psi) \subset [0,+\infty)$ and $\Psi' \in L^2(\R)$, for all $T>0$ and all $U \in L^2(M;\C^2)$, $V\in H^1_m \times L^2$, we have
\begin{align}
\label{e:Psi-T-classical-bis}
\frac{1}{T} \int_\R \Psi \left(\frac{t}{T}\right)^2 \nor{ \mathsf{M}\left(\frac{\Lambda_m}{1-\eps}\right)^{-1} e^{it\P_m}U}{L^2(M;\C^2)}^2 dt \leq \frac{C_0^2}{T^2}\nor{\Psi'}{L^2(\R)}^2 \nor{U}{L^2(M;\C^2)}^2 ,\\
\label{e:Psi-T-classical-ter}
\frac{1}{T} \int_\R \Psi \left(\frac{t}{T}\right)^2 \nor{ \mathsf{M}\left(\frac{\Lambda_m}{1-\eps}\right)^{-1} e^{t\A_m}V}{H^1_m \times L^2}^2 dt \leq \frac{C_0^2}{T^2}\nor{\Psi'}{L^2(\R)}^2 \nor{V}{H^1_m \times L^2}^2 .
\end{align}
\end{theorem}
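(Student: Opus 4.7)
\textbf{Plan for the proof of Theorem~\ref{t:thm-classiq-A}.}
The strategy is to fit the damped Klein-Gordon equation into the abstract framework of Section~\ref{s:abstract-section} and then chain together Lemma~\ref{l:res-est-res} (resolvent estimate $\Rightarrow$ modified bound on $\P_m-\tau+i\alpha$), Lemma~\ref{l:Psi-T-classical} (modified bound $\Rightarrow$ integrated time average), and Corollary~\ref{c:dyadic} (supplying the ``regularity'' Assumption~\ref{asspt-reg-B} of Lemma~\ref{l:res-est-res}). Concretely, take $\scrH=L^2(M;\C^2)$ and apply the abstract machinery to the splitting
\[
\scrP = \P_m = \scrP_0 + i\scrB, \qquad \scrP_0 = \Lambda_m I_\pm, \qquad \scrB = \tfrac{b}{2}\begin{pmatrix}1&1\\1&1\end{pmatrix}.
\]
Here $\scrP_0$ is selfadjoint, $\scrB$ is bounded and accretive (since $b\geq 0$), and $i\P_m$ generates a contraction semigroup by Corollary~\ref{corollary-Pm-Am-reso}. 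The abstract Assumption~\ref{asspt-res-estim} of Lemma~\ref{l:res-est-res} translates from the first hypothesis of the theorem via Item~\ref{i:resolvantes} of Corollary~\ref{corollary-Pm-Am-reso}, which gives $\|(\tau-\P_m)^{-1}\|_{\L(L^2(M;\C^2))}=\|(i\tau-\A_m)^{-1}\|_{\L(H^1_m\times L^2)}\leq \mathsf{M}(\tau)$.

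The main work is to check Assumption~\ref{asspt-reg-B} of Lemma~\ref{l:res-est-res} via Corollary~\ref{c:dyadic}. Assumption~\eqref{e:diag-terms-bis} of that corollary is precisely hypothesis~\ref{i:growth-M-temperate} of the theorem. For~\eqref{e:sum-M-N-bis} I will use Lemma~\ref{l:b-mix-freq}, applied entrywise to the $2{\times}2$ matrix $\scrB$ (note $\tilde\varphi(h\scrP_0) = \tilde\varphi(h\Lambda_m) I_2$ since $\tilde\varphi$ is even and $\Lambda_m I_\pm$ has spectrum in $\pm\Sp(\Lambda_m)$). In case~(a), $b\in C^\infty$ gives the bound $\|(\id-\tilde\varphi(h\scrP_0))\scrB\varphi(h\scrP_0)\|\lesssim h^N$ for any $N$, so pick $N$ large enough to dominate the polynomial growth of $\mathsf{M}$. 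In case~(b), Lemma~\ref{l:b-mix-freq} provides the bound $C\omega(\kappa h^\nu)\lesssim h^{\nu\alpha}$ for any $\nu<1/2$; given the hypothesis $\mathsf{M}(\lambda)\leq C\lambda^{\alpha/2-\eta}$, I pick $\nu$ sufficiently close to $1/2$ (depending on $\eta,\alpha$) so that $N:=\nu\alpha$ satisfies simultaneously $\nu\alpha\geq \alpha/2-\eta/2$ and $\mathsf{M}(\lambda)\leq C\lambda^{N-\eta/2}$, which is the crucial compatibility between the regularity of $b$ and the size of $\mathsf{M}$.

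With the hypotheses of Lemma~\ref{l:res-est-res} verified, I obtain $\|\mathsf{M}_\eps(\scrP_0)^{-1}u\|_{L^2(M;\C^2)}\leq C_0\|(\P_m-\tau+i\alpha)u\|_{L^2(M;\C^2)}$ for $\tau\in\R$, $\alpha\in[0,1]$. Feeding this into Lemma~\ref{l:Psi-T-classical} with $\scrC=\mathsf{M}_\eps(\scrP_0)^{-1}$ yields the abstract time-averaged estimate. Since $\mathsf{M}$ is even and $\scrP_0=\Lambda_m I_\pm$, by functional calculus for selfadjoint operators we have $\mathsf{M}_\eps(\scrP_0)^{-1}=\mathsf{M}(\Lambda_m/(1-\eps))^{-1}I_2$ (cf.\ Item~\ref{i:fPm} of Corollary~\ref{corollary-Pm-Am-reso}), which gives~\eqref{e:Psi-T-classical-bis} directly.

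Estimate~\eqref{e:Psi-T-classical-ter} then follows by transport through the intertwining map $\Sigma L_m$. More precisely, Item~\ref{i:Energ-am} of Corollary~\ref{corollary-Pm-Am-reso} gives $\|e^{t\A_m}V\|_{H^1_m\times L^2}=\|e^{it\P_m}\Sigma L_m V\|_{L^2(M;\C^2)}$ and $\|V\|_{H^1_m\times L^2}=\|\Sigma L_m V\|_{L^2(M;\C^2)}$. Since $\Sigma$ and $L_m$ both commute with the scalar operator $\mathsf{M}(\Lambda_m/(1-\eps))^{-1}I_2$ (indeed $L_m$ is diagonal in $\Lambda_m$ and $\id$, while $\Sigma$ commutes with any scalar multiple of $I_2$), applying~\eqref{e:Psi-T-classical-bis} to $U:=\Sigma L_m V$ is exactly~\eqref{e:Psi-T-classical-ter}. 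The main obstacle is the Hölder case of step~(2): the tightness of the exponent $\alpha/2$ in the hypothesis on $\mathsf{M}$ leaves no room to lose, so the choice of the small parameter $\nu<1/2$ in Lemma~\ref{l:b-mix-freq} must be balanced carefully against $\eta$.
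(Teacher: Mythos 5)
Your proposal is correct and follows essentially the same route as the paper's proof: the same choice $\scrH=L^2(M;\C^2)$, $\scrP_0=\Lambda_m I_\pm$, $\scrB=\tfrac b2\begin{pmatrix}1&1\\1&1\end{pmatrix}$, verification of Assumption~\ref{asspt-reg-B} via Corollary~\ref{c:dyadic} and Lemma~\ref{l:b-mix-freq}, then Lemma~\ref{l:res-est-res} and Lemma~\ref{l:Psi-T-classical} with $\scrC=\mathsf{M}_\eps(\Lambda_m)^{-1}I_2$, and finally transport to~\eqref{e:Psi-T-classical-ter} through $\Sigma L_m$ (Item~\ref{i:fPm} of Corollary~\ref{corollary-Pm-Am-reso}). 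Your explicit exponent bookkeeping in the H\"older case (choosing $\nu$ close to $1/2$ so that $N=\nu\alpha$ beats $\alpha/2-\eta$ up to $\eta/2$) is just a spelled-out version of the step the paper leaves implicit, and it is sound.
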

Note first that the assumption that $b$ is nontrivial implies $\R\subset \rho(\P_m)$. The theorem applies in particular to the function $\mathsf{M}=\mathsf{M}_{\P_m}$.

Now, if $(u,\d_t u)(t) = e^{t\A_m}(u_0,u_1)$ denotes the solution to~\eqref{eq: stabilization} for $m>0$, then~\eqref{e:Psi-T-classical-ter} rewrites as~\eqref{e:Psi-T-classical-quad-0} (see Item~\ref{i:Energ-am} in Corollary~\ref{corollary-Pm-Am-reso}) 
Therefore, Theorem~\ref{t:thm-classiq-A} in the case $b\in C^\infty(M)$ implies Theorem~\ref{t:thm-classiq-A-0} in the introduction.

We do not formulate the analoguous result in the case of the wave equation $m=0$ for the sake of concision. The proof would be a combination of those of Theorems~\ref{t:thm-classiq-A} and~\ref{t:semiclassic-++}.

\bnp[Proof of Theorem~\ref{t:thm-classiq-A}]
We first prove~\eqref{e:Psi-T-classical-bis}. 
For this, we apply the results of the above sections to $\scrH =L^2(M;\C^2)$, $\scrP = \P_m$ defined in~\eqref{e:def-Pm-KG}, that is $\scrP_0 = \Lambda_m \begin{pmatrix}
1 & 0\\
0& -1
\end{pmatrix}$ and $\scrB = \frac{b}{2}\begin{pmatrix}
1 & 1\\
1& 1
\end{pmatrix}$.
According to Corollary~\ref{corollary-Pm-Am-reso} and the assumption, we have $\nor{(\tau \id- \P_m)^{-1}}{\L( L^2(M;\C^2))} = \nor{(i\tau \id- \A_m)^{-1}}{\L(H^1_m\times L^2)} \leq \mathsf{M}(\tau)$ for all $\tau \in \R$.
Next, since the functions $\varphi,\tilde{\varphi}$ and $\mathsf{M}$ are even, we have
$$
\varphi(\lambda^{-1}\scrP_0) = \varphi(\lambda^{-1}\Lambda_m) I_2 , \quad \tilde{\varphi}(\lambda^{-1}\scrP_0) =  \tilde{\varphi}(\lambda^{-1}\Lambda_m) I_2 ,
\quad \mathsf{M}_\eps(\scrP_0)^{-1} = \mathsf{M}_\eps(\Lambda_m)^{-1} I_2 .
$$
As a consequence, we have
$$
(\id - \tilde{\varphi}(\lambda^{-1}\scrP_0))\scrB \varphi(\lambda^{-1}\scrP_0) =  (1 - \tilde{\varphi}(\lambda^{-1}\Lambda_m)) b  \varphi(\lambda^{-1}\Lambda_m) \frac{1}{2}\begin{pmatrix}
1 & 1\\
1& 1
\end{pmatrix} .
$$
Condition~\eqref{e:sum-M-N-bis} is thus a consequence of Lemma~\ref{l:b-mix-freq} together with Assumption~\ref{i:reg-of-b}. Condition~\eqref{e:diag-terms-bis} is Item~\ref{i:growth-M-temperate}. Corollary~\ref{c:dyadic} thus applies. The assumptions of Lemma~\ref{l:res-est-res} are hence satisfied, and therefore those of Lemma~\ref{l:Psi-T-classical} with $\scrC = \mathsf{M}_\eps \left(\P_m\right)^{-1} = \mathsf{M}_\eps(\Lambda_m)^{-1} I_2$, which  proves~\eqref{e:Psi-T-classical-bis}.
Finally,~\eqref{e:Psi-T-classical-ter} is a direct consequence of Corollary~\ref{corollary-Pm-Am-reso} Item~\ref{i:fPm}.
\enp

\appendix

\section{Geometry and pseudodifferential calculus}
\label{appendix}
\subsection{Local charts, pullbacks, and partition of unity}
\label{sec: local charts}
For a diffeomorphism $\phi$ between two open sets, $\phi: U_1 \to U_2$,
the associated pullback (here stated for continuous functions) is
\begin{align*}
  \phi^\ast : C^0 (U_2) &\to C^0 (U_1),\\
    u &\mapsto  u \circ \phi.
\end{align*}
For a function defined on phase-space, \eg a symbol, the pullback is
given by
\begin{align}
  \label{eq: pullback phasespace}
  \phi^\ast u (y,\eta) = u (\phi(y),\transp \big(\phi'(y)\big)^{-1} \eta), 
  \quad y \in U_1, \eta \in T^\ast_yU_1, \quad u \in C^0(T^\ast U_2).
\end{align}
The compact manifold $M$ is of dimension $n$ and is furnished with a finite
atlas $(U_j, \phi_j)$, $j \in J$, with $\phi_j: U_j \to \Ut_j =\phi_j (U_j)\subset \R^{n}$ a smooth diffeomorphism.  
For later use, we denote by $(\psi_j)_j$ a partition of unity subordinated to this covering:
 $$
 \psi_j \in C^\infty(M),\quad  \supp(\psi_j) \subset U_j,  \quad 0\leq \psi_j \leq 1, \quad \sum_j \psi_j =1.
 $$ 

\subsection{Symbol classes}
\label{app:symbol-classes}
We introduce the following mildly exotic but now classical class of symbols (see~\cite[Chapter~7]{DS:book} or~\cite{Zworski:book}). We say that $a \in S^m_\rho(T^*\R^n)$ if it is an $h$-dependent family of $C^\infty(\R^n \times \R^n)$ functions and, for all $\alpha, \beta \in \N^n$, there is $C_{\alpha,\beta}>0$ such that for all $h \in (0,h_0)$,
\bnan
\label{e:symb-estim-Srho}
|\d_x^\alpha \d_\xi^\beta a(x,\xi , h)| \leq C_{\alpha,\beta} h^{-\rho(|\alpha| + |\beta|)} \langle \xi \rangle^{m-|\beta|} .
\enan
The best constants $C_{\alpha,\beta}$ constitute a family of seminorms, endowing $S^m_\rho(T^*\R^n)$ with the topology of a Fr\'echet space. 
Given a bounded open set $\tilde{U} \subset \R^n$, we say that $a \in S^m_\rho(T^*\tilde{U})$ if $\chi a \in S^m_\rho(T^*\R^n)$ for all $\chi \in  C^\infty_c(\tilde{U})$. 
Finally, given $a \in C^\infty(T^*M)$, we say that $a \in S^m_\rho(T^*M)$ if $(\phi^{-1})^*a \in S^m_\rho(T^*\phi(U))$ for any coordinate patch $(U,\phi)$ of $M$, a property which has only to be checked for an atlas~\cite[Theorem~9.4 and Section~14.2.3]{Zworski:book}.

\subsection{Operators and quantization}
\label{app:op-quant}
We recall the usual left quantization of symbols in $\R^n$: for $a \in S^m_\rho(T^*\R^n)$, 
$$
\left(\Op_h^\ell (a)  u \right) (x)= \frac{1}{(2\pi h)^n}\int_{\R^{2n}} e^{\frac{i}{h}(x-y)\cdot \xi} a\left( x , \xi\right) u(y) dy d\xi .
$$
We denote by $\Psi^m_\rho(\R^n)$ the set of operators $\Op_h^\ell (a)$ with $a \in S^m_\rho(T^*\R^n)$.

\begin{definition}
\label{def:pseudo}
We say that an operator $A : C^\infty(M) \to \D'(M)$ belongs to $\Psi^m_\rho(M)$ if the following two items are satisfied:
\begin{enumerate}
\item \label{i:pseudolocal} for any $\chi_1,\chi_2 \in C^\infty_c(M)$ with $\supp(\chi_1) \cap \supp(\chi_2) = \emptyset$, we have for all $N\in \N$, $\nor{ \chi_1 A \chi_2}{\L(H^{-N},H^N)}  = O(h^\infty)$;
\item \label{i:pseudo} for any coordinate patch $(U,\phi)$ of $M$ and for all $\chi_1,\chi_2 \in C^\infty_c(U)$, we have $\big(\phi^{-1}\big)^\ast \chi_1 A \chi_2 \phi^\ast \in \Psi^m_\rho(\R^n)$.
\end{enumerate}
\end{definition}
We refer \eg to~\cite[Definition~E.12 and Proposition~E.13]{DZ:book}.
Now, we quantize a symbol $a \in S^m_\rho(T^*M)$ as 
 \begin{align}
\label{eq: def psiDO on M}
\Op_h(a) = \sum_{j\in J} \check{\psi}_j \phi_j^\ast  \Op_h^\ell(  \tilde{a}_j ) 
  \big(\phi_j^{-1}\big)^\ast \check{\psi}_j , \quad \text{with} \quad  \tilde{a}_j = \big(\phi_j^{-1}\big)^\ast (a \psi_j) , \quad j \in J, 
\end{align}
where $\check{\psi}_j \in C^\infty_c(U_j)$ is such that $\check{\psi}_j =1$ in a neighborhood of $\supp(\psi_j)$. According to~\cite[Theorem~14.1]{Zworski:book}, if $a \in S^m_\rho(T^*M)$ then $\Op_h( a) \in \Psi^m_\rho(M)$.

\subsection{Pseudodifferential calculus}
\label{s:calcul-pseudo}
The general idea is that the gain in the $\Psi^m_\rho(M)$ pseudodifferential calculus is $h^{1-2\rho}$ (together with a derivative). From now on, we assume that $0\leq\rho<\frac12$.
We recall that the principal symbol map (see~\cite[Propositions~E.14-E.16]{DZ:book}) 
$$
\sigma_h : \Psi^m_\rho(M) \to S^m_\rho(T^*M)/h^{1-2\rho}S^{m-1}_\rho(T^*M)  , 
\quad A \mapsto \sigma_h(A) ,
$$
satisfies the following first properties:
\begin{enumerate}
\item 
The kernel of $\sigma_h$ is $h^{1-2\rho}\Psi^{m-1}_\rho(M)$, \ie $\sigma_h(A) = 0$ if and only if $A \in h^{1-2\rho}\Psi^{m-1}_\rho(M)$.
\item 
For all $a\in S^m_\rho(T^*M)$, we have $\sigma_h(\Op_h(a)) = a$ in $S^m_\rho(T^*M)/h^{1-2\rho}S^{m-1}_\rho(T^*M)$ (in particular, $\sigma_h$ is onto).
\item 
For all $A\in \Psi^m_\rho(M)$, we have $A - \Op_h(\sigma_h(A) ) \in  h^{1-2\rho} \Psi^{m-1}_\rho(M)$.
\item \label{i:act-coord-patch}
For $A\in \Psi^m_\rho(M)$ and for any coordinate patch $(U,\phi)$ of $M$ and $\chi \in C^\infty_c(U)$, we have 
 $ \big(\phi^{-1}\big)^\ast \chi A\chi \phi^\ast - \Op_h\left(  \big(\phi^{-1}\big)^\ast (\chi^2 \sigma_h(A))\right) \in h^{1-2\rho} \Psi^{m-1}_\rho(\R^n)$.
\end{enumerate}
Operators in $ \Psi^0_\rho(M)$ are $L^2$-bounded, with a finite number of derivatives (see \eg~\cite[Theorem~2.5.1]{Lerner:10} or \cite[Corollaire~2]{FK:14}): There is $\kappa=\kappa(n)$ (depending only on the dimension of $M$) such that for all $a\in S^{0}_\rho(T^*M)$, there is $C>0$ such that $$\|\Op_h(a)\|_{\L(L^2)} \leq C\sup_{|\alpha|\leq \kappa} \|h^{|\alpha|}\d^\alpha a\|_{\infty}.$$
 Namely, for $A \in \Psi^m_\rho(M) , B \in \Psi^k_\rho(M)$ we have:
\begin{enumerate}
\item Adjoint (taken in $L^2(M,d\Vol_g)$): $A^* \in  \Psi^m_\rho(M)$ with $\sigma_h(A^*)= \ovl{\sigma_h(A)}$,
\item Product:  $AB \in \Psi^{m+k}_\rho(M)$ with $\sigma_h(AB) =\sigma_h(A)\sigma_h(B)$, 
\item Commutator:  $[A,B] \in h^{1-2\rho} \Psi^{m+k-1}_\rho(M)$ with $\sigma_h\big(h^{2\rho-1}[A,B] \big) = \frac{h^{2\rho}}{i} \{\sigma_h(A), \sigma_h(B)\}$ where equality holds in $$S^{m+k-1}_\rho(T^*M)/h^{1-2\rho}S^{m+k-2}_\rho(T^*M).$$
\end{enumerate}
We refer to \cite[End of Section~E.1.7]{DZ:book}. This last point can be rewritten with the quantization $\Op_h$ as follows: for all $a \in S^{m}_\rho(T^*M), b\in S^{k}_\rho(T^*M)$, we have
$$
\frac{1}{h}[\Op_h(a),\Op_h(b)] =  \Op_h\big(\frac{1}{i}\{a ,b\} \big) + R , \qquad R \in h^{1-4\rho} \Psi^{m+k-2}_\rho(M) ,
$$
One should beware that $\frac{1}{h}[\Op_h(a),\Op_h(b)]$ and $\Op_h\big(\frac{1}{i}\{a ,b\} \big)$ both belong to $h^{-2\rho} \Psi^{m+k-1}_\rho(M)$.
 As noticed in~\cite{DJN:20}, this $O(h^{1-4\rho})$ remainder is not good enough for the proof of an Egorov theorem in time $\left(\frac{1}{2\Upsilon_{\max}}+\eps \right)\log(h^{-1})$ (here Theorem~\ref{t:egorov}).
We follow the remedy in~\cite[Appendix~A]{DJN:20}. The following is part of~\cite[Lemma~A.6]{DJN:20}.
\begin{lemma}
\label{l:DJN-A6}
There exists $\kappa (n) >0$ a constant depending only of the dimension $n$ of $M$ such that the following holds true. For all $a,b \in C^\infty_c(T^*M)$ such that $\supp a, \supp b \subset \{|\xi|_x \leq 10\}$ and all $N\in \N$, we have 
\begin{align}
\label{e:DJN-A6}
[\Op_h(a) , \Op_h(b)] & = \Op_h \left( \frac{h}{i} \{a,b\}+\sum_{j=2}^{N-1} h^j{\bf D}^{2j-4}(d^2a\otimes d^2b)|_{\Diag} \right)+ O_{N} \big(\nor{a \otimes b}{C^{2N+ \kappa(n)}} h^N\big) ,
\end{align}
where 
\begin{itemize}
\item $a\otimes b\in C^\infty_c(T^*M\times T^*M)$ is defined by $(a\otimes b)(\zeta,\zeta') = a(\zeta)b(\zeta')$, 
\item $\Diag$ is the diagonal of $T^*M \times T^*M$, 
\item $d^2b$ denotes the vector $(\d^\alpha b)_{|\alpha|\leq 2}$, 
\item ${\bf D}^{k}c$ denotes the result of applying some partial differential operator of order $k$ to $c$, with coefficients depending only on $(M,g)$, the atlas $(U_j, \phi_j)$ and the cutoff functions $(\psi_j, \check\psi_j)$ used to define the quantization $\Op_h$,
\item $O_N (\cdot)$ is in $\L(L^2(M))$ (and also depends on the same features as ${\bf D}^{k}$).
\end{itemize}
\end{lemma}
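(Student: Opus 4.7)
The plan is to follow the strategy of \cite[Appendix~A]{DJN:20}, of which this lemma is the main output, and reduce the problem in two stages: first to an oscillatory integral analysis in $\R^n$, and then to a stationary-phase-type expansion with a remainder controlled only by a finite Hölder norm of $a\otimes b$.

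First I would use a partition of unity subordinate to the atlas of Section~\ref{sec: local charts} together with the pseudolocal property (Item~\ref{i:pseudolocal} of Definition~\ref{def:pseudo}) to localize. Writing $a=\sum_j a\psi_j$ and $b=\sum_j b\psi_j$ and using~\eqref{eq: def psiDO on M}, the cross terms $\check\psi_j[\ldots]\check\psi_k$ with disjoint support are $O_{\L(L^2)}(h^\infty)$ with constants controlled by finitely many derivatives of $a$ and $b$ (hence by $\nor{a\otimes b}{C^{2N+\kappa(n)}}$ after possibly increasing $\kappa(n)$). The diagonal terms reduce, via the chart diffeomorphism, to a commutator $[\Op_h^\ell(\tilde a), \Op_h^\ell(\tilde b)]$ on $\R^n$, where $\tilde a, \tilde b$ are the compactly supported pushforwards.

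Second, in $\R^n$, the composition is $\Op_h^\ell(\tilde a)\Op_h^\ell(\tilde b) = \Op_h^\ell(\tilde a\#_h \tilde b)$ with
\[
(\tilde a\#_h \tilde b)(x,\xi) = \frac{1}{(2\pi h)^n}\int e^{-\frac{i}{h}y\cdot\eta}\tilde a(x,\xi+\eta)\tilde b(x+y,\xi)\,dy\,d\eta.
\]
I would Taylor expand $\tilde a(x,\xi+\eta)$ and $\tilde b(x+y,\xi)$ to order $2N$ at $(x,\xi)$ and integrate formally against $e^{-iy\cdot\eta/h}$: the stationary phase identity $\frac{1}{(2\pi h)^n}\int e^{-iy\cdot\eta/h}y^\alpha\eta^\beta\,dy\,d\eta = h^{|\alpha|}(-i)^{|\alpha|}\alpha!\,\delta_{\alpha\beta}$ yields the usual polynomial expansion $\tilde a\#_h \tilde b \sim \sum_k \frac{h^k}{k!}(-i\d_\xi\cdot\d_y)^k(\tilde a\otimes \tilde b)|_{\Diag}$. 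Subtracting $\tilde b\#_h \tilde a$ kills the even $k$ (by symmetry), producing the Poisson-bracket term $\frac{h}{i}\{\tilde a,\tilde b\}$ at $k=1$ and pure derivative terms of total order $2k$ in $(a,b)$ for odd $k\geq 3$. Regrouping these odd terms by powers $h^j$, $j\geq 2$, and pulling back to $M$ produces the asserted differential operators $\mathbf{D}^{2j-4}$ acting on $d^2 a\otimes d^2 b|_{\Diag}$; the factors $d^2 a, d^2 b$ appear because one derivative on each side is already ``spent'' to produce the $h$-power structure of the Poisson bracket at $k=1$, so that the $j$th correction carries at least two derivatives on each of $a$ and $b$.

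Third, and this is the main technical obstacle, one must bound the remainder by $\nor{a\otimes b}{C^{2N+\kappa(n)}}h^N$ rather than by an a priori unbounded symbol seminorm. The remainder has the form of an oscillatory integral whose symbol involves $\d^\alpha \tilde a\otimes \d^\beta \tilde b$ with $|\alpha|+|\beta|=2N$, integrated against the explicit kernel produced by the Taylor remainder. Here one cannot simply invoke symbolic seminorm bounds: instead, I would write the remainder as $\Op_h^\ell(r_h)$ with $r_h$ given by an explicit integral representation over Taylor paths, and apply a quantitative Calderón--Vaillancourt theorem (see~\cite[Corollaire~2]{FK:14} or~\cite[Theorem~2.5.1]{Lerner:10}) which involves only $\kappa(n)$ derivatives. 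The compact $\xi$-support of $\tilde a,\tilde b$ (inherited from $\{|\xi|_x\leq 10\}$) removes any $\langle\xi\rangle$-weights so that the $C^{2N+\kappa(n)}$ norm of $a\otimes b$ indeed suffices. The delicate point is that the $y$- and $\eta$-integrations in the composition formula are non-compact: I would handle this by integration by parts using $\langle y\rangle^{-2}(1-h^2\Delta_\eta)$ and a symmetric one in $y$, which costs only a bounded number of derivatives of $\tilde a\otimes \tilde b$ and gives integrable factors $\langle y\rangle^{-n-1}\langle \eta\rangle^{-n-1}$. Collecting constants across the (finite) atlas absorbs the geometric dependencies into the differential operators $\mathbf{D}^k$ and into the final value of $\kappa(n)$, completing the proof.
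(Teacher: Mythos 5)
Your sketch is an attempt to reconstruct the proof of a result the paper itself does not reprove: the lemma is quoted from \cite[Lemma~A.6]{DJN:20} (stated there in dimension $2$, with $\kappa(2)=15$), the paper only remarking that the argument is dimension-independent because the calculus consumes finitely many derivatives. Your broad strategy (chart-by-chart composition, explicit expansion of the flat composition formula, quantitative Calder\'on--Vaillancourt for the remainder) is the same as in that reference, but as written there is a genuine gap at the very first step: the reduction to a single flat commutator $[\Op_h^\ell(\tilde a),\Op_h^\ell(\tilde b)]$. Since $\Op_h$ is defined in~\eqref{eq: def psiDO on M} as a sum over charts, the commutator contains cross terms coming from different charts whose cutoffs overlap; these are \emph{not} $O(h^\infty)$ and cannot be dismissed by pseudolocality (only the disjoint-support ones can). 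Treating them requires the semiclassical change-of-variables expansion under the transition maps, with remainders quantified by finitely many $C^k$ norms, and --- this is the actual content of the lemma --- a verification that the resulting corrections at order $h^j$, which involve derivatives of the transition maps and of the cutoffs $\psi_j,\check\psi_j$, still take the special form ${\bf D}^{2j-4}(d^2a\otimes d^2b)|_{\Diag}$: no correction at order $h$ beyond $\frac{h}{i}\{a,b\}$, and never more than $2j-2$ derivatives piling up on a single factor. This structural bookkeeping is precisely what Corollary~\ref{c:DJN-A6} exploits (it is why one gets $h^{2-2\rho}$ rather than the rough-calculus $h^{2-4\rho}$), and your sketch does not engage with it; the heuristic that ``one derivative on each side is already spent'' is not an argument.

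A second, smaller error: for the left quantization used here the parity claim is false --- subtracting $\tilde b\#_h\tilde a$ does \emph{not} kill the even-order terms (that cancellation is a Weyl-quantization fact). For instance the order-$h^2$ part of the flat commutator is $-h^2\sum_{|\alpha|=2}\frac{1}{\alpha!}\left(\d_\xi^\alpha \tilde a\,\d_x^\alpha \tilde b-\d_\xi^\alpha \tilde b\,\d_x^\alpha \tilde a\right)$, which is nonzero in general. This does not endanger the statement, since at order $h^j$ the flat terms carry exactly $j$ derivatives on each factor and hence fit into ${\bf D}^{2j-4}(d^2a\otimes d^2b)|_{\Diag}$, but the derivation as you wrote it (``regrouping the odd terms'') is incorrect. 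By contrast, your third paragraph --- Taylor expansion with integral remainder, integration by parts in $(y,\eta)$ to gain integrable factors, and a quantitative Calder\'on--Vaillancourt theorem costing only $\kappa(n)$ derivatives, with the compact $\xi$-support removing all weights --- is the right mechanism for the $\nor{a\otimes b}{C^{2N+\kappa(n)}}h^N$ bound, and once the cross-chart terms are incorporated with the same quantitative care this part of the argument does go through.
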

Note that Lemma~\ref{l:DJN-A6} is stated only in dimension $n=2$ in~\cite[Lemma~A.6]{DJN:20} (in which case $\kappa(2)=15$ is estimated explicitly); however the proof works as well in higher dimension using that  pseudodifferential calculus (and in particular boundedness in $L^2$, see \eg \cite[Section~2.2]{FK:14}) consumes a finite number (depending only on the dimension) of derivatives.
The point of that lemma is the very particular structure of the lower order terms, which allows to show the following corollary (see the remarks after Lemma~A.6 in~\cite{DJN:20}). 
\begin{corollary}
\label{c:DJN-A6}
Let $\rho \in [0,1/2)$ and assume that $a \in S^{-\infty}_\rho(T^*M)$ and $b \in S^{-\infty}(T^*M)=S^{-\infty}_0(T^*M)$ satisfy $\supp(a), \supp(b)\subset \{|\xi|_x\leq 10\}$. Then we have 
\begin{enumerate}
\item \label{e:poisson-loss} $ \{a,b\} \in h^{-\rho} S^{-\infty}_\rho(T^*M)$,
\item  \label{e:commut-loss}  $[\Op_h(a) , \Op_h(b)]  \in h^{1-\rho} \Psi^{-\infty}_\rho(M) + O_{\L(L^2(M))} \big(h^{\infty} \big)$, 
\item  \label{e:poiss-comm} $[\Op_h(a) , \Op_h(b)]  = \Op_h \left( \frac{h}{i} \{a,b\} \right)+ O_{\L(L^2(M))} \big(h^{2-2\rho} \big)$.
\end{enumerate}
\end{corollary}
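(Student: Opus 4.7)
The plan is to deduce all three items from Lemma~\ref{l:DJN-A6}, exploiting the crucial asymmetry that $a$ has mildly exotic scaling in $S^{-\infty}_\rho$ while $b$ lies in the standard class $S^{-\infty}_0$, so that derivatives falling on $b$ cost nothing in powers of $h^{-1}$.

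First I would establish item~\ref{e:poisson-loss} by direct symbolic computation in local coordinates. Writing $\{a,b\}=\partial_\xi a\cdot\partial_x b-\partial_x a\cdot\partial_\xi b$, the Leibniz formula expresses $\partial^\alpha\{a,b\}$ as a finite sum of terms of the form $\partial^\beta a\,\partial^\gamma b$ with $|\beta|,|\gamma|\ge 1$ and $|\beta|+|\gamma|=|\alpha|+2$. Since $b\in S^{-\infty}_0$, the factor $\partial^\gamma b$ is bounded uniformly in $h$ (and rapidly decaying in $\xi$), while the $a$-factor contributes $h^{-\rho|\beta|}\le h^{-\rho(|\alpha|+1)}$. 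Hence $\{a,b\}$ satisfies the $h^{-\rho}S^{-\infty}_\rho(T^*M)$ estimates.

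Next, for item~\ref{e:poiss-comm}, I would invoke Lemma~\ref{l:DJN-A6} with $N$ chosen large enough so that the $O_N(h^N)$ remainder is acceptable, and estimate each term $h^j{\bf D}^{2j-4}(d^2a\otimes d^2b)|_{\Diag}$ for $j\ge 2$. This term is, in coordinates, a finite sum of $h^j\,\partial^{\alpha_a}a\,\partial^{\alpha_b}b$ with $|\alpha_a|,|\alpha_b|\ge 2$ and $|\alpha_a|+|\alpha_b|=2j$. Again, derivatives on $b$ are $O(1)$, while $|\partial^{\alpha_a}a|=O(h^{-\rho|\alpha_a|})$; the worst case is $|\alpha_a|=2j-2$, giving a bound $h^{j-\rho(2j-2)}=h^{j(1-2\rho)+2\rho}$. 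For $j=2$ this is $h^{2-2\rho}$, and for $j\ge 3$ it is even smaller because $1-2\rho>0$ under $\rho<1/2$. The same count applied to each higher derivative of these symbols shows they lie in $h^{2-2\rho}S^{-\infty}_\rho(T^*M)$, so pseudodifferential $L^2$-boundedness yields the $O_{\L(L^2)}(h^{2-2\rho})$ remainder after subtracting the Poisson bracket term.

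Finally, item~\ref{e:commut-loss} is obtained by regrouping: the Poisson bracket piece $\Op_h(\frac{h}{i}\{a,b\})$ lies in $h\cdot h^{-\rho}\Psi^{-\infty}_\rho=h^{1-\rho}\Psi^{-\infty}_\rho(M)$ by item~\ref{e:poisson-loss}, while every $j\ge 2$ term is in $h^{j(1-2\rho)+2\rho}\Psi^{-\infty}_\rho\subset h^{2-2\rho}\Psi^{-\infty}_\rho\subset h^{1-\rho}\Psi^{-\infty}_\rho$ (using $2-2\rho\ge 1-\rho$, i.e.\ $\rho\le 1$). Choosing $N$ arbitrarily large in Lemma~\ref{l:DJN-A6} absorbs the $O_N(h^N)$ remainder into the $O_{\L(L^2)}(h^\infty)$ term, yielding the claim.

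The only real subtlety lies in the careful bookkeeping of item~\ref{e:poiss-comm}: one must check that the naive worst-case allocation of derivatives (all $2j-4$ extra derivatives landing on $a$) is indeed the correct bound for the symbolic $S^{-\infty}_\rho$ estimates, and that the structural form of ${\bf D}^{2j-4}$ in Lemma~\ref{l:DJN-A6}, whose coefficients depend only on the manifold data, preserves the decay in $\xi$ and the differential-symbolic character of the terms. Once this is recorded, the three statements follow by essentially elementary counting.
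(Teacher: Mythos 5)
Your proposal is correct and follows essentially the same route as the paper: item (1) by the direct symbolic estimates, and items (2)--(3) by exactly the paper's derivative count on the terms of Lemma~\ref{l:DJN-A6}, namely $h^j h^{-(2j-2)\rho}=h^{j(1-2\rho)+2\rho}\subset h^{2-2\rho}$ for $j\geq 2$, together with taking $N$ large and $L^2$-boundedness. One small precision: the remainder in Lemma~\ref{l:DJN-A6} is $O_N\big(\|a\otimes b\|_{C^{2N+\kappa(n)}}h^N\big)=O\big(h^{N(1-2\rho)-\rho\kappa(n)}\big)$ rather than $O(h^N)$, since the $C^{2N+\kappa(n)}$ norm of $a$ grows like $h^{-\rho(2N+\kappa(n))}$; because $\rho<1/2$ your ``choose $N$ large'' step still absorbs it exactly as in the paper.
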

Note that the rough $\Psi^{-\infty}_\rho(M)$ calculus only yields the same result with $h^{-\rho}$ replaced by $h^{-2\rho}$. The improvement is due to the fact that one of the symbols is in $S^{-\infty}_0(T^*M)$, together with Lemma~\ref{l:DJN-A6}. 
\bnp[Proof of Corollary~\ref{c:DJN-A6} from Lemma~\ref{l:DJN-A6}]
Item~\ref{e:poisson-loss} directly follows from the symbolic estimates~\eqref{e:symb-estim-Srho} satisfied by $a,b$.
To prove Items~\ref{e:commut-loss} and~\ref{e:poiss-comm}, we examine the remainder term in~\eqref{e:DJN-A6}. From~\eqref{e:symb-estim-Srho} we deduce $\nor{a \otimes b}{C^{2N+ \kappa(n)}} \leq C_N h^{-\rho(2N+ \kappa(n))}$. As a consequence, 
\begin{align}
\label{e:remainder-DJN-1}
O_{N} \big(\nor{a \otimes b}{C^{2N+ \kappa(n)}} h^N\big)  & = O_{N} (h^{N(1-2\rho)-\rho \kappa(n)})  \\
\label{e:remainder-DJN-2}
& = O_{\L(L^2(M))}(h^{2-2\rho}),
\end{align}
where we have fixed $N \in \N, N> \frac{2 + (\kappa(n)-2)\rho}{1-2\rho}$ for the last equality to hold.
Next, we have
\begin{align*}
{\bf D}^{2j-4}(d^2a\otimes d^2b)|_{\Diag} \in h^{-(2j-2)\rho} S^{-\infty}_\rho(T^*M),
\end{align*}
so that 
\begin{align}
\label{e:remainder-DJNb-1}
h^j{\bf D}^{2j-4}(d^2a\otimes d^2b)|_{\Diag}&  \in h^{j(1-2\rho)+2\rho}  S^{-\infty}_\rho(T^*M) ,\\
\label{e:remainder-DJNb-2}
&  \subset 
 h^{2-2\rho} S^{-\infty}_\rho(T^*M) ,  \quad \text{ for all }j \in \{2,\cdots N\} .
\end{align}
Lemma~\ref{l:DJN-A6} together with~\eqref{e:remainder-DJN-1},~\eqref{e:remainder-DJNb-1} and Item~\ref{e:poisson-loss} imply Item~\ref{e:commut-loss}.
Finally,~\eqref{e:remainder-DJNb-2} and $L^2$ boundedness yield
$$
\Op_h \left( h^j{\bf D}^{2j-4}(d^2a\otimes d^2b)|_{\Diag} \right) = O_{\L(L^2(M))}(h^{2-2\rho}).
$$
Lemma~\ref{l:DJN-A6} together with~\eqref{e:remainder-DJN-2} and \eqref{e:remainder-DJNb-2} conclude the proof of Item~\ref{e:poiss-comm}. 
\enp

Finally, we also make use of the following sharp G{\aa}rding inequatity in $\Psi^0_\rho(M;\C^{N\times N})$ (\ie, for matrix-valued operators, see \eg~\cite[Theorem~2.5.4]{Lerner:10}).
\begin{theorem}
\label{t:garding}
Assume 
$$
A \in \Psi^0_\rho(M;\C^{N\times N}),  \quad \sigma_h(A) + \sigma_h(A)^* \geq 0 \text{ on }T^*M , \text{ in the sense of symmetric matrices} ,
$$ then there is $C>0$ such that 
$$ \Re \left(A U,U\right)_{L^2(M;\C^N)} \geq -h^{1-2\rho} C  \nor{U}{L^2(M;\C^N)}^2 \text{ for all }U \in L^2(M;\C^N), h \in (0,h_0) .
$$
\end{theorem}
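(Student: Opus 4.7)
The plan is to follow the classical Friedrichs symmetrization argument, adapted to the exotic class $\Psi^0_\rho$ with $\rho<1/2$ and to matrix-valued symbols; this is precisely the content of~\cite[Theorem~2.5.4]{Lerner:10}, but I sketch the structure for completeness.

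First I would reduce to a formally selfadjoint problem. Since $A^*\in\Psi^0_\rho(M;\C^{N\times N})$ has principal symbol $\sigma_h(A)^*$, the operator $B:=\frac{1}{2}(A+A^*)\in\Psi^0_\rho(M;\C^{N\times N})$ is formally selfadjoint with principal symbol $b_0:=\frac{1}{2}(\sigma_h(A)+\sigma_h(A)^*)\geq 0$ pointwise as a hermitian matrix on $T^*M$, and $\Re(AU,U)_{L^2}=(BU,U)_{L^2}$. Moreover $B-\Op_h(b_0)\in h^{1-2\rho}\Psi^{-1}_\rho(M;\C^{N\times N})$ is $O(h^{1-2\rho})$ in $\L(L^2)$ by the general calculus. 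So it suffices to prove the lower bound for $\Op_h(b_0)$ with $b_0\in S^0_\rho(T^*M;\C^{N\times N})$ pointwise hermitian nonnegative.

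Next I would localize via the atlas $(U_j,\phi_j)$ and the cutoffs $(\psi_j,\check\psi_j)$ of Section~\ref{sec: local charts}. By the pseudolocality property (Item~\ref{i:pseudolocal} in Definition~\ref{def:pseudo}), off-diagonal cross-terms $\check\psi_j\Op_h(b_0)\check\psi_k$ with disjoint supports are $O(h^\infty)$ in $\L(L^2)$; each diagonal piece pulls back via $\phi_j$ to a matrix pseudodifferential operator on $\R^n$ whose symbol is, up to an $h^{1-2\rho}\Psi^{-1}_\rho$ error (Item~\ref{i:act-coord-patch} of Section~\ref{s:calcul-pseudo}), $(\phi_j^{-1})^*(\check\psi_j^2\, b_0)$, which is still pointwise hermitian nonnegative. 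The problem therefore reduces to establishing the sharp G{\aa}rding inequality on $\R^n$ for a symbol $\tilde b\in S^0_\rho(T^*\R^n;\C^{N\times N})$, compactly supported in $x$ and pointwise hermitian nonnegative.

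The core step is the construction of a matrix-valued anti-Wick (Friedrichs) quantization $\Op^{\mathrm{AW}}_h(\tilde b)$ built from Gaussian coherent states $\phi^h_{(y,\eta)}(x)=(\pi h)^{-n/4}e^{i\eta\cdot(x-y)/h}e^{-|x-y|^2/(2h)}$, via
$$
\big(\Op^{\mathrm{AW}}_h(\tilde b)U,V\big)_{L^2(\R^n;\C^N)}=\frac{1}{(2\pi h)^n}\int_{T^*\R^n}\Big(\tilde b(y,\eta)\,\langle U,\phi^h_{(y,\eta)}\rangle,\langle V,\phi^h_{(y,\eta)}\rangle\Big)_{\C^N}\,dy\,d\eta,
$$
where $\langle\cdot,\phi^h_{(y,\eta)}\rangle$ acts componentwise in $\C^N$. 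Pointwise hermitian nonnegativity of $\tilde b$ makes the integrand nonnegative when $U=V$, so $(\Op^{\mathrm{AW}}_h(\tilde b)U,U)_{L^2}\geq 0$ for all $U\in L^2(\R^n;\C^N)$. A Taylor expansion of $\tilde b$ around each $(y,\eta)$ then yields $\Op^{\mathrm{AW}}_h(\tilde b)-\Op^\ell_h(\tilde b)\in h^{1-2\rho}\Psi^{-1}_\rho(\R^n;\C^{N\times N})$, hence $O(h^{1-2\rho})$ in $\L(L^2)$; combined with the previous reductions this yields the desired bound.

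The main technical obstacle is controlling that asymptotic expansion inside the exotic class $S^0_\rho$: each derivative of the symbol costs a factor $h^{-\rho}$, while each order of the Taylor expansion against the Gaussian kernel of variance $h$ gains a factor $h$, so the first nontrivial correction has size $h\cdot h^{-2\rho}=h^{1-2\rho}$. This is the same loss appearing in the composition and commutator formulas recalled in Section~\ref{s:calcul-pseudo} and is the source of the exponent $1-2\rho$ in the statement. The matrix-valued nature adds no essential difficulty, since the positivity hypothesis is interpreted pointwise as a hermitian matrix inequality and all coherent-state manipulations commute with the $\C^N$ components.
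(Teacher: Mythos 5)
The paper does not prove Theorem~\ref{t:garding} at all: it simply invokes the matrix-valued sharp G{\aa}rding inequality from~\cite[Theorem~2.5.4]{Lerner:10}, whose proof is exactly the Friedrichs/anti-Wick symmetrization you sketch. Your reduction to the selfadjoint part, localization to charts, and positivity of the coherent-state (Wick) quantization with an $O(h^{1-2\rho})$ comparison error is the standard argument behind that cited result and is correct, modulo the routine bookkeeping you already flag (volume form in the charts, Calder\'on--Vaillancourt bounds for the remainders in $S^0_\rho$).
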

\subsection{Regularization procedure}
\label{e:reg-b}
We let $N\in C^\infty_c(\R^n)$ be such that $\supp(N)\subset B(0,1)$, $N\geq 0$,  $\int_{\R^n} N(x) dx=1$ and define $N_\eps(x) =\frac{1}{\eps^n}N\left(\frac{x}{\eps}\right)$. 
Notice that, given $b \in C^0_c(\R^n)$, we have $N_\eps \ast b \in C^\infty_c(\R^n)$ with $\d^\alpha(N_\eps \ast b )\leq C_\alpha \eps^{-|\alpha|}$. Hence, for $\eps= h^{\nu}$, we also have $N_\eps \ast b \in S^0_\nu(T^*\R^n)$, and thus the multiplication by the function $N_\eps \ast b$ defines an operator in $\Psi^0_\nu(\R^n)$. We now define a similar regularization procedure for functions $b\in C^0(M)$.
Recall that such a continuous function $b\in C^0(M; \R)$ is uniformly continuous. It hence admits a modulus of continuity, \ie a continuous function $\omega : \R_+ \to \R_+$ with $\omega(0)=0$ such that
\begin{equation}
\label{e:modulus}
| b(x)- b(y) |\leq \omega (\dist_g (x,y)), \quad \text{ for all } x,y\in M ,
\end{equation}
and which we moreover assume to be non-decreasing. Here, $\dist_g$ denotes the Riemannian distance associated to the metric $g$.

Given such a function $b\in C^0(M)$, we define
\begin{equation}
\label{e:def-regu-b}
b_\eps = \sum_{j\in J} \psi_j \phi_j^\ast \left(N_\eps \ast b_j \right), \quad   b_j := \big(\phi_j^{-1}\big)^\ast \left(\check{\psi}_j b \Big(1-\chi \big(\frac{b}{\omega(\eps)}\big) \Big) \right)  ,
\end{equation}
where $\check{\psi}_j \in C^\infty_c(U_j)$ is such that $\check{\psi}_j =1$ in a neighborhood of $\supp(\psi_j)$, and $\chi \in C^\infty_c(\R ;[0,1])$ satisfies $\chi=1$ on $[-1,1]$ and $\supp(\chi) \subset [-2,2]$.

Note that there is an additional cutoff $1-\chi \big(\frac{b}{\omega(\eps)}\big)$ with respect to a usual mollifier. It is aimed at preserving the zero set of $b$.

\begin{proposition}
\label{p:def-b-eps}
There are $C_0, \kappa, C_\alpha>0$ (depending only on $(M,g), \chi$ and the $(U_j,\phi_j), \psi_j , \check{\psi}_j$'s) such that 
for all $b \in C^0(M;\R)$ satisfying $b \geq 0$ and~\eqref{e:modulus} with $\omega$ non-decreasing, the function $b_\eps$ defined by~\eqref{e:def-regu-b} satisfies for all $\eps \in (0,1)$ the following statements:
\begin{enumerate}
\item\label{i:b-pos}$b_\eps$ is real-valued with $b_\eps\geq 0$ on $M$;
\item\label{i:b=0} $b_\eps(x) = 0$ for all $x \in M$ such that $\dist_g(x,\{b=0\} ) \leq \eps$; 
\item \label{i:Linfty-bound} $\|b_\eps\|_{L^\infty(M)} \leq C_0 \|b\|_{L^\infty(M)}$;
\item \label{i:symbolic} $b_\eps \in C^\infty(M)$ and for any coordinate patch $(U,\phi)$ of $M$ and $\check\chi \in  C^\infty_c(\phi(U))$, we have $| \d_x^\alpha \left( \check\chi (\phi^{-1})^* b_\eps \right) | \leq C_\alpha \eps^{-|\alpha|}\|b\|_{L^\infty(M)}$;
\item \label{i:convergence} $\|b_\eps- b\|_{L^\infty(M)} \leq C_0 \omega(\kappa \eps)$.
\end{enumerate}
\end{proposition}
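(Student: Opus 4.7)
The proof proceeds item by item. Items~\ref{i:b-pos}, \ref{i:Linfty-bound} and \ref{i:symbolic} are essentially immediate from the structure of the definition. First, each defining factor of $b_j$ is nonnegative: $\check\psi_j \geq 0$, $b \geq 0$, and since $0 \leq \chi \leq 1$ we also have $1-\chi(b/\omega(\eps)) \in [0,1]$. Since $N_\eps \geq 0$, the convolution $N_\eps \ast b_j$ remains nonnegative in each chart, and $b_\eps$ is a finite sum of terms $\psi_j \phi_j^*(N_\eps \ast b_j)$ with $\psi_j \geq 0$; this gives Item~\ref{i:b-pos}. The $L^\infty$ bound of Item~\ref{i:Linfty-bound} follows from $\|b_j\|_\infty \leq \|b\|_\infty$, Young's inequality $\|N_\eps \ast b_j\|_\infty \leq \|N_\eps\|_{L^1} \|b_j\|_\infty = \|b\|_\infty$, $\psi_j \leq 1$, and the finiteness of the partition. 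The symbolic estimates of Item~\ref{i:symbolic} come from $\|\partial^\alpha N_\eps\|_{L^1} \leq C_\alpha \eps^{-|\alpha|}$ together with the fact that $b_\eps$ is, in local coordinates, a finite combination of such convolutions multiplied by fixed smooth compactly supported cutoffs.

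For Item~\ref{i:convergence} (the convergence estimate), we would write
\begin{align*}
b_\eps - b = \sum_{j\in J} \psi_j \phi_j^*\bigl(N_\eps \ast b_j - (\phi_j^{-1})^* b \bigr)
\end{align*}
using $\sum_j \psi_j = 1$ and $\check\psi_j = 1$ near $\supp(\psi_j)$. In each chart, one splits the error into two pieces. First, $\|N_\eps \ast b_j - b_j\|_\infty$ is controlled by the continuity modulus of $b_j$, which is in turn bounded by $C\,\omega(\kappa\eps)$ where the constant $\kappa$ absorbs the distortion between Euclidean distance in the chart and Riemannian distance (relying on compactness of the chart domains). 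Second, $b_j - (\phi_j^{-1})^*(\check\psi_j b) = -(\phi_j^{-1})^*\bigl(\check\psi_j\, b\, \chi(b/\omega(\eps))\bigr)$, whose $L^\infty$ norm is bounded by $2\omega(\eps)$ since the product $b\,\chi(b/\omega(\eps))$ is supported in $\{b \leq 2\omega(\eps)\}$. Summing over $j$ and using $\sum_j \psi_j = 1$ then yields Item~\ref{i:convergence}.

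The main work is Item~\ref{i:b=0}, which encodes the preservation of the zero set. Fix $x \in M$ with $\dist_g(x,\{b=0\}) \leq \eps$ and pick $j$ with $x \in \supp \psi_j$. Then
\begin{align*}
\psi_j(x)\phi_j^*(N_\eps \ast b_j)(x) = \psi_j(x) \int_{B(\phi_j(x),\eps)} N_\eps(\phi_j(x)-z)\, b_j(z)\, dz.
\end{align*}
For any $z$ in this integration domain, set $y = \phi_j^{-1}(z) \in U_j$. By compactness of $\supp\check\psi_j$ and smoothness of the chart, there is a constant $C_j$ (intrinsic to the atlas) with $\dist_g(x,y) \leq C_j |\phi_j(x)-z| \leq C_j \eps$, so $\dist_g(y, \{b=0\}) \leq (1+C_j)\eps$, and therefore $b(y) \leq \omega\bigl((1+C_j)\eps\bigr)$. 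To make the factor $\bigl(1-\chi(b(y)/\omega(\eps))\bigr)$ vanish identically on this neighborhood, one uses that the constants $C_j$ are fixed by the atlas, so that by choosing the cutoff $\chi$ to equal $1$ on a sufficiently large interval containing $[-1,1]$ (compatible with the scaling imposed by the bi-Lipschitz constants of the charts, and with $\omega$ being nondecreasing), one obtains $\chi(b(y)/\omega(\eps)) = 1$ throughout the effective integration region, killing the integrand. The constant $\kappa$ in Item~\ref{i:convergence} is then taken accordingly.

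The delicate step is the last one, where the interplay between the Euclidean mollifier scale, the Riemannian distance, and the threshold $\omega(\eps)$ in the cutoff must all be reconciled. The cleanest way is to fix the atlas first (so the distortion constants $C_j$ become universal), then choose $\chi$ so that $\chi \equiv 1$ on $[-\Gamma,\Gamma]$ for some $\Gamma$ adapted to these constants; all subsequent constants $C_0, \kappa$ then depend only on $(M,g)$, $\chi$ and the atlas data, as required.
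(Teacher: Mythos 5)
Your treatment of Items~\ref{i:b-pos}, \ref{i:Linfty-bound} and \ref{i:symbolic} coincides with the paper's, and for Item~\ref{i:convergence} you use essentially the paper's decomposition (mollification error plus a cutoff term bounded by $2\omega(\eps)$ because $b\,\chi(b/\omega(\eps))$ is supported where $b\leq 2\omega(\eps)$). One caveat there: you bound $\nor{N_\eps\ast b_j-b_j}{L^\infty}$ by ``the continuity modulus of $b_j$'', but $b_j$ contains the $\eps$-dependent factor $1-\chi\big(b/\omega(\eps)\big)$, whose Lipschitz constant in $b$ is of size $1/\omega(\eps)$; the bound $C\omega(\kappa\eps)$ is still true at scales $\leq\eps$ (this factor only varies where $b\lesssim\omega(\eps)$), but that needs a short justification, which the paper avoids by mollifying $\tilde b_j=(\phi_j^{-1})^\ast(\check\psi_j b)$ instead and keeping the cutoff term inside the convolution.

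The real divergence is Item~\ref{i:b=0}, and there your instinct is sound but the repair does not close. You correctly observe that $(N_\eps\ast b_j)(\phi_j(x))$ samples $b$ on the pullback of a Euclidean $\eps$-ball, i.e.\ on a Riemannian ball of radius $C_j\eps$ around $x$, so the vanishing of $1-\chi\big(b(x)/\omega(\eps)\big)$ at the single point $x$ is not sufficient; this is exactly the point that the paper's own one-line argument (which only checks $\chi\big(b(x)/\omega(\eps)\big)=1$ at $x$) treats too quickly. However, your proposed fix has two genuine problems. First, \eqref{e:def-regu-b} fixes $\chi$ with $\chi=1$ on $[-1,1]$ and $\supp\chi\subset[-2,2]$, so the plateau of $\chi$ cannot exceed $2$, while the $\Gamma$ you need is dictated by the chart distortion constants $C_j$ and may be larger; the proposition is a statement about the $b_\eps$ built from the given $\chi$ and atlas, so you cannot re-engineer $\chi$ inside the proof. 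Second, even granting a free plateau, your argument requires $b(y)\leq\Gamma\,\omega(\eps)$ on the sampling region, whereas what you control is $b(y)\leq\omega\big((1+C_j)\eps\big)$; for a general nondecreasing modulus there is no inequality $\omega\big((1+C_j)\eps\big)\leq\Gamma\,\omega(\eps)$ (monotonicity gives no doubling property: take $\omega$ with a steep ramp just above the scale $\eps$). To close the step you would need to invoke, say, that $\omega$ may be assumed subadditive (true for the optimal modulus on a geodesic manifold, but it must be stated, and it still collides with the cap $\Gamma\leq2$), or else modify the construction itself (threshold $\omega(\kappa\eps)$ in the cutoff, or mollification at scale $\eps/\kappa$, with $\kappa$ determined by the atlas). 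So as written Item~\ref{i:b=0} is not established; the difficulty you ran into is real, and it sits precisely at the step where the paper's own proof is the least explicit.
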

We only use Proposition~\ref{p:def-b-eps} under the following direct corollary (apply Proposition~\ref{p:def-b-eps} to $\eps= h^\nu$).
\begin{corollary}
\label{cor:def-b-eps}
Given $b \in C^0(M;\R)$ satisfying $b \geq 0$ and~\eqref{e:modulus}, and $\nu \in (0,1/2)$, there are $C_0, \kappa>0$ and, for $h \in (0,1)$ a function $b_{h^\nu} \in C^\infty(M; \R) \cap S^0_\nu(T^*M)$ such that $b_{h^\nu} \geq 0$ on $M$, $b_{h^\nu} = 0$ on $\{b=0\}$, $\|b_{h^\nu}\|_{L^\infty(M)} \leq C_0 \|b\|_{L^\infty(M)}$ and $\|b_{h^\nu}- b\|_{L^\infty(M)} \leq C_0 \omega(\kappa h^\nu)$.
\end{corollary}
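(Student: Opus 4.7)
The plan is straightforward: Corollary~\ref{cor:def-b-eps} is obtained by simply specializing Proposition~\ref{p:def-b-eps} to $\eps = h^\nu$, so the bulk of the work is packaged into that proposition, which I would take for granted here. Given $\nu \in (0,1/2)$ and $h \in (0,1)$, set $\eps := h^\nu \in (0,1)$ and define $b_{h^\nu} := b_\eps$ via formula~\eqref{e:def-regu-b}. Items~\ref{i:b-pos},~\ref{i:Linfty-bound} and~\ref{i:convergence} of Proposition~\ref{p:def-b-eps} then yield directly the non-negativity $b_{h^\nu}\geq 0$, the bound $\|b_{h^\nu}\|_{L^\infty(M)} \leq C_0 \|b\|_{L^\infty(M)}$, and the convergence estimate $\|b_{h^\nu}-b\|_{L^\infty(M)} \leq C_0\omega(\kappa h^\nu)$. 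The vanishing on $\{b=0\}$ follows from Item~\ref{i:b=0}: any $x \in \{b=0\}$ trivially satisfies $\dist_g(x,\{b=0\}) = 0 \leq \eps$.

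It remains only to verify that $b_{h^\nu} \in S^0_\nu(T^*M)$. Since $b_{h^\nu}$ is a function of $x$ alone (viewed as a symbol independent of $\xi$), the symbolic estimates~\eqref{e:symb-estim-Srho} with $m=0$ reduce to $\d_\xi^\beta$-derivatives vanishing for $|\beta|\geq 1$, and in the case $|\beta|=0$ to a bound of the form $|\d_x^\alpha b_{h^\nu}| \leq C_\alpha h^{-\nu|\alpha|}$ in local coordinates. This is precisely Item~\ref{i:symbolic} of Proposition~\ref{p:def-b-eps} with $\eps = h^\nu$: for any coordinate chart $(U,\phi)$ and any $\check\chi \in C^\infty_c(\phi(U))$, one has $|\d_x^\alpha\bigl(\check\chi (\phi^{-1})^* b_{h^\nu}\bigr)| \leq C_\alpha h^{-\nu|\alpha|}\|b\|_{L^\infty(M)}$, so after covering $M$ by finitely many charts and multiplying by a subordinate partition of unity one recovers the class $S^0_\nu(T^*M)$ as defined in Appendix~\ref{app:symbol-classes}. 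No obstacle is expected here; the only (implicit) caution is the assumption $\nu < 1/2$, which is needed only downstream so that $b_{h^\nu}$ fits into the mildly exotic calculus $\Psi^0_\nu(M)$ used elsewhere in the paper, but plays no role in the corollary's proof itself.
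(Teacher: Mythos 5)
Your proposal is correct and coincides with the paper's own argument: the corollary is stated there as a direct specialization of Proposition~\ref{p:def-b-eps} to $\eps=h^\nu$, with each claimed property matched to the corresponding item of the proposition exactly as you do (including the vanishing on $\{b=0\}$ from Item~\ref{i:b=0} with distance zero, and the $S^0_\nu(T^*M)$ membership from the chartwise bounds of Item~\ref{i:symbolic}). Nothing further is needed.
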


\begin{proof}[Proof of Proposition~\ref{p:def-b-eps}]
We remark first that $b_j \in C^0_c(\R^n)$ and $b_j \geq 0$, where $b_j$ is defined in~\eqref{e:def-regu-b}.
Item~\ref{i:b-pos} follows from the definition~\eqref{e:def-regu-b} of $b_\eps$ as a sum of convolutions of nonnegative functions. 
To prove Item~\ref{i:b=0}, remark first that~\eqref{e:modulus} implies that $b(x) \leq \omega (\dist_g(x, \{b=0\}))$. Since $\omega$ is non-decreasing, if $x$ is such that we have $\dist_g(x,\{b=0\}) \leq \eps$, this implies
$$
b(x) \leq \omega\big(\dist_g(x,\{b=0\}) \big) \leq \omega(\eps) , \quad \text{ hence } 0\leq  \frac{b(x)}{\omega(\eps) } \leq 1 .
$$
As a consequence, $\chi\big(\frac{b(x)}{\omega(\eps) } \big) = 1$ and~\eqref{e:def-regu-b} implies $b_\eps(x)=0$.

Item~\ref{i:Linfty-bound} is a direct consequence of the definition~\eqref{e:def-regu-b} and the estimate
$$
\|b_\eps\|_{L^\infty(M)} \leq 
C \sum_{j\in J} \nor{N_\eps \ast   b_j }{L^\infty(\R^n)} = C \sum_{j\in J} \nor{b_j}{L^\infty(\R^n)} \leq 
C \|b\|_{L^\infty(M)} .
$$
To prove Item~\ref{i:symbolic}, first remark that $b_\eps \in C^\infty(M)$ since each term in the sum~\eqref{e:def-regu-b} is a convolution of the $C^0$ function $b_j$ with the $C^\infty$ function $N_\eps$. 
Now, given a coordinate patch $(U,\phi)$ of $M$ and $\chi \in  C^\infty_c(\phi(U))$, the only dependence of $\chi (\phi^{-1})^* b_\eps$ on the parameter $\eps$ is in $N_\eps*b_j$. Therefore, in $\d^\alpha \left( \chi (\phi^{-1})^* b_\eps \right)$, all derivatives are uniformly bounded in $\eps$ except those appearing in $\d^\beta \left( N_\eps \ast  b_j \right)$, $|\beta| \leq |\alpha|$, which satisfy
\begin{align*}
\left|\d^\beta \left( N_\eps \ast  b_j \right) (x)\right| & =  \int_{\R^n} \eps^{-|\beta|} \left| (\d^\beta N )\left(\frac{x-y}{\eps} \right)\right| | b_j(y)|\frac{dy}{\eps^n} \\
&   \leq \eps^{-|\beta|}\nor{b_j}{L^\infty(\R^n)} \nor{\d^\beta N}{L^1(\R^n)} \leq C_{\beta}\eps^{-|\beta|}\nor{b}{L^\infty(M)}.
\end{align*}
Summing up these contributions proves Item~\ref{i:symbolic}.

To prove Item~\ref{i:convergence}, first write $b$ as $b = \sum_{j\in J} \psi_j \phi_j^\ast \big(\phi_j^{-1}\big)^\ast \big(\check{\psi}_j b \big)$, so that we have 
\begin{align}
\label{e:mod-infty-b}
\|b_\eps- b\|_{L^\infty(M)} \leq C \max_{j\in J} \nor{\big(\phi_j^{-1}\big)^\ast \big(\check{\psi}_j b \big) - N_\eps \ast b_j}{L^\infty(\R^n)} .
\end{align}
We now write $\tilde{b}_j := \big(\phi_j^{-1}\big)^\ast \big(\check{\psi}_j b \big)$, and, recalling the definition of $b_j$ in~\ref{e:def-regu-b}, decompose the above norm as
\begin{align}
\label{e:interm-convol-reg}
\nor{\tilde{b}_j - N_\eps \ast b_j}{L^\infty(\R^n)} & = \nor{\tilde{b}_j - N_\eps \ast \tilde{b}_j + N_\eps \ast \big(\phi_j^{-1}\big)^\ast \left(\check{\psi}_j b \chi \big(\frac{b}{\omega(\eps)}\big) \right) }{L^\infty(\R^n)} \nonumber \\
& \leq \nor{\tilde{b}_j - N_\eps \ast \tilde{b}_j}{L^\infty(\R^n)}  + \nor{N_\eps \ast \big(\phi_j^{-1}\big)^\ast \left(\check{\psi}_j b \chi \big(\frac{b}{\omega(\eps)}\big) \right) }{L^\infty(\R^n)} .
\end{align}
As far as the second term is concerned, we have 
$$
\nor{N_\eps \ast \big(\phi_j^{-1}\big)^\ast \left(\check{\psi}_j b \chi \big(\frac{b}{\omega(\eps)}\big) \right) }{L^\infty(\R^n)} 
 \leq \nor{N_\eps \ast \big(\phi_j^{-1}\big)^\ast \left(\check{\psi}_j b \chi \big(\frac{b}{\omega(\eps)}\big) \right) }{L^\infty(\R^n)}
 \leq \nor{ b \chi \big(\frac{b}{\omega(\eps)}\big) }{L^\infty(M)} \leq 2\omega(\eps) ,
$$
since $\supp(\chi) \subset (-\infty,2]$.
Let us now consider the first term in~\eqref{e:interm-convol-reg}. 
Using that $\phi_j \in C^1_c(M)$ and $ \check{\psi}_j$ is of class $C^1$, equivalence between Riemannian and Euclidean distances, together with~\eqref{e:modulus} (and the fact that $\omega$ is non-decreasing with $\omega(t) \geq c_0 t$ for $t \in (0,1)$) yield the existence of $C_j, \kappa_j$ such that
$$
| \tilde{b}_j(x)- \tilde{b}_j(y) |\leq C_j \omega (\kappa_j |x -y| ), \quad \text{ for } x,y\in \R^n .
$$
This implies, uniformly for $x \in \R^n$ 
\begin{align*}
 \left| \tilde{b}_j (x)-  N_\eps \ast \tilde{b}_j (x)  \right| & = \left| \tilde{b}_j(x) \int_{\R^n} N_\eps (y) dy- \int_{\R^n}  \tilde{b}_j(x-y) N_\eps (y) dy\right| \\
&  \leq  \int_{\R^n} | \tilde{b}_j(x)- \tilde{b}_j(x-y)| N_\eps (y) dy \leq C_j  \int_{\R^n}  \omega (\kappa_j |y| )N\left(\frac{y}{\eps}\right)\frac{dy}{\eps^n} \\
& \leq C_j  \int_{\R^n}  \omega (\kappa_j \eps |y| )N\left(y\right) dy \leq C_j \omega (\kappa_j \eps) .
\end{align*}
where we used in the last inequality that $\omega$ is non-decreasing and that $|y| \leq 1$ on $\supp(N)$. Plugging this estimate in~\eqref{e:mod-infty-b} finally concludes the proof of Item~\ref{i:convergence} using the finiteness of $J$ and, again, the fact that $\omega$ is non-decreasing. This concludes the proof of the lemma.
\end{proof}

\subsection{Functional calculus}
In the main part of the article, we use functional calculus for the selfadjoint operator $h\Lambda = \sqrt{-h^2\Delta}$. We recall in this section the interplay between functional calculus and pseudodifferential calculus. Classical results (that is, concerning $f(-h^2\Delta)$ for fixed $f$) can be found in~\cite[Section~8]{DS:book} or~\cite[Theorem~14.9]{Zworski:book}. Here, we sometimes use functional calculus for $f_h(-h^2\Delta)$ where $f_h$ satisfies $|\d^\alpha f_h(s)|\leq C_{\alpha} h^{-\alpha \rho}$. This functional calculus is described in~\cite{Kuster:17}. We state here a particular case of~\cite[Theorem~4.4]{Kuster:17}.

\begin{theorem}[Theorem~4.4 in~\cite{Kuster:17}]
\label{t:thm-Kuster}
Let $0\leq \rho <\frac12$ and take $f_h \in C^\infty_c(\R)$ such that $|\d^\alpha f_h(s)|\leq C_{\alpha} h^{-\alpha \rho}$ and $\supp(f_h) \subset K$ where $K\subset \R$ is a compact set independent of $h$. Then, there exist $e_j\in S^{-\infty}_\delta(T^*\R^n)$ for $j \in J$ and $R_h \in \L(L^2(M))$ such that 
\begin{enumerate}
\item $f_h(-h^2\Delta) = \sum_{j\in J} \check{\psi}_j \phi_j^\ast  \Op_h^\ell(  e_j ) \big(\phi_j^{-1}\big)^\ast \check{\psi}_j + R_h$;
\item $R_h = \O_{\L(L^2(M))}(h^\infty)$;
\item \label{i:asympt} for all $N\in \N$, $e_j - \sum_{k=0}^N h^{k(1-2\delta)} e_{j,k} \in h^{(N+1)(1-2\delta)}  S_{\delta}^{-\infty}(T^*\R^n)$ with $e_{j,k} \in S_{\delta}^{-\infty}(T^*\R^n)$ such that 
\item $e_{j,0} = \big(\phi_j^{-1}\big)^\ast \big(f_h(|\xi|_x^2) \psi_j\big)$ for all $j \in J$,
\item $\supp(e_{j,k}) \subset \supp \Big( \big(\phi_j^{-1}\big)^\ast \big(f_h(|\xi|_x^2) \psi_j\big) \Big)$ for all $j \in J$.
\end{enumerate}
\end{theorem}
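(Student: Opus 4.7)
My plan is to prove this via the Helffer--Sj\"ostrand formula combined with a semiclassical parametrix construction for the resolvent $(z+h^2\Delta)^{-1}$, carefully tracking the dependence on $\Im z$ because the semiclassical symbol class is mildly exotic.

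The first step is to produce an almost analytic extension $\tilde f_h \in C^\infty_c(\C)$ of $f_h$ that is adapted to the exotic estimates $|\d^\alpha f_h(s)| \leq C_\alpha h^{-\alpha \rho}$. Taking $\tilde f_h(x+iy) = \chi(y) \sum_{k=0}^N \frac{(iy)^k}{k!} f_h^{(k)}(x)$ with $\chi \in C^\infty_c(\R)$ equal to $1$ near $0$, one checks that for each $N \in \N$ there is $C_N>0$ with
\begin{equation*}
|\bar\d \tilde f_h(x+iy)| \leq C_N \, h^{-(N+1)\rho} \, |y|^N \mathds{1}_{\supp \tilde f_h}(x+iy),
\end{equation*}
and $\tilde f_h$ is compactly supported in a complex neighbourhood of $\supp f_h \subset K$. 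The Helffer--Sj\"ostrand formula then reads
\begin{equation*}
f_h(-h^2\Delta) = -\frac{1}{\pi}\int_\C \bar\d \tilde f_h(z)\,(z+h^2\Delta)^{-1}\,L(dz).
\end{equation*}

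The second step is a local pseudodifferential parametrix for $(z+h^2\Delta)^{-1}$. Transporting to $\R^n$ via $(U_j,\phi_j)$, the operator $-h^2\Delta$ becomes a second order elliptic pseudodifferential operator with principal symbol $p_j(y,\eta) = (\phi_j^{-1})^*(|\xi|_x^2)$. For $z$ outside a complex neighbourhood of $[0,+\infty)$ one would get $O(h^\infty)$ control trivially, so the real work is for $z$ with $\Re z$ near $K$ and $|\Im z|$ small. On the support of $\bar\d \tilde f_h$, I construct symbols $q_{j,k}(y,\eta;z,h) \in S^{-\infty}(T^*\R^n)$, localized in the support of $(\phi_j^{-1})^*\psi_j$, by the usual iterative scheme $q_{j,0} = \psi_j/(z-p_j)$ and $q_{j,k}$ involving $k$ derivatives of $1/(z-p_j)$ against $p_j$. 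Each extra derivative in $\eta$ costs a factor $\langle z-p_j\rangle^{-1}$, and on $\supp\bar\d \tilde f_h$ we have $|z-p_j| \gtrsim |\Im z|$, so
\begin{equation*}
|\d_y^\alpha \d_\eta^\beta q_{j,k}(y,\eta;z,h)| \leq C_{\alpha,\beta,k,N}\,|\Im z|^{-(2k+1)}\langle\eta\rangle^{-N}.
\end{equation*}
Stopping the expansion at order $K$, standard elliptic calculus in $\Psi^{-\infty}(\R^n)$ (uniform in the parameter $z$) gives a parametrix with remainder $O(|\Im z|^{-K'} h^{K''})$ in $\L(L^2(\R^n))$, where $K'', K'$ grow linearly with $K$.

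The third step is to insert the parametrix into the Helffer--Sj\"ostrand integral and carry out the $z$-integration term by term. Each term produces a quantized symbol whose $z$-integral, against $\bar\d \tilde f_h$, yields (by repeated integration by parts in $z$ using $\bar\d(1/(z-p_j)^{k+1}) = 0$ away from $p_j$, transferring $\bar\d$ onto $f_h^{(k)}$ and then onto derivatives of $f_h$) a symbol
\begin{equation*}
e_{j,k}(y,\eta) = \text{polynomial in } f_h^{(\ell)}(p_j(y,\eta)) \text{ and derivatives of } p_j,\ \ell \leq 2k,
\end{equation*}
localized in $\supp\psi_j$. Because each factor $f_h^{(\ell)}(p_j)$ costs $h^{-\ell \rho}$ when differentiated in $(y,\eta)$ and each integration by parts in $z$ gains one power of $h$, balancing yields $e_{j,k} \in h^{k(1-2\rho)} S^{-\infty}_\rho(T^*\R^n)$, with leading term $e_{j,0}(y,\eta) = (\phi_j^{-1})^*(f_h(|\xi|_x^2)\,\psi_j)$ as required. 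The balance between the $|\Im z|^{-K'}$ singularity and the $|y|^N$ vanishing of $\bar\d \tilde f_h$ is what makes the remainder $O(h^\infty)$ after choosing $N$ large. The pseudolocal property (Item 1 of Definition~\ref{def:pseudo}) together with $\check\psi_j \equiv 1$ on $\supp\psi_j$ absorbs all cross-chart errors into $R_h = O_{\L(L^2(M))}(h^\infty)$.

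The main obstacle is the bookkeeping in the third step: ensuring that the powers of $h^{-\rho}$ accumulated from differentiating $f_h$ never outpace the $h$-gains from the symbolic expansion, which requires the hypothesis $\rho < 1/2$ (so that $1-2\rho>0$ is the effective gain per order). A secondary technical point is justifying the interchange of the $z$-integral and the pseudodifferential operations, for which the compact support of $\tilde f_h$ and the $L^2$-boundedness of each parametrix term (uniform in $z$ off $\supp\bar\d\tilde f_h$) are the key inputs.
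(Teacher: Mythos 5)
The paper does not prove this statement: it is quoted verbatim (up to switching from Weyl to left quantization) as Theorem~4.4 of the cited reference \cite{Kuster:17}, so there is no in-paper proof to compare against. Your sketch --- almost analytic extension adapted to the exotic bounds $|\d^\alpha f_h|\leq C_\alpha h^{-\alpha\rho}$, the Helffer--Sj\"ostrand formula, a resolvent parametrix with $|\Im z|^{-M}$ losses balanced against the $|y|^N$ vanishing of $\bar\d \tilde f_h$, and the $h^{k(1-2\rho)}$ bookkeeping that uses $\rho<\tfrac12$ --- is precisely the standard route taken in that reference (and in the non-exotic versions in Dimassi--Sj\"ostrand and Zworski), and the plan is sound.
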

This result is stated with the Weyl quantization in~\cite[Theorem~4.4]{Kuster:17} but can be equivalently rewritten with the classical quantization according \eg to~\cite[Theorem~4.13]{Zworski:book}.
In particular, we mostly use this result under the following weaker corollary, where we recall that $\Lambda = \sqrt{-\Delta}$.
\begin{corollary}
\label{c:functional-calc}
Let $0\leq \rho <\frac12$, $\chi \in C^\infty_c(\R)$ and set $\chi_h(s) := \chi(\frac{s-1}{h^\rho})$. Then, modulo $\O_{\L(L^2(M))}(h^\infty)$, we have $\chi_h(h\Lambda) \in \Psi_\rho^{-\infty}(M)$ with principal symbol $\chi_h(|\xi|_x) \in S_\rho^{-\infty}(T^*M)$. 

Moreover, let $\tilde{\chi} \in C^\infty_c(\R)$ be equal to one in a neighborhood of $\supp(\chi)$ and $\tilde{\chi}_h(s) = \tilde{\chi}(\frac{s-1}{h^\rho})$. Then, for any $B \in \Psi_\rho^{0}(M)$, we have $(1-\tilde{\chi}_h(h\Lambda)) B \chi_h(h\Lambda) = \O_{\L(L^2(M))}(h^\infty)$.
\end{corollary}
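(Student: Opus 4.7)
The plan is, for the first statement, to apply Theorem~\ref{t:thm-Kuster} to a carefully chosen $h$-dependent function of $-h^2\Delta$. Namely, I would set $F_h(s) := \chi_h(\sqrt{s})\zeta(s)$, where $\zeta \in C^\infty_c((0,\infty);[0,1])$ is a fixed cutoff equal to $1$ on a neighborhood of $[1/4, 4]$. For $h$ small, $\supp(\chi_h\circ\sqrt{\cdot}) \subset [1/2, 2]$, so $F_h = \chi_h(\sqrt{\cdot})$ on a neighborhood of this support and $F_h \in C^\infty_c(\R)$ with $\supp F_h$ contained in a fixed compact $K \subset (0,\infty)$. The chain rule (smoothness of $\sqrt{\cdot}$ on $K$) gives $|\partial^\alpha_s F_h(s)| \leq C_\alpha h^{-|\alpha|\rho}$, so that Theorem~\ref{t:thm-Kuster} with $\delta = \rho$ applies. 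Functional calculus for the nonnegative operator $-h^2\Delta$ gives $F_h(-h^2\Delta) = \chi_h(h\Lambda)$, and the theorem then furnishes a decomposition of $\chi_h(h\Lambda)$ as a sum of quantizations $\check{\psi}_j \phi_j^* \Op_h^\ell(e_j) (\phi_j^{-1})^* \check{\psi}_j$ of symbols $e_j \in S^{-\infty}_\rho$ modulo $\O_{\L(L^2(M))}(h^\infty)$, with leading parts $e_{j,0}$ equal to the local representatives of $\chi_h(|\xi|_x)\psi_j$. Comparing with~\eqref{eq: def psiDO on M} identifies $\chi_h(h\Lambda) \in \Psi^{-\infty}_\rho(M)$ modulo $\O_{\L(L^2(M))}(h^\infty)$ with principal symbol $\chi_h(|\xi|_x)$.

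For the second statement, I would introduce two fixed auxiliary cutoffs $\eta, \eta_1 \in C^\infty_c(\R;[0,1])$ with $\eta \equiv 1$ on $[1/2, 3/2]$, $\supp\eta \subset (1/4, 7/4)$, and $\eta_1 \equiv 1$ on a neighborhood of $\supp\eta$. For $h$ small, $\supp\chi_h \cup \supp\tilde{\chi}_h \subset [1/2, 3/2]$, so $\eta\chi_h = \chi_h$, $\eta_1\tilde{\chi}_h = \tilde{\chi}_h$, and $\eta_1\chi_h = \chi_h$. The algebraic identity $1 - \tilde{\chi}_h = (\eta_1 - \tilde{\chi}_h) + (1 - \eta_1)$ together with functional calculus and the vanishings $(\eta_1 - \tilde{\chi}_h)\chi_h = 0$ (since both $\eta_1$ and $\tilde{\chi}_h$ are identically $1$ on $\supp\chi_h$) and $(1-\eta_1)\eta = 0$ then yields
\[
(1-\tilde{\chi}_h(h\Lambda))\,B\,\chi_h(h\Lambda) = (\eta_1 - \tilde{\chi}_h)(h\Lambda)\,[B,\chi_h(h\Lambda)] + (1-\eta_1)(h\Lambda)\,[B,\eta(h\Lambda)]\,\chi_h(h\Lambda).
\]
By the first part of the corollary (applied to $\chi_h$ and to $\eta_1 - \tilde{\chi}_h$), each of $\chi_h(h\Lambda)$, $(\eta_1 - \tilde{\chi}_h)(h\Lambda)$, $\eta_1(h\Lambda)$ and $\eta(h\Lambda)$ lies in $\Psi^{-\infty}_\rho(M)$ modulo $\O_{\L(L^2(M))}(h^\infty)$ with the expected principal symbols. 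The full Moyal expansion of $[B,\chi_h(h\Lambda)]$ has its symbol supported in $\supp\chi_h(|\xi|_x)$, disjoint from $\supp(\eta_1 - \tilde{\chi}_h)(|\xi|_x)$ at scale $\sim h^\rho$; similarly the symbol of $[B,\eta(h\Lambda)]$ is supported in $\supp d\eta(|\xi|_x)$, disjoint from $\supp(1-\eta_1)(|\xi|_x)$ at $h$-independent positive distance. In each case the triple composition is a product of pseudodifferential operators whose outer symbols have disjoint supports, and one concludes that it is $\O_{\L(L^2(M))}(h^\infty)$.

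The hard part will be justifying this $\O(h^\infty)$ bound for a composition $\Op_h(a)\,B\,\Op_h(c)$ in the exotic class $\Psi^{-\infty}_\rho$ when $\supp a$ and $\supp c$ are separated only at scale $\sim h^\rho$ — precisely the scale of symbol variation of that class. The naive Moyal calculus in $\Psi^{-\infty}_\rho$ only gains $h^{1-2\rho}$ per order, which by itself falls short of $\O(h^\infty)$. The resolution is that each term of the full asymptotic expansion of the composition is a product of derivatives of the outer symbols $a$ and $c$ evaluated at a common point, and so vanishes identically on the disjoint supports; only the genuine remainder of any truncated expansion survives. This remainder can be controlled by a non-stationary-phase argument on the oscillatory integral defining the composition (valid for $\rho<1/2$), or equivalently by rescaling the spectral variable via $t = (|\xi|_x-1)/h^\rho$ to reduce to standard ($\rho = 0$) pseudodifferential calculus with outer supports at uniform positive distance.
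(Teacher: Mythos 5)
Your proposal is correct. For the first statement it coincides with the paper's proof: one applies Theorem~\ref{t:thm-Kuster} to $f_h(s)=\chi_h(\sqrt{s})$, noting that for $h$ small the support of $f_h$ sits in a fixed compact set away from $s=0$ so that $|\d^\alpha f_h|\leq C_\alpha h^{-|\alpha|\rho}$; your extra cutoff $\zeta$ is harmless but unnecessary, since $f_h$ is already smooth and compactly supported for small $h$.

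For the second statement you take a somewhat different route. The paper composes $(1-\tilde{\chi}_h(h\Lambda))\,B\,\chi_h(h\Lambda)$ directly: it invokes the support statement of Theorem~\ref{t:thm-Kuster} for \emph{both} $\chi_h(h\Lambda)$ and $\tilde{\chi}_h(h\Lambda)$ (all local symbols $e_j$, $\tilde e_j$ are supported, modulo $O(h^\infty)$, inside the supports of the localized functions $\chi_h(|\xi|_x)\psi_j$, $\tilde{\chi}_h(|\xi|_x)\psi_j$), observes that $\supp\chi_h(|\xi|_x)\cap\supp\big(1-\tilde{\chi}_h(|\xi|_x)\big)=\emptyset$, and concludes by the change-of-variables formula (needed because the contributions from different charts must be compared) and the $\Psi^{-\infty}_\rho$ calculus. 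Your commutator decomposition with the auxiliary fixed cutoffs $\eta,\eta_1$ is algebraically correct (both exact cancellations $(\eta_1-\tilde{\chi}_h)\chi_h=0$ and $(1-\eta_1)\eta=0$ hold by functional calculus for $h$ small) and reduces to the same disjoint-support principle, at the cost of extra bookkeeping; what it buys is that the second term has supports separated at an $h$-independent distance, but this gain is not needed. Indeed, the point you flag as ``the hard part'' is not actually a difficulty: in the $S_\rho$ calculus with $\rho<\frac12$, every term of the composition expansion is a product of derivatives of the outer symbols at a common point and hence vanishes identically when their supports are disjoint (even at scale $h^\rho$, since the symbols lie uniformly in $S^{-\infty}_\rho$), so the composed operator equals the order-$N$ remainder, which is $O(h^{N(1-2\rho)})$ in $\L(L^2)$ for every $N$; as $1-2\rho>0$ this is already $O(h^\infty)$, with no need for a separate non-stationary-phase or rescaling argument (though those would also work). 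This is exactly what the paper means by ``the $\Psi^{-\infty}_\rho(\R^n)$ calculus''.
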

\bnp
We write $\supp(\chi) \subset (-\alpha,\alpha)$ and assume $h<\alpha^{-1}$. Then setting $f_h(s) = \chi_h(\sqrt{s}) = \chi(\frac{\sqrt{s}-1}{h^\rho})$ implies $f_h \in C^\infty(\R)$, $\supp(f_h) \subset [0,2]$ and $|\d^\alpha f_h(s)|\leq C_{\alpha} h^{-\alpha \rho}$. Hence Theorem~\ref{t:thm-Kuster} applies and the first part of the corollary follows. 

Concerning the second part, we write $a(x,\xi) = \chi_h(|\xi|_x)$, $\tilde{a}(x,\xi) = \tilde{\chi}_h(|\xi|_x)$, and notice that, modulo $\O_{\L(L^2(M))}(h^\infty)$, we have  $\chi_h(h\Lambda) = \sum_{j\in J} \check{\psi}_j \phi_j^\ast  \Op_h^\ell(e_j )\big(\phi_j^{-1}\big)^\ast \check{\psi}_j$ with $\supp(e_j) \subset \supp \Big( \big(\phi_j^{-1}\big)^\ast \big(a\psi_j\big) \Big)$ and similarly $\tilde{\chi}_h(h\Lambda) = \sum_{j\in J} \check{\psi}_j \phi_j^\ast  \Op_h^\ell(\tilde{e}_j ) \big(\phi_j^{-1}\big)^\ast \check{\psi}_j $ with $\supp(\tilde{e}_j) \subset  \supp \Big( \big(\phi_j^{-1}\big)^\ast \big(\tilde{a}\psi_j\big) \Big)$. Since $\supp a \cap \supp (1-\tilde{a})= \emptyset$, we deduce that $\supp\big(\big(\phi_i^{-1}\big)^\ast\phi_j^\ast e_j \big) \cap \supp(1-\tilde{e}_i) = \emptyset$ for all $i,j \in J$. Thus the second part of the result follows from Item~\ref{i:asympt} in Theorem~\ref{t:thm-Kuster}, the change of variable formula (see~\cite[Theorem~18.1.17]{Hoermander:V3} or~\cite[Theorem~B.1]{LRLR:13} in the semiclassical setting) together with the $\Psi_\rho^{-\infty}(\R^n)$ calculus.
\enp

\subsection{Coherent states}

\begin{lemma}
\label{l:coherent}
Let $(x_0,\xi_0) \in T^*\R^n$ and set 
\begin{equation}
\label{e:def-coherent-state}
v_h(x) = (\pi h)^{-\frac{n}{4}}e^{-\frac{1}{2h} |x-x_0|^2 + \frac{i}{h}(x-x_0) \cdot \xi_0}.
\end{equation} Then we have $\|v_h\|_{L^2(\R^n)}=1$ and for all $m \in \R$ there is $C>0$ such that for all $a \in S^m_\rho(T^*\R^n)$, we have
\begin{align*}
\left| \left(\Op_h^\ell(a)  v_h , v_h \right)_{L^2} - a(x_0 , \xi_0) \right| \leq Ch^{\frac12-\rho}.
\end{align*}
\end{lemma}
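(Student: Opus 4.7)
The plan is to establish normalization by a direct Gaussian computation, then write the matrix element $(\Op_h^\ell(a)v_h,v_h)_{L^2}$ as an absolutely convergent oscillatory integral, rescale by $\sqrt{h}$, and conclude by a first-order Taylor expansion of the symbol at $(x_0,\xi_0)$.

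First, the identity $\|v_h\|_{L^2(\R^n)}^2 = (\pi h)^{-n/2}\int_{\R^n}e^{-|x-x_0|^2/h}dx = 1$ is immediate from the standard Gaussian integral. Next, I would compute the semiclassical Fourier transform $\mathcal{F}_h v_h(\xi) = (2\pi h)^{-n/2}\int e^{-ix\xi/h}v_h(x)dx$ by a Gaussian integration after shifting $x\mapsto x-x_0$; one finds
\[
\mathcal{F}_h v_h(\xi) = (\pi h)^{-n/4} e^{-|\xi-\xi_0|^2/(2h)}e^{-ix_0\cdot\xi/h},
\]
so $\mathcal{F}_h v_h\in\mathcal{S}(\R^n)$. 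Using that $a\in S^m_\rho(T^*\R^n)$ has only polynomial growth in $\xi$ while $\mathcal{F}_h v_h$ has Gaussian decay in $\xi$, the Schwartz kernel computation gives the absolutely convergent representation
\[
\Op_h^\ell(a)v_h(x) = (2\pi h)^{-n/2}\int_{\R^n} e^{ix\cdot\xi/h}a(x,\xi)\mathcal{F}_h v_h(\xi)\, d\xi,
\]
which is a Schwartz function of $x$. Taking the $L^2$ inner product with $v_h$ and using Fubini (legitimate because $\overline{v_h(x)}$ has Gaussian decay in $x$) then leads to an absolutely convergent double integral in $(x,\xi)$. The three phase factors $e^{ix\cdot\xi/h}$, $e^{-i(x-x_0)\cdot\xi_0/h}$ coming from $\overline{v_h}$, and $e^{-ix_0\cdot\xi/h}$ coming from $\mathcal{F}_h v_h$ combine to produce $e^{i(x-x_0)\cdot(\xi-\xi_0)/h}$, while the two Gaussian amplitudes multiply to $(\pi h)^{-n/2}e^{-|x-x_0|^2/(2h)-|\xi-\xi_0|^2/(2h)}$.

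At this point I would perform the change of variables $(x,\xi)=(x_0+\sqrt{h}y,\xi_0+\sqrt{h}\eta)$, whose Jacobian $h^n$ exactly cancels the prefactors, leading to
\[
(\Op_h^\ell(a)v_h,v_h)_{L^2} = \pi^{-n/2}(2\pi)^{-n/2}\iint_{\R^{2n}} a(x_0+\sqrt{h}y,\xi_0+\sqrt{h}\eta)\,e^{-|y|^2/2-|\eta|^2/2+iy\cdot\eta}\,dy\,d\eta.
\]
Taking $a\equiv1$ and integrating successively in $\eta$ (a Gaussian Fourier transform producing $(2\pi)^{n/2}e^{-|y|^2/2}$) and then in $y$ yields exactly the value $1$, which provides both a sanity check and the fact that the constant term $a(x_0,\xi_0)$ will be reproduced. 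Finally, writing
\[
a(x_0+\sqrt{h}y,\xi_0+\sqrt{h}\eta) = a(x_0,\xi_0) + \sqrt{h}\int_0^1\bigl[y\cdot\partial_x a+\eta\cdot\partial_\xi a\bigr](x_0+s\sqrt{h}y,\xi_0+s\sqrt{h}\eta)\,ds,
\]
the first term gives $a(x_0,\xi_0)$, while the remainder is bounded using the $S^m_\rho$ estimates $|\partial_x^\alpha\partial_\xi^\beta a|\le C_{\alpha,\beta}h^{-\rho(|\alpha|+|\beta|)}\langle\xi\rangle^{m-|\beta|}$ together with $\langle\xi_0+s\sqrt{h}\eta\rangle^{|m|}\le C_{\xi_0}(1+|\eta|)^{|m|}$ (valid uniformly for $h\le 1$). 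This produces an integrand bounded by $C h^{-\rho}(|y|+|\eta|)(1+|\eta|)^{|m|}$ against the finite Gaussian measure $e^{-|y|^2/2-|\eta|^2/2}$, which dominates any polynomial; the oscillation $e^{iy\cdot\eta}$ costs nothing in absolute value. The resulting bound is $Ch^{1/2-\rho}$, as claimed.

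The only real obstacle is a matter of care rather than substance: ensuring that each exchange of integrations is justified despite $a$ having polynomial growth in $\xi$. This is handled by defining $\Op_h^\ell(a)v_h$ via the semiclassical Fourier transform (making it Schwartz before pairing with $v_h$), so that all subsequent manipulations involve absolutely convergent integrals. One should also be mindful of the $\langle\xi\rangle^m$ weight when estimating the remainder; the point is that on the support of the Gaussian the $\xi$-variable lives at distance $\sqrt{h}|\eta|$ from the fixed point $\xi_0$, so $\langle\xi\rangle$ is uniformly controlled by $C_{\xi_0}\langle\eta\rangle$ and Gaussian decay absorbs the polynomial factor.
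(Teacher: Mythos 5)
Your proof is correct and follows essentially the same route as the paper: both reduce the matrix element to the Gaussian phase-space integral $\frac{2^{n/2}}{(2\pi h)^{n}}\iint a(x,\xi)e^{\frac{i}{h}(x-x_0)\cdot(\xi-\xi_0)}e^{-\frac{1}{2h}(|x-x_0|^2+|\xi-\xi_0|^2)}\,dx\,d\xi$ (which the paper simply quotes from Zworski's book, while you rederive it via the Fourier transform of the coherent state), then Taylor-expand $a$ at $(x_0,\xi_0)$, use the $S^m_\rho$ bounds and the $\sqrt{h}$ rescaling to get the $Ch^{1/2-\rho}$ remainder. The only cosmetic difference is that you rescale before expanding while the paper expands first; the estimates are identical in substance.
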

Note that $a \in S^m_\rho(T^*\R^n)$ (and hence in particular $a(x_0 , \xi_0)$) depends on $h$.

\begin{corollary}
\label{c:coherent-state}
Given $\zeta_0 \in T^*M$, there is an $h$-dependent family of functions $u_h^{\zeta_0} \in C^\infty(M)$ such that $\|u_h^{\zeta_0}\|_{L^2(M)}=1$ and for all $A \in \Psi^m_\rho(M)$, we have for all $h \in (0,h_0)$
\begin{align*}
\left| \left(A  u_h^{\zeta_0} , u_h^{\zeta_0} \right)_{L^2} - \sigma_h(A)(\zeta_0) \right| \leq Ch^{\frac12-\rho}.
\end{align*}
\end{corollary}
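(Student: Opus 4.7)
The strategy is to transport the Euclidean coherent state of Lemma~\ref{l:coherent} to $M$ through a local chart, and use the pseudolocal/coordinate-invariance properties of the calculus $\Psi^m_\rho(M)$ to reduce the claim to Lemma~\ref{l:coherent}.

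First, I pick a coordinate patch $(U,\phi)$ of $M$ with $\pi(\zeta_0)\in U$ and let $(x_0,\xi_0)\in T^*\phi(U)$ be the image of $\zeta_0$ under the canonical cotangent identification, so that $\sigma_h(A)(\zeta_0) = (\phi^{-1})^\ast\sigma_h(A)(x_0,\xi_0)$ for any $A\in\Psi^m_\rho(M)$. Pick $\chi\in C^\infty_c(\phi(U);[0,1])$ equal to $1$ in a neighborhood of $x_0$, and write the Riemannian volume in these coordinates as $d\Vol_g = \rho(x)\,dx$ with $\rho>0$ smooth. With $v_h$ the Euclidean coherent state~\eqref{e:def-coherent-state}, I define
\[
 \tilde u_h(x) := \chi(x) v_h(x)\in C^\infty_c(\phi(U)), \qquad  u_h^{\zeta_0} := \frac{\phi^\ast \tilde u_h}{\|\phi^\ast \tilde u_h\|_{L^2(M)}}\in C^\infty_c(U).
\]
Because $v_h$ concentrates in an $O(\sqrt h)$-neighborhood of $x_0$, $(1-\chi)v_h = O_{\mathscr{S}}(h^\infty)$, and a direct change-of-variable computation gives $\|\phi^\ast\tilde u_h\|_{L^2(M)}^2 = \int |\tilde u_h(x)|^2 \rho(x)\,dx = \rho(x_0) + O(h^{1/2})$, so the normalization behaves as $\rho(x_0)^{-1/2}(1+O(h^{1/2}))$.

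Next, I compute $(Au_h^{\zeta_0}, u_h^{\zeta_0})_{L^2(M)}$. Pick $\check\chi\in C^\infty_c(U)$ equal to $1$ on a neighborhood of $\supp(\phi^\ast\chi)$. By the pseudolocality property (Item~\ref{i:pseudolocal} of Definition~\ref{def:pseudo}) and the $O(h^\infty)$ decay of $v_h$ away from $x_0$, one has $(1-\check\chi)u_h^{\zeta_0}=O(h^\infty)$ in every Sobolev norm, hence
\[
 (Au_h^{\zeta_0},u_h^{\zeta_0})_{L^2(M)} = (\check\chi A \check\chi\, u_h^{\zeta_0},u_h^{\zeta_0})_{L^2(M)}+ O(h^\infty) .
\]
Now I invoke Item~\ref{i:act-coord-patch} of the calculus: $(\phi^{-1})^\ast \check\chi A \check\chi \phi^\ast = \Op_h^\ell(b) + h^{1-2\rho}R$ with $b = (\phi^{-1})^\ast(\check\chi^2\sigma_h(A))\in S^m_\rho(T^*\R^n)$ and $R\in\Psi^{m-1}_\rho(\R^n)$. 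Changing variables to $\phi(U)$ (the Jacobian precisely produces the factor $\rho$ of $d\Vol_g$, which cancels against the normalization up to $O(h^{1/2})$), one arrives at
\[
 (Au_h^{\zeta_0},u_h^{\zeta_0})_{L^2(M)} = \rho(x_0)^{-1}\bigl(\Op_h^\ell(\rho\, b)\,\tilde u_h,\tilde u_h\bigr)_{L^2(\R^n)} + h^{1-2\rho}(R\tilde u_h,\tilde u_h)_{L^2} + O(h^{1/2}).
\]
Applying Lemma~\ref{l:coherent} to the symbol $\rho\, b\in S^m_\rho(T^*\R^n)$ and to $R$'s symbol (both paired against unit vectors, the latter contributing $h^{1-2\rho}\cdot O(1) = o(h^{1/2-\rho})$ since $\rho<1/2$), one finds the leading term
\[
 \rho(x_0)^{-1}\rho(x_0)\,b(x_0,\xi_0) = (\phi^{-1})^\ast(\check\chi^2\sigma_h(A))(x_0,\xi_0) = \sigma_h(A)(\zeta_0),
\]
with total error bounded by $Ch^{1/2-\rho}$, as claimed.

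The main technical obstacle is the bookkeeping of the three unrelated $O(h^{1/2})$-size corrections: the normalization of $u_h^{\zeta_0}$, the Jacobian $\rho(x)=\rho(x_0)+O(\sqrt h)$ inside the localized integral, and the intrinsic $h^{1/2-\rho}$ error of Lemma~\ref{l:coherent}. Since $\rho<1/2$, all three are absorbed into a single $O(h^{1/2-\rho})$ bound; the other essential check is that the ambiguity $h^{1-2\rho}S^{m-1}_\rho$ in the principal symbol produces an error $h^{1-2\rho}=o(h^{1/2-\rho})$ at $\zeta_0$, so that the statement is insensitive to the choice of representative of $\sigma_h(A)$.
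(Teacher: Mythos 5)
Your proof is correct and follows essentially the same route as the paper: pull back the Euclidean coherent state of Lemma~\ref{l:coherent} through a chart, localize with cutoffs, use Item~\ref{i:act-coord-patch} of the calculus to reduce to a left-quantized symbol carrying the volume density $\sqrt{|g|}$, apply Lemma~\ref{l:coherent}, and normalize, with all errors absorbed in $O(h^{1/2-\rho})$ since $\rho<1/2$. The only (harmless) differences are that the paper gets the normalization by running the same computation with $A=\id$ instead of your direct Gaussian computation and keeps the cutoff inside the symbol rather than on the state; just beware of your double use of the letter $\rho$ for both the symbol-class exponent and the volume density.
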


\bnp[Proof of Corollary~\ref{c:coherent-state}]
Denote by $(y_0,\eta_0) = \zeta_0\in T^*M$. Take first $(U,\phi)$ a coordinate patch of $M$ such that $y_0 \in U$. Write $(x_0, \xi_0) := ( \phi(y_0),\transp\big(\phi'(y_0)\big)^{-1} \eta_0)\in T^*\R^n$, and take $\psi \in C^\infty_c(\phi (U))$ such that $\psi = 1$ in a neighborhood of $x_0$ and $\chi \in C^\infty(U)$ such that $\chi=1$ in a neighborhood of $\supp (\phi^*\psi)$

Now, we denote by $v_h$ the function defined in~\eqref{e:def-coherent-state} associated to $(x_0,\xi_0)$, and  
$v_h^{\zeta_0} := \phi^* \psi v_h \in C^\infty(M)$. 
Given $A\in \Psi^m_\rho(M)$, and denoting by $|g|$ the determinant of the Jacobian, we have
\begin{align*}
\left(Av_h^{\zeta_0} , v_h^{\zeta_0} \right)_{L^2(M)} & =  \left(A \phi^* \psi v_h ,  \phi^* \psi v_h \right)_{L^2(M)} 
=  \left( \chi A \chi \phi^* \psi v_h ,  \phi^* \psi v_h \right)_{L^2(M)} \\
& =  \int_{\R^n} \Op_h^\ell(\tilde{a}) (\psi v_h) \ovl{\psi v_h} \sqrt{|g|} dx ,
\end{align*}
where $\Op_h^\ell(\tilde{a}) = \big(\phi^{-1}\big)^* \chi A \chi \phi^*$ is such that $\tilde{a}\in S^m_\rho (T^*\R^n)$ equals $\big(\phi^{-1}\big)^*\big(\chi^2\sigma_h(A)\big)$ modulo $h^{1-2\rho}S^{m-1}_\rho (T^*\R^n)$, according to Definition~\ref{def:pseudo} and the properties of $\sigma_h$. Now, pseudodifferential calculus in $\R^n$ gives
$\psi  \sqrt{|g|} \Op_h^\ell(\tilde{a}) \psi =  \Op_h^\ell(b)$ where $b\in S^m_\rho (T^*\R^n)$, $b = \sqrt{|g|} \psi^2 \tilde{a}= \sqrt{|g|} \psi^2\big(\phi^{-1}\big)^*\sigma_h(A)$ modulo $h^{1-2\rho}S^{m-1}_\rho (T^*\R^n)$.
Using Lemma~\ref{l:coherent}, we thus obtain that 
\begin{align*}
 \left(A v_h^{\zeta_0} , v_h^{\zeta_0} \right)_{L^2(M)} &  = 
\left(\Op_h^\ell(b) v_h,v_h \right)_{L^2(\R^n)} = b(x_0, \xi_0) + O(h^{\frac12-\rho}) \\
& = \sqrt{|g|}(x_0) \big(\phi^{-1}\big)^*\sigma_h(A)(x_0, \xi_0) + O(h^{\frac12-\rho}) .
\end{align*}
In particular, if $A=\id$, this reads $\nor{v_h^{\zeta_0}}{L^2(M)}^2 = \sqrt{|g|}(x_0)+ O(h^{\frac12-\rho})$.

Hence, finally setting $u_h^{\zeta_0} :=\nor{v_h^{\zeta_0}}{L^2(M)}^{-1} v_h^{\zeta_0}$, we have $\nor{u_h^{\zeta_0}}{L^2(M)}=1$ and for all $A\in \Psi^m_\rho(M)$, 
\begin{align*}
 \left(A u_h^{\zeta_0} , u_h^{\zeta_0} \right)_{L^2(M)}  = \nor{v_h^{\zeta_0}}{L^2(M)}^{-2} \left(A v_h^{\zeta_0} , v_h^{\zeta_0} \right)_{L^2(M)} 
 = \big(\phi^{-1}\big)^*\sigma_h(A)(x_0, \xi_0) + O(h^{\frac12-\rho})  = \sigma_h(A)(\zeta_0) +O(h^{\frac12-\rho}) ,
\end{align*}
which concludes the proof of the corollary.
\enp

\bnp[Proof of Lemma~\ref{l:coherent}]
According to~\cite[p 103]{Zworski:book}, we have
\begin{align}
\label{e:op-gauss}
\left(\Op_h^\ell(a)  v_h , v_h \right)_{L^2} = \frac{2^{\frac{n}{2}}}{(2\pi h)^n} \int_{\R^n} \int_{\R^n}a(x,\xi) e^{\frac{i}{h}(x-x_0)\cdot(\xi-\xi_0)}e^{-\frac{1}{2h}( |x-x_0|^2 + |\xi-\xi_0|^2)} dx d\xi .
\end{align}
We write 
$$
a(x,\xi) - a(x_0, \xi_0) =   \left[\int_0^1 a'\big( t(x,\xi) + (1-t)(x_0,\xi_0) \big) dt \right] \cdot \left( 
\begin{array}{c}
x-x_0 \\
\xi-\xi_0
\end{array}
\right) ,
$$
which, using $a \in S^m_\rho(T^*\R^n)$ and hence~\eqref{e:symb-estim-Srho}, yields $|a(x,\xi) - a(x_0, \xi_0)| \leq C h^{-\rho}\langle \xi\rangle^{m-1} \left(|x-x_0| +|\xi - \xi_0| \right)$. Coming back to~\eqref{e:op-gauss} and using that $\frac{2^{\frac{n}{2}}}{(2\pi h)^n} \int_{\R^n} \int_{\R^n}e^{\frac{i}{h}(x-x_0)\cdot(\xi-\xi_0)}e^{-\frac{1}{2h}( |x-x_0|^2 + |\xi-\xi_0|^2)} dx d\xi = 1$, we now have
\begin{align}
\label{e:-coherent-mean-value}
\left| \left(\Op_h^\ell(a)  v_h , v_h \right)_{L^2} - a(x_0 , \xi_0) \right| & \leq   \frac{Ch^{-\rho}}{h^n} \int_{\R^n} \int_{\R^n} \langle \xi\rangle^{m-1} \left(|x-x_0| +|\xi - \xi_0| \right) e^{-\frac{1}{2h}( |x-x_0|^2 + |\xi-\xi_0|^2)} dx d\xi \nonumber \\
& \leq  Ch^{-\rho} h^{1/2} ,
\end{align}
after a change of variables.
This concludes the proof of the lemma.
\enp

\section{Elementary intermediate lemmata}
\label{appendix-elementary}
\subsection{Semigroup estimates}
\begin{lemma}
\label{l:bound-etip}
Let $N \in \N$ and $\nu <\frac12$. Assume that $\P_Q = P + i Q$ where $P$ is a selfadjoint operator on $L^2(M;\C^N)$ with dense domain, and $Q \in \Psi^{0}_\nu (M; \C^{N\times N})$ with principal symbol $q \in S^{0}_\nu(T^*M; \C^{N\times N})$ such that 
\begin{align}
\label{e:pos-q}
\Re(q) = \frac12 \left( q + q^*\right) \quad \text{ is a nonnegative (hermitian) matrix on }T^*M.
\end{align} 
Then, there are $C_0,h_0>0$ such that $i\P_Q$ generates a strongly continuous semigroup $(e^{i t \P_Q} )_{t\in \R_+}$ with 
\begin{align}
\label{e:estim-pq-sem}
\|e^{\frac{i t}{h}\P_Q} u_0\|_{L^2(M;\C^N)}\leq e^{C_0h^{1-2\nu} t}\|u_0\|_{L^2(M;\C^N)}, \quad \text{ for all } u_0\in L^2(M;\C^N), \quad h \in (0,h_0) , \quad t \geq 0 .
\end{align}
If we assume further $\Re(Q) = \frac12 (Q+ Q^*) \geq 0$ in the sense of operators (which implies~\eqref{e:pos-q}), then $(e^{i t\P_Q} )_{t\in \R_+}$ is a contraction semigroup, \ie~\eqref{e:estim-pq-sem} holds true with $C_0=0$.
\end{lemma}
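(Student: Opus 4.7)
My plan is to combine standard bounded-perturbation semigroup generation with an energy identity closed by the matrix-valued sharp G\aa rding inequality applied to $Q+Q^*$.

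First I would verify that $i\P_Q$ generates a strongly continuous semigroup. The selfadjoint operator $P$ with dense domain generates the unitary group $(e^{itP})_{t\in\R}$ by Stone's theorem, and since $\nu<1/2$ the $\Psi^0_\nu$-calculus recalled in Section~\ref{s:calcul-pseudo} ensures that $Q$ is bounded on $L^2(M;\C^N)$, with operator norm controlled by finitely many seminorms of its symbol, uniformly in $h\in(0,h_0)$. The bounded perturbation theorem (see \eg~\cite[Chapter~3]{Pazy:83}) then produces the $C^0$ semigroup $(e^{it\P_Q})_{t\in\R_+}$, and the rescaled family $(e^{it\P_Q/h})_{t\in\R_+}$ inherits strong continuity.

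Next I would derive the growth bound through a single energy identity. For $u_0\in D(P)$ the orbit $u(t):=e^{\frac{it}{h}\P_Q}u_0$ belongs to $C^0(\R_+;D(P))\cap C^1(\R_+;L^2(M;\C^N))$; differentiating $\|u(t)\|^2$ and using the selfadjointness of $P$ to kill the $iP$-contribution gives
\[
\tfrac{d}{dt}\|u(t)\|_{L^2}^2 \;=\; 2\Re\bigl(\tfrac{i}{h}\P_Q u,u\bigr)_{L^2} \;=\; -\tfrac{2}{h}\,\Re(Qu,u)_{L^2}.
\]
The operator $Q+Q^*\in\Psi^0_\nu(M;\C^{N\times N})$ has principal symbol $q+q^*=2\Re(q)\geq 0$ pointwise on $T^*M$ by hypothesis~\eqref{e:pos-q}, so the matrix-valued sharp G\aa rding inequality (Theorem~\ref{t:garding}) supplies a constant $C>0$, uniform in $h\in(0,h_0)$, with
\[
\Re(Qu,u)_{L^2} \;=\; \tfrac12\bigl((Q+Q^*)u,u\bigr)_{L^2} \;\geq\; -Ch^{1-2\nu}\|u\|_{L^2}^2.
\]
Inserting this bound into the energy identity and applying Gr\"onwall's lemma yields the estimate~\eqref{e:estim-pq-sem} for $u_0\in D(P)$, and a density argument extends it to all $u_0\in L^2(M;\C^N)$.

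Finally, under the strengthened hypothesis that $Q+Q^*\geq 0$ in the operator sense, the sharp G\aa rding step becomes superfluous: the quantity $\Re(Qu,u)_{L^2}$ is directly nonnegative, so the energy identity forces $\frac{d}{dt}\|u(t)\|_{L^2}^2\leq 0$, and the semigroup is contractive, which is exactly the case $C_0=0$. The main technical ingredient is thus the matrix-valued sharp G\aa rding inequality of Theorem~\ref{t:garding}, responsible both for the $h^{1-2\nu}$ loss and for the fact that only the Hermitian part $q+q^*$ of the principal symbol need be prescribed nonnegative rather than $q$ itself; since this inequality is already at our disposal in the matrix-valued setting we need, everything else reduces to the routine Gr\"onwall computation above and no further analytic input is required.
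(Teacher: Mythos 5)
Your argument is essentially the paper's proof: generation by bounded perturbation of the selfadjoint $P$, the energy identity for $\nor{u(t)}{L^2(M;\C^N)}^2$, the matrix-valued sharp G{\aa}rding inequality of Theorem~\ref{t:garding} applied to $\Re(Q)=\frac12(Q+Q^*)$, and the observation that the G{\aa}rding step is unnecessary when $\Re(Q)\geq0$ as an operator; the paper merely phrases the conclusion as maximal accretivity of $i\P_Q+C_0h^{1-2\nu}$ (Lumer--Phillips) rather than Gr\"onwall, which is the same dissipativity estimate.

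One bookkeeping caveat: with the factor $\frac{2}{h}$ you carry in the energy identity for the rescaled semigroup $e^{\frac{it}{h}\P_Q}$, inserting $\Re(Qu,u)\geq -Ch^{1-2\nu}\nor{u}{L^2}^2$ and applying Gr\"onwall gives the exponent $C h^{-2\nu}t$, not $C_0h^{1-2\nu}t$, so the last step does not literally yield \eqref{e:estim-pq-sem} as you assert. The exponent $C_0h^{1-2\nu}t$ is what this computation gives for $e^{it\P_Q}$ (no $1/h$ in the time scale), which is exactly what the paper's own proof establishes and the form in which the lemma is invoked later (in the applications the skew-adjoint part carries an additional factor of $h$, which restores the stated power after rescaling). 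So your approach is the intended one; just either state the conclusion for $e^{it\P_Q}$ or track the extra power of $h$ in $Q$, instead of claiming that Gr\"onwall directly produces \eqref{e:estim-pq-sem} in the $t/h$ time variable.
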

\bnp
That  $\P_Q$ generates a strongly continuous group of evolution follows from selfadjointness of $P$ together with boundedness of $Q \in \Psi^{0}_\nu (M; \C^{N\times N})$.
For $u \in D(\P_Q) = D(P) \subset L^2(M;\C^N)$, we have  
\begin{align*}
\Re \left( i \P_Q u, u \right)_{L^2(M;\C^N)}= \left( - Q u, u\right)_{L^2(M;\C^N)} +  \left(u, - Q u \right)_{L^2(M;\C^N)} 
 = - \left(\Re(Q) u , u \right)_{L^2(M;\C^N)} ,
\end{align*}
with $\Re(Q)=\frac12(Q+Q^*)$ having principal symbol $\Re(q) \geq 0$. The sharp G{\aa}rding inequality of Theorem~\ref{t:garding}  implies the existence of $C_0>0$ such that 
$$
\left(\Re(Q) u, u\right)_{L^2(M;\C^N)} \geq -C_0 h^{1-2\nu} \|u\|_{L^2(M;\C^N)}^2 ,
$$
As a consequence, $i \P_Q + C_0 h^{1-2\nu}$, is accretive (hence maximal accretive), and thus 
generates a contraction semigroup. This implies the sought result for $i\P_Q$.
\enp

We use several times the following lemma.
\begin{lemma}
\label{l:1-BB}
Assume $i\P$ generates a strongly continuous semigroup $(e^{it\P} )_{t\in \R_+}$ on $\H$, with $\|e^{it\P}\|_{\L(\H)} \leq K_0 e^{\gamma t}$ for some $\gamma, K_0\geq 0$ and for all $t\geq 0$. Assume that $A \in \L(\H)$ is such that $[\P,A]\in  \L(\H)$. Then, we have $[e^{it\P},A] \in \L(\H)$ for all $t\geq 0$, together with
\begin{align}
\nor{[e^{it\P},A]}{\L(\H)} = \nor{e^{it\P}A  - Ae^{it\P}}{\L(\H)} \leq K_0^2 t e^{\gamma t} \nor{[\P,A]}{\L(\H)}, \quad \text{for all } t\geq0 . 
\end{align}
\end{lemma}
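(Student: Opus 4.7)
The approach is the classical Duhamel interpolation between the two endpoints $e^{it\P}A$ and $Ae^{it\P}$. Fix $t\geq 0$ and $u\in D(\P)$, and consider the auxiliary function
$$
g(s) = e^{i(t-s)\P}\,A\,e^{is\P}u , \qquad s\in [0,t].
$$
Formally differentiating and using $\frac{d}{ds}e^{is\P}v = i\P e^{is\P}v$ on $D(\P)$ and the corresponding identity for $e^{i(t-s)\P}$, one obtains
$$
g'(s) = -i\,e^{i(t-s)\P}(\P A - A\P)e^{is\P}u = -i\,e^{i(t-s)\P}[\P,A]e^{is\P}u,
$$
where $[\P,A]$ is interpreted as its bounded extension to $\H$. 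Integrating from $0$ to $t$, with $g(0) = e^{it\P}Au$ and $g(t)=Ae^{it\P}u$, yields the Duhamel-type identity
\begin{equation}\label{eq:duhamel-comm}
[e^{it\P},A]u \ =\ e^{it\P}Au - Ae^{it\P}u \ = \ i\int_0^t e^{i(t-s)\P}[\P,A]e^{is\P}u\,ds .
\end{equation}
Taking the $\H$-norm of \eqref{eq:duhamel-comm}, using the assumed bound $\|e^{i\tau\P}\|_{\L(\H)}\leq K_0 e^{\gamma \tau}$ on each factor and the boundedness of $[\P,A]$, gives
$$
\|[e^{it\P},A]u\|_\H \ \leq\ \int_0^t K_0 e^{\gamma(t-s)}\,\|[\P,A]\|_{\L(\H)}\,K_0 e^{\gamma s}\,\|u\|_\H\,ds \ = \ K_0^2\, t\, e^{\gamma t}\,\|[\P,A]\|_{\L(\H)}\,\|u\|_\H .
$$
Density of $D(\P)$ in $\H$ together with boundedness of the right-hand side in $u$ then extends the estimate to all $u\in\H$.

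The only genuine difficulty lies in making the differentiation of $g$ rigorous, since although $e^{is\P}u$ stays in $D(\P)$ for $u\in D(\P)$, the vector $Ae^{is\P}u$ need not belong to $D(\P)$, so $\P$ cannot be applied to it directly. To handle this I would use a Yosida-type approximation: replace $\P$ by the bounded operators $\P_n := -in(\id - \frac{i}{n}\P)^{-1}\P$ (or equivalently $n\P (n-i\P)^{-1}$), which satisfy $\P_n \to \P$ strongly on $D(\P)$ and generate uniformly (on compact time intervals) bounded semigroups $e^{it\P_n}$ converging strongly to $e^{it\P}$. Since each $\P_n$ is bounded, the analogue of \eqref{eq:duhamel-comm} with $\P_n$ in place of $\P$ holds by a completely elementary computation (everything is in $\H$), giving
$$
[e^{it\P_n},A]u \ = \ i\int_0^t e^{i(t-s)\P_n}[\P_n,A]e^{is\P_n}u\,ds.
$$
One checks that $[\P_n,A] \to [\P,A]$ strongly on a dense subspace (using that $[\P_n,A]u = (\P_n A - A\P_n)u \to \P Au - A\P u$ for $u$ such that $Au \in D(\P)$ and that the operators are uniformly bounded), and then passes to the limit $n\to\infty$ in the integrand by dominated convergence to recover \eqref{eq:duhamel-comm}, with the norm estimate preserved under the limit.

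The main obstacle is precisely this density/approximation issue, i.e.\ verifying that the formal Duhamel computation is rigorous in the absence of any smoothing on $A$; once it is, everything else reduces to the straightforward integral bound $\int_0^t e^{\gamma(t-s)}e^{\gamma s}ds = t e^{\gamma t}$.
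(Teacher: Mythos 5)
Your proposal is correct and follows essentially the same route as the paper: the Duhamel-type commutator identity $[e^{it\P},A]u = i\int_0^t e^{i(t-s)\P}[\P,A]e^{is\P}u\,ds$ on $D(\P)$, the bound $\int_0^t e^{\gamma(t-s)}e^{\gamma s}ds = te^{\gamma t}$, and density (the paper derives the identity by writing an ODE for $w(t)=Ae^{it\P}u$ and applying Duhamel, which is the same computation as differentiating your interpolant $g(s)$). Your extra remark on justifying the formal differentiation via Yosida approximation addresses a domain subtlety the paper passes over silently, but it does not change the argument.
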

\bnp
Let $u \in D(\P)$ and denote $w(t) =A e^{it\P} u$. We have  
\begin{align*}
\d_t w(t) &= A i\P e^{it\P} u  =  i\P  A e^{it\P} u + [A , i\P ] e^{it\P} u =  i\P  w(t)+ [A , i\P ] e^{it\P} u .
\end{align*}
Remarking  that $w(0) = A u$, the Duhamel formula yields
$$
w(t) = e^{it\P} A u  + \int_0^t e^{i(t-s)\P}  [A , i\P ] e^{is \P} u\ ds .
$$
This implies 
\begin{align*}
 \nor{e^{it\P}A  - Ae^{it\P}}{\L(\H)} & \leq \int_0^t \nor{e^{i(t-s)\P}}{\L(\H)}  \nor{[A , i\P ]}{\L(\H)} \nor{e^{is \P}}{\L(\H)} \nor{u}{\H} ds \\
 & \leq \int_0^t K_0 e^{(t-s)\gamma} \nor{[A , \P ]}{\L(\H)} K_0 e^{s\gamma}  ds  \nor{u}{\H}  = K_0^2 t e^{t\gamma} \nor{[A , \P ]}{\L(\H)} \nor{u}{\H} .
\end{align*}
This concludes the proof for $u \in D(\P)$ and by a density argument, that of the lemma.
\enp

\begin{lemma}[Pertubations of the damping]
\label{l:perturbation}
On a Hilbert space $\H$, let $\P_{Q_j} = P_h  + i Q_j$ with $P$ selfadjoint with dense domain, and $Q_j$ bounded with $\Re(Q_j) = \frac12 (Q_j+ Q_j^*) \geq 0$ (in the sense of operators) for $j=1,2$. Then, we have 
\begin{align}
\label{e:erreur-2}
\nor{ (e^{it\P_{Q_2}} - e^{it\P_{Q_1}} ) u }{\H} \leq t \nor{Q_2-Q_1}{\L(\H)} \nor{u}{\H} , \quad \text{for all}\quad t \geq 0 ,\quad  u \in \H . 
\end{align}
\end{lemma}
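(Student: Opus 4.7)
The plan is a standard Duhamel perturbation argument. First I would observe that since $P$ is selfadjoint and the hypothesis $\Re(Q_j)\geq 0$ holds in the sense of operators, both $i\P_{Q_1}$ and $i\P_{Q_2}$ are maximal dissipative (equivalently, the ``moreover'' part of Lemma~\ref{l:bound-etip} applies with $h=1$ in the abstract setting, the symbolic assumption being replaced by the operator inequality), so $(e^{it\P_{Q_j}})_{t\geq 0}$ are contraction semigroups on $\H$. In particular $D(\P_{Q_1})=D(\P_{Q_2})=D(P)$, since the $Q_j$ are bounded.

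Next, for $u\in D(P)$, I would set $v_j(t):=e^{it\P_{Q_j}}u\in C^1(\R_+;\H)\cap C^0(\R_+;D(P))$, and consider $w(t):=v_2(t)-v_1(t)$. Writing $i\P_{Q_2}-i\P_{Q_1}=-(Q_2-Q_1)$, a direct computation yields
\begin{equation*}
\partial_t w(t) = i\P_{Q_2}w(t) - (Q_2-Q_1)v_1(t), \qquad w(0)=0.
\end{equation*}
The Duhamel formula then gives
\begin{equation*}
w(t)= -\int_0^t e^{i(t-s)\P_{Q_2}}(Q_2-Q_1)\,e^{is\P_{Q_1}}u\,ds.
\end{equation*}
Using the contractivity of both semigroups,
\begin{equation*}
\nor{w(t)}{\H} \leq \int_0^t \nor{e^{i(t-s)\P_{Q_2}}}{\L(\H)}\nor{Q_2-Q_1}{\L(\H)}\nor{e^{is\P_{Q_1}}}{\L(\H)}\nor{u}{\H}\,ds \leq t\,\nor{Q_2-Q_1}{\L(\H)}\nor{u}{\H},
\end{equation*}
which is \eqref{e:erreur-2} for $u\in D(P)$.

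Finally, to conclude for arbitrary $u\in\H$, I would invoke density of $D(P)$ in $\H$ (consequence of $P$ being a densely defined selfadjoint operator) together with the uniform-in-$u$ nature of the estimate and the strong continuity of the two semigroups. There is no real obstacle here: the argument is essentially the same as the proof of Lemma~\ref{l:1-BB}, with the Duhamel source term coming from $i\P_{Q_2}-i\P_{Q_1} = -(Q_2-Q_1)$ rather than from a commutator. The only point requiring a brief verification is that both semigroups are indeed contractions under the pure operator-positivity hypothesis $\Re(Q_j)\geq 0$, which is immediate by computing $\Re(i\P_{Q_j}u,u)_\H = -\Re(Q_j u,u)_\H\leq 0$ for $u\in D(P)$ and applying the Lumer--Phillips theorem.
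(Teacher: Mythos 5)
Your proof is correct and follows essentially the same route as the paper: the same Duhamel argument for $w(t)=e^{it\P_{Q_2}}u-e^{it\P_{Q_1}}u$ with source $-(Q_2-Q_1)e^{it\P_{Q_1}}u$, contractivity of both semigroups (which the paper gets from the same accretivity computation), and the bound by $t\nor{Q_2-Q_1}{\L(\H)}\nor{u}{\H}$. Your explicit density step at the end is a harmless addition that the paper leaves implicit.
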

\bnp
We consider $r(t) = (e^{it\P_{Q_2}} - e^{it\P_{Q_1}} ) u$ and notice that we have $r(0) = 0$. Moreover, assuming first that $u \in D(P)= D(\P_{Q_j})$, we have
\begin{align*}
\d_t r(t) & =i \left(\P_{Q_2} e^{it\P_{Q_2}} u - \P_{Q_1}e^{it\P_{Q_1}} u \right) 
=i \P_{Q_2} \left( e^{it\P_{Q_2}} u - e^{it\P_{Q_1}} u \right) +i \left(\P_{Q_2} -\P_{Q_1}  \right)e^{it\P_{Q_1}} u \\
& =i \P_{Q_2}r(t) - \left(Q_2 - Q_1  \right)e^{it\P_{Q_1}} u  .
\end{align*}
The Duhamel Formula then yields $r(t) = \int_0^t e^{i(t-s)\P_{Q_2}} \left(Q_1 - Q_2 \right)e^{is\P_{Q_1}} u \, ds$.
Using that $\P_{Q_j}$ generate contraction semigroups, we obtain
\begin{align*}
\nor{r(t)}{\H} \leq \int_0^t \nor{e^{i(t-s)\P_{Q_2}}}{\L(\H)} \nor{Q_1 - Q_2 }{\L(\H)} \nor{e^{is\P_{Q_1}}}{\L(\H)}  \nor{u}{\H} ds 
\leq   \int_0^t \nor{Q_1 - Q_2 }{\L(\H)} \nor{u}{\H} ds ,
\end{align*}
which proves~\eqref{e:erreur-2}, and concludes the proof of the lemma.
\enp

\subsection{Time-averages and pointwise-in-time estimates}
The following elementary Gronwall-type lemma compares averaged quantities with pointwise quantities.
\begin{lemma}
\label{l:debile}
Let $\Psi, \psi \in L^2_{\loc}(\R_+;\R)$ and $E \in W^{1,1}_{\loc}(\R_+;\R)$ such that $E' \leq 0$ \ae on $\R_+$. Then we have 
\begin{align}
\label{e:debile-1}
\left(\int_0^\tau\Psi^2(t)dt\right)E(\tau) &\leq \int_0^\tau \Psi^2(t) E(t)dt , \quad \text{ for all } \tau\geq 0 ,\\
\label{e:debile-2}
\nor{\psi}{L^2(0,\theta)}^2 E(\theta T) & \leq \frac{1}{T}\int_0^{\theta T} \psi^2\left(\frac{t}{T}\right) E(t) dt ,  \quad \text{ for all } T ,\theta>0  .
\end{align}
\end{lemma}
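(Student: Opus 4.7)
The plan is to use the monotonicity assumption $E' \leq 0$ \ae, which (since $E \in W^{1,1}_{\loc}$) implies that $E$ admits a non-increasing representative: $E(t) \geq E(\tau)$ whenever $0\leq t \leq \tau$. Both inequalities are then essentially immediate consequences of integrating this pointwise bound against the non-negative weight $\Psi^2$ (resp.\ $\psi^2(\cdot/T)$).

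For \eqref{e:debile-1}, I would simply observe that for all $t \in [0,\tau]$,
\begin{equation*}
\Psi^2(t) E(t) \geq \Psi^2(t) E(\tau) ,
\end{equation*}
since $\Psi^2(t) \geq 0$ and $E(t) \geq E(\tau)$. Integrating this on $[0,\tau]$ and pulling the constant $E(\tau)$ out of the integral yields \eqref{e:debile-1}.

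For \eqref{e:debile-2}, the first step is the change of variables $s = t/T$ (with $ds = dt/T$), which transforms the right-hand side into
\begin{equation*}
\frac{1}{T}\int_0^{\theta T} \psi^2\!\left(\frac{t}{T}\right) E(t)\, dt  = \int_0^{\theta} \psi^2(s)\, E(sT)\, ds .
\end{equation*}
Then, since $sT \leq \theta T$ for $s \in [0,\theta]$, monotonicity gives $E(sT) \geq E(\theta T)$, and integrating $\psi^2(s) E(sT) \geq \psi^2(s) E(\theta T)$ on $[0,\theta]$ produces the desired bound.

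There is no serious obstacle here; the only mild point is justifying the pointwise use of monotonicity from the $W^{1,1}_{\loc}$ hypothesis on $E$, which follows from the absolute continuity of $E$ (as a representative) and the fundamental theorem of calculus: for $0\leq t \leq \tau$, $E(\tau) - E(t) = \int_t^\tau E'(s)\, ds \leq 0$.
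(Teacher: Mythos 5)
Your proof is correct, but it follows a slightly different route from the paper's. You first upgrade $E'\leq 0$ a.e.\ to pointwise monotonicity of the absolutely continuous representative of $E$ (via $E(\tau)-E(t)=\int_t^\tau E'(s)\,ds\leq 0$), and then obtain \eqref{e:debile-1} by integrating the pointwise bound $\Psi^2(t)E(t)\geq \Psi^2(t)E(\tau)$, and \eqref{e:debile-2} by the change of variables $s=t/T$ plus the same monotonicity. The paper instead never states monotonicity of $E$: it sets $\varphi(t)=\int_0^t\Psi^2(s)\,ds$, notes $(\varphi E)'=\varphi'E+\varphi E'\leq \Psi^2 E$ since $\varphi\geq 0$ and $E'\leq 0$, and integrates this differential inequality on $(0,\tau)$ (which is why it is labelled a Gronwall-type lemma); it then gets \eqref{e:debile-2} as an immediate corollary of \eqref{e:debile-1} by taking $\tau=\theta T$ and $\Psi(t)=\psi(t/T)$, rather than redoing the integration with a substitution. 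The two arguments are essentially equivalent in content: yours is marginally more elementary and makes the monotonicity explicit (your remark on the absolutely continuous representative is exactly the justification needed, and is also implicitly used by the paper since $E(\tau)$ is evaluated pointwise), while the paper's product-rule formulation avoids any discussion of representatives of $E'$ and derives the second inequality for free from the first.
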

\bnp
Writing $\varphi(t) := \int_0^t\Psi^2(s)ds \in W^{1,1}_{\loc}(\R_+;\R)$, we have $\big(\varphi E \big)' = \varphi' E + \varphi E' \leq \varphi' E =  \Psi^2 E$, since $E' \leq 0$ and $\varphi \geq 0$. Integrating this inequality on $(0,\tau)$ proves~\eqref{e:debile-1}. Finally,~\eqref{e:debile-2} follows from~\eqref{e:debile-1} applied to $\tau=\theta T$ and $\Psi (t) = \psi(t/T)$.
\enp

The following elementary lemma is then used to optimize the choice of the weight function $\psi$ in order to pass from time-averages to pointwise-in-time bounds in Theorem~\ref{t:resolvent-implies-long-time}.

  \begin{lemma}
  \label{l:optimizing-psi}
We have
  \begin{align*}
\mathcal{I} & : = \inf \left\{\nor{\psi'}{L^2(\R)}^2 , \psi \in H^1_{\comp}(\R) , \nor{\mathds{1}_{[0,1]}\psi}{L^2(\R)} = 1 , \supp \psi \subset [0,+\infty) \right\} \\
& = \inf \left\{\frac{\nor{\psi'}{L^2(\R)}^2}{\nor{\mathds{1}_{[0,1]}\psi}{L^2(\R)}^2} , \psi \in C^\infty_c(\R) , \supp \psi \subset [0,+\infty) \right\} \\
& = 2 .
\end{align*}
Moreover, the infimum is not reached but, for any $L>1$, there is $\psi_{\min,L} \in H^1_{\comp}(\R)$ with $\supp \psi_{\min,L}  =[0,L]$ and such that
\begin{align*}
\nor{\psi_{\min,L}}{L^2(0,1)} = 1 , \quad 
 \nor{\psi_{\min,L}'}{L^2(\R)}^2 = 2 \left( 1+ \frac{L}{(L-1)^2}\right), \\
\nor{\psi_{\min,L}}{L^1(\R)} = \frac{L}{\sqrt{2}} , \quad \nor{\psi_{\min,L}'}{L^1(\R)} = \sqrt{2} \left(1+ \frac{L}{L-1}\right).
\end{align*}
  \end{lemma}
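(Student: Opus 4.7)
My plan is to prove the lemma in three steps: first establish the lower bound $\mathcal{I}\geq 2$ by a Hardy-type argument, then construct an explicit family $\psi_{\min,L}$ realizing the four stated norm identities (which yields the upper bound on sending $L\to+\infty$), and finally identify the two formulations of $\mathcal{I}$ by a density argument. Non-attainment will drop out of the equality case in the first step.

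For the lower bound I will use that any $\psi\in H^1_{\comp}(\R)$ is continuous by the one-dimensional Sobolev embedding, so the support condition $\supp\psi\subset[0,+\infty)$ forces $\psi(0)=0$. Writing $\psi(x)=\int_0^x\psi'(s)\,ds$ and applying Cauchy-Schwarz gives $|\psi(x)|^2\leq x\int_0^x|\psi'|^2\,ds\leq x\,\|\psi'\|_{L^2(\R)}^2$. Integrating over $x\in[0,1]$ yields
\[
\int_0^1|\psi|^2\,dx \;\leq\; \tfrac12\,\|\psi'\|_{L^2(\R)}^2,
\]
hence $\|\psi'\|_{L^2(\R)}^2\geq 2\|\mathds{1}_{[0,1]}\psi\|_{L^2(\R)}^2$, so $\mathcal{I}\geq 2$. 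Equality along the Cauchy-Schwarz chain would force $\psi'\equiv 0$ almost everywhere outside $\{0\}$, hence $\psi\equiv 0$, which contradicts the normalization; so the infimum is strict and not attained.

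For the upper bound, I will construct $\psi_{\min,L}$ piecewise on $[0,L]$, guided by the following heuristic: as $L\to+\infty$ one wants $\psi$ close to $\sqrt{2}$ on a long plateau (to deliver $\|\psi\|_{L^2(0,1)}\approx 1$ together with $\|\psi\|_{L^1}\approx L/\sqrt{2}$), rising briskly from $0$ near $x=0$ and descending roughly linearly on $[1,L]$ to produce the extra $2L/(L-1)^2$ term in $\|\psi'\|_{L^2}^2$. A piecewise-linear ansatz with two or three breakpoints, whose heights and positions are determined by the four norm conditions (viewed as a linear/algebraic system in the free parameters), is the natural candidate; the four identities then follow by direct integration on each piece, and the limit $\|\psi_{\min,L}'\|_{L^2}^2\to 2$ gives $\mathcal{I}\leq 2$. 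The equivalence of the infima over $H^1_{\comp}$ and over $C^\infty_c$ is routine: given an admissible $\psi\in H^1_{\comp}$, translate by $h>0$ so that the support lies in $(h,+\infty)$ and mollify at scale smaller than $h$; the resulting smooth compactly supported functions have their $H^1$ norms converging to those of $\psi$, and the reverse inclusion is trivial.

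The main obstacle is pinning down the explicit shape of $\psi_{\min,L}$: a symmetric linear tent immediately fails because it produces $\|\psi'\|_{L^2}^2=3L/(L-1)$ instead of $2+2L/(L-1)^2$, and its $L^1$ norms do not match the stated values. In particular, the relation $\|\psi_{\min,L}'\|_{L^1}=\sqrt{2}(2L-1)/(L-1)$ is strictly greater than $2\max\psi_{\min,L}$ one would obtain from a unimodal tent, which forces the candidate to have either asymmetric slopes with a break location different from $x=1$, a sign-changing derivative (so $\psi$ is non-monotone on $[0,L]$), or an intermediate slope change; I expect the verification, once the correct piecewise formula is written down, to reduce entirely to elementary algebra on the polynomial expressions in $L$.
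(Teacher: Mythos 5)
Your lower-bound half coincides with the paper's argument: the Sobolev embedding and the support condition force $\psi(0)=0$, and Cauchy--Schwarz gives $\int_0^1|\psi|^2\,dx\le\frac12\nor{\psi'}{L^2(\R)}^2$, whence $\mathcal{I}\ge2$; your equality-case discussion of non-attainment is also sound. The genuine gap is everything else: you never exhibit $\psi_{\min,L}$, so none of the four norm identities is proved, and your upper bound $\mathcal{I}\le2$ rests on the unproved limit $\nor{\psi_{\min,L}'}{L^2(\R)}^2\to2$. For reference, the paper gets its candidate from the Euler--Lagrange equation of the constrained problem on $[0,L]$ and takes the tent $\psi_L(t)=\mathds{1}_{[0,1]}(t)\,t+\mathds{1}_{(1,L]}(t)\frac{L-t}{L-1}$, normalized in $L^2(0,1)$.

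However, the obstacle you hit is not a failure of imagination, and no cleverer piecewise ansatz will remove it. Your computation that the correctly normalized tent has Rayleigh quotient $\frac{3L}{L-1}$ is right (indeed $\int_0^1t^2\,dt=\frac13$, so the normalizing factor is $\sqrt3$, not the $\sqrt2$ appearing in the paper), and the stated identities cannot be realized by any function once $L$ is large: since every admissible $\psi$ has $\psi(0)=0$, the sharp one-dimensional Poincar\'e inequality on $(0,1)$ gives $\nor{\psi'}{L^2(\R)}^2\ge\int_0^1|\psi'|^2\,dt\ge\frac{\pi^2}{4}\int_0^1|\psi|^2\,dt$, so $\nor{\psi}{L^2(0,1)}=1$ forces $\nor{\psi'}{L^2(\R)}^2\ge\frac{\pi^2}{4}\approx2.47$, which is incompatible with the claimed value $2\big(1+\frac{L}{(L-1)^2}\big)$ as soon as $L\ge7$. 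The same inequality, tested on $\sqrt2\sin(\pi t/2)$ on $[0,1]$ continued linearly down to $0$ on $[1,L]$, shows that in fact $\mathcal{I}=\frac{\pi^2}{4}$, not $2$: the constant $2$ coming from the Cauchy--Schwarz bound is not sharp, and the paper's own optimality step (the choice $\lambda=0$, which only yields $\psi\equiv0$, and the ensuing norm formulas) contains slips. So the statement cannot be proved as written; what can be salvaged, and what the pointwise-in-time application actually needs, is the lower bound $\mathcal{I}\ge2$ (or the sharp $\frac{\pi^2}{4}$) together with explicit admissible test functions whose quotient $\nor{\psi'}{L^2(\R)}^2/\nor{\psi}{L^2(0,1)}^2$ is as small as possible, namely $\frac{\pi^2}{4}+\frac{2}{L-1}$ for the sine profile or $\frac{3L}{L-1}$ for the tent.
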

  \bnp
The first equality comes from homogeneity and density of $C^\infty_c(\R)$ in $H^1_{\comp}(\R)$. Therefore, we only need to prove that $\mathcal{I}=2$.
We first remark that if $\psi \in H^1_{\comp}(\R)$ and $\supp \psi \subset [0,+\infty)$, then $\psi$ is continuous and $\psi (0)=0$. As a consequence, the Cauchy-Schwarz inequality yields
\begin{align*}
\int_0^1|\psi(x)|^2 dx& = \int_0^1\left| \int_0^x\psi'(t)dt \right|^2 dx \leq  \int_0^1x \left(\int_0^x\left| \psi'(t) \right|^2dt\right) dx \\
&\leq \left( \int_0^1x  dx\right) \left(\int_0^1\left| \psi'(t) \right|^2dt\right) = \frac12 \int_0^1\left| \psi'(t) \right|^2dt .
\end{align*}
This implies $\mathcal{I}\geq 2$.
Second, to prove that $2$ is actually the infimum, we rewrite 
\begin{align*}
\mathcal{I} = \lim_{L \to +\infty} \inf \left\{\nor{\psi'}{L^2(\R)}^2 , \psi \in \mathcal{J}_L\right\} 
\quad \mathcal{J}_L :=\left\{ \psi \in H^1(\R) , \nor{\mathds{1}_{[0,1]}\psi}{L^2(\R)} = 1 , \supp \psi \subset [0,L] \right\} .
\end{align*}
Given $L>1$ fixed, the first order optimality condition for this minimization problem writes, 
$$
-\psi'' = \lambda \mathds{1}_{[0,1]}\psi , \quad \text{ on }[0,L] , \quad \psi(0)= 0 , \quad \psi(L)=0.
$$
for some $\lambda \in \R$. In $\mathcal{J}_L$, the minimization problem thus has $\psi_{\min,L}(t) := \frac{\psi_L}{\nor{\psi_L}{L^2(0,1)}} = \sqrt{2} \psi_L$, with $\psi_L(t) = \mathds{1}_{[0,1]}(t) t + \mathds{1}_{(1,L]}(t)\frac{L-t}{L-1}$ as a minimizer (unique modulo sign, and obtained for $\lambda =0$), with minimal energy $\nor{\psi_{\min,L}'}{L^2(\R)}^2 = 2 \left( 1+ \frac{L}{(L-1)^2}\right)$.
Letting $L$ go to $+\infty$, we conclude that the infimum is $\mathcal{I} = 2$ and is not reached in $H^1_{\comp}(\R)$. 

Note that $\nor{\psi_L}{L^1(\R)} = \frac{L}{2}$ and $\psi_L'(t)=\mathds{1}_{[0,1]}(t) -\frac{1}{L-1} \mathds{1}_{(1,L]}(t)$ so that $\nor{\psi_L'}{L^1(\R)} = 1+ \frac{L}{L-1}$. This conclude the proof recalling that 
 $\psi_{\min,L} = \sqrt{2} \psi_L$.
\enp

\small
\bibliographystyle{alpha}
\bibliography{bibli}
\end{document}